\theoremstyle{plain}
\newtheorem{theorem}{Theorem}
\newtheorem{lemma}{Lemma}
\newtheorem{proposition}{Proposition}
\newtheorem{condition}{Condition}
\theoremstyle{remark}
\newtheorem{definition}[theorem]{Definition}
\newtheorem*{example}{Example}
\newtheorem*{remark}{Remark}
\def \P{\mathrm{Pr}}
\def \N{\mathbb{N}}
\def \Z{\mathbb{Z}}
\def \R{\mathbb{R}}
\def \dd{\mathrm{d}}
\newcommand\independent{\protect\mathpalette{\protect\independenT}{\perp}}
\def\independenT#1#2{\mathrel{\rlap{$#1#2$}\mkern2mu{#1#2}}}
\newcommand{\E}[1]{\mathbb{E}\left[#1\right]}
\newcommand{\CE}[2]{\mathbb{E}\left[ #1 \mid #2 \right]}
\newcommand{\curly}[1]{\left\{#1\right\}}
\newcommand{\p}[1]{\left(#1\right)}
\newcommand{\floor}[1]{\left \lfloor #1 \right \rfloor }
\newcommand{\norm}[1]{\left\lVert#1\right\rVert}
\DeclareMathOperator{\Var}{var}
\DeclareMathOperator{\Cov}{cov}
\DeclareMathOperator*{\argmin}{arg\,min}
\begin{document}

\begin{frontmatter}
\title{ }
\title{Conditional Extreme Value Estimation for Dependent Time Series}
\runtitle{Conditional Extreme Value Estimation for Dependent Time Series}

\begin{aug}
\author[A]{\fnms{Martin}~\snm{Bladt}\ead[label=e1]{martinbladt@math.ku.dk}}
\author[A]{\fnms{Laurits}~\snm{Glargaard}\ead[label=e2]{lauritsglargaard@mail.dk}}
\author[A]{\fnms{Theodor}~\snm{Henningsen \MakeLowercase{(corresponding author)}    }\ead[label=e3]{th@math.ku.dk}}
\address[A]{Department of Mathematical Sciences, University of Copenhagen, Denmark,\\
\printead[presep={\ }]{e1,e2,e3}}

\end{aug}

\begin{abstract}
We study the consistency and weak convergence of the conditional tail function and conditional Hill estimators under broad dependence assumptions for a heavy-tailed response sequence and a covariate sequence. Consistency is established under 
$\alpha$-mixing, while asymptotic normality follows from 
$\beta$-mixing and second-order conditions. A key aspect of our approach is its versatile functional formulation in terms of the conditional tail process. Simulations demonstrate its performance across dependence scenarios. We apply our method to extreme event modelling in the oil industry, revealing distinct tail behaviours under varying conditioning values.
\end{abstract}

\begin{keyword}[class=MSC]
\kwd[Primary ]{62G32}
\kwd[; secondary ]{62E20, 62M10}
\end{keyword}

\begin{keyword}
\kwd{Extreme values}
\kwd{Time series}
\kwd{Conditional regular variation}
\kwd{Weak convergence}
\end{keyword}

\end{frontmatter}
\tableofcontents

\section{Introduction}

The analysis of extremal events in the presence of covariates is a central problem in modern risk management. A persistent challenge in this setting is the temporal dependence inherent in both the covariate process and the process of interest, which complicates statistical inference. This paper focuses on the estimators for a heavy-tailed mixing time series given another mixing covariate time series.

Functional limit theorems for the tail empirical process under dependence conditions have been studied in the literature. Early  results were established by~\citet{Rootzen_1995} for $\alpha$-mixing sequences and later extended by~\citet{Rootzen_2009} to more general dependence structures. In~\citet{Drees_2000}  the asymptotic normality of the POT estimator was shown under $\beta$-mixing conditions. Among the most widely used estimators for heavy-tailed data is the Hill estimator, originally introduced by~\citet{Hill1975}. Its consistency in dependent settings was first investigated by~\citet{Resnick_Starica_1995}, who analysed its behaviour under various dependence structures. Further work by~\citet{Resnick_Starica_1997} established the consistency of the tail empirical measure for autoregressive processes with heavy-tailed innovations. The asymptotic normality of the Hill estimator for i.i.d. data has been well documented, with significant contributions from~\citet{Hall_1982},~\citet{Davis_Resnick_1984},~\citet{Haeusler_Teugels_1985}, and~\citet{Csorgo_Mason_1985}. Extending these results to stationary sequences,~\citet{HSING1991117} proved a central limit theorem for the Hill estimator under $\beta$-mixing conditions, and a modern treatment can be found in~\cite{Kulik2020}.

In this paper, we investigate the consistency and weak convergence of the conditional kernel Hill estimator under general assumptions on both the response and covariate sequences. Specifically, we establish consistency under $\alpha$-mixing conditions and derive asymptotic normality under $\beta$-mixing conditions, with the latter results obtained under two general second-order conditions. A key feature of our approach is its functional formulation, which naturally leads to limit theorems for the conditional tail process. Through simulation studies, we investigate our method across different scenarios. Notably, a distinguishing advantage of our estimator is its simple (and easy to plug-in) variance structure, in contrast to the often intricate variance expressions encountered in the unconditional case. We apply our method to extreme event modelling in the oil industry where, interestingly, distinct tail behaviours arise under different conditioning values. 

Other work is related to ours. In the presence of covariates, the asymptotic properties of extreme value estimators have been investigated using regression-based approaches. In~\citet{Daouia2010} tail index estimation is studied under $\alpha$-mixing conditions using a smoothed Pickands estimator, a technique later generalized by~\citet{Daouia_2013, Daouia_Stupfler_Usseglio-Carleve_2023} to regression models within the general max-domain of attraction. Furthermore, tail index regression in a random covariate setting has been analysed using a maximum likelihood framework, as explored by~\citet{Wang01092009}. Finally, asymptotic normality of the conditional kernel Hill estimator has been explored through different techniques and for a special second-order regularly varying class in the preprint~\cite{GoegebeurGuillou2024}.

The remainder of the paper is structured as follows. Section~\ref{section:estimators} introduces the mathematical framework for heavy-tailed analysis in dependent settings, defining key concepts and estimators. Section~\ref{section:main} establishes sufficient conditions for the consistency of the estimators under $\alpha$-mixing dependence, as well as conditions for their asymptotic normality under $\beta$-mixing conditions. Section~\ref{section:verification} provides specific second-order conditions and explicit sequences which imply our results. Section~\ref{section:finite} evaluates the finite-sample performance and robustness of the estimator through simulation studies, while Section~\ref{section:real} applies our methodology to extreme event modelling in the oil industry. Finally, Section~\ref{section:conclusion} concludes. Proofs of all results are provided in a series of appendices, as well as a additional results regarding anti-clustering conditions for certain models, kernel density estimation for dependent data, and mixing and functional limit theorems.

\section{The estimators}\label{section:estimators}

In this section we introduce estimators for the conditional tail function and for the conditional Hill estimator, the latter having the same structure as in independent settings. It is only when proving its asymptotics that the temporal dependence of the data comes into play.

\subsection{Notation and setting} 
Let $\left\{\left(X_{j}, Y_{j}\right), j \in  \mathbb{Z} \right\}$ be a stationary time series defined on the probability space $\left(\Omega, \P , \mathcal{F}\right)$. Here, $\left\{Y_{j}, j \in  \mathbb{Z} \right\}$ is a positive process of interest and $\left\{X_{j}, j \in  \mathbb{Z} \right\}$ is a process of covariates, where $X_{j}, Y_{j} \in \mathbb{R}$, for each $j \in \mathbb{Z}$. Let the distribution function of $Y_{0}$ be denoted by $F$, the density of $X_0$ be denoted by $g$, which is assumed twice continuously differentiable, the joint density $g_{X_i, X_j}$ of $(X_i, X_j)$ exists, and is bounded for all $i,j \in \Z$, and let the distribution function of $Y_{0}$ given $X_{0}=x$ be denoted by $F^{x}$. 

Let $x\in\R$ with $g(x)>0$ be given. Our key assumption is that the conditional distribution of $Y_{0}$, given $X_{0} = x$, is regularly varying with tail index $1/\gamma(x) > 0 $, and that this holds in a neighbourhood around $x$, that is
\begin{align}\label{def:condRV}
    \overline{F}^{u}(y) =
    \P (Y_0 >   y \vert X_0 = u) = 
     L^u(y) \ y^{-\frac{1}{\gamma(u)}},\quad y>0,
\end{align}
for all $u$ in an open set $U\subseteq\mathbb{R}$ containing $x$, and where $L^{u}$ is slowly varying at infinity, that is $L^u(tv)/L^u(v)\to1$ as $v\to\infty$ for any $t>0$.

Let $K$ be a bounded density, denoted a kernel, supported on $[-1,1]$ and denote by $\left\{h_{n}, n \in \N \right\}$ the corresponding bandwidth (scale) parameter. Let $\left\{u^{x}_{n}, n \in \N\right\}$ be a scaling sequence diverging to infinity. 
The conditional survival function $ \overline{F}^{x}$ is estimated with the Nadaraya--Watson pathwise estimator as 
\begin{align*}
    \overline{F}^{x}_{n}(y) = \sum_{j = 1}^{n}\frac{K\left(\frac{x-X_j}{h_n}\right)}{ \sum_{j = 1}^{n}K\left(\frac{x-X_j}{h_n}\right)} 1_{\left\{Y_j > y\right\}},\quad y>0.
\end{align*}
Let the corresponding tail empirical distribution function, for deterministic threshold, be given by
\begin{align*}
    \widetilde{T}^{x}_{n}(s) =  
    \frac{ \overline{F}^{x}_{n}(su_{n}^{x})}
    { \overline{F}^{x}(u_{n}^{x})} = 
    \frac{\frac{1}{nh_{n} \overline{F}^{x}(u_{n}^{x})}
    \sum_{j=1}^{n}K\left(\frac{x-X_j}{h_n}\right)1_{\curly{Y_j > su_{n}^{x}}}}{\frac{1}{n h_n}\sum_{j=1}^{n}K\left(\frac{x-X_j}{h_n}\right)} =: 
    \frac{\widetilde{V}^{x}_{n}(s)}{g_{n}(x)},
\end{align*}
for $s >0$. The target of $\widetilde{T}^{x}_{n}$ is the conditional tail function $ s\mapsto T^{x}(s) = s^{-1/\gamma(x)}$. Indeed, this convergence, both in probability and weakly, will be shown under suitable conditions in this paper. However, the deterministic thresholds $u_{n}^{x}$ are not observable. Below we provide asymptotic results also for when these thresholds are estimated from data.

\subsection{Conditional tail function and conditional Hill estimators}
To transfer the tail function behaviour to the conditional tail index, the quantity of interest, the following identity is key:
\begin{align*}
    \gamma(x)  = \int_{1}^{\infty}s^{-1/\gamma(x)-1}\mathrm{d}s 
    = \int_{1}^{\infty}\frac{T^{x}(s)}{s}\mathrm{d}s.
\end{align*}
In this paper, we assume that an \emph{intermediate sequence} is given, that is a sequence $k_n\le n$ such that
\begin{align*}
k_{n} \to \infty, \ \ k_{n}/n \to 0, \ \ \text{as} \ n \to \infty.
\end{align*}
Let $u_{n}^{x} = F^{\leftarrow, x}\p{1-k_{n}/n}$ be an unobserved sequence of deterministic thresholds, diverging to infinity, where $F^{\leftarrow, x}$ denotes the generalised inverse of $F^{x}$. We may clearly estimate this sequence by its empirical counterpart $\left\{q_{n,k_{n}}^{x}, n \in \N \right\}$ given by $$q_{n,k_{n}}^{x} = F^{\leftarrow, x}_{n}\p{1-k_{n}/n},\quad n \in \N.$$ The conditional tail function is thus estimated by
\begin{align*}
    \widehat{T}^{x}_{n}(s) =  
    \frac{ \overline{F}^{x}_{n}(sq_{n,k_{n}}^{x})}
    { \overline{F}^{x}(u_{n}^{x})} 
    = \widetilde{T}^{x}_{n}\p{s\frac{q_{n,k_{n}}^{x}}{u_{n}^{x}}},
\end{align*}
which no longer depends on the sequence $u_{n}^{x}$, which can be seen from the definition of $\widetilde{T}^{x}_{n}$ and from the fact that $\overline{F}^{x}(u_{n}^{x})=k_n/n$.

Furthermore, we construct a conditional version of the Hill Estimator, given by
\begin{align*}
    \widehat{\gamma}_{n}\p{x} = 
    \int_{1}^{\infty}\frac{\widehat{T}_{n}^{x}(s)}{s}\mathrm{d}s
   =
     -\int_{1}^{\infty}\log\p{s}\widehat{T}_{n}^{x}(\mathrm{d}s)
   =
   \frac{n}{k_{n}} \frac{\sum_{j=1}^{n}K\p{\frac{x-X_{j}}{h_{n}}}\log_{+} \p{\frac{Y_j}{q_{n,k_{n}}^{x}}}}
   {\sum_{j=1}^{n}K\p{\frac{x-X_{j}}{h_{n}}}}.
\end{align*}
It is worth noticing that the effective sample size used in this estimator is proportional to $h_n k_n$ instead of $k_n$, which is also the rate for weak convergence. Consequently, the parameter $k_n$ has a different interpretation than in the unconditional case. In particular, $\widehat{\gamma}_{n}\p{x}$ can be plotted for the entire range $k_n\in\{2,3,\dots,n\}$.
\begin{example}
In case of $K$ being the uniform density on $[-1,1]$, then $\widehat{\gamma}_{n}\p{x}$ reduces to the Hill estimator applied to the subsample of the data having $X_i\in [x-h_n,x+h_n]$. Notice that the $k_n$ sequence no longer corresponds to the number of top order statistics in the subsample. Instead, let that number be defined as $\tilde k_n$. They are linked through the following relation:
$$k_n\sim\frac{n \tilde k_n}{\# \{X_i\in [x-h_n,x+h_n]\}}$$
\end{example}

\begin{figure}[hbt!]
\centering
\includegraphics[width=.75\textwidth,clip]{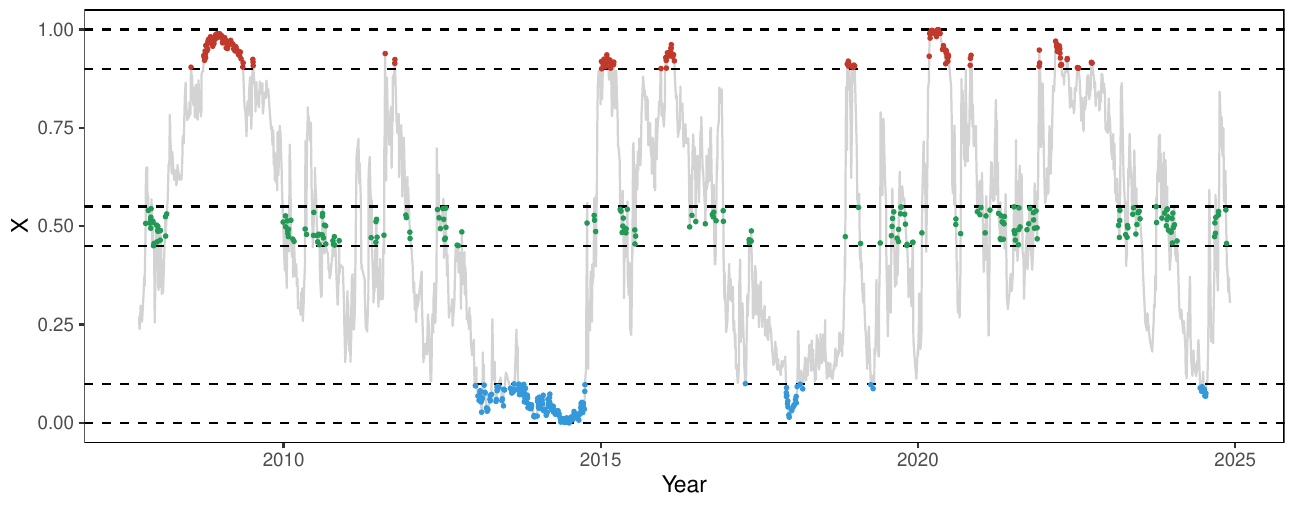}
\includegraphics[width=.75\textwidth,clip]{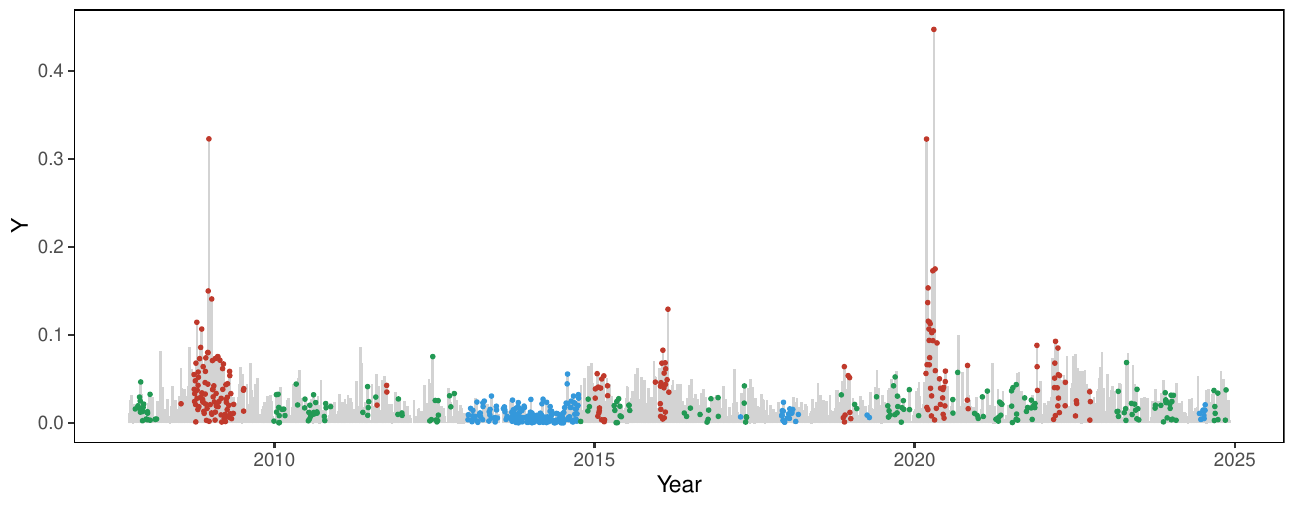}
\includegraphics[width=.75\textwidth,clip]{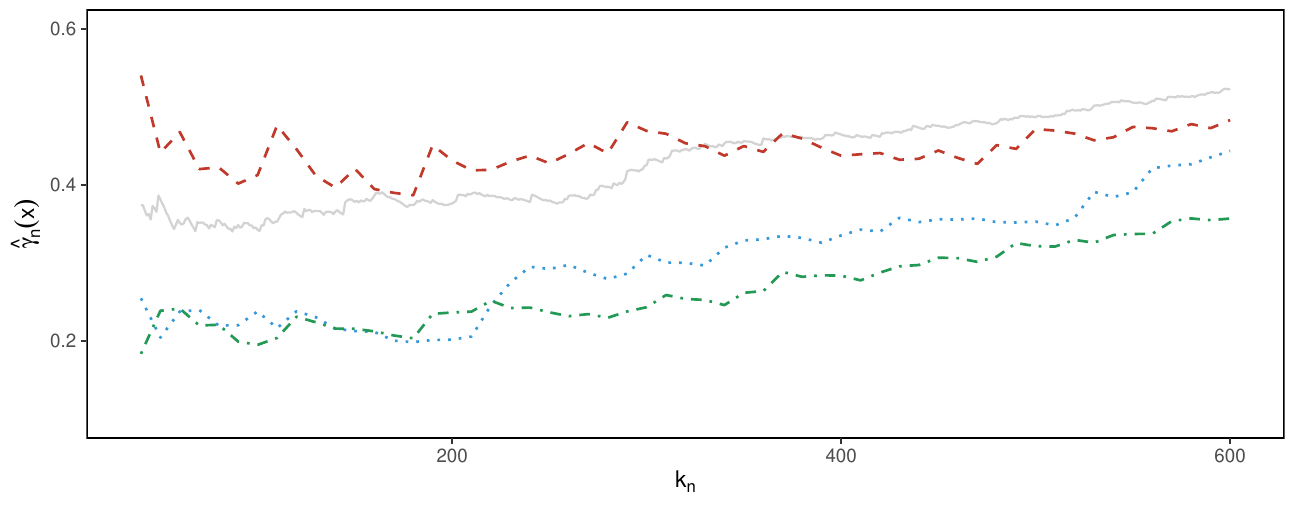}
\caption{Top panel: covariate process $\curly{X_j}$ given by the Crude Oil Volatility
Index (CBOE OVX). The effective sample is calculated using a uniform kernel centred at $x=0.05,\, 0.50,\, 0. 95$ (highlighted in red, green and blue, respectively) and with $h_n=0.05$. Middle panel: target process $\curly{Y_j}$ given by the absolute value of the negative log-returns of the West Texas Intermediate (WTI) grade crude oil spot prices with highlighted subsamples. Bottom panel: conditional Hill estimates $\widehat{\gamma}_{n}\p{x}$ in dotted blue, dashdotted green and dashed red, respectively; in solid grey we have the unconditional Hill estimator of the entire sample.}
\label{fig:simulatedts}
\end{figure}

Before delving into the mathematical treatment of our estimators, we provide an example of a conditional Hill plot. In Figure~\ref{fig:simulatedts} we have taken $\curly{Y_j}$ to be the (absolute value of) the negative log-returns of the West Texas Intermediate (WTI) grade crude oil spot prices from $2007$-$2024$. As a covariate $\curly{X_j}$, we take the Crude Oil Volatility
Index (CBOE OVX), transformed to have uniform marginals. For large $x$, we clearly observe a different, heavier tail regime. In contrast, for medium and small $x$, the tail regime is similar and lighter. Below, it's shown that the choice of kernel enters the asymptotic variance of the conditional Hill estimator through a certain integral. Transforming the covariate process to have uniform marginals is not necessary. It's nonetheless done here, in the simulations and real data analysis below to avoid sparse regions of data to ensure good practical performance.

\section{Main results}\label{section:main}

In this section we provide consistency and weak convergence results for the estimators of the conditional tail function and conditional tail index under suitable conditions. Most conditions are versions of their most general unconditional counterparts. Notably, only $\alpha$-mixing is required for consistency, while $\beta$-mixing is required for normality. Furthermore, second order conditions are formulated in a very general and abstract form. It is later seen that broad classes of regularly-varying tails satisfy them.  Unless otherwise specified, all statements below hold for every $x \in U$.

\subsection{Consistency}\label{section:consistency}

A fundamental underlying result is the uniform consistency of the conditional tail empirical function, for deterministic thresholds. For $m$-dependent sequences, this can be established under rather lax conditions, while for $\alpha$-mixing sequences we require additional dependence and speed conditions.

\begin{condition}\label{L_Lipschitz_assumption} The conditions for consistency are as follows.
\begin{enumerate}[label=\thecondition.\arabic*, ref=\thecondition.\arabic*]
    \item The map $z \mapsto 1/\gamma(z)$ is $c_{\gamma}$-Lipschitz continuous, \label{Assu:gamma_lip} 
    \item  $\lim_{n\to \infty}h_n\log(u_{n}^{x}) = 0$,                            \label{Assu:h_n_log_u_{n}^{x}}
    \item There exists $y_0 \in \R$ and $c_L>0$ such that                                 \label{Assu:L_lip}
    \begin{align*}
        \sup_{y \geq y_{0}}   \left| \frac{\log(L^{x}(y)) - \log(L^{u}(y))}{\log(y)} \right|
        \leq c_{L} \lvert x - u \rvert,
    \end{align*}
     uniformly in $x,u \in U$,
       \item  $\lim_{n\to \infty}n h_{n} \overline{F}^{x}(u_{n}^{x}) = \infty$,   
      \label{Assu:n_h_F_limit}
        \item It holds $g_{n}(x) \overset{\P}{\to} g(x)$, see Lemma~\ref{lemma:g_n_consistent_anti_clust},~\ref{lemma:g_n_consistent_alpha_mixing} or~\ref{lemma:g_n_consistent_m_dependent}, \label{Assu:g_n_consistent}
        \item For any $s,t > 0$ there exists a finite constant $M^x(s,t)$ not depending on $j$ such that \label{Assu:Expected_cross_indicator_limit}
        \begin{align*}
         \frac{\CE{1_{\left\{Y_0 > su_{n}^{x}\right\}}
      1_{\left\{Y_j > tu_{n}^{x}\right\}}}{X_{0} = x + h_{n}w_{1}, X_{j} = x + h_{n}w_{2}}}{\overline{F}^{x}(u_{n}^{x})} \to M^{x}(s,t),
        \end{align*}
         uniformly in $w_1, w_2 \in \left[-1,1\right]$.
\end{enumerate}

\end{condition}

\begin{theorem} \label{Empirical_tail_dist_consistency_theorem_m_depence}
    Assume~\eqref{def:condRV} and that conditions~\ref{Assu:gamma_lip}-\ref{Assu:g_n_consistent} hold and that the time series $\left\{\left(X_{j}, Y_{j}\right), j \in  \mathbb{Z} \right\}$ is stationary and m-dependent. Then
    \begin{align}\label{Empirical_tail_dist_consistency_equation}
        \widetilde{T}^{x}_{n}(s) \overset{\P}{\to} T^{x}(s) = s^{-1/\gamma(x)},\ s >0,
    \end{align}
    the convergence being uniform on compacts bounded away from zero.
\end{theorem}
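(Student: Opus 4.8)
The plan is to decompose $\widetilde{T}^{x}_{n}(s) = \widetilde{V}^{x}_{n}(s)/g_n(x)$ and handle numerator and denominator separately. The denominator is already taken care of: Condition~\ref{Assu:g_n_consistent} gives $g_n(x)\overset{\P}{\to} g(x)>0$, so by the continuous mapping theorem and Slutsky-type arguments it suffices to show that the numerator $\widetilde{V}^{x}_{n}(s)$ converges in probability to $g(x)T^{x}(s) = g(x)s^{-\alpha_x}$, with the convergence uniform in $s$ over compacts bounded away from zero. The strategy for the numerator is the classical ``first and second moment'' method: compute $\E{\widetilde{V}^{x}_{n}(s)}$ and show it converges to $g(x)s^{-\alpha_x}$, then show $\Var(\widetilde{V}^{x}_{n}(s))\to 0$, and finally upgrade pointwise convergence to uniform convergence on compacts by a monotonicity argument (each $s\mapsto \overline{F}^x_n(su_n)$ is non-increasing, and the limit $s^{-\alpha_x}$ is continuous, so a Pólya/Dini-type argument applies after establishing convergence on a dense set).

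For the expectation, I would write
\begin{align*}
\E{\widetilde{V}^{x}_{n}(s)} = \frac{1}{h_n \overline{F}^{x}(u_n)}\E{K\p{\tfrac{x-X_0}{h_n}}1_{\{Y_0 > su_n\}}} = \frac{1}{h_n \overline{F}^{x}(u_n)}\int K\p{\tfrac{x-z}{h_n}}\overline{F}^{z}(su_n)\,g(z)\,\dd z,
\end{align*}
substitute $z = x - h_n w$ with $w\in[-1,1]$, and use the smoothness of $g$ (twice continuously differentiable, hence $g(x-h_nw)\to g(x)$) together with $\int K(w)\,\dd w = 1$. The key point is to control $\overline{F}^{x-h_nw}(su_n)/\overline{F}^{x}(u_n)$: writing it as $\frac{\overline{F}^{x-h_nw}(su_n)}{\overline{F}^{x}(su_n)}\cdot\frac{\overline{F}^{x}(su_n)}{\overline{F}^{x}(u_n)}$, the second factor tends to $s^{-\alpha_x}$ by conditional regular variation \eqref{def:condRV}, while the first is shown to tend to $1$ uniformly in $w$ by combining the Lipschitz continuity of $z\mapsto\alpha_z$ (Condition~\ref{Assu:alpha_lip}), the slowly-varying comparison bound on $L^z$ versus $L^x$ (Condition~\ref{Assu:L_lip}), and the bandwidth condition $h_n\log u_n\to 0$ (Condition~\ref{Assu:h_n_log_u_n}); indeed $\log\overline{F}^{x-h_nw}(su_n) - \log\overline{F}^{x}(su_n)$ is controlled by $(c_\alpha + c_L)h_n\log(su_n)\to 0$.

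For the variance, I would expand $\Var(\widetilde{V}^{x}_{n}(s)) = \frac{1}{n^2 h_n^2 \overline{F}^{x}(u_n)^2}\sum_{i,j}\Cov\p{K_i 1_{\{Y_i>su_n\}}, K_j 1_{\{Y_j>su_n\}}}$ where $K_j := K\p{\frac{x-X_j}{h_n}}$. By $m$-dependence only the $|i-j|\le m$ terms survive, giving at most $(2m+1)n$ nonzero covariance terms. The diagonal terms are bounded by $n\,\E{K_0^2 1_{\{Y_0>su_n\}}}$, which after the same change of variables is of order $n h_n \overline{F}^{x}(u_n)$; dividing by $(nh_n\overline{F}^{x}(u_n))^2$ gives order $1/(n h_n \overline{F}^{x}(u_n))\to 0$ by Condition~\ref{Assu:n_h_F_limit}. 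The off-diagonal terms with $1\le|i-j|\le m$ involve $\E{K_0 K_j 1_{\{Y_0>su_n\}}1_{\{Y_j>su_n\}}}$; here the existence and boundedness of the joint density $g_{X_0,X_j}$ plus Condition~\ref{Assu:Expected_cross_indicator_limit} (giving $\CE{1_{\{Y_0>su_n\}}1_{\{Y_j>su_n\}}}{X_0,X_j}\approx M^x(s,s)\overline{F}^x(u_n)$) shows each such expectation is also $O(h_n^2\overline{F}^{x}(u_n))$ — note the two kernel factors each contribute an $h_n$, and the bounded joint density replaces one $g$ — so after normalization these terms are $O(1/(n\,\overline{F}^{x}(u_n)))$, which is even smaller. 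Hence $\Var(\widetilde{V}^{x}_{n}(s))\to 0$.

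The main obstacle I expect is the uniform-in-$w$ control of the ratio $\overline{F}^{x-h_nw}(su_n)/\overline{F}^{x}(u_n)$ inside the expectation: regular variation is inherently a pointwise-in-the-tail statement, so turning ``$L^u(tv)/L^u(v)\to1$'' into a bound that is simultaneously uniform over the shrinking covariate window $[x-h_n, x+h_n]$ \emph{and} over $s$ in a compact set requires carefully stitching together Conditions~\ref{Assu:alpha_lip}, \ref{Assu:h_n_log_u_n} and \ref{Assu:L_lip}; the slowly-varying part $L^u$ is the delicate piece, since without the logarithmic comparison bound \ref{Assu:L_lip} one cannot rule out that $L^{x-h_nw}$ and $L^{x}$ drift apart at rate faster than the $h_n\log u_n\to0$ budget allows. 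Once this uniform tail-ratio estimate is in hand, everything else is bookkeeping: the expectation converges to $g(x)s^{-\alpha_x}$, the variance vanishes, Chebyshev gives convergence in probability pointwise, monotonicity in $s$ upgrades to uniform convergence on compacts bounded away from zero, and dividing by the consistent denominator $g_n(x)$ finishes the proof.
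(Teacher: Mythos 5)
Your proof is correct in substance, and the first-moment part (change of variables, then controlling $(su_n)^{-(\alpha_{x-h_nw}-\alpha_x)}$, $L^{x-h_nw}(su_n)/L^{x}(su_n)$ and $g(x-h_nw)/g(x)$ uniformly in $w$ via Conditions~\ref{Assu:alpha_lip}, \ref{Assu:h_n_log_u_n}, \ref{Assu:L_lip}) is exactly the paper's Lemma~\ref{Limit_of_tail_process}. Where you diverge is the second moment: you expand the variance of the full sum and observe that $m$-dependence kills all but $O(mn)$ covariance terms, whereas the paper splits $\widetilde{V}^x_n$ into $m$ interleaved sub-sums of genuinely independent summands (plus a remainder block handled by Cauchy--Schwarz) and computes each sub-sum's variance by independence. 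Your route is more direct and avoids the remainder-term bookkeeping; the paper's route avoids ever touching a nonzero cross-expectation. One caveat on your version: to bound the off-diagonal expectations $\E{K_0K_j1_{\{Y_0>su_n\}}1_{\{Y_j>su_n\}}}$ you invoke Condition~\ref{Assu:Expected_cross_indicator_limit}, which is \emph{not} among the hypotheses of this theorem (only \ref{Assu:alpha_lip}--\ref{Assu:g_n_consistent} are assumed; \ref{Assu:Expected_cross_indicator_limit} first enters in the $\alpha$-mixing Theorem~\ref{Empirical_tail_dist_consistency_theorem}). This is easily repaired: Cauchy--Schwarz gives $\E{K_0K_j1_01_j}\le \E{K_0^2 1_{\{Y_0>su_n\}}} = O(h_n\overline{F}^x(u_n))$ by the Lemma~\ref{Expected_indicator_kernel_lemma}-type computation, so the $O(mn)$ surviving cross terms contribute $O\bigl(m/(nh_n\overline{F}^x(u_n))\bigr)\to 0$ by Condition~\ref{Assu:n_h_F_limit}, which is all you need. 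With that substitution your argument goes through under the stated hypotheses, and the final Dini/monotonicity upgrade to uniformity matches the paper.
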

\begin{theorem} \label{Empirical_tail_dist_consistency_theorem}
    Assume~\eqref{def:condRV}, condition~\ref{L_Lipschitz_assumption} and that the time series $\left\{\left(X_{j}, Y_{j}\right), j \in  \mathbb{Z} \right\}$ is stationary and $\alpha$-mixing with $\alpha(j) = \mathcal{O}(j^{-\eta})$, for $\eta > 2$. \\
     If there exists $c_{x} \in (0, \infty)$ such that
        \begin{align}\label{Assu:h_F_speed_relation}
            \frac{\overline{F}^{x}(u_{n}^{x}) + h_{n}^{c_{x}\eta -2}}{n h_{n}^{c_{x}} (\overline{F}^{x}(u_{n}^{x}))^{2}} \to 0,
        \end{align}
    then~\eqref{Empirical_tail_dist_consistency_equation} holds uniformly on compacts bounded away from zero.
\end{theorem}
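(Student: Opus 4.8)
The plan is to reduce the $\alpha$-mixing case to the $m$-dependent case already handled in Theorem~\ref{Empirical_tail_dist_consistency_theorem_m_depence}, by showing that the normalized numerator $\widetilde{V}^{x}_{n}(s) = (nh_n\overline{F}^{x}(u_n))^{-1}\sum_{j=1}^{n}K\p{\frac{x-X_j}{h_n}}1_{\curly{Y_j>su_n}}$ concentrates around its mean, while the denominator $g_n(x)$ is handled by Condition~\ref{Assu:g_n_consistent} (which for $\alpha$-mixing sequences is supplied by the relevant auxiliary lemma). Since $\widetilde{T}^{x}_{n}(s) = \widetilde{V}^{x}_{n}(s)/g_n(x)$, Slutsky together with the continuous mapping theorem reduces everything to showing $\widetilde{V}^{x}_{n}(s)\overset{\P}{\to} g(x)T^{x}(s)$. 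By the monotonicity of $s\mapsto\widetilde{V}^{x}_{n}(s)$ and $s\mapsto s^{-\alpha_x}$ together with continuity of the limit, pointwise convergence upgrades to uniform convergence on compacts bounded away from zero (a Pólya-type argument), so it suffices to fix $s>0$.

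For fixed $s$, write $Z_{n,j} = K\p{\frac{x-X_j}{h_n}}1_{\curly{Y_j>su_n}}$, so $\widetilde{V}^{x}_{n}(s) = (nh_n\overline{F}^{x}(u_n))^{-1}\sum_{j=1}^{n}Z_{n,j}$. First I would compute the mean: conditioning on $X_j$ and using the kernel's support in $[-1,1]$,
\begin{align*}
\E{Z_{n,j}} = \int_{-1}^{1} K(w)\,\overline{F}^{x+h_n w}(su_n)\,g(x+h_n w)\,h_n\,\dd w,
\end{align*}
and dividing by $h_n\overline{F}^{x}(u_n)$ one gets $\int_{-1}^{1}K(w)\frac{\overline{F}^{x+h_n w}(su_n)}{\overline{F}^{x}(u_n)}g(x+h_n w)\,\dd w \to g(x)\,s^{-\alpha_x}$; here the ratio converges to $s^{-\alpha_x}$ uniformly in $w\in[-1,1]$ by conditional regular variation~\eqref{def:condRV}, the Lipschitz continuity of $z\mapsto\alpha_z$ (Condition~\ref{Assu:alpha_lip}), the slowly-varying control in Condition~\ref{Assu:L_lip}, and $h_n\log u_n\to 0$ (Condition~\ref{Assu:h_n_log_u_n}), exactly as in the $m$-dependent proof, while $g(x+h_nw)\to g(x)$ by continuity of $g$. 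So $\E{\widetilde{V}^{x}_{n}(s)}\to g(x)T^{x}(s)$.

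It remains to show $\Var\p{\widetilde{V}^{x}_{n}(s)}\to 0$, and this is the main obstacle. Writing $\Var\p{\sum_j Z_{n,j}} = \sum_j \Var(Z_{n,j}) + \sum_{i\ne j}\Cov(Z_{n,i},Z_{n,j})$ and dividing by $(nh_n\overline{F}^{x}(u_n))^2$, the diagonal term is of order $\frac{nh_n\overline{F}^{x}(u_n)}{(nh_n\overline{F}^{x}(u_n))^2} = \frac{1}{nh_n\overline{F}^{x}(u_n)}\to 0$ by Condition~\ref{Assu:n_h_F_limit} (using $\E{Z_{n,j}^2}\le \|K\|_\infty^2\, \P(Y_j>su_n\mid \cdot)\cdot g\cdot h_n \asymp h_n\overline{F}^{x}(u_n)$ up to constants, and that $\|K\|_\infty<\infty$). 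For the off-diagonal sum I would split by lag into a block of small lags $|i-j|\le \ell_n$ and large lags $|i-j|>\ell_n$. For small lags, bound the covariances crudely by $\E{Z_{n,i}Z_{n,j}}$, which by Condition~\ref{Assu:Expected_cross_indicator_limit} (the analogue of an anti-clustering/bivariate-tail bound) together with the boundedness of the joint density $g_{X_i,X_j}$ is $\mathcal{O}(h_n^2\,\overline{F}^{x}(u_n))$ per pair, giving a contribution of order $\frac{n\ell_n h_n^2\overline{F}^{x}(u_n)}{(nh_n\overline{F}^{x}(u_n))^2} = \frac{\ell_n}{n\overline{F}^{x}(u_n)}$, which tends to $0$ provided $\ell_n$ grows slowly enough relative to $n\overline{F}^{x}(u_n)$. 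For large lags, apply an $\alpha$-mixing covariance inequality: since $|Z_{n,j}|\le \|K\|_\infty$, Davydov's inequality gives $|\Cov(Z_{n,i},Z_{n,j})|\lesssim \|K\|_\infty^2\,\alpha(|i-j|)\lesssim |i-j|^{-\eta}$, so $\sum_{|i-j|>\ell_n}|\Cov|\lesssim n\sum_{r>\ell_n} r^{-\eta}\lesssim n\,\ell_n^{1-\eta}$ since $\eta>2$, and dividing by $(nh_n\overline{F}^{x}(u_n))^2$ yields a term of order $\frac{\ell_n^{1-\eta}}{n(h_n\overline{F}^{x}(u_n))^2}=\frac{\ell_n^{1-\eta}h_n^{-2}}{n(\overline{F}^{x}(u_n))^2}$. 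Choosing $\ell_n = h_n^{-c_x}$ balances the two regimes, and the resulting requirement is precisely $\frac{\overline{F}^{x}(u_n) + h_n^{c_x\eta-2}}{nh_n^{c_x}(\overline{F}^{x}(u_n))^2}\to 0$, the hypothesis of the theorem; the first summand controls the small-lag block and the diagonal, the second the large-lag block. (A sharper version of Davydov's inequality, or blocking directly, may be needed to get the exponent $c_x\eta$ rather than something weaker, and reconciling the kernel normalization with the mixing rate is the delicate bookkeeping.) Combining, $\Var\p{\widetilde{V}^{x}_{n}(s)}\to 0$, hence $\widetilde{V}^{x}_{n}(s)\overset{\P}{\to}g(x)T^{x}(s)$, and the theorem follows as described.
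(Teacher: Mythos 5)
Your proposal is correct and follows essentially the same route as the paper: asymptotic unbiasedness of $\widetilde V_n^x$ via the kernel-and-Potter-type computation, diagonal variance killed by Condition~\ref{Assu:n_h_F_limit}, and the off-diagonal covariances split at lag $d_n\asymp h_n^{-c_x}$ with the small lags controlled by Condition~\ref{Assu:Expected_cross_indicator_limit} (giving $\mathcal{O}(h_n^2\overline F^x(u_n))$ per pair) and the large lags by the $\alpha$-mixing covariance bound for bounded variables, reproducing exactly the hypothesis \eqref{Assu:h_F_speed_relation}. Your parenthetical worry about needing a sharper inequality is unfounded — since $|Z_{n,j}|\le\|K\|_\infty$, the elementary Billingsley/Davydov bound $|\Cov|\lesssim\alpha(j)$ already yields the exponent $c_x\eta$ as you computed — and your monotonicity argument for uniformity plays the role of the paper's appeal to Dini's theorem.
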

As a first application we get that consistency of the conditional tail empirical function implies that the intermediate empirical quantile $q_{n,k_{n}}^{x}$ is equivalent to the deterministic $u_{n}^{x}$. This is of practical importance, since the former is estimable from data. Consistency of the conditional Hill estimator then directly follows.
\begin{proposition} \label{Empirical_quantile_consistency_prop}
    If~\eqref{Empirical_tail_dist_consistency_equation} holds, then $q_{n,k_{n}}^{x}/u_{n}^{x} \overset{\P}{\to} 1.$
\end{proposition}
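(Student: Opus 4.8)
The statement only asks for convergence in probability, so it suffices to fix $\varepsilon\in(0,1)$ and show that $\P(q_n/u_n>1+\varepsilon)\to0$ and $\P(q_n/u_n<1-\varepsilon)\to0$. The idea is to translate each of these events, via the definition of the generalised inverse $q_n=F_n^{\leftarrow,x}(1-k_n/n)$, into an event about the tail empirical function $\widetilde T^x_n$ evaluated at the single fixed point $1+\varepsilon$ (resp.\ $1-\varepsilon$), and then to invoke the hypothesis \eqref{Empirical_tail_dist_consistency_equation} together with the strict monotonicity of $s\mapsto s^{-\alpha_x}$ (recall $\alpha_x=1/\gamma(x)>0$). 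Note that uniformity is not even needed here: pointwise consistency at $1\pm\varepsilon$ is enough.

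Concretely, on the event that the kernel denominator $\sum_{j=1}^n K((x-X_j)/h_n)$ is positive, $\overline F^x_n$ is a right-continuous, nonincreasing step function and $q_n=\inf\{y:\overline F^x_n(y)\le k_n/n\}$, so $\{y:\overline F^x_n(y)\le k_n/n\}=[q_n,\infty)$. Hence $\{q_n>(1+\varepsilon)u_n\}\subseteq\{\overline F^x_n((1+\varepsilon)u_n)>k_n/n\}$ and $\{q_n<(1-\varepsilon)u_n\}\subseteq\{\overline F^x_n((1-\varepsilon)u_n)\le k_n/n\}$. Dividing through by $\overline F^x(u_n)=k_n/n$ and using $\widetilde T^x_n(s)=\overline F^x_n(su_n)/\overline F^x(u_n)$, these become $\{\widetilde T^x_n(1+\varepsilon)>1\}$ and $\{\widetilde T^x_n(1-\varepsilon)\le1\}$, respectively. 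By \eqref{Empirical_tail_dist_consistency_equation}, $\widetilde T^x_n(1+\varepsilon)\overset{\P}{\to}(1+\varepsilon)^{-\alpha_x}<1$ and $\widetilde T^x_n(1-\varepsilon)\overset{\P}{\to}(1-\varepsilon)^{-\alpha_x}>1$, so both probabilities tend to $0$. Combining the two one-sided bounds with the contribution of the degenerate event (handled below) gives $q_n/u_n\overset{\P}{\to}1$.

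The only delicate points are bookkeeping ones. First, the generalised-inverse inclusions above must be stated with the correct sidedness: because $\overline F^x_n$ is right-continuous, $\overline F^x_n(q_n)\le k_n/n$, which is exactly what makes the half-line representation of $\{y:\overline F^x_n(y)\le k_n/n\}$ correct and lets the inclusions go through regardless of atoms of $\overline F^x_n$ (here jumps of the step function) near $u_n$. Second, one must control the event $D_n=\{\sum_{j}K((x-X_j)/h_n)=0\}$ on which $q_n$ is not well defined: since $g(x)>0$, $D_n\subseteq\{|g_n(x)-g(x)|\ge g(x)\}$, whose probability tends to $0$ by Condition~\ref{Assu:g_n_consistent}, so it contributes a vanishing additive term to each of the two bounds. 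Neither point is a real obstacle; the argument is entirely elementary once these are in place.
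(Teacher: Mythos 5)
Your proof is correct and takes essentially the same route as the paper's: both reduce the events $\{q_n/u_n>1+\varepsilon\}$ and $\{q_n/u_n<1-\varepsilon\}$ to statements about $\widetilde T^x_n(1+\varepsilon)$ and $\widetilde T^x_n(1-\varepsilon)$ and then invoke the pointwise consistency together with $(1+\varepsilon)^{-\alpha_x}<1<(1-\varepsilon)^{-\alpha_x}$. If anything, your handling of the generalised inverse (using the half-line representation rather than the exact identity $\overline F^x_n(q_n)=k_n/n$, which can fail for a step function) and of the degenerate event where the kernel denominator vanishes is slightly more careful than the paper's.
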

\begin{theorem}\label{Hill_Consistency_thm}
    Assume conditions~\ref{Assu:gamma_lip}-\ref{Assu:L_lip},~\ref{Assu:g_n_consistent} and~\eqref{Empirical_tail_dist_consistency_equation}. Then $ \widehat{\gamma}_{n}\p{x} \overset{\P}{\to} \gamma(x)$.
\end{theorem}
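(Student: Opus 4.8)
The plan is to use the integral representation $\widehat{\gamma}_{n}(x)-\gamma(x)=\int_1^\infty\frac{\widehat T^x_n(s)-T^x(s)}{s}\,\dd s$, to reduce everything to the deterministic-threshold object $\widetilde T^x_n$ via the identity $\widehat T^x_n(s)=\widetilde T^x_n(sq_n/u_n)$ and Proposition~\ref{Empirical_quantile_consistency_prop}, and then to split the integral at a large level $R$. The contribution of $[1,R]$ will be handled by the assumed uniform consistency \eqref{Empirical_tail_dist_consistency_equation}; the contribution of $[R,\infty)$ has to be shown to be uniformly (in $n$) small as $R\to\infty$, and this tail truncation bound is the only genuine difficulty.

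\textbf{Compact part.} First I would upgrade \eqref{Empirical_tail_dist_consistency_equation} to $\sup_{s\in[1,R]}\lvert\widehat T^x_n(s)-T^x(s)\rvert\overset{\P}{\to}0$. On the event $\{\lvert q_n/u_n-1\rvert<1/2\}$, which has probability tending to one by Proposition~\ref{Empirical_quantile_consistency_prop}, we have $sq_n/u_n\in[1/2,3R/2]$ for all $s\in[1,R]$, so $\lvert\widehat T^x_n(s)-T^x(s)\rvert\le\sup_{t\in[1/2,3R/2]}\lvert\widetilde T^x_n(t)-T^x(t)\rvert+s^{-\alpha_x}\lvert(q_n/u_n)^{-\alpha_x}-1\rvert$, and both terms vanish in probability uniformly in $s\in[1,R]$. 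Integrating $1/s$ over the bounded interval $[1,R]$ then gives $\int_1^R\frac{\widehat T^x_n(s)-T^x(s)}{s}\,\dd s\overset{\P}{\to}0$, while the deterministic remainder $\int_R^\infty\frac{T^x(s)}{s}\,\dd s=R^{-\alpha_x}/\alpha_x$ is made arbitrarily small by taking $R$ large (recall $\alpha_x\in(0,\infty)$).

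\textbf{Tail part.} The crux is an upper bound for $\int_R^\infty\frac{\widehat T^x_n(s)}{s}\,\dd s$. Substituting $t=sq_n/u_n$ and writing $\widetilde T^x_n=\widetilde V^x_n/g_n(x)$, on the event $\{q_n/u_n>1/2\}\cap\{g_n(x)>g(x)/2\}$ one gets $\int_R^\infty\frac{\widehat T^x_n(s)}{s}\,\dd s\le\frac{2}{g(x)}\int_{R/2}^\infty\frac{\widetilde V^x_n(t)}{t}\,\dd t$. By Tonelli's theorem, for $R'\ge1$, $\int_{R'}^\infty\frac{\widetilde V^x_n(t)}{t}\,\dd t=\frac{1}{nh_n\overline F^x(u_n)}\sum_{j=1}^n K\big(\tfrac{x-X_j}{h_n}\big)\log_+\!\big(\tfrac{Y_j}{R'u_n}\big)$, hence, by stationarity, conditioning on $X_0$ and changing variables in the conditional expectation of $\log_+$, $\E{\int_{R'}^\infty\frac{\widetilde V^x_n(t)}{t}\,\dd t}=\frac{1}{\overline F^x(u_n)}\int_{-1}^1 K(w)\,g(x-h_nw)\int_{R'}^\infty\overline F^{x-h_nw}(u_nz)\,\frac{\dd z}{z}\,\dd w$. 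Now conditions~\ref{Assu:alpha_lip} and~\ref{Assu:L_lip}, together with \eqref{def:condRV}, give $\overline F^{x-h_nw}(y)\le\overline F^x(y)\,y^{(c_L+c_\alpha)h_n}$ for $\lvert w\rvert\le1$ and all large $y$; since $h_n\to0$ and $h_n\log u_n\to0$ (condition~\ref{Assu:h_n_log_u_n}), for $n$ large the factor $u_n^{(c_L+c_\alpha)h_n}$ is bounded and $(c_L+c_\alpha)h_n<\alpha_x/4$, while Potter's bound gives $\overline F^x(u_nz)\le2\,\overline F^x(u_n)\,z^{-3\alpha_x/4}$ for $z\ge1$ and $n$ large. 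Substituting and integrating out $z$ (and using that $\int_{-1}^1 K(w)g(x-h_nw)\,\dd w$ is bounded, $K$ being a density) produces $\E{\int_{R'}^\infty\frac{\widetilde V^x_n(t)}{t}\,\dd t}\le C(R')^{-\alpha_x/2}$, valid for all $R'\ge1$ and all $n\ge n_0$, with $C,n_0$ not depending on $R'$. Markov's inequality then shows $\int_{R/2}^\infty\frac{\widetilde V^x_n(t)}{t}\,\dd t$ is arbitrarily small with probability arbitrarily close to one, uniformly over $n\ge n_0$, once $R$ is large; combined with condition~\ref{Assu:g_n_consistent} and Proposition~\ref{Empirical_quantile_consistency_prop} this controls $\int_R^\infty\frac{\widehat T^x_n(s)}{s}\,\dd s$ on a high-probability event.

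\textbf{Assembling.} Given $\eta,\delta>0$, I would first fix $R$ large enough that $R^{-\alpha_x}/\alpha_x<\eta/3$ and the tail estimate above gives $\P\big(\int_R^\infty\frac{\widehat T^x_n(s)}{s}\,\dd s>\eta/3\big)<2\delta/3$ for all $n\ge n_0$ (the $2\delta/3$ absorbing the Markov bound and the probability of the complement of $\{q_n/u_n>1/2\}\cap\{g_n(x)>g(x)/2\}$, which tends to $0$). With $R$ fixed, the compact part satisfies $\P\big(\lvert\int_1^R\frac{\widehat T^x_n(s)-T^x(s)}{s}\,\dd s\rvert>\eta/3\big)<\delta/3$ for $n$ large. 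A union bound over the three contributions to $\lvert\widehat{\gamma}_{n}(x)-\gamma(x)\rvert$ then yields $\P(\lvert\widehat{\gamma}_{n}(x)-\gamma(x)\rvert>\eta)<\delta$ eventually, which is the claim. I expect the main obstacle to be exactly the uniform-in-$n$ tail estimate of the third step: transferring the neighbouring conditional tails $\overline F^{x-h_nw}$ back to $\overline F^x$ through the Lipschitz-type conditions~\ref{Assu:alpha_lip}, \ref{Assu:L_lip} and controlling the resulting $u_n^{(c_L+c_\alpha)h_n}$ factor via condition~\ref{Assu:h_n_log_u_n}, all uniformly over the truncation level $R'$.
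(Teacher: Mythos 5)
Your proposal is correct and follows essentially the same route as the paper: reduce to the deterministic-threshold process via $q_n/u_n\overset{\P}{\to}1$, use uniform consistency on a compact interval, and control the tail integral $\int_R^\infty \widetilde V^x_n(t)\,t^{-1}\mathrm{d}t$ by a first-moment Markov bound in which the neighbouring tails $\overline F^{x-h_nw}$ are transferred to $\overline F^x$ via conditions~\ref{Assu:alpha_lip}, \ref{Assu:L_lip} and $h_n\log u_n\to0$, followed by Potter's bound. The only cosmetic difference is that the paper isolates the random-level correction as $\int_{q_n/u_n}^1\widetilde T^x_n(s)s^{-1}\mathrm{d}s$ while you absorb it into uniform convergence on $[1,R]$, and you evaluate the tail expectation via Tonelli and a $\log_+$ identity rather than bounding $\E{\widetilde V^x_n(s)}$ pointwise before integrating; both yield the same estimate.
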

\subsection{Central limit theorems}
We now consider the univariate conditional tail empirical process for deterministic thresholds
\begin{align*}
    \widetilde{\mathbb{T}}^{x}_{n}(s) =  \sqrt{k_{n}h_{n}}\curly{\widetilde{T}^{x}_{n}(s) - T^{x}(s)}, \ s > 0,
\end{align*}
for which a functional central limit is established in the case of $\beta$-mixing. The fundamental tool to achieve this goal is the blocking method. Let $\curly{r_n, n \in \N}$ be an intermediate sequence of block sizes and set $m_n = \floor{n/r_n}$ which is the number of complete blocks in the sample. 

Notice that the speed of convergence is determined by the effective sample size $nh_{n}\overline{F}^{x}(u_{n}^{x})=h_nk_n$, which is slower than in the unconditional case, which is simply $k_n$. This price to pay, however, results in a variance which agrees with the i.i.d. conditional case, in contrast to the more involved expression for unconditional time series provided as an infinite sum, e.g. in~\citet[Eq. (9.5.4)]{Kulik2020}.

For $s_{0} \in (0, 1)$, define 
\begin{align*}
    B_{n}^{x}(s_{0}) = \sup_{t \geq s_{0}}\left\vert   \E{\widetilde{V}^{x}_{n}(t)}  - t^{-1/\gamma(x)}g(x)  \right\vert.
\end{align*}
\begin{condition} \label{Normality_condition}
The conditions for normality are as follows.
\begin{enumerate}[label=\thecondition.\arabic*, ref=\thecondition.\arabic*]
    \item  \label{rate_condition}
    $\lim_{n\to \infty} r_{n} h_{n} \overline{F}^{x}(u_{n}^{x}) = 0$,
    \item For all $s,t > 0$,  \label{anti_clustering_condition}
    \begin{align*}
       \lim_{m\to \infty}
       \limsup_{n\to \infty} \sum_{j=m}^{r_n} 
       \frac{\E{K\left(\frac{x-X_0}{h_n}\right)1_{\left\{Y_0 > su_{n}^{x}\right\}}
       K\left(\frac{x-X_j}{h_n}\right)1_{\left\{Y_j > tu_{n}^{x}\right\}}}}
       {h_{n} \overline{F}^{x}(u_{n}^{x})} = 0,
    \end{align*}
    \item For the stationary time series $\left\{\left(X_{j}, Y_{j}\right), j \in  \mathbb{Z} \right\}$ with $\beta$-mixing coefficients $\curly{\beta_n}$, there exists an intermediate sequence $\curly{\ell_n}$ such that  \label{beta_mixing_condition} 
    \begin{align*}
        \lim_{n \to \infty} \frac{1}{\ell_n} =
        \lim_{n \to \infty} \frac{\ell_n}{r_n} = 
        \lim_{n \to \infty} \frac{r_n}{n} = 
        \lim_{n \to \infty} \frac{n}{r_n}\beta_{\ell_n} = 0,
        \end{align*}
    \item It holds that $  \sqrt{k_{n}h_{n}}\curly{g(x)- g_{n}(x)} \overset{\P }{\to} 0 $, see Lemma~\ref{lemma:g_n_emp_process_negligibility_anti_cl} or~\ref{lemma:g_n_emp_process_negligibility_alpha_mixing}, \label{Assu:gn_emp_proc_negl}
    \item There exists $s_{0} \in (0,1)$, such that
$\sqrt{k_{n}h_{n}}B_{n}^{x}(s_{0})\to 0$, \label{B_n_assumption}
    \item For all $A > 0$, \label{log_anti_clustering_condition}
    \begin{align*}
       \lim_{m\to \infty}
       \limsup_{n\to \infty} \sum_{j=m}^{r_n} 
         \frac{\E{K\left(\frac{x-X_{0}}{h_n}\right) K\left(\frac{x-X_{j}}{h_n}\right) 
         \log_{+}\left(\frac{Y_{0}}{u_{n}^{x}A} \right)
         \log_{+}\left(\frac{Y_{j}}{u_{n}^{x}A} \right)}}{h_{n}\overline{F}^{x}(u_{n}^{x})}
           = 0,
    \end{align*}
    \item  It holds that\label{integral_B_n_assumption}
    \begin{align*}
        \lim_{n\to \infty}
        \sqrt{k_{n}h_{n}}\int_{1}^{\infty} \frac{ \E{\widetilde{V}^{x}_{n}(t)}  - t^{-1/\gamma(x)}g(x)}{t} \mathrm{d}t = 0.
    \end{align*}
\end{enumerate}
    
\end{condition}

\begin{proposition} \label{Functional_CLT_Tail_empirical_process} 
    Assume~\eqref{def:condRV}, conditions~\ref{Assu:gamma_lip}-\ref{Assu:g_n_consistent} and~\ref{rate_condition}-\ref{B_n_assumption}.
    Then it holds that $ \widetilde{\mathbb{T}}^{x}_{n}$ converges weakly to a centred Gaussian process $\mathbb{T}^{x}$ in $\mathbb{D}\p{[s_0, \infty)}$ endowed with the $J_1$-topology, with covariance function
    \begin{align*} 
        \Gamma(s,t) = \frac{\p{s \vee t}^{-1/\gamma(x)}}{g(x)}\int  K^{2}(u)\mathrm{d}u,
    \end{align*}
    and the process ${\mathbb{T}}^{x}$ has an almost surely continuous version.
\end{proposition}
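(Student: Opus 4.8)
Here is how I would prove Proposition~\ref{Functional_CLT_Tail_empirical_process}.

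\noindent\emph{Plan and Step 1 (reduction to the centred array).} The plan is to peel $\widetilde{\mathbb T}^x_n$ into a dominant centred kernel-weighted tail array plus two negligible terms, and then to establish a functional CLT for the array by the blocking method (equivalently, by invoking the general $\beta$-mixing functional limit theorem in the appendix). Using $\widetilde T^x_n=\widetilde V^x_n/g_n(x)$ and the identity $k_n h_n=n h_n\overline{F}^x(u_n)$ (since $\overline{F}^x(u_n)=k_n/n$), I would write
\begin{align*}
\widetilde{\mathbb T}^x_n(s)=\frac{\mathbb V^x_n(s)+\sqrt{k_n h_n}\curly{\E{\widetilde V^x_n(s)}-T^x(s)g(x)}+T^x(s)\sqrt{k_n h_n}\curly{g(x)-g_n(x)}}{g_n(x)},
\end{align*}
with $\mathbb V^x_n(s)=\sqrt{k_n h_n}\curly{\widetilde V^x_n(s)-\E{\widetilde V^x_n(s)}}$. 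Condition~\ref{B_n_assumption} kills the second numerator term uniformly on $[s_0,\infty)$; Condition~\ref{Assu:gn_emp_proc_negl}, together with boundedness of $T^x$ on $[s_0,\infty)$, kills the third; and $g_n(x)\overset{\P}{\to}g(x)>0$ by Condition~\ref{Assu:g_n_consistent}. Hence, by a Slutsky argument in the $J_1$-topology, it suffices to prove $\mathbb V^x_n\Rightarrow g(x)\mathbb T^x$ in $\mathbb D([s_0,\infty))$, the latter being the centred Gaussian process with covariance $g(x)^2\Gamma$.

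\noindent\emph{Step 2 (finite-dimensional convergence by blocking).} Write $\mathbb V^x_n(s)=\sum_{j=1}^n\xi_{n,j}(s)$ with $\xi_{n,j}(s)=(n h_n\overline{F}^x(u_n))^{-1/2}\bigl(Z_{n,j}(s)-\E{Z_{n,j}(s)}\bigr)$ and $Z_{n,j}(s)=K((x-X_j)/h_n)1_{\curly{Y_j>su_n}}$. Group the indices into $m_n=\floor{n/r_n}$ blocks of length $r_n$, peel a trailing sub-block of length $\ell_n$ off each, discard these small sub-blocks (a second-moment bound combining Conditions~\ref{rate_condition} and~\ref{anti_clustering_condition} with the mixing makes their pooled variance $o(1)$), and replace the $m_n$ big sub-blocks, now separated by gaps $\ell_n$, by independent copies at total cost $o(1)$ from the $\beta$-mixing budget in Condition~\ref{beta_mixing_condition}. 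A Lindeberg CLT then applies to the resulting independent row array: each block has variance $O(r_n/n)\to0$, and since $K$ is bounded while the indicators fire only on an event of probability $O(\overline{F}^x(u_n))$, the truncated second moments vanish (using $k_n h_n\to\infty$ from Condition~\ref{Assu:n_h_F_limit}). The limiting covariance comes from the diagonal term
\begin{align*}
\frac{\E{K\p{\tfrac{x-X_0}{h_n}}^2 1_{\curly{Y_0>(s\vee t)u_n}}}}{h_n\overline{F}^x(u_n)}=\int K^2(w)\,\frac{\overline{F}^{x-h_nw}((s\vee t)u_n)}{\overline{F}^x(u_n)}\,g(x-h_nw)\,\dd w,
\end{align*}
which tends to $g(x)(s\vee t)^{-\alpha_x}\int K^2$ by the conditional regular variation~\eqref{def:condRV} and the local uniformity in Conditions~\ref{Assu:alpha_lip},~\ref{Assu:h_n_log_u_n},~\ref{Assu:L_lip} together with continuity of $g$; the distinct-time contributions vanish --- the finitely many short lags by the $O(h_n)$ bound coming from Condition~\ref{Assu:Expected_cross_indicator_limit} and boundedness of $g_{X_i,X_j}$, the intermediate lags by the anti-clustering Condition~\ref{anti_clustering_condition}, and the long lags by $\beta$-mixing. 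This yields $(\mathbb V^x_n(s_1),\dots,\mathbb V^x_n(s_d))\Rightarrow N(0,(g(x)^2\Gamma(s_i,s_j))_{ij})$.

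\noindent\emph{Step 3 (tightness, continuity, and the main obstacle).} Since $K\ge0$, $s\mapsto\widetilde V^x_n(s)$ is nonincreasing, so $\mathbb V^x_n$ is a difference of monotone functions; repeating the Step~2 bookkeeping on increments gives, uniformly for $s_0\le s<t$, $\E{\curly{\mathbb V^x_n(t)-\mathbb V^x_n(s)}^2}\le C\p{s^{-\alpha_x}-t^{-\alpha_x}}+o(1)$, and since $s\mapsto s^{-\alpha_x}$ is Lipschitz on compact subsets of $(0,\infty)$ a chaining (Billingsley-type) argument yields asymptotic equicontinuity on each $[s_0,M]$, while monotonicity of $\widetilde V^x_n$ and the bound $\Var(\mathbb V^x_n(M))=O(M^{-\alpha_x})$ control the tail $[M,\infty)$ as $M\to\infty$; together with Step~2 this gives tightness in $\mathbb D([s_0,\infty))$ under $J_1$, and local Hölder continuity of $\Gamma$ with Kolmogorov--Chentsov supplies the almost surely continuous version. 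I expect the main obstacle to be Step~2: controlling the blocks simultaneously demands the $\beta$-mixing decoupling, the negligibility of the small sub-blocks, and a Lindeberg estimate that genuinely exploits the rareness of the exceedances (a crude uniform bound on the block sums is far too lossy), and it is precisely this second-moment accounting --- pairing the anti-clustering Condition~\ref{anti_clustering_condition} with the mixing rate --- that removes the cluster term present in the unconditional covariance and that also drives the increment bound of Step~3.
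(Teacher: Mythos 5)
Your Step 1 decomposition is algebraically correct and is, up to dividing by $g_n(x)$ instead of $g(x)$, exactly the paper's: condition~\ref{B_n_assumption} removes the bias term, condition~\ref{Assu:gn_emp_proc_negl} removes the density term, and everything reduces to a functional CLT for the centred array $\widetilde{\mathbb V}^x_n$. Your Step 2 also mirrors the paper's route (big/small blocks, discarding the incomplete block, replacing big blocks by independent copies at total-variation cost $m_n\beta_{\ell_n}$, a Lindeberg CLT for the row-independent array, and the covariance computation in which condition~\ref{Assu:Expected_cross_indicator_limit} handles short lags, condition~\ref{anti_clustering_condition} the intermediate ones, and condition~\ref{rate_condition} the centering cross-term). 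Those parts are sound.

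The genuine gap is in Step 3. The bound $\E{\curly{\mathbb V^x_n(t)-\mathbb V^x_n(s)}^2}\le C\p{s^{-\alpha_x}-t^{-\alpha_x}}+o(1)$, i.e.\ a variance of increments that is merely Lipschitz in $|t-s|$, does not imply asymptotic equicontinuity, and there is no ``Billingsley-type chaining'' that closes from a raw second-moment increment bound alone: the standard moment criteria require either $\E{\lvert X_n(t)-X_n(s)\rvert^{2}}\le C\lvert t-s\rvert^{1+\epsilon}$, a fourth-moment bound on products of adjacent increments, or a genuine maximal inequality for the partial sums. For sums of rare indicators the second moment of an increment is exactly of order $\lvert t-s\rvert$ (as for the classical empirical process), so this is the borderline case where chaining on second moments fails. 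The paper resolves this by applying an entropy-based covering CLT (Theorem~\ref{bracketing_clt}) to the row-independent block array: it verifies the Lindeberg condition for block sums (reusing the anti-clustering accounting), a convergence condition for the expected random semi-metric, and a random entropy integral condition, the last of which holds because the class $\curly{\mathcal E_s: s\in[a,b]}$ of kernel-weighted exceedance functionals is linearly ordered by monotonicity in $s$, hence VC-subgraph with a square-integrable envelope (Lemmas~\ref{uniform_entropy_lemma} and~\ref{linearly_ordered_class_lemma}). Equicontinuity of the odd- and even-indexed block sums is then transferred back to the dependent sample via the total-variation blocking bound. To repair your argument you would either have to carry out this entropy/VC argument, or replace your second-moment bound by the two-adjacent-increments fourth-moment criterion for the independent block array; as written, the tightness step does not go through. (Your identification of Step 2 as the main obstacle is therefore misplaced --- the fidi part is the routine half; the equicontinuity is where the real work sits.)
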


\begin{proposition}\label{prop:q_{n,k_{n}}^{x}_normality}
    Assume~\eqref{def:condRV}, conditions~\ref{Assu:gamma_lip}-\ref{Assu:g_n_consistent} and~\ref{rate_condition}-\ref{B_n_assumption}.
    Then
    \begin{align*}
         \sqrt{h_{n}k_{n}}\curly{\frac{q_{n,k_{n}}^{x}}{u_{n}^{x}} - 1} 
         \overset{d}{\to}
         \gamma(x)\mathbb{T}^{x}(1), 
    \end{align*}
    where 
    \begin{align*}
        \gamma(x)\mathbb{T}^{x}(1) \overset{d}{=}  \text{N}\p{0, \frac{\gamma(x)^{2}}{g(x)}\int  K^{2}(u)\mathrm{d}u}.
    \end{align*}
\end{proposition}
The sequence $\{u_{n}^{x}\}$ is unobserved and we may estimate it with $\{q_{n,k_{n}}^{x}\}$. Let
\begin{align*}
    &\widehat{\mathbb{T}}^{x}_{n}(s) =  
    \sqrt{k_{n}h_{n}}\curly{\widehat{T}^{x}_{n}(s) - T^{x}(s)},
\end{align*}
which we refer to as the tail empirical process with random levels.
\begin{theorem}\label{Thm:Functional_CLT_Tail_empirical_process_Random_level}
    Assume~\eqref{def:condRV}, conditions~\ref{Assu:gamma_lip}-\ref{Assu:g_n_consistent} and~\ref{rate_condition}-\ref{B_n_assumption}.
    Then
    \begin{align*}
        \widehat{\mathbb{T}}^{x}_{n} 
        \overset{w}{\Longrightarrow} \mathbb{T}^{x} - \mathbb{T}^{x}(1) \times T^{x},
    \end{align*}
    in $\mathbb{D}\p{[s_0, \infty)}$ endowed with the $J_{1}$ topology.
\end{theorem}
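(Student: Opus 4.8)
The plan is to deduce the statement for random levels from the deterministic-level result (Proposition~\ref{Functional_CLT_Tail_empirical_process}) together with the consistency of $q_n/u_n$ and its first-order expansion (Proposition~\ref{prop:q_n_normality}), via a functional delta-method/composition argument. The starting point is the identity $\widehat{T}^x_n(s) = \widetilde{T}^x_n\p{s\, q_n/u_n}$ already recorded in the excerpt. Writing $\widehat\theta_n := q_n/u_n$, we have $\widehat\theta_n \overset{\P}{\to} 1$ by Proposition~\ref{Empirical_quantile_consistency_prop}, and moreover $\sqrt{k_n h_n}\,(\widehat\theta_n - 1) \overset{d}{\to} \alpha_x^{-1}\mathbb{T}^x(1)$ jointly with the convergence of $\widetilde{\mathbb{T}}^x_n$ (the joint statement is available because $\widehat\theta_n-1$ is itself read off from $\widetilde{\mathbb{T}}^x_n(1)$ up to negligible terms in the proof of Proposition~\ref{prop:q_n_normality}, so no new tightness input is needed).

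The key decomposition is
\begin{align*}
    \widehat{\mathbb{T}}^x_n(s)
    &= \sqrt{k_n h_n}\curly{\widetilde{T}^x_n(s\widehat\theta_n) - T^x(s\widehat\theta_n)}
       + \sqrt{k_n h_n}\curly{T^x(s\widehat\theta_n) - T^x(s)} \\
    &= \widetilde{\mathbb{T}}^x_n(s\widehat\theta_n)
       + \sqrt{k_n h_n}\curly{(s\widehat\theta_n)^{-\alpha_x} - s^{-\alpha_x}}.
\end{align*}
For the first term I would argue that $\widetilde{\mathbb{T}}^x_n(s\widehat\theta_n)$ and $\widetilde{\mathbb{T}}^x_n(s)$ are uniformly close (on $[s_0,\infty)$) because $\widehat\theta_n\to 1$ and the limit process $\mathbb{T}^x$ has an a.s.\ continuous version; formally one uses that $\widetilde{\mathbb{T}}^x_n \Rightarrow \mathbb{T}^x$ is tight in $\mathbb{D}([s_0,\infty))$ with $J_1$-topology together with a random-time-change lemma (Billingsley-type: composition with $s\mapsto s\widehat\theta_n$ converging uniformly to the identity is continuous at continuous limit paths). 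This gives $\widetilde{\mathbb{T}}^x_n(\cdot\,\widehat\theta_n) \Rightarrow \mathbb{T}^x(\cdot)$, jointly with $\sqrt{k_n h_n}(\widehat\theta_n-1)\to\alpha_x^{-1}\mathbb{T}^x(1)$. For the second (deterministic-functional) term, a first-order Taylor expansion of $\theta\mapsto (s\theta)^{-\alpha_x}$ around $\theta=1$ gives
\begin{align*}
    \sqrt{k_n h_n}\curly{(s\widehat\theta_n)^{-\alpha_x} - s^{-\alpha_x}}
    = -\alpha_x s^{-\alpha_x}\,\sqrt{k_n h_n}\,(\widehat\theta_n - 1) + o_\P(1),
\end{align*}
uniformly in $s \geq s_0$ (the remainder is controlled since $s^{-\alpha_x}$ is bounded on $[s_0,\infty)$ and $\widehat\theta_n\to 1$). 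Substituting the limit $\sqrt{k_n h_n}(\widehat\theta_n-1)\to \alpha_x^{-1}\mathbb{T}^x(1)$, this term converges to $-\alpha_x s^{-\alpha_x}\cdot\alpha_x^{-1}\mathbb{T}^x(1) = -T^x(s)\,\mathbb{T}^x(1)$.

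Adding the two pieces and invoking joint convergence (via the continuous mapping theorem applied to the map $(\phi,c)\mapsto \phi(\cdot) - T^x(\cdot)\,c$ from $\mathbb{D}([s_0,\infty))\times\R$ to $\mathbb{D}([s_0,\infty))$, which is continuous) yields $\widehat{\mathbb{T}}^x_n \Rightarrow \mathbb{T}^x - T^x\times \mathbb{T}^x(1)$ in the $J_1$-topology, as claimed. I expect the main obstacle to be making the random-time-change step rigorous in the $J_1$ (rather than uniform) topology while simultaneously keeping track of the \emph{joint} distribution of $(\widetilde{\mathbb{T}}^x_n, \widehat\theta_n)$; the cleanest route is to note that $\widehat\theta_n$ is asymptotically a continuous functional of $\widetilde{\mathbb{T}}^x_n$ (essentially $\widetilde{\mathbb{T}}^x_n(1)$ up to $o_\P(1)$, which is where Proposition~\ref{prop:q_n_normality} and the consistency of $g_n(x)$ via condition~\ref{Assu:gn_emp_proc_negl} enter), so that joint weak convergence on the product space follows from the single weak convergence statement of Proposition~\ref{Functional_CLT_Tail_empirical_process} plus the continuous mapping theorem, and the a.s.\ continuity of $\mathbb{T}^x$ ensures the composition map is a.s.\ continuous at the limit.
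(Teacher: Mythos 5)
Your proposal is correct and follows essentially the same route as the paper: the identical decomposition $\widehat{\mathbb{T}}^{x}_{n}(s)=\widetilde{\mathbb{T}}^{x}_{n}\p{s\,q_n/u_n}+\sqrt{k_nh_n}\curly{T^{x}\p{s\,q_n/u_n}-T^{x}(s)}$, a random time-change/continuous-mapping argument for the first term using $q_n/u_n\overset{\P}{\to}1$ and the a.s.\ continuity of $\mathbb{T}^x$, and the delta-method on the second term with the joint convergence supplied by the Vervaat-lemma result (Theorem~\ref{th_quantile_normality}), which is exactly your observation that $q_n/u_n$ is asymptotically a continuous functional of $\widetilde{\mathbb{T}}^x_n$. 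The only cosmetic difference is that the paper exploits the exact multiplicativity $T^x(s\theta)=s^{-\alpha_x}T^x(\theta)$ to reduce the second term to a one-dimensional delta-method at $s=1$, whereas you Taylor-expand uniformly in $s$; both are valid.
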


It turns out that Theorem~\ref{Thm:Functional_CLT_Tail_empirical_process_Random_level} is not required to establish convergence of the conditional Hill estimator. However, it is an interesting result in its own right, since the empirical tail process can be used to measure how closely the top order statistics behave like a strict Pareto tail. Generalisations to multiple dimensions are also possible through the same proof techniques. 

\begin{theorem}\label{Hill_normality_thm}
   Assume~\eqref{def:condRV}, that conditions~\ref{Assu:gamma_lip}-\ref{Assu:g_n_consistent} and~\ref{Normality_condition} hold.
   Then it holds that 
    \begin{align*}
        \sqrt{k_{n}h_{n}}\p{\widehat{\gamma}_{n}\p{x} - \gamma(x)} 
        \overset{d}{\to} \text{N}\p{0, \frac{\gamma(x)^{2}}{g(x)}\int  K^{2}(u)\mathrm{d}u}.
    \end{align*}
\end{theorem}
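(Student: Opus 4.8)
The plan is to derive the asymptotic normality of $\widehat{\gamma}_{n}(x)$ from the functional central limit theorem for the tail empirical process together with the continuous mapping theorem, exactly mirroring the identity $\gamma(x) = \int_1^\infty T^x(s)/s\,\mathrm{d}s$. First I would write
\begin{align*}
    \sqrt{k_n h_n}\p{\widehat{\gamma}_n(x) - \gamma(x)}
    = \sqrt{k_n h_n}\int_1^\infty \frac{\widehat{T}_n^x(s) - T^x(s)}{s}\,\mathrm{d}s
    = \int_1^\infty \frac{\widehat{\mathbb{T}}_n^x(s)}{s}\,\mathrm{d}s,
\end{align*}
and then appeal to Theorem~\ref{Thm:Functional_CLT_Tail_empirical_process_Random_level} to replace $\widehat{\mathbb{T}}_n^x$ by its weak limit $\mathbb{T}^x - T^x\times\mathbb{T}^x(1)$. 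Since the functional $\phi \mapsto \int_1^\infty \phi(s)/s\,\mathrm{d}s$ is linear, one computes that the limit is
\begin{align*}
    \int_1^\infty \frac{\mathbb{T}^x(s)}{s}\,\mathrm{d}s - \mathbb{T}^x(1)\int_1^\infty \frac{s^{-\alpha_x}}{s}\,\mathrm{d}s
    = \int_1^\infty \frac{\mathbb{T}^x(s)}{s}\,\mathrm{d}s - \frac{1}{\alpha_x}\mathbb{T}^x(1),
\end{align*}
which is a centred Gaussian random variable; its variance can be computed directly from the covariance function $\Gamma$ in Proposition~\ref{Functional_CLT_Tail_empirical_process} by a double integral, and should collapse to $\gamma(x)^2 g(x)^{-1}\int K^2$.

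The main obstacle is that the functional $\phi \mapsto \int_1^\infty \phi(s)/s\,\mathrm{d}s$ is \emph{not} continuous on $\mathbb{D}([s_0,\infty))$ with the $J_1$-topology without a tail-control argument: the integral runs over an unbounded interval, so weak convergence on compacts does not immediately transfer. I would handle this by the standard truncation device --- for fixed $A>1$ split the integral at $A$, apply the continuous mapping theorem to the bounded piece $\int_1^A \phi(s)/s\,\mathrm{d}s$ (which is continuous on $\mathbb{D}([s_0,A])$), and then show that the contribution of the tail $\int_A^\infty \widehat{\mathbb{T}}_n^x(s)/s\,\mathrm{d}s$ is uniformly (in $n$) negligible as $A\to\infty$. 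The uniform tail negligibility is where conditions~\ref{log_anti_clustering_condition} and \ref{integral_B_n_assumption} enter: condition~\ref{integral_B_n_assumption} controls the deterministic bias $\sqrt{k_n h_n}\int_1^\infty (\E{\widetilde{V}_n^x(t)} - t^{-\alpha_x}g(x))/t\,\mathrm{d}t$, while condition~\ref{log_anti_clustering_condition} is the anti-clustering condition phrased directly in terms of $\log_+(Y_j/(u_n A))$, which is exactly what one needs to bound the variance of $\int_A^\infty \widetilde{\mathbb{T}}_n^x(s)/s\,\mathrm{d}s = \sqrt{k_n h_n}(n/k_n)(\text{sum of }\log_+\text{ terms}) - \text{bias}$ via a blocking/Bernstein argument uniformly in $A$.

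Concretely, the steps in order: (i) reduce to the deterministic-threshold process by Proposition~\ref{Empirical_quantile_consistency_prop} and the identity $\widehat{T}_n^x(s) = \widetilde{T}_n^x(sq_n/u_n)$, or alternatively work directly with $\widehat{\mathbb{T}}_n^x$ and Theorem~\ref{Thm:Functional_CLT_Tail_empirical_process_Random_level}; (ii) decompose $\sqrt{k_n h_n}(\widehat{\gamma}_n(x)-\gamma(x))$ into a stochastic term $\int_1^\infty \widehat{\mathbb{T}}_n^x(s)/s\,\mathrm{d}s$ and a bias term, showing the latter vanishes by condition~\ref{integral_B_n_assumption} (after checking the bias of $\widehat{T}_n^x$ matches that of $\E{\widetilde{V}_n^x}/g(x)$ up to negligible terms, using condition~\ref{Assu:gn_emp_proc_negl} for the denominator $g_n(x)$); (iii) truncate at $A$, apply continuous mapping on $[1,A]$ using Proposition~\ref{Functional_CLT_Tail_empirical_process}; (iv) show $\limsup_n \Var\big(\int_A^\infty \widetilde{\mathbb{T}}_n^x(s)/s\,\mathrm{d}s\big) \to 0$ as $A\to\infty$ via the blocking method, where the diagonal block term is handled by the i.i.d.-type bound $\E{K^2((x-X_0)/h_n)\log_+^2(Y_0/(u_nA))}/(h_n\overline{F}^x(u_n))$ and the off-diagonal/cross-block terms by condition~\ref{log_anti_clustering_condition} and $\beta$-mixing (condition~\ref{beta_mixing_condition}); (v) assemble via a standard $3\epsilon$-argument (e.g.\ Theorem 3.2 of Billingsley) to conclude weak convergence to $\int_1^\infty \mathbb{T}^x(s)/s\,\mathrm{d}s - \alpha_x^{-1}\mathbb{T}^x(1)$; (vi) identify the variance by integrating $\Gamma$: with $\Gamma(s,t) = (s\vee t)^{-\alpha_x}g(x)^{-1}\int K^2$, a direct computation of $\int_1^\infty\int_1^\infty \Gamma(s,t)/(st)\,\mathrm{d}s\,\mathrm{d}t$ minus the cross terms with $\mathbb{T}^x(1)$ (using $\E{\mathbb{T}^x(s)\mathbb{T}^x(1)} = s^{-\alpha_x}g(x)^{-1}\int K^2$ for $s\ge1$) yields $\gamma(x)^2 g(x)^{-1}\int K^2$ after elementary manipulations.
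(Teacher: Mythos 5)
Your proposal is correct and follows essentially the same route as the paper: the same integral decomposition into a random-threshold term contributing $-\alpha_x^{-1}\mathbb{T}^{x}(1)$ plus a centred term $g(x)^{-1}\int_1^\infty \widetilde{\mathbb{V}}_n^{x}(s)/s\,\mathrm{d}s$, the same truncation at $A$ with uniform tail negligibility via the blocking method and condition~\ref{log_anti_clustering_condition}, bias and density control via conditions~\ref{integral_B_n_assumption} and \ref{Assu:gn_emp_proc_negl}, and the same variance identification from $\Gamma(s,t)=(s\vee t)^{-\alpha_x}g(x)^{-1}\int K^2$. The only cosmetic difference is that your primary framing routes the random-threshold correction through Theorem~\ref{Thm:Functional_CLT_Tail_empirical_process_Random_level}, whereas the paper deliberately bypasses that result and handles $q_n/u_n$ directly via the term $\int_{q_n/u_n}^{1}\widetilde{T}_n^{x}(s)/s\,\mathrm{d}s$ and the delta method --- an alternative your step (i) already lists.
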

\begin{remark}
Examining closely the proof techniques of Theorems~\ref{Hill_Consistency_thm} and~\ref{Hill_normality_thm}, we see that our results can be extended to functionals of the tail empirical process of the form
$$\widehat\Phi_n(x)=\int\phi\p{s}\widehat{T}_{n}^{x}(\mathrm{d}s),$$
provided conditions involving the logarithmic function are replaced with suitably modified versions involving $\phi$.  For instance, the choice $\phi(s) = \log(s)^{2}$, yields an estimator of $2\gamma(x)^2$. In case the functional
$z\mapsto \int \phi \dd y$, or any other functional is defined on $\mathbb{D}\p{[A,B]}$ and is sup-norm continuous or Hadamard differentiable, consistency and a central limit theorem for the associated estimator follow directly from Theorem~\ref{Thm:Functional_CLT_Tail_empirical_process_Random_level}, together with the continuous mapping theorem or functional delta-method, respectively.
\end{remark}

\begin{remark}
    The bias of the conditional Hill estimator vanishes due to condition~\ref{integral_B_n_assumption}. If this condition is relaxed to 
     \begin{align*}
        \lim_{n\to \infty}
        \sqrt{k_{n}h_{n}}\int_{1}^{\infty} \frac{ \E{\widetilde{V}^{x}_{n}(t)}  - t^{-1/\gamma(x)}g(x)}{t} \mathrm{d}t \ = \  B \in \R,
    \end{align*}
    the asymptotic variance in Theorem~\ref{Hill_normality_thm} remains similar, while the bias becomes $B/g(x)$. 
\end{remark}
\section{Second order behaviour and specification of sequences}\label{section:verification}
In this section we investigate the fulfilment of the abstract second order conditions by considering certain classes of regularly-varying tails. Moreover, there are several deterministic sequences involved in the above results, which have to satisfy speed assumptions. We also verify that it is always possible to find such sequences for the proposed classes.

\subsection{Consistency}
In subsection~\ref{section:consistency} we presented two theorems that yield consistency of $\widetilde{T}^{x}_{n}$, the first one under $m$-dependence, and the second one assuming $\alpha$-mixing and additionally~\eqref{Assu:h_F_speed_relation}. In the latter case there are three limit assumptions related to the speed of deterministic sequences needed for consistency, which are given by
\begin{enumerate}
    \item $h_{n}\log(u_{n}^{x}) = h_{n}\log( F^{\leftarrow, x}\p{1-k_{n}/n}) \to 0$, (\ref{Assu:h_n_log_u_{n}^{x}}) 
    \item $n h_{n} \overline{F}^{x}(u_{n}^{x}) =k_nh_n\to \infty$, (\ref{Assu:n_h_F_limit})
    \item $ (\overline{F}^{x}(u_{n}^{x}) + h_{n}^{c_{x}\eta -2})/(n h_{n}^{c_{x}} (\overline{F}^{x}(u_{n}^{x}))^{2})
    =
     (k_{n} + nh_{n}^{c_{x}\eta -2}) / (h_{n}^{c_{x}} k_{n}^{2})
     \to 0$,~\eqref{Assu:h_F_speed_relation}.
\end{enumerate}
Notice that, in the case of $m$-dependence, only the first two conditions are required. In any case, the conditions are directly on the conditional distribution, so the results below do not make use of the time-series structure of the data, but only on the (first-order) regular variation property of the conditional tail.\\
In what follows, we use the notation $f_n  \asymp g_{n}$, when $f_n  \sim c g_{n}$, for some constant $c>0$.

\begin{theorem}\label{thm:sequence_cons_power}
    Assume~\eqref{def:condRV}.
     If $k_{n} \asymp n^{\delta}$ for $\delta \in ((\eta + 1)^{-1}, 1)$, $h_{n} \asymp \overline{F}^{x}(u_{n}^{x})^{e} $ for $e \in (0, \delta/(1-\delta)\wedge (\delta(\eta + 1) -1)/(2(1-\delta)))$, there exists a  $c_{x} \in (0,\infty)$ such that the above limit conditions are satisfied.
\end{theorem}
\begin{theorem}\label{thm:sequence_cons_m_dependence}
    Assume~\eqref{def:condRV}. 
    If $h_{n} \asymp \log(n)^{-e} $ for $e \in (4/3, 2)$ and $k_{n}  \asymp \log(n)^{2}$, then the first two above limit conditions are satisfied.
\end{theorem}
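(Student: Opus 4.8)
The plan is a direct verification of the two claimed limit conditions from the prescribed polylogarithmic rates. Recall that, by construction, $u_n=F^{\leftarrow,x}(1-k_n/n)$ yields the identity $\overline{F}^{x}(u_n)=k_n/n$; the only ingredient beyond elementary algebra is the growth rate of $\log u_n$, which I will read off from the first-order regular variation in \eqref{def:condRV}.

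For condition \ref{Assu:n_h_F_limit}, the identity above gives $n h_n\overline{F}^{x}(u_n)=h_nk_n\asymp\log(n)^{-e}\log(n)^{2}=\log(n)^{2-e}$, which diverges precisely because $e<2$. The remaining intermediate-sequence requirements are immediate: $k_n\asymp\log(n)^2\to\infty$, $k_n/n\to0$, $k_n\le n$ eventually, and $h_n\to0$.

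For condition \ref{Assu:h_n_log_u_n}, note first that $k_n/n\to0$ together with the fact that $\overline{F}^{x}$ is regularly varying with negative index (hence has infinite right endpoint) forces $u_n\to\infty$. Taking logarithms in $L^{x}(u_n)u_n^{-\alpha_x}=\overline{F}^{x}(u_n)=k_n/n$ yields
\begin{align*}
\alpha_x\log u_n=\log(n/k_n)+\log L^{x}(u_n),
\end{align*}
and since $\log L^{x}(y)=o(\log y)$ as $y\to\infty$ (Potter's bounds, or the Karamata representation of the slowly varying $L^{x}$), this gives $\log u_n\sim\gamma(x)\log(n/k_n)$. With $k_n\asymp\log(n)^2$ we have $\log(n/k_n)=\log n-2\log\log n+O(1)\sim\log n$, so $\log u_n\sim\gamma(x)\log n$, in particular $\log u_n=O(\log n)$. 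Therefore $h_n\log u_n=O(\log(n)^{1-e})\to0$ because $e>1$, which is implied by $e>4/3$. This establishes both claimed conditions.

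I do not anticipate a genuine obstacle in this verification; the computation only uses $e\in(1,2)$ together with $k_n=n^{o(1)}$ and $\log(n)^{e}=o(k_n)$. The narrower window $e\in(4/3,2)$ and the exact scaling $k_n\asymp\log(n)^{2}$ are more than sufficient for these two conditions and are presumably calibrated to the remaining hypotheses needed to invoke Theorem~\ref{Empirical_tail_dist_consistency_theorem_m_depence} — in particular condition \ref{Assu:g_n_consistent} via the $m$-dependent kernel density estimation lemma — and to the companion asymptotic normality statements.
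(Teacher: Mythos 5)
Your proof is correct and follows essentially the same route as the paper: the first condition is the immediate computation $nh_n\overline{F}^{x}(u_n)=k_nh_n\asymp\log(n)^{2-e}\to\infty$, and the second uses $\overline{F}^{x}(u_n)=k_n/n$ together with $\log L^{x}(u_n)=o(\log u_n)$ to get $\log u_n\sim\alpha_x^{-1}\log(n/k_n)\lesssim\log n$, whence $h_n\log u_n=O(\log(n)^{1-e})\to0$. Your closing remark is also accurate — only $e\in(1,2)$ is used here, and the paper likewise invokes the tighter window $e\in(4/3,2)$ only in the later normality arguments.
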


\subsection{Asymptotic normality}
Asymptotic normality of the tail empirical process $\widetilde{\mathbb{T}}_{n}^{x}$ and of the conditional Hill estimator $\widehat{\gamma}_{n}(x)$ rely on two crucial, yet abstract conditions~\ref{B_n_assumption} and~\ref{integral_B_n_assumption} respectively, related to the regular variation bias. In this subsection we propose quite general second-order-type conditions, under which we can provide examples of sequences that satisfy the speed restrictions required for asymptotic normality.

\begin{lemma}\label{lemma_second_order_conditions}
 Assume conditions~\ref{Assu:gamma_lip}-\ref{Assu:n_h_F_limit}, that $F^{x}$ is a continuous distribution and that there exist $c_{x},  \gamma(x) >0,$ and $\zeta_{x}\gamma(x) >  1$ such that
    \begin{align}\label{cond_second_order}
        \sup_{t\geq 1} t^{\zeta_{x}}\left\vert \overline{F}^{x}(t)-c_{x}t^{-1/\gamma(x)} \right\vert < \infty.
    \end{align}
    If there exist an intermediate sequence $\left\{k_{n} \right\}$ and a bandwidth sequence $\left\{h_{n} \right\}$ such that
    \begin{align}\label{eq:second_order_derived_sequences}
         \lim_{n \to \infty}\sqrt{k_{n}h_{n}^{3}}
         \log\left({n}/{k_{n}}\right)
          =
            \lim_{n \to \infty} \sqrt{h_{n}}
         k_{n}^{\zeta_{x}\gamma(x) - \frac{1}{2}}n^{1-\zeta_{x}\gamma(x)} = 0,
    \end{align}
    then conditions~\ref{B_n_assumption} and~\ref{integral_B_n_assumption} hold.
\end{lemma}
\begin{lemma}\label{lemma_log_second_order_conditions}
     Assume conditions~\ref{Assu:gamma_lip}-\ref{Assu:n_h_F_limit}, 
      that $F^{x}$ is a continuous distribution and that there exist $c_{x}, \gamma(x) >0,$ and $\zeta_{x}\gamma(x) >  1$ such that
\begin{align}\label{cond_second_order_log}
        \sup_{t\geq 1} t^{\zeta_{x}}\left\vert \overline{F}^{x}(t)-c_{x}\log(t)t^{-\gamma(x)} \right\vert < \infty.
    \end{align}
    If there exist an intermediate sequence $\left\{k_{n} \right\}$ and a bandwidth sequence $\left\{h_{n} \right\}$ such that
    \begin{align}\label{eq:second_order_log_derived_sequences}
         \lim_{n \to \infty}\sqrt{k_{n}h_{n}^{3}}
         \log\left({n}/{k_{n}}\right)
          =
          \lim_{n \to \infty}
        \frac{\sqrt{k_{n}h_{n}}}
       {\log\left({n}/{k_n}\right)}
       = 0,
    \end{align}
    then conditions~\ref{B_n_assumption} and~\ref{integral_B_n_assumption} hold.
\end{lemma}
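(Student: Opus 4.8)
The plan is to reduce conditions~\ref{B_n_assumption} and \ref{integral_B_n_assumption} to a purely deterministic analysis of the bias
\[
 \beta^{x}_{n}(t):=\E{\widetilde{V}^{x}_{n}(t)}-t^{-\alpha_{x}}g(x),
\]
so that condition~\ref{B_n_assumption} becomes $\sqrt{k_{n}h_{n}}\,\sup_{t\ge s_{0}}|\beta^{x}_{n}(t)|\to0$ and condition~\ref{integral_B_n_assumption} becomes $\sqrt{k_{n}h_{n}}\int_{1}^{\infty}|\beta^{x}_{n}(t)|\,t^{-1}\,\dd t\to0$ (bounding the signed integrand by its modulus suffices). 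Under \eqref{cond_second_order_log} I would show that $\beta^{x}_{n}$ splits into a \emph{smoothing} part of size $O(h_{n}\log u_{n})$ — the price of shifting the conditioning point, the tail index and the slowly varying function by $O(h_{n})$, plus a plain $O(h_{n})$ coming from $g\in C^{2}$ — and a genuine \emph{second-order} part of size $O(1/\log u_{n})$, both uniformly in the relevant range of $t$. Since the dominant second-order term is $c_{x}\log(t)\,t^{-\alpha_{x}}$, which forces $\log u_{n}\sim\alpha_{x}^{-1}\log(n/k_{n})$, these two parts are controlled precisely by the first and the second limit in \eqref{eq:second_order_log_derived_sequences}, respectively.

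The concrete steps are as follows. First, by stationarity, the substitution $w=x-h_{n}v$, the support of $K$, and $\overline{F}^{x}(u_{n})=k_{n}/n$ (where continuity of $F^{x}$ enters),
\[
 \E{\widetilde{V}^{x}_{n}(t)}=\frac{1}{\overline{F}^{x}(u_{n})}\int_{-1}^{1}K(v)\,\overline{F}^{x-h_{n}v}(tu_{n})\,g(x-h_{n}v)\,\dd v,
\]
and, since $\int K=1$, writing $R_{n}(t,v):=\overline{F}^{x-h_{n}v}(tu_{n})/\overline{F}^{x}(u_{n})$,
\[
 \beta^{x}_{n}(t)=\int_{-1}^{1}K(v)\Big\{\big[R_{n}(t,v)-t^{-\alpha_{x}}\big]g(x-h_{n}v)+t^{-\alpha_{x}}\big[g(x-h_{n}v)-g(x)\big]\Big\}\,\dd v.
\]
The last term is $O(h_{n}t^{-\alpha_{x}})$ by smoothness of $g$. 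For the first I would split $R_{n}=A_{n}C_{n}$ with $A_{n}(t,v)=\overline{F}^{x-h_{n}v}(tu_{n})/\overline{F}^{x}(tu_{n})$ and $C_{n}(t)=\overline{F}^{x}(tu_{n})/\overline{F}^{x}(u_{n})$. Using $\overline{F}^{w}(y)=L^{w}(y)y^{-\alpha_{w}}$ together with conditions~\ref{Assu:alpha_lip} and \ref{Assu:L_lip}, for $n$ large and $t\ge s_{0}$ one gets $|\log A_{n}(t,v)|\le(c_{\alpha}+c_{L})h_{n}\log(tu_{n})$ uniformly in $v\in[-1,1]$, hence $|A_{n}(t,v)-1|\le C h_{n}\log(tu_{n})\,(tu_{n})^{C h_{n}}$. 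Plugging $\overline{F}^{x}(y)=c_{x}\log(y)y^{-\alpha_{x}}+\rho_{x}(y)$ with $|\rho_{x}(y)|\le M y^{-\beta_{x}}$ from \eqref{cond_second_order_log} into $C_{n}$, and noting that all remainder terms carry the vanishing prefactor $u_{n}^{-(\beta_{x}-\alpha_{x})}/\log u_{n}$, one obtains $C_{n}(t)=t^{-\alpha_{x}}\big(1+\log(t)/\log u_{n}\big)+r_{n}(t)$ with $|r_{n}(t)|\le C(1+t^{-\alpha_{x}})(1+|\log t|)\,u_{n}^{-(\beta_{x}-\alpha_{x})}/\log u_{n}$. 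Finally, taking logarithms in $k_{n}/n=\overline{F}^{x}(u_{n})=c_{x}(\log u_{n})u_{n}^{-\alpha_{x}}(1+o(1))$ and using $\log\log u_{n}=o(\log u_{n})$ gives $\log u_{n}\sim\alpha_{x}^{-1}\log(n/k_{n})$.

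Combining these estimates, for $n$ large $|\beta^{x}_{n}(t)|$ is bounded by a constant times $h_{n}t^{-\alpha_{x}}+h_{n}\log(tu_{n})\,(tu_{n})^{C h_{n}}t^{-\alpha_{x}}(1+|\log t|)+t^{-\alpha_{x}}(1+|\log t|)/\log u_{n}$, the last summand absorbing both $t^{-\alpha_{x}}\log(t)/\log u_{n}$ and $r_{n}$. For condition~\ref{B_n_assumption}, fix $s_{0}\in(0,1)$ and take the supremum over $t\ge s_{0}$: since $u_{n}^{C h_{n}}\to1$ (condition~\ref{Assu:h_n_log_u_n}) while $\sup_{t\ge s_{0}}t^{-\alpha_{x}}(tu_{n})^{C h_{n}}(1+|\log t|)^{j}$ stays finite and bounded uniformly in $n$ for each fixed $j$, the three terms contribute $O(h_{n})$, $O(h_{n}\log u_{n})$, $O(1/\log u_{n})$, so $B^{x}_{n}(s_{0})=O(h_{n})+O(h_{n}\log u_{n})+O(1/\log u_{n})$; multiplying by $\sqrt{k_{n}h_{n}}$ and using $\log u_{n}\asymp\log(n/k_{n})$ yields terms of orders $\sqrt{k_{n}h_{n}^{3}}$, $\sqrt{k_{n}h_{n}^{3}}\log(n/k_{n})$ and $\sqrt{k_{n}h_{n}}/\log(n/k_{n})$, all of which tend to $0$ under \eqref{eq:second_order_log_derived_sequences} (the first because $\sqrt{k_{n}h_{n}^{3}}\log(n/k_{n})\to0$ forces $\sqrt{k_{n}h_{n}^{3}}\to0$). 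For condition~\ref{integral_B_n_assumption}, integrate the same bound against $t^{-1}\dd t$ over $[1,\infty)$: for $n$ large enough that $C h_{n}<\alpha_{x}/2$, each integrand is dominated by the integrable function $t^{-1-\alpha_{x}/2}(1+\log t)^{j}$, so $\int_{1}^{\infty}|\beta^{x}_{n}(t)|\,t^{-1}\dd t=O(h_{n}\log u_{n})+O(1/\log u_{n})$, and multiplying by $\sqrt{k_{n}h_{n}}$ again gives $0$ in the limit by \eqref{eq:second_order_log_derived_sequences}.

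The main obstacle is the tension between the two bias scales: the smoothing bias is of order $h_{n}\log u_{n}\asymp h_{n}\log(n/k_{n})$ while the genuine second-order bias is of order $1/\log u_{n}\asymp1/\log(n/k_{n})$, and the two competing constraints $\sqrt{k_{n}h_{n}^{3}}\log(n/k_{n})\to0$ and $\sqrt{k_{n}h_{n}}/\log(n/k_{n})\to0$ are exactly what makes them vanish simultaneously. Technically this demands (i) the sharp equivalence $\log u_{n}\sim\alpha_{x}^{-1}\log(n/k_{n})$ extracted from \eqref{cond_second_order_log}, and (ii) careful bookkeeping of the stray factors $(tu_{n})^{O(h_{n})}$ so that both the suprema over $t\ge s_{0}$ and the integrals over $[1,\infty)$ stay finite and of the claimed order uniformly in $n$ — the crucial point being that $u_{n}^{O(h_{n})}\to1$, which is precisely condition~\ref{Assu:h_n_log_u_n}.
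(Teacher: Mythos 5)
Your proposal is correct and follows essentially the same route as the paper: the bias $\E{\widetilde{V}^{x}_{n}(t)}-t^{-\alpha_{x}}g(x)$ is split into the kernel-smoothing terms of orders $\mathcal{O}(h_{n})$ and $\mathcal{O}(h_{n}\log u_{n})$ (via the Lipschitz conditions on $g$, $\alpha_{\cdot}$ and $L^{\cdot}$) and a genuine second-order term of order $\mathcal{O}(1/\log u_{n})$ extracted from \eqref{cond_second_order_log} together with $u_{n}^{-\alpha_{x}}\log u_{n}=k_{n}/n+\mathcal{O}(u_{n}^{-\beta_{x}})$ and $\log u_{n}\sim\alpha_{x}^{-1}\log(n/k_{n})$, after which the two limits in \eqref{eq:second_order_log_derived_sequences} finish both conditions. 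The only cosmetic difference is that you expand $C_{n}(t)$ directly instead of citing the analogous estimates from the non-logarithmic case; the bookkeeping of the $(tu_{n})^{\mathcal{O}(h_{n})}$ factors and the uniformity in $t$ are handled exactly as in the paper.
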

\begin{lemma}\label{lemma::anti_clustering_sequence}
     Assume conditions~\ref{Assu:gamma_lip}-\ref{Assu:n_h_F_limit},~\ref{Assu:Expected_cross_indicator_limit},~\ref{rate_condition}, the time series $\left\{\left(X_{j}, Y_{j}\right), j \in  \mathbb{Z} \right\}$ is stationary and $\alpha$-mixing with $\alpha(j) = \mathcal{O}(j^{-\eta})$, for $\eta > 2$, and that for $A>0$ there exists a finite constant $\widetilde{M}^x$ not depending on $j$ such that
        \begin{align}\label{eq::Expected_cross_log_limit}
         \frac{\CE{ \log_{+}\left(\frac{Y_{0}}{u_{n}^{x}A} \right)
         \log_{+}\left(\frac{Y_{j}}{u_{n}^{x}A} \right)}{X_{0} = x + h_{n}w_{1}, X_{j} = x + h_{n}w_{2}}}{\overline{F}^{x}(u_{n}^{x})} \to \widetilde{M}^{x},
        \end{align}
         uniformly in $w_1, w_2 \in \left[-1,1\right]$. If there exist a bandwidth sequence $\left\{h_{n} \right\}$ and constants $c_{x}, c'_{x} > 0$ and $p>1$, such that
    \begin{align}\label{eq:sequence_anti_clustering_restrains}
          \lim_{n \to \infty}\left(h_{n}^{1-c_{x}} + h_{n}^{c_{x}(\eta-1) -1}\overline{F}^{x}(u_{n}^{x})^{-1}\right)
          =
         \lim_{n \to \infty}\left(
         h_n^{1-c'_x} + h_{n}^{-1/p + c'_{x}(\eta/p -1)}\overline{F}^{x}(u_{n}^{x})^{-1/p} \right)
       = 0,
    \end{align}
    then the anti-clustering conditions~\ref{anti_clustering_condition} and~\ref{log_anti_clustering_condition} hold.
\end{lemma}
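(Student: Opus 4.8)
The plan is to verify the two anti-clustering conditions~\ref{anti_clustering_condition} and \ref{log_anti_clustering_condition} by splitting each inner sum $\sum_{j=m}^{r_n}$ at an intermediate cut-off, say $j \le d_n$ versus $j > d_n$ for a suitable slowly growing $d_n$, and bounding the two regimes by different arguments. For the ``near'' regime $m \le j \le d_n$, I would use the convergence hypothesis \eqref{eq::Expected_cross_log_limit} (and condition~\ref{Assu:Expected_cross_indicator_limit} for the other sum): conditioning on $(X_0, X_j)$ and using that the joint density $g_{X_0,X_j}$ is bounded, each summand of the form $\E{K(\tfrac{x-X_0}{h_n}) K(\tfrac{x-X_j}{h_n}) 1_{\{Y_0>su_n\}} 1_{\{Y_j>tu_n\}}}/(h_n\overline{F}^x(u_n))$ is, after the change of variables $X_i = x + h_n w_i$, of order $h_n \cdot M^x(s,t) \cdot \overline F^x(u_n)/\overline F^x(u_n) = O(h_n)$ per term (the extra $h_n$ coming from the second kernel integration, the first kernel integration cancelling the $h_n^{-1}$ in the denominator together with $g(x)$). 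Summing over $j \le d_n$ gives $O(d_n h_n)$, which will tend to $0$ provided $d_n h_n \to 0$; the analogous bound for the logarithmic sum uses \eqref{eq::Expected_cross_log_limit}. Sending $m\to\infty$ afterwards is harmless here since this bound is already $o(1)$ uniformly.

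For the ``far'' regime $d_n < j \le r_n$, I would exploit the $\alpha$-mixing decay $\alpha(j) = \mathcal O(j^{-\eta})$ via a covariance inequality for mixing sequences (e.g.\ a Davydov-type bound): the covariance of $K(\tfrac{x-X_0}{h_n})1_{\{Y_0>su_n\}}$ and $K(\tfrac{x-X_j}{h_n})1_{\{Y_j>tu_n\}}$ is controlled by $\alpha(j)^{1-2/p}$ times the $L^p$-norms of each factor, which are themselves $O((h_n \overline F^x(u_n))^{1/p})$ up to constants. The product of the two expectations (the ``independent'' part) is $O((h_n\overline F^x(u_n))^2)$, and after dividing by $h_n \overline F^x(u_n)$ this contributes a sum $\sum_{j>d_n} h_n \overline F^x(u_n)$, negligible by condition~\ref{rate_condition} times $m_n$-type counting; the genuine covariance part contributes $\sum_{j>d_n} \alpha(j)^{1-2/p} (h_n \overline F^x(u_n))^{2/p}/(h_n\overline F^x(u_n))$, which is summable in $j$ when $\eta(1-2/p)>1$ and then of order $d_n^{1-\eta(1-2/p)} (h_n \overline F^x(u_n))^{2/p - 1}$. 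Choosing $d_n$ as a small power of $1/h_n$ and matching the exponents $1-c_x$, $c_x(\eta-1)-1$ (resp.\ $1-c_x'$, $-1/p + c_x'(\eta/p-1)$) in the hypothesis \eqref{eq:sequence_anti_clustering_restrains} is exactly what makes both the near and far contributions vanish; the logarithmic version is identical once one notes $\log_+(Y_i/(u_n A))$ has all moments of the needed order because of regular variation (the log of a Pareto-type tail is in every $L^p$ against the tail event, with the right scaling in $\overline F^x(u_n)$).

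The main obstacle I anticipate is the bookkeeping of exponents: one must track how the $L^p$-norms of the truncated kernel-times-indicator (and kernel-times-log) variables scale jointly in $h_n$ and $\overline F^x(u_n)$, then choose the cut-off $d_n$ and the free constant $p$ so that the resulting powers of $h_n$ and $\overline F^x(u_n)$ are precisely dominated by the two displayed limits in \eqref{eq:sequence_anti_clustering_restrains}. A secondary technical point is justifying the uniform-in-$(w_1,w_2)$ passage under the integral against $K\otimes K$ in the near regime, and handling the logarithmic factors, which are unbounded, by a preliminary truncation at level $(u_n)^{\epsilon}$ whose discarded part is controlled again by regular variation and condition~\ref{Assu:h_n_log_u_n}. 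The double limit $\lim_m \limsup_n$ structure is convenient precisely because it lets the near-regime bound depend on $m$ in an innocuous way while the far-regime bound is $m$-free.
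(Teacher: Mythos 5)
Your proposal is correct and follows essentially the same route as the paper: decompose each summand into a covariance plus a product of expectations (the latter killed by condition~\ref{rate_condition}), cut the covariance sum at $d_n\asymp h_n^{-c_x}$ (resp.\ $h_n^{-c'_x}$), bound the near terms by $\mathcal{O}(h_n)$ each via condition~\ref{Assu:Expected_cross_indicator_limit} and \eqref{eq::Expected_cross_log_limit}, and bound the far terms by a mixing covariance inequality, which is exactly how the two displayed limits in \eqref{eq:sequence_anti_clustering_restrains} arise. The only refinement worth noting is that for the bounded indicator case the paper uses the sup-norm (Billingsley) bound $\mathcal{O}(\alpha(j))$, which is what yields the exponent $c_x(\eta-1)-1$ with a full $\overline{F}^{x}(u_n)^{-1}$, and reserves the Rio $L^q$-norm bound (with $2/q=1-1/p$) for the unbounded logarithmic terms, producing the $1/p$ powers in the second limit; no preliminary truncation of the logarithms is needed.
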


In essence, Lemmas~\ref{lemma_second_order_conditions} and~\ref{lemma_log_second_order_conditions} transfer the asymptotic bias conditions into additional speed conditions on the $h_n$ and $k_n$ sequences. Standard second-order conditions in extremes are often formulated in terms of limits which imply~\eqref{cond_second_order}; in particular the broad Hall class~\citep{HallWelsh1985} falls into this framework. Second-order conditions akin to~\eqref{lemma_log_second_order_conditions} are not often encountered in the literature, and our result is inspired by~\cite[Eq. (9.6.3)]{Kulik2020}. The result of Lemma~\ref{lemma::anti_clustering_sequence} provides an alternative way of verifying the anti-clustering conditions. For convenience, we now restate the main speed conditions for asymptotic normality of the conditional Hill estimator to hold.

In addition to the assumptions needed for consistency, we require sequences $\left\{(k_{n}, h_{n}, r_{n}, l_{n}), n \in \N \right\}$ such that the following conditions are met:

\begin{enumerate}[itemsep=1.5mm]
    \item $r_{n}h_{n}\overline{F}^{x}(u_{n}^{x}) = r_{n}h_{n}k_{n}/n \to  0$, (\ref{rate_condition})
    \item $\lim_{n \to \infty} 1/\ell_n =
        \lim_{n \to \infty} \ell_n/r_n = 
        \lim_{n \to \infty} r_n/n = 
        \lim_{n \to \infty} (n/r_n)\beta_{\ell_n} = 0$, (\ref{beta_mixing_condition})
    \item $\sqrt{k_{n}h_{n}}\curly{g(x)- g_{n}(x)} \overset{\P }{\to} 0$, (\ref{Assu:gn_emp_proc_negl})  
    \item $\sqrt{k_{n}h_{n}}B_{n}^{x}(s_{0}) \to 0$,  (\ref{B_n_assumption})  
    \item $\sqrt{k_{n}h_{n}}\int_{1}^{\infty} (\mathbb{E}[\widetilde{V}^{x}_{n}(t)]  - t^{-1/\gamma(x)}g(x))/t \  \mathrm{d}t \to 0$, (\ref{integral_B_n_assumption}).
\end{enumerate}

\begin{theorem}\label{thm:existence_of_sequences}
    Assume~\eqref{def:condRV} and that $\left\{  (X_{j},Y_{j}), \ j\in \Z \right\}$ is a stationary, $\beta$-mixing time series, with $\alpha$-mixing coefficients $\alpha(j) = \mathcal{O}(j^{-\eta})$, for $\eta > 2$, $\beta$-mixing coefficients $\beta(j) = \mathcal{O}(j^{-\nu})$ for $\nu > 0$, and that $F^{x}$ is continuous.
    Assume conditions~\ref{Assu:gamma_lip},~\ref{Assu:L_lip},~\ref{Assu:Expected_cross_indicator_limit},~\eqref{eq::Expected_cross_log_limit}, and the second-order condition~\eqref{cond_second_order}.
    Then there exist sequences $\left\{(k_{n}, h_{n}, r_{n}, \ell_{n}), n \in \N \right\}$ such that the first two conditions for consistency, all the above conditions for normality and anti-clustering conditions~\ref{anti_clustering_condition} and~\ref{log_anti_clustering_condition} are satisfied.
\end{theorem}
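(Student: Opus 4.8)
The plan is to exhibit explicit polynomial/power-type sequences and verify each of the finitely many speed constraints by elementary asymptotic analysis. First I would parametrize the bandwidth and intermediate sequence as powers of $n$, say $h_n \asymp n^{-a}$ and $k_n \asymp n^{b}$ with $a,b \in (0,1)$ and $b<1$, and choose the block sizes as $r_n \asymp n^{\rho}$ and $\ell_n \asymp n^{\lambda}$ with $0 < \lambda < \rho < 1$; the $\beta$-mixing summability condition $ (n/r_n)\beta_{\ell_n}\to 0$ then reduces, under $\beta(j)=\mathcal{O}(j^{-\nu})$, to the single inequality $1-\rho-\lambda\nu<0$, which is satisfiable by taking $\lambda$ large enough (close to $\rho$) once $\rho$ is fixed, because $\nu>0$. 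Since $\overline{F}^x(u_n)=k_n/n \asymp n^{b-1}$, the quantity $k_n h_n = n h_n\overline F^x(u_n) \asymp n^{\,b-a}$ governs the rate, so we need $b>a$; condition~\ref{Assu:n_h_F_limit} ($nh_n\overline F^x(u_n)\to\infty$) is then automatic, and the rate condition~\ref{rate_condition} ($r_n h_n\overline F^x(u_n)\to 0$) becomes $\rho - a + (b-1) < 0$, i.e. $\rho < 1-b+a$.

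Next I would collect the remaining constraints and check they carve out a nonempty region. Condition~\ref{Assu:h_n_log_u_n} ($h_n\log u_n\to 0$) holds trivially since $u_n$ grows polynomially (as $F^x$ is regularly varying with continuous tail, $u_n = F^{\leftarrow,x}(1-k_n/n) \asymp (n/k_n)^{\gamma(x)}\asymp n^{(1-b)\gamma(x)}$) so $\log u_n = \mathcal{O}(\log n)$ and $h_n \log n\to 0$. The consistency speed condition~\eqref{Assu:h_F_speed_relation} can be matched by the argument of Theorem~\ref{thm:sequence_cons_power}, giving a lower bound on $a$ of the form $a > $ something determined by $\eta$ and the exponent $c_x$; simultaneously the anti-clustering Lemma~\ref{lemma::anti_clustering_sequence} requires, via \eqref{eq:sequence_anti_clustering_restrains}, that certain powers $h_n^{1-c_x}$, $h_n^{c_x(\eta-1)-1}\overline F^x(u_n)^{-1}$, and the $p$-analogue all vanish — these translate into finitely many linear inequalities in $(a,b,c_x,c_x',p)$ which (as in the proof of Lemma~\ref{lemma::anti_clustering_sequence}) are solvable by taking $c_x$ in a suitable subinterval of $((\eta-1)^{-1},1)$ and $p$ close to $1$, provided $a$ is small enough relative to $1-b$. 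Finally the second-order/bias conditions~\ref{B_n_assumption} and \ref{integral_B_n_assumption} are handled by Lemma~\ref{lemma_second_order_conditions}: under \eqref{cond_second_order} it suffices that $\sqrt{k_n h_n^3}\log(n/k_n)\to 0$ and $\sqrt{h_n}k_n^{\beta_x/\alpha_x-1/2}n^{1-\beta_x/\alpha_x}\to 0$; writing $\theta = \beta_x/\alpha_x > 1$, these become (up to logarithmic factors) $b - 3a < 0$ wait — $\tfrac12(b-3a)<0$, i.e. $b<3a$, and $-\tfrac{a}{2} + (\theta-\tfrac12)b + (1-\theta)< 0$, i.e. $(\theta-\tfrac12)b - \tfrac a2 < \theta - 1$.

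With all constraints assembled, the proof reduces to checking that the system
\begin{align*}
0<a<b<1,\quad b<3a,\quad a > a_0(\eta),\quad (\theta-\tfrac12)b-\tfrac a2 < \theta-1,\quad \rho < 1-b+a,\quad 1-\rho < \lambda\nu,\quad 0<\lambda<\rho<1
\end{align*}
has a solution, together with the anti-clustering inequalities in $c_x,c_x',p$. I would argue this by first fixing $b$ very close to $0$ (so $k_n$ grows slowly, like $n^b$ with $b$ tiny) and $a$ just above $b/3$ but below $b$ and above $a_0(\eta)$ — this is where I must be careful, since $a_0(\eta)$ from the consistency condition is bounded away from $0$, so I actually need $b$ not too small, and the feasibility hinges on whether the window $(a_0(\eta), 3a_0(\eta))$ and the bias band overlap; because $\theta>1$ the bias constraint $(\theta-\tfrac12)b < \theta-1+\tfrac a2$ allows $b$ up to roughly $(\theta-1)/(\theta-\tfrac12)>0$, so a genuine overlap exists once one verifies $a_0(\eta)$ can be taken small by choosing $c_x$ near $1$. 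Then $\rho$ is picked in the nonempty interval $(0,1-b+a)$, and $\lambda$ in $((1-\rho)/\nu,\rho)$, which is nonempty once $\rho$ is taken close enough to its upper limit (using $\nu>0$). The main obstacle is the simultaneous satisfiability of the consistency lower bound on $a$ (forcing $a$ away from $0$) with the bias upper bound on $b$ and the constraint $b<3a$ and $a<b$: one has to track carefully how the free exponent $c_x\in((\eta-1)^{-1},1)$ enters both the consistency condition~\eqref{Assu:h_F_speed_relation} and the anti-clustering inequalities~\eqref{eq:sequence_anti_clustering_restrains}, and show a common choice works; I expect the cleanest route is to first choose $c_x$ (and $c_x',p$) to minimize the lower bound forced on $a$, then choose $a$ just above that bound, then $b\in(a/ \text{?}, \min(3a, \text{bias bound}))$ — wait, $b>a$ — then $b\in(a,\min(3a,\text{bias bound}))$ which is nonempty, and finally $\rho,\lambda$. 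Once feasibility is shown, every displayed limit in Condition~\ref{Normality_condition} and the anti-clustering lemmas follows by plugging in the power rates, and Lemmas~\ref{lemma_second_order_conditions} and~\ref{lemma::anti_clustering_sequence} close the argument.
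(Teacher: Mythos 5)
Your strategy is the paper's strategy: take all four sequences to be (up to constants) powers of $n$, push every condition through Lemmas~\ref{lemma_second_order_conditions}, \ref{lemma::anti_clustering_sequence} and \ref{lemma:g_n_emp_process_negligibility_alpha_mixing} so that it becomes a linear inequality in the exponents, and check that the resulting system is feasible. (The paper writes $h_n\asymp\overline F^x(u_n)^e$, $k_n\asymp n^\delta$, which is your $(a,b)$ with $a=(1-\delta)e$, $b=\delta$, and your bias inequalities $b<3a$ and $(\theta-\tfrac12)b-\tfrac a2<\theta-1$ match \eqref{eq:second_order_derived_sequences} exactly.) But the theorem is a pure existence statement, so its entire content is the nonemptiness of that system, and this is precisely where your argument stops: the final paragraph is a sequence of ``I would argue'', ``I expect'', ``once one verifies'', and an unresolved worry about whether the window forced by your lower bound $a_0(\eta)$ overlaps the bias band. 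The paper closes this by exhibiting an explicit nonempty interval for $\delta$, namely $\delta\in\bigl(e/(1+e),\,e/(\tfrac13+e)\wedge(\tfrac e2+\tfrac{\beta_x}{\alpha_x}-1)/(\tfrac e2+\tfrac{\beta_x}{\alpha_x}-\tfrac12)\bigr)$ for $e>\tfrac14\vee(\eta-2)^{-1}$, and separately verifying the existence of admissible $c_x,c_x',p$ for Lemma~\ref{lemma::anti_clustering_sequence}. Without that explicit check your proposal does not prove the theorem.

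Part of your difficulty is self-inflicted. The theorem only requires the \emph{first two} consistency conditions, \ref{Assu:h_n_log_u_n} and \ref{Assu:n_h_F_limit} (the paper says so in the sentence following the theorem), so the lower bound $a>a_0(\eta)$ you import from \eqref{Assu:h_F_speed_relation} via Theorem~\ref{thm:sequence_cons_power} is not needed; the only genuine lower bound on $a$ relative to $1-b$ comes from the anti-clustering requirement $e=a/(1-b)>(\eta-2)^{-1}$ of Lemma~\ref{lemma::anti_clustering_sequence}, and dropping the spurious constraint makes the feasibility region transparent (e.g.\ take $b=a(1+\epsilon)$ with $\epsilon$ small and $a$ in $(1/(\eta-1+\epsilon),1)$ close enough to $1$ that the bias bound still holds). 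Two further omissions: you never address condition~\ref{Assu:gn_emp_proc_negl}, which via Lemma~\ref{lemma:g_n_emp_process_negligibility_alpha_mixing} adds the constraint $\sqrt{k_nh_n}\,h_n^2\to0$, i.e.\ $b<5a$ (harmless, since it is implied by your $b<3a$, but it must be stated); and your claim that $\lambda\in((1-\rho)/\nu,\rho)$ is nonempty ``once $\rho$ is close enough to its upper limit'' silently assumes $1-b+a>1/(1+\nu)$, which is an additional coupling between $(a,b)$ and $\nu$ that has to be noted (it is satisfiable by keeping $b-a$ small, but it is not automatic).
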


\begin{theorem}\label{thm:existence_of_sequences_log}
    Assume~\eqref{def:condRV} and that $\left\{  (X_{j},Y_{j}), \ j\in \Z \right\}$ is a stationary, $\beta$-mixing time series, with $\alpha$-mixing coefficients $\alpha(j) = \mathcal{O}(j^{-\eta})$, for $\eta > 2$,  $\beta$-mixing coefficients $\beta(j) = \mathcal{O}(j^{-\nu})$ for $\nu > 0$, and that $F^{x}$ is continuous.
    Assume conditions~\ref{Assu:gamma_lip} and~\ref{Assu:L_lip} and the second-order condition~\eqref{cond_second_order_log}.
    Then there exist sequences $\left\{(k_{n}, h_{n}, r_{n}, \ell_{n}), n \in \N \right\}$ such that the first two conditions for consistency and all the above conditions for normality are satisfied.
\end{theorem}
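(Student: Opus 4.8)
The plan is to exhibit explicit sequences and verify, one at a time, the two consistency conditions~\ref{Assu:h_n_log_u_n}, \ref{Assu:n_h_F_limit} and the five normality conditions~\ref{rate_condition}, \ref{beta_mixing_condition}, \ref{Assu:gn_emp_proc_negl}, \ref{B_n_assumption}, \ref{integral_B_n_assumption}, delegating the two bias conditions~\ref{B_n_assumption} and \ref{integral_B_n_assumption} to Lemma~\ref{lemma_log_second_order_conditions} once its sequence requirement~\eqref{eq:second_order_log_derived_sequences} has been checked. Concretely, I would take $k_n=\lfloor(\log n)^{2}\rfloor$ and $h_n=(\log n)^{-e}$ for a fixed $e\in(4/3,2)$, and set $r_n=\lfloor n^{\theta}\rfloor$, $\ell_n=\lfloor n^{\theta'}\rfloor$ with exponents satisfying $\tfrac{1}{1+\nu}<\theta<1$ and $\tfrac{1-\theta}{\nu}<\theta'<\theta$; such $\theta,\theta'$ exist because $\nu>0$, and all four sequences are intermediate.

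The one genuinely analytic ingredient is the asymptotics of the unobserved threshold $u_n=F^{\leftarrow,x}(1-k_n/n)$. Since $\overline{F}^{x}(u_n)=k_n/n$ and \eqref{cond_second_order_log} gives $\overline{F}^{x}(t)=c_x\log(t)\,t^{-\alpha_x}+\mathcal{O}(t^{-\beta_x})$ with $\beta_x>\alpha_x$, taking logarithms yields $\alpha_x\log u_n=\log(n/k_n)+\log\log u_n+\log c_x+o(1)$, and as $\log\log u_n=o(\log u_n)$ this forces $\log u_n\sim\alpha_x^{-1}\log(n/k_n)\sim\alpha_x^{-1}\log n$. This is the only place the logarithmic (rather than purely power-law) shape of \eqref{cond_second_order_log} enters; everything downstream is bookkeeping with powers of $\log n$.

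With this in hand I would check the conditions in the order dictated by the cited lemmas. Condition~\ref{Assu:h_n_log_u_n}: $h_n\log u_n\asymp(\log n)^{1-e}\to0$ since $e>1$. Condition~\ref{Assu:n_h_F_limit}: $nh_n\overline{F}^{x}(u_n)=h_nk_n\asymp(\log n)^{2-e}\to\infty$ since $e<2$; note the effective sample size is now poly-logarithmic. Condition~\ref{rate_condition}: $r_nh_n\overline{F}^{x}(u_n)=r_nh_nk_n/n\asymp n^{\theta-1}(\log n)^{2-e}\to0$ since $\theta<1$. Condition~\ref{beta_mixing_condition}: under $\beta(j)=\mathcal{O}(j^{-\nu})$ the four limits become $\ell_n\to\infty$, $\ell_n/r_n\asymp n^{\theta'-\theta}\to0$, $r_n/n\to0$ and $(n/r_n)\beta_{\ell_n}=\mathcal{O}(n^{1-\theta-\nu\theta'})\to0$, all guaranteed by the choice of $\theta,\theta'$. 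Having secured \ref{Assu:h_n_log_u_n} and \ref{Assu:n_h_F_limit}, conditions~\ref{Assu:alpha_lip}--\ref{Assu:n_h_F_limit} hold; since moreover $F^{x}$ is continuous and \eqref{cond_second_order_log} holds, Lemma~\ref{lemma_log_second_order_conditions} applies once \eqref{eq:second_order_log_derived_sequences} is checked, and indeed $\sqrt{k_nh_n^{3}}\log(n/k_n)\asymp(\log n)^{(4-3e)/2}\to0$ (true iff $e>4/3$) and $\sqrt{k_nh_n}/\log(n/k_n)\asymp(\log n)^{-e/2}\to0$ (automatic), so \ref{B_n_assumption} and \ref{integral_B_n_assumption} follow. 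Finally, \ref{Assu:gn_emp_proc_negl} follows from the appendix lemma referenced there (such as Lemma~\ref{lemma:g_n_emp_process_negligibility_alpha_mixing}): its bias contribution is dominated by $\sqrt{k_nh_n^{3}}\to0$, its stochastic contribution by the fact that $\sqrt{k_nh_n}\,g_n(x)$ has variance of order $k_n/n\to0$, and its remaining hypotheses are polynomial-in-$n$ speed restrictions relative to the $\alpha$-mixing rate $j^{-\eta}$, $\eta>2$, which poly-logarithmic $h_n$ and $k_n$ satisfy trivially.

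It then remains only to observe that the constraints on the single free exponent, $e>1$, $e<2$ and $e>4/3$, hold simultaneously precisely on the nonempty interval $(4/3,2)$; a concrete admissible choice is $k_n=\lfloor(\log n)^{2}\rfloor$, $h_n=(\log n)^{-3/2}$, $r_n=\lfloor n^{\theta}\rfloor$, $\ell_n=\lfloor n^{\theta'}\rfloor$. The conceptual obstacle is reconciling the two halves of \eqref{eq:second_order_log_derived_sequences}: a useful central limit theorem needs $\sqrt{k_nh_n}\to\infty$, yet the second half forces $\sqrt{k_nh_n}=o(\log(n/k_n))$, so $k_n$ may grow only like a power of $\log n$, which in turn forces the bandwidth to decay like a comparable power of $\log n$; threading this while keeping $h_nk_n\to\infty$ (condition~\ref{Assu:n_h_F_limit}) and $h_n\log(n/k_n)\to0$ (condition~\ref{Assu:h_n_log_u_n}) is what carves out the window $e\in(4/3,2)$, with $k_n\asymp(\log n)^{2}$ mirroring the $m$-dependent regime of Theorem~\ref{thm:sequence_cons_m_dependence}. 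A secondary technical step is making the threshold expansion $\log u_n\sim\alpha_x^{-1}\log(n/k_n)$ fully rigorous from the logarithmic second-order bound and confirming that the lemma behind \ref{Assu:gn_emp_proc_negl} imposes only polynomial-in-$n$ restrictions.
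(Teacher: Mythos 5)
Your proposal is correct and follows essentially the same route as the paper: the same choices $k_n\asymp(\log n)^2$, $h_n\asymp(\log n)^{-e}$ with $e\in(4/3,2)$, polynomial $r_n,\ell_n$, the reduction of the bias conditions to Lemma~\ref{lemma_log_second_order_conditions} via \eqref{eq:second_order_log_derived_sequences}, and the appeal to Lemma~\ref{lemma:g_n_emp_process_negligibility_alpha_mixing} for condition~\ref{Assu:gn_emp_proc_negl}. The only cosmetic difference is that you derive $\log u_n\sim\alpha_x^{-1}\log(n/k_n)$ directly from \eqref{cond_second_order_log}, while the paper obtains it from regular variation as in Theorem~\ref{thm:sequence_cons_m_dependence}; both are valid.
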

Note that in both of the above theorems, only the first two conditions for consistency are required, by $\beta$-mixing. In the simulation study below, we present three non-trivial examples of processes where the above speed conditions and anti-clustering conditions are satisfied.

\begin{remark}
    In the proof of Theorem~\ref{thm:existence_of_sequences}, we let $k_{n} \asymp n^{\delta}$, where 
    \begin{align*}
         \delta \in \left(0, \ \frac{3}{2}\frac{\nu}{1+\nu} \wedge \frac{2(\zeta_{x} -1/\gamma(x))}{2\zeta_{x} - 1/\gamma(x)}\right).
    \end{align*}
    The restriction $\delta < 3\nu/(2(1+\nu))$ is a consequence of allowing the $\beta$-mixing coefficient to decay polynomially of any order. The more common and stronger assumption of exponential decay i.e. $\beta(j) = \mathcal{O}(\rho^{j})$ for $\rho \in (0,1)$ removes this restriction. On the other hand, the restriction $\delta < 2(\zeta_{x} - 1/\gamma(x))/(2\zeta_{x} - 1/\gamma(x))$ is a direct consequence of the required rates in Lemma~\ref{lemma_second_order_conditions} that arise from the second order assumption~\eqref{cond_second_order}.
    
    Also notice that the rate of convergence in Theorem~\ref{Hill_normality_thm} is $k_{n}h_{n}$. In the setting of Theorem~\ref{thm:existence_of_sequences}, this rate reduces to
    \begin{align*}
        k_{n}h_{n} \asymp n^{\delta - (1-\delta)e},
    \end{align*}
    where $\delta$ is in the above interval and $e$ is bounded from below by $\delta/(3(1-\delta))$. To optimize this rate, $\delta$ should be picked as large as possible and $e$ should be as small as possible. For $e$ close to its lower bound, $k_{n}h_{n}$ behaves like $n^{2\delta/3}$.  If we consider only the case of exponential decay of the $\beta$-mixing coefficients, the optimal rate is then 
    \begin{align*}
        k_{n}h_{n} = o\Big(n^{\frac{2}{3}\frac{2(\zeta_{x} -1/\gamma(x))}{2\zeta_{x} - 1/\gamma(x)}}\Big).
    \end{align*}
     It resembles the rate achieved in the unconditional case under a corresponding second order assumption, $k_n = o(n^{\rho})$ where $\rho = 2(\zeta_{x} - 1/\gamma(x))/(2\zeta_{x} - 1/\gamma(x))$ (see~\cite{Kulik2020}, problem 9.6), however the fraction $2/3$ in the exponent is caused by the bandwidth sequence slowing the speed of convergence.
\end{remark}
 The preprint~\cite{GoegebeurGuillou2024} obtains similar asymptotic rates for the conditional Hill estimator. We now compare their main assumptions.
Their kernel assumptions $(\mathcal{K})$ are similar to ours; they also cover the multidimensional covariate case. They throughout assume a second order condition $(\mathcal{D})$, which is seen to be a special case of equation~\eqref{cond_second_order}; they do not cover the second order condition of equation~\eqref{cond_second_order_log}.
Their assumption $(\mathcal{J})$ implies our anti-clustering condition~\ref{anti_clustering_condition}. Their strong condition $(\mathcal{X})$ on the joint behaviour of $(X_0, X_i)$ is here replaced by simply $g_{X_0, X_i}$ being bounded for every $i$. Their condition (A) is similar to our beta-mixing condition~\ref{beta_mixing_condition} but it depends on an involved probability sequence $\{\nu_n\}$. Their condition (B) implies our anti-clustering condition~\ref{log_anti_clustering_condition}. In their Theorem 3.2 and 3.3, the first bandwidth condition corresponds to condition~\ref{Assu:h_n_log_u_{n}^{x}}, the second to equation~\eqref{Condition:sqrt_n_h3_F}, the third and fourth to the first and second part of equation~\eqref{eq:second_order_derived_sequences}. 

\section{Finite-sample behaviour}\label{section:finite}
In this section we investigate the finite-sample behaviour of the conditional Hill estimator. We begin by discussing the selection of the bandwidth parameter for a fixed sample size. Then we perform some simulation studies where we examine how different intertemporal dependence structures affect the performance of the estimator.

\subsection{Bandwidth selection}
In this section we present methods for bandwidth selection, which is one of the main parameters above subject to tuning. The choice of bandwidth should, according to the theorems above, obey certain speed constraints, and the remark in the former section yields that it should depend on the threshold sequence $\{k_n\}$. Below we present four selection approaches, two of which are $k_n$-dependent and two of which are not. The proposed techniques are assessed in the subsequent subsection.
\begin{enumerate}
    \item  Sheather-Jones bandwidth fitted only on the covariate process (see~\cite{Sheather_Jones}). It's a plug-in method that provides a global bandwidth, independent of $k_n$. 
    \item  Sheather-Jones bandwidth fitted only on the top $k_n$ values of $\curly{X_j}$, sorted according to the order statistics of $\curly{Y_j}$.
    \item Various approaches concerning non-parametric regression (see~\citet{Daouia2010,Gannoun2002}
    ) have proposed a purely data-driven, cross-validation bandwidth selection method 
    \begin{align}\label{eq_h_hat}
        \hat{h} = \argmin_{h>0} \sum_{i=1}^{n}\sum_{j=1}^{n} \left(1_{\{Y_{i} > Y_{j}\}} - \overline{F}_{n, -i}^{X_{i}}(Y_j)\right)^{2},
    \end{align}
    where $\overline{F}_{n, -i}$ is the leave-one-out Nadaraya--Watson estimator of the conditional survival function given by
    \begin{align*}
        \overline{F}_{n, -i}^{x}(y)
        =
        \frac{
        \sum_{j=1, j\neq i}^{n}K\left(\frac{x-X_j}{h_n}\right)1_{\curly{Y_j > y}}}
        {\sum_{j=1,  j\neq i}^{n}K\left(\frac{x-X_j}{h_n}\right)}.
    \end{align*}
    Alternatively, as proposed by~\cite{Li_Racine}, a leave-one-out doubly-smoothed Nadaraya--Watson estimator of the conditional survival function, where also $\curly{Y_j}$ is smoothed with distribution function $G$, using bandwidth $b_n$ is given by
    \begin{align}\label{eq:F_bar_double_smoothed}
        \overline{F}_{n, -i}^{x}(y)
        =
        \frac{
        \sum_{j=1, j\neq i}^{n}K\left(\frac{x-X_j}{h_n}\right) G\left(\frac{y-Y_j}{b_n}\right)}
        {\sum_{j=1,  j\neq i}^{n}K\left(\frac{x-X_j}{h_n}\right)},
    \end{align}
    where $G(v) = \int_{(-\infty, v]} w(u) \dd u$, for symmetric, bounded and compactly supported density $w$. In the numeric studies below the kernels are taken to be Gaussian, which deviates from the compact support assumption; this deviation does not pose any practical issues. The asymptotic properties of the bandwidth selection through such leave-one-out procedure for the empirical conditional survival function have, to our knowledge, not been examined in the case of mixing data and are thus material for future work. Presently, we simply mention that this approach provides a global bandwidth independent of both $k_n$ and focuses not only on $x$, but on all covariate values at the same time. 
    \item Finally consider the data-independent case of picking $h_{n} = \sqrt{\log(k_{n})/n}$, which falls into the framework of conditions~\ref{Assu:h_n_log_u_{n}^{x}},~\ref{Assu:n_h_F_limit},~\ref{rate_condition}, and equations~\eqref{eq:second_order_derived_sequences},~\eqref{Condition:sqrt_n_h3_F} for certain $\{k_n\}$ and appropriately chosen $\{r_n\}$.
\end{enumerate}

\subsection{Simulation study}
In this section we examine the finite-sample behaviour of the conditional Hill Estimator $\widehat{\gamma}_{n}\p{x}$ through a simulation study. To this end, we compare the bias and the mean squared error in four settings, where in each setting we have two degrees of dependence in the data (low and high intertemporal dependence) and for a fixed $x$, arising from the true tail index function $\gamma (x) = 3x(x-1) + 1$. More concretely, the settings considered are as follows.
\begin{enumerate}[label=\alph*)]
    \item 
    Conditional Fr\'echet distribution: The covariate process $\curly{X_j}$ is simulated from an auto-regressive (AR) process of order $1$ (with coefficient $0.1$ and $0.9$ in the low and high dependence cases respectively) transformed to have uniform marginals, similarly for the uniform process $\curly{U_j}$ that is then transformed into $\curly{Y_j}$ via the inverse Fr\'echet distribution function using the chosen $\gamma (X_n)$ as tail index. Here we use the Sheather--Jones optimal bandwidth, fitted for each $k_n$ on the $k_n$ concomitant values of $\curly{X_j}$, i.e. selected to correspond to the top $k_n$ order statistics of $\curly{Y_j}$. To evaluate the sample size, we consider $n = 10^3, 10^{4}$.
    
    \item 
    Conditional Pareto distribution: Similar construction as the conditional Fr\'echet, using instead the Pareto distribution function. For a fixed sample size of $n=500$, we compare the (global) optimal Sheather-Jones bandwidth fitted on the entire covariate process, fitted on the top $k_n$ values of $\curly{X_j}$, sorted according to the order statistics of $\curly{Y_j}$, the method of equation~\eqref{eq_h_hat} using equation~\eqref{eq:F_bar_double_smoothed} as conditional tail estimate and the choice $h_{n} = \sqrt{\log(k_{n})/n}$.
    \item 
    Consider a centred Gaussian process $\textbf{W}$ with $\text{var}(W_{j}) = \sigma_{j}^{2}$ and Gaussian covariance  kernel, and a covariate process $\curly{X_j}$. Let $\left\{P^{z}_{i}\right\}$ be the points of a Poisson point process on $(0,\infty)$ having mean measure $\nu_{1/\gamma(z)}$, where $\R \ni z \mapsto 1/\gamma(z) \in (0, \infty)$ is Lipschitz, and let $\textbf{W}^{(i)}, i \geq 1$ be i.i.d. copies of $\textbf{W}$. 
    Then the process $(Y_{j})$ defined by
    \begin{align} \label{eq:MS_cond_construction}
        Y_{j} = \bigvee_{i=1}^{\infty} P^{X_{j}}_{i}e^{W_{j}^{(i)} -(1/\gamma(X_{j})) \sigma_{j}^{2}/2},
    \end{align}
    is marginally regularly varying with tail index $1/\gamma(x)$ on $(X_j = x)$ and its construction is shown in Figure~\ref{fig:MS_diagram}. We refer to this construction as a Conditional Subordinated Gaussian Max-Stable (CSGMS) process. In the simulation study we let $\curly{X_j}$ be i.i.d. standard uniform.
    To evaluate the sample size, we consider $n = 10^3, 10^{4}$, and length-scale $l = 0.5, 2$ for all $j$, for the low and high dependence cases respectively. 
    
    \item 
    Conditional Fr\'echet with long memory: Similar construction as the conditional Fr\'echet, where the auto-regressive processes are replaced by auto-regressive fractional integrated moving averages (ARFIMA) with AR and moving average (MA) orders of 1, with AR coefficient $0.5$, MA coefficient $0.2$, i.e.
    \begin{align*}
        \left( 1-\frac{1}{2}B \right)(1-B)^{d}X_{t} = \left( 1+\frac{1}{5}B \right)\varepsilon_{t},
    \end{align*}
    where $B$ is the backshift operator, and $\curly{\varepsilon_t}$ are iid standard Gaussian noise terms. The fractional coefficient $d$ is chosen, such that the decay of $\beta$-mixing coefficient is of polynomial order as opposed to exponential. To evaluate the sample size, we consider $n = 10^3, 10^{4}$.
\end{enumerate}
The conditional Fréchet and Pareto are known to satisfy the second-order condition of equation~\eqref{cond_second_order}. 
By the remarks on page 224 in~\cite{Mikosch_Wintenberger}, it holds that $V^{(i)}_{j} := e^{W_{j}^{(i)} -(1/\gamma(X_{j})) \sigma_{j}^{2}/2}$ in equation~\eqref{eq:MS_cond_construction} can be represented by $(f_{j}(U_{i}))_{j=1}^{\infty}$ for suitable non-negative measurable functions $f_j$ on $(0,1)$, and $\curly{U_i}$ are i.i.d. uniform variables on $(0,1)$. By eq. (5.5.7) therein, the CSGMS process above has conditional Fréchet marginals. In particular it also satisfies the condition of equation~\eqref{cond_second_order} and we note that all the three constructions satisfy equation~\eqref{def:condRV}. The mapping $z \mapsto 1/\gamma(z) = 1/(3z(z-1) +1)$ is clearly Lipschitz continuous and hence condition~\ref{Assu:gamma_lip} holds. Condition~\ref{Assu:Expected_cross_indicator_limit} and equation~\eqref{eq::Expected_cross_log_limit} are verified for the conditional Fréchet and Pareto in Lemma~\ref{Lemma:cond_par_fre_AC_requirements}, Appendix~\ref{app_AC}, and anti-clustering conditions for the CSGMS hold due to Lemma~\ref{lemma:cond_ind_impl_anti_clustering}, Appendix~\ref{app_AC}. In the first two cases, the underlying processes are simulated from $\text{AR}(1)$-processes with auto-regressive coefficients absolutely bounded by $1$, which implies $\beta$-mixing with mixing coefficients of exponential decay (see~\cite{Ibragimov1962}). When considering the unconditional version of the CSGMS-process, it is possible to show $\beta$-mixing decay of exponential order (see~\cite{Kulik2020}, Example 13.5.4). In equation~\eqref{eq:MS_cond_construction} we extend the construction by bundling with the covariate sequence, which itself is i.i.d and in particular of exponential $\beta$-mixing decay. Since $z \mapsto 1/\gamma(z)$ is Lipschitz, then also the bivariate sequence $\curly{X_j, Y_j}$ is of exponential $\beta$-mixing decay. Hence, since the  conditional slowly varying functions in equation~\eqref{def:condRV} are seen to satisfy condition~\ref{Assu:L_lip} for both Pareto and Fréchet marginals, we may invoke Theorem~\ref{thm:existence_of_sequences} to conclude that there exists sequences $\left\{(k_{n}, h_{n}, r_{n}, \ell_{n})\right\}$ such that all conditions required in Theorem~\ref{Hill_normality_thm} are met in the above simulation studies.

Note that in the case when the conditional Fr\'echet process is constructed from an $\text{ARFIMA}(1,d,1)$-process with $d = 0.1, 0.45$ in the low and high dependence respectively, both cases imply that the $\beta$-mixing coefficients decay polynomially, still falling in the framework of Theorem~\ref{thm:existence_of_sequences} and~\ref{thm:existence_of_sequences_log}. 
The results of the simulation study for $k_{n} \in \left\{n/100, \ldots, n/2\right\}$ are depicted in Figures~\ref{fig:Frechet_Pareto} and~\ref{fig:MS_Arfima}, where we plot the bias and Mean Squared Error (MSE) of the estimator, evaluated in $x = 0.6$, as a function of $k_{n}/n$, averaged over 500 simulations. 

For the Fr\'echet setting a), we observe that increasing the dependence structure in the underlying data, results in an similar finite-sample bias and MSE. In addition the bias exhibits unstable behaviour when $k_{n}/n$ approaches zero, for the smaller sample size. This highlights the double sub-sampling used in conditional extreme estimators.\\
For the Pareto setting b), we focus on comparing the bandwidth selection techniques, as this distribution has no second-order component (no bias term). For the both dependence structure, the optimal bandwidth on the upper $k_{n}$ concomitants of $X$, yields slightly lower bias when $k_{n}/n$ is away from zero. For $k_{n}/n$ lower than approximately $0.1$, the data-independent bandwidth $h_n =\sqrt{\log(k_{n})/n}$ results in the lowest mean squared error. We observe that neither of these two techniques are superior to another, and hence use the technique of lowest computational complexity, which is $h_n =\sqrt{\log(k_{n})/n}$ for the remaining simulations and real-data analysis.

\begin{figure}[hbt!]
\centering
\includegraphics[width=0.39\textwidth,trim={0.6cm 0cm 0cm 0cm},clip]{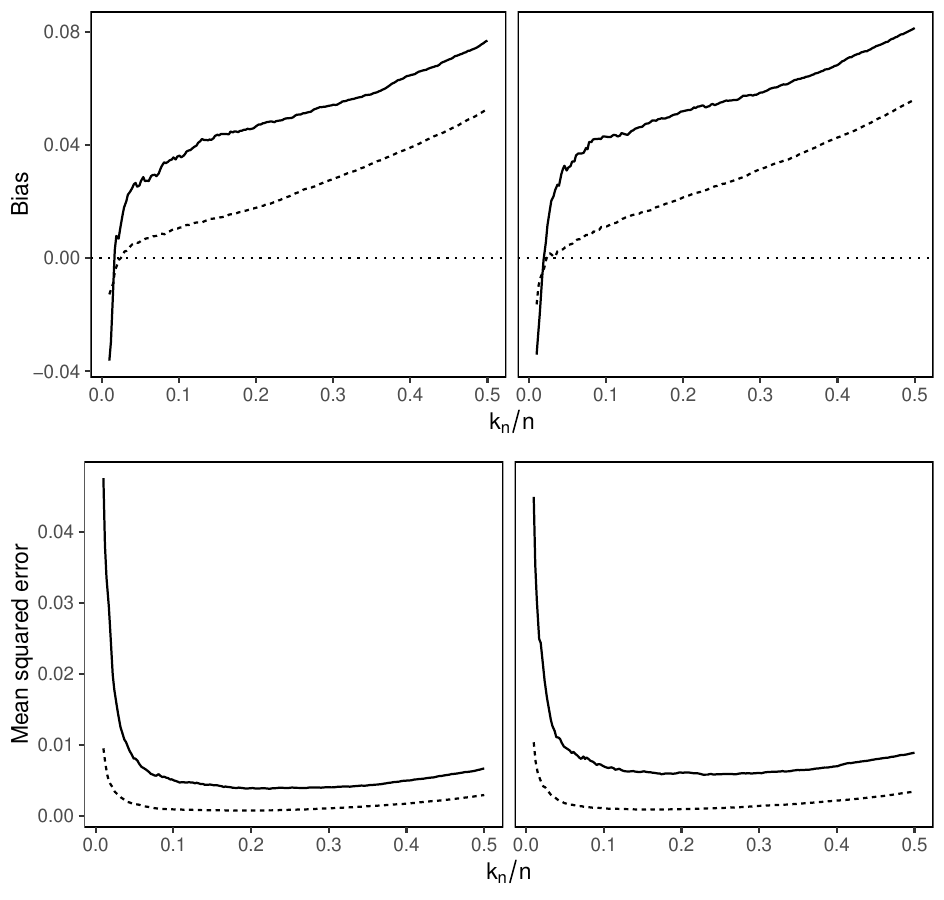}
\includegraphics[width=0.39\textwidth,trim={0.6cm 0cm 0cm 0cm},clip]{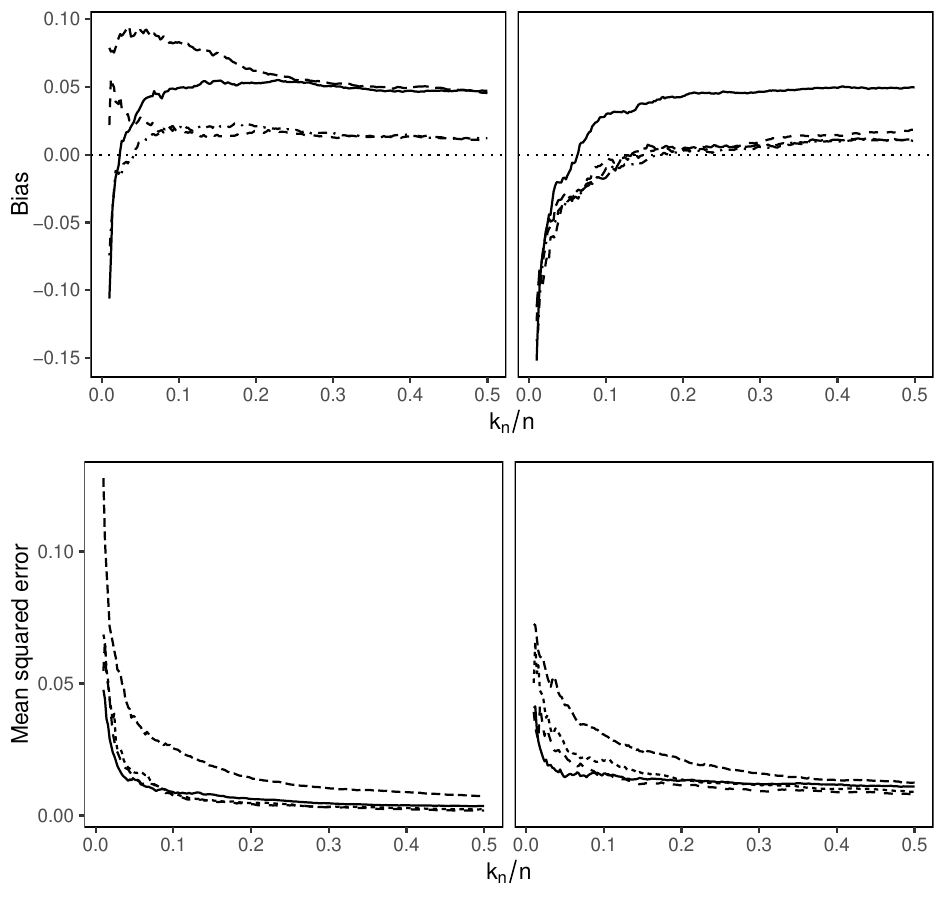}
\caption{
Top panels: Bias. Bottom panels: MSE.
We consider the conditional Fr\'echet model (low dependence (far left) and high dependence (center left)) $n = 10^3, 10^{4}$ in solid and dashed, respectively; number of simulations is $N = 500$.
We also consider the conditional Pareto model (low dependence (center right) and high dependence (far right)) with 
$n = 500, N = 200$, and optimal bandwidth of $\curly{X_j, Y_j}$ using least-squares cross validation (long dash), optimal bandwidth of $X$ (dot dash), optimal bandwidth of upper $k_n$ concomitants of $X$ (dash) and $h_n =\sqrt{\log(k_{n})/n}$ (solid).} 

\label{fig:Frechet_Pareto}
\end{figure}
For the CSGMS setting c), we observe a strong positive bias  for $k_n/n$ away from zero for high dependence and low sample size. Increasing the sample size dramatically reduces the bias and MSE for the high degree of dependence, to the point where the high dependence scenario catches up to the performance of the low dependence scenario.

Finally, for the long memory setting d), which also falls in the scope of our asymptotic results, the estimator performs rather well with a negative bias for high quantiles, and positive bias for lower quantiles. In both cases the MSE  stabilises as the sample size increases, and the negative bias at top quantiles diminishes. This suggests that the conditional Hill estimator can possibly be studied for time series with even stronger temporal dependence, albeit with possibly other techniques than the ones proposed in this paper.
\begin{figure}[hbt!]
\centering
\includegraphics[width=0.39\textwidth,trim={0.6cm 0cm 0cm 0cm},clip]{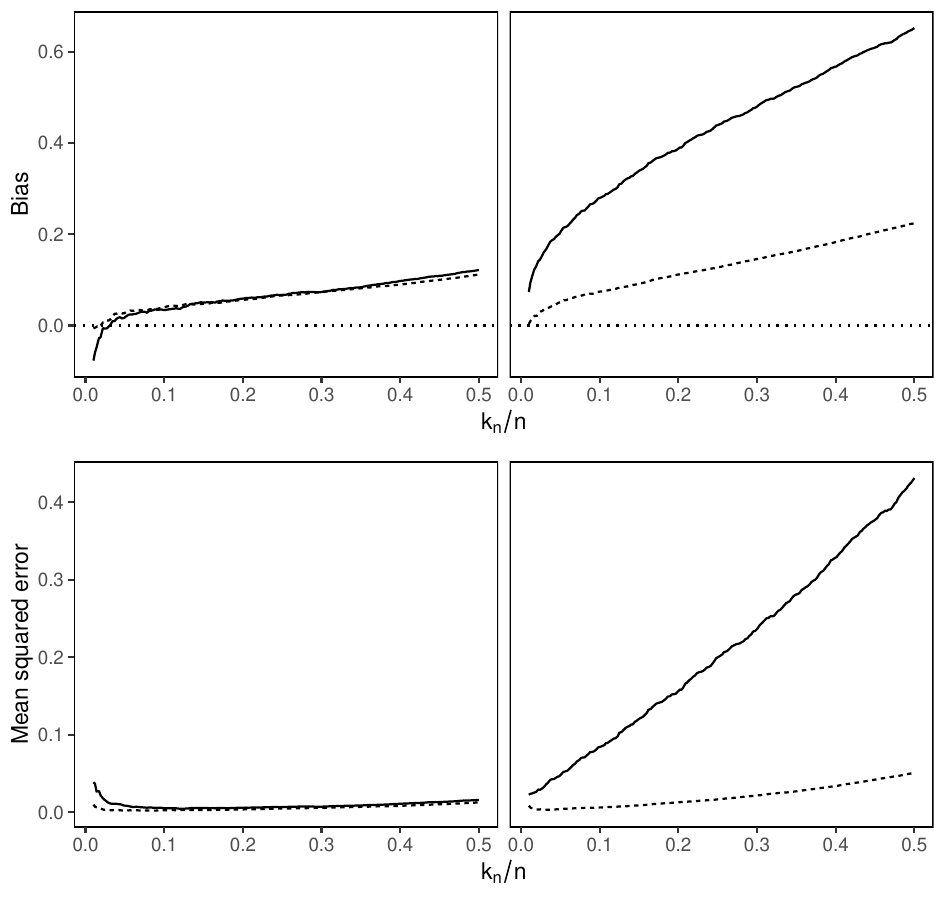}
\includegraphics[width=0.39\textwidth,trim={0.6cm 0cm 0cm 0cm},clip]{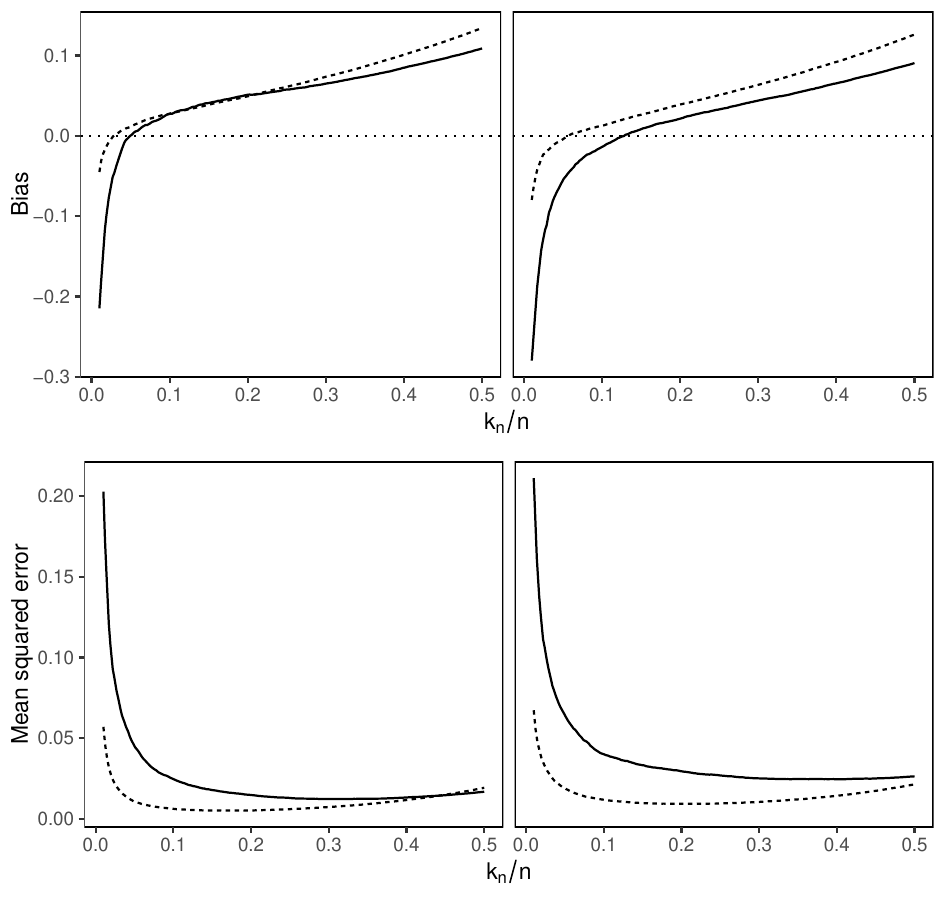}
\caption{
Top panels: Bias. Bottom panels: MSE. Sample sizes $n = 10^3, 10^{4}$ in solid and dashed, respectively; number of simulations is $N = 500$.
We consider the CSGMS model (low dependence (far left) and high dependence (center left)), and the conditional Fr\'echet model constructed from an underlying ARFIMA process (low dependence (center right) and high dependence (far right)).}
\label{fig:MS_Arfima}
\end{figure}

\section{Real data Analysis}\label{section:real}

\subsection{Description of dataset}
The West Texas Intermediate (WTI) is a grade of crude oil and is considered one of the main benchmarks in oil pricing. We are interested to examine whether the spot price of WTI is conditionally regularly varying given the Crude Oil Volatility Index (CBOE OVX) and, if this is the case, how the tail index is influenced by different levels of implied volatility.
The CBOE OVX is constructed by applying the VIX (Chicago Board Options Exchange's Volatility Index) methodology (see~\cite{whaley2000investor}) to options on the United States Oil Fund (USO), using a weighted average of mid-quote prices from calls and puts across a range of strike prices. This aggregated implied volatility is extrapolated to a 30‐day horizon to provide a forward-looking measure of the expectation of the market of price fluctuations and is expressed in percent.

Now, as USO is designed to track the WTI price using future contracts, the OVX provides a forward looking market expectation of the WTI price volatility. Thus, in the case of regularly varying marginal distributions of the WTI, we expect that the tail index varies as a function of OVX. While the OVX is mean-reverting and can be considered stationary (see~\cite{chen2018predictive}), oil prices are generally not stationary, and are consequently often transformed to log-returns (see~\citet{hamilton1983oil,sadorsky1999oil}) to achieve stationarity. We follow this approach. The corresponding time series are depicted in Figure~\ref{TS_WTI_CBOE}. Note how high activity periods such as the financial crisis (2008), oil price collapse due to US and OPEC oil war (2015), covid period (2020-2021), and the Russian and Ukrainian conflict (2022), are visually observable. Remark that the WTI price attained a negative value on the 20th of April 2020, however this observation has been removed in order to transform with logarithms.

\begin{figure}[hbt!] 
    \centering
    \includegraphics[width=0.75\textwidth]{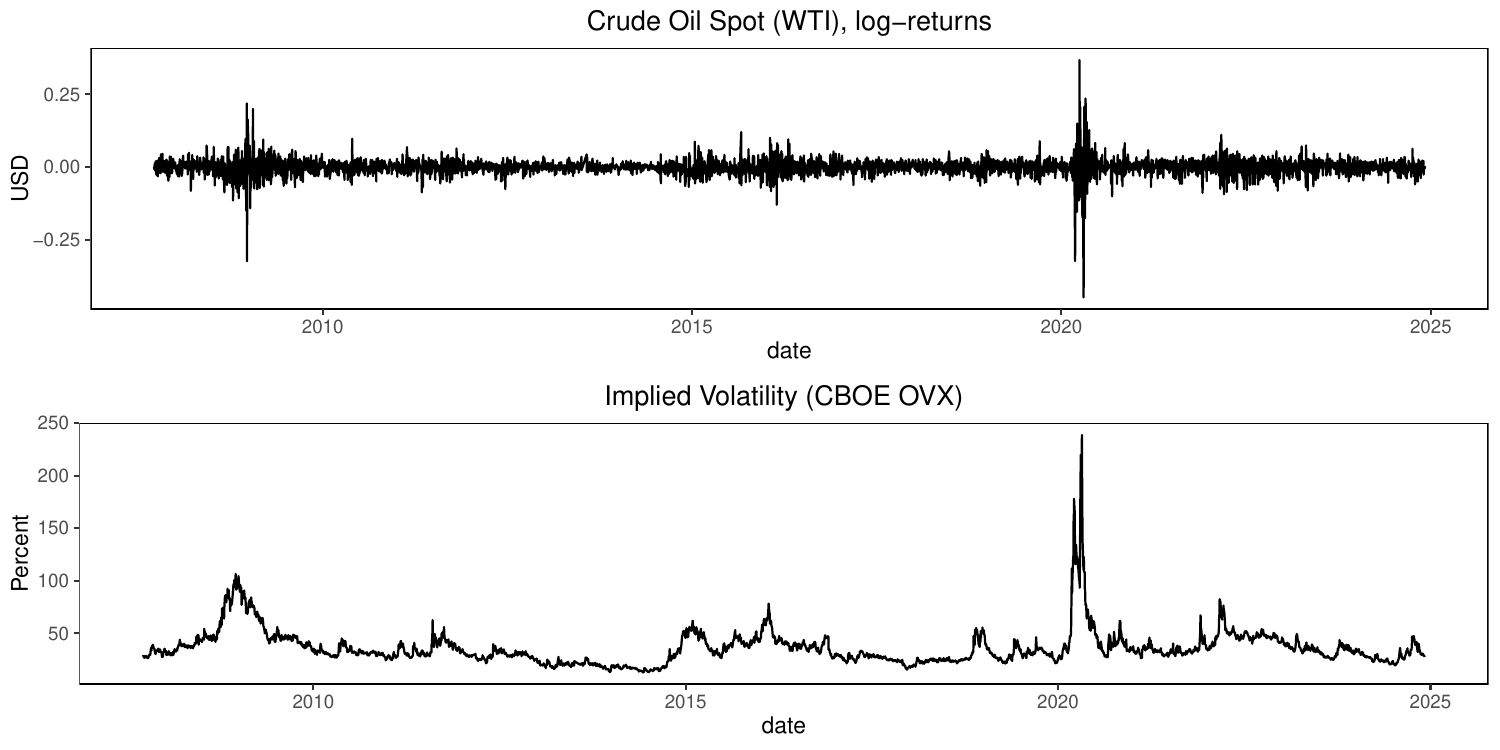} 
    \caption{Log-returns of the WTI spot price (top) and oil volatility index CBOE OVX (bottom) from 2007 to end of 2024.}
    \label{TS_WTI_CBOE}
\end{figure}

\subsection{Analysis of crude oil spot price}
As seen from Figure~\ref{TS_WTI_CBOE}, the WTI log-returns attain both positive and negative values. By construction, the conditional Hill estimator is applicable only for positive $\curly{Y_j}$. Splitting the bivariate time series $\curly{X_j, Y_j}$ according to where $\curly{Y_j}$ is positive and negative respectively, and switching the sign of the negative parts, results in two bivariate time series with positive $\curly{Y_j}$ components. The two time series are analysed separately and are seen to exhibit different tail behaviour, which allows for a more detailed analysis.

The covariate process OVX is, as in the simulation study, transformed to have uniform margins. Prior to employing our proposed conditional Hill estimator, we preliminarily check that the bivariate time series (positive or negative side) is in fact conditionally regularly varying. For this purpose, we subsample the WTI log-returns for different values of (the uniformly transformed) OVX.
This then results in the Pareto QQ-plots depicted in Figure~\ref{fig:ACF_plot}, where the largest $k_n=n/2$ observations are used. We clearly observe linearity in the QQ-plots, which suggests conditional regular variation. We can also already note that for both the positive and negative sides of the WTI log-returns, the slope is steepest for large implied volatility values.

\begin{figure}[hbt!] 
    \centering
    \includegraphics[width=.30\textwidth]{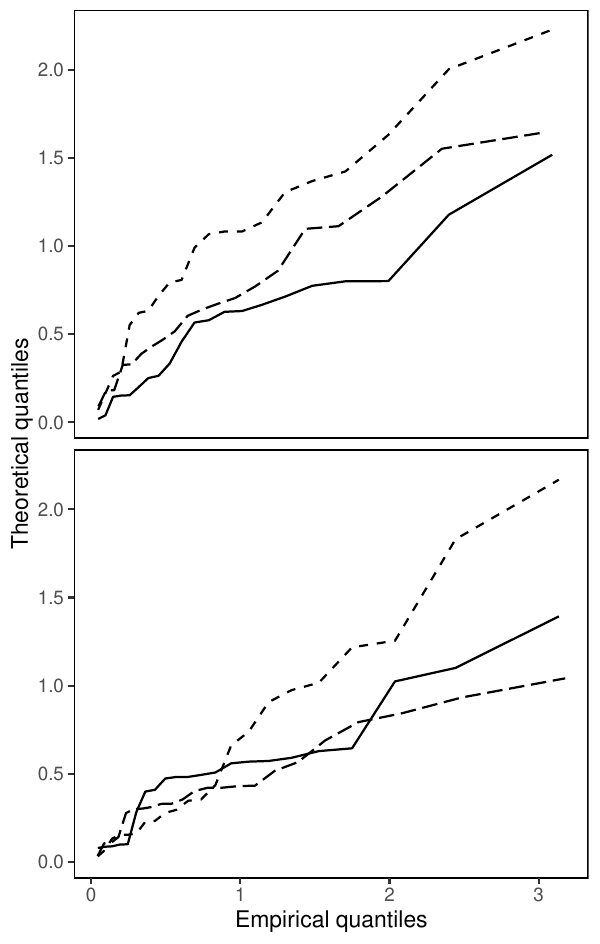} 
    \includegraphics[width=.5\textwidth]{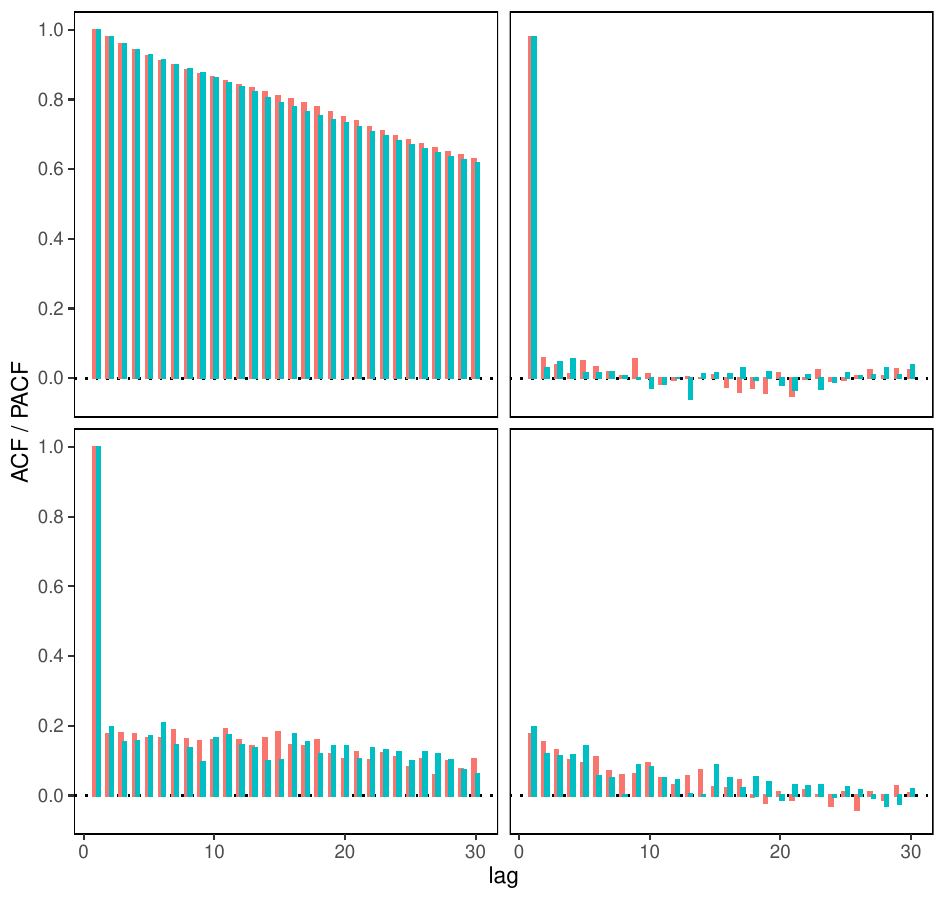} 
    \caption{Left panels: Pareto QQ-plots of the negative (top) and positive (bottom) WTI log-returns subsampled on uniformly transformed values of OVX in the intervals $\{[0.01, 0.03], [0.49, 0.51], [0.96, 0.98]\}$ in dashed, long-dashed and solid respectively.
    Four right panels: Auto-correlation plots (left) and partial auto-correlation plots (right) of the negative (red) and positive (blue) of CBOE OVX data (top) and WTI log-returns (bottom).}
    \label{fig:ACF_plot}
\end{figure}

\begin{figure}[hbt!]
    \centering
    \includegraphics[width=.65\textwidth]{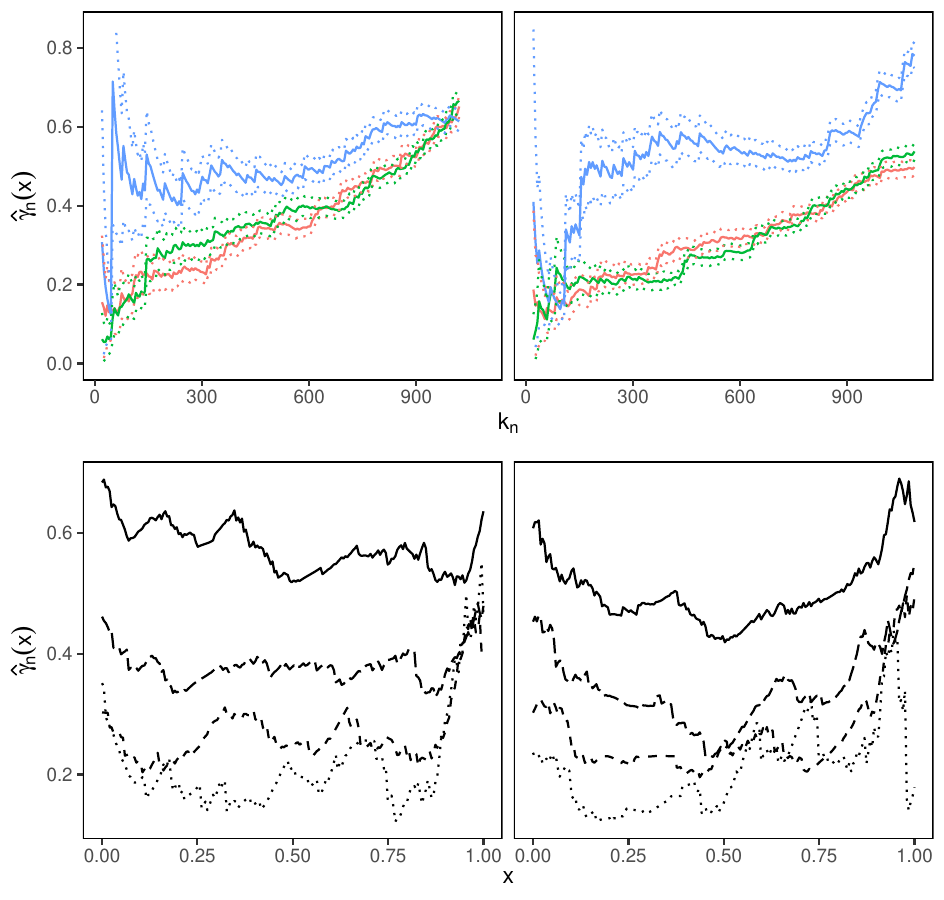} 
    \caption{Plots of $\widehat{\gamma}_{n}(x)$, using the negative (left) and positive (right) WTI log-returns as $(Y_{n})$ and the uniformly transformed OVX as $(X_{n})$ (covariate process). In the top panels we depict the conditional Hill estimator, as of function of $k_{n}$, i.e. the number of top order statistics included, for four chosen covariate values $x \in \{0.231, 0.402, 0.995\}$ (red, green, and blue, respectively). In the bottom panels $\widehat{\gamma}_{n}(x)$ is plotted as a function of $x$ for four fixed number of order statistics, $k_{n} \in \{80, 241, 552, 959\}$ (dotted, dashed, long-dashed and solid, respectively).}
    \label{fig:WTI_CBOE_Hill}
\end{figure}
In Figure~\ref{fig:WTI_CBOE_Hill} we consider the conditional Hill estimator of the WTI log-returns given the uniformly transformed OVX. We observe that the largest $\widehat{\gamma}_{n}(x)$ value is attained for both negative and positive sides of the WTI log-returns, when the implied volatility is large. This is in accordance with the common financial intuition that large implied volatility implies heavier tails (and thus fewer moments) for log-returns of markets on which it is based. Furthermore, when the implied volatility is large, the negative side of the WTI log-returns exhibits slightly heavier tails than the positive side. This has been documented for many assets (see~\citet{Christie_82}), where shock effects can create large upswings but even greater downswings. 
In addition, the confidence intervals that arise from Theorem~\ref{Hill_normality_thm} are depicted. For the both sides, the lower bound of the estimator corresponding to high volatility is separated from the upper bound of the of the estimators, corresponding to low and mid volatility, within the stable regions. This supports the claim the the the WTI log-returns change tail index according the implied volatility.
Since the conditional Hill estimator can be viewed as a function of the covariate value $x$ for a fixed sample fraction $k_{n}$ bivariately, we also plot these ``risk profiles", where slightly noisy right-skewed smiles can be seen, which indicates that the tail index is large when volatility is low and even larger when high.

\section{Conclusion and outlook}\label{section:conclusion}

We have shown that the conditional Hill estimator can be applied for a variety of time series models, in particular only $\alpha$-mixing being required to establish consistency. Numerical studies suggest good performance for both exponentially and polynomially decaying $\beta$-mixing coefficients, which is also supported by the theory. The estimator might also be asymptotically convergent for stronger dependence structures, which is subject of further investigation. 

Apart from considering stronger intertemporal dependence structures, our work also allows us to consider extensions in several directions. First, having an estimate of the entire tail at our disposal our results can be used to construct and statistically analyse conditional risk measures such as conditional expected shortfall or conditional tail moments. Secondly, a promising direction is to consider specific models where the same time series can be used as the covariate and response, but at different lags; Markov processes are particularly attractive in this connection. Finally,
removing top observations can sometimes lead to the construction of stable estimators that allow transparent threshold selection as in~\cite{bladt2020threshold}, which is yet to be explored both for univariate and conditional cases.

\newpage

\subsection*{Funding}
The authors were supported by the Carlsberg Foundation, grant CF23-1096.

\subsection*{Data availability statement}

The data were accessed through a subscription-based platform and are not publicly available.
\newpage



\bibliographystyle{imsart-nameyear} 
\bibliography{bibliography}       

@book{Kulik2020,
  title = {Heavy-Tailed Time Series},
  ISBN = {9781071607374},
  ISSN = {2197-1773},
  url = {http://doi.org/10.1007/978-1-0716-0737-4},
  DOI = {10.1007/978-1-0716-0737-4},
  journal = {Springer Series in Operations Research and Financial Engineering},
  publisher = {Springer New York},
  author = {Kulik,  Rafal and Soulier,  Philippe},
  year = {2020}
}

@book{vanderVaart2023,
  title = {Weak Convergence and Empirical Processes: With Applications to Statistics},
  ISBN = {9783031290404},
  ISSN = {2197-568X},
  url = {http://doi.org/10.1007/978-3-031-29040-4},
  DOI = {10.1007/978-3-031-29040-4},
  journal = {Springer Series in Statistics},
  publisher = {Springer International Publishing},
  author = {van der Vaart,  A. W. and Wellner,  Jon A.},
  year = {2023}
}

@article{HallWelsh1985,
author = {Peter Hall and A. H. Welsh},
title = {{Adaptive Estimates of Parameters of Regular Variation}},
volume = {13},
journal = {The Annals of Statistics},
number = {1},
publisher = {Institute of Mathematical Statistics},
pages = {331 -- 341},
keywords = {Adaptive estimator, order statistics, regular variation, scale parameter, shape parameter, Stable laws},
year = {1985},
doi = {10.1214/aos/1176346596}
}

@article{bladt2020threshold,
  title={Threshold selection and trimming in extremes},
  author={Bladt, Martin and Albrecher, Hansj{\"o}rg and Beirlant, Jan},
  journal={Extremes},
  volume={23},
  pages={629--665},
  year={2020},
  publisher={Springer},
doi={10.1007/s10687-020-00385-0}
}

@article{Rosenblatt1956ACL,
  title={A Central Limit Theorem and a Strong Mixing Condition},
  author={Murray Rosenblatt},
  journal={Proceedings of the National Academy of Sciences of the United States of America},
  year={1956},
  volume={42 1},
  doi={10.1073/pnas.42.1.43},
  pages={
          43-7
        },
  url={http://doi.org/10.1073/pnas.42.1.43}
}

@book{BillingsleyPatrick2009Copm,
doi = {10.1002/9780470316962},
author = {Billingsley, Patrick},
year = {2009},
edition = {2nd ed.},
isbn = {9780471197454},
keywords = {Convergence ; Metric spaces ; Probability measures},
language = {eng},
publisher = {Wiley-Blackwell},
series = {Wiley series in probability and statistics. Probability and statistics section},
title = {Convergence of probability measures},
url = {http://doi.org/10.1002/9780470316962}
}

@article{Volkonskii1959,
  title = {Some Limit Theorems for Random Functions. I},
  volume = {4},
  ISSN = {1095-7219},
  url = {http://doi.org/10.1137/1104015},
  DOI = {10.1137/1104015},
  number = {2},
  journal = {Theory of Probability \&; Its Applications},
  publisher = {Society for Industrial \& Applied Mathematics (SIAM)},
  author = {Volkonskii,  V. A. and Rozanov,  Yu. A.},
  year = {1959},
  month = jan,
  pages = {178–197}
}

@article{Bradley_05,
author = {Richard C. Bradley},
title = {{Basic Properties of Strong Mixing Conditions. A Survey and Some Open Questions}},
volume = {2},
journal = {Probability Surveys},
number = {none},
publisher = {Institute of Mathematical Statistics and Bernoulli Society},
pages = {107 -- 144},
keywords = {Stationary sequences, strong mixing conditions},
year = {2005},
doi = {10.1214/154957805100000104},
URL = {https://doi.org/10.1214/154957805100000104}
}

@article{Hill1975,
author = {Bruce M. Hill},
title = {{A Simple General Approach to Inference About the Tail of a Distribution}},
volume = {3},
journal = {The Annals of Statistics},
number = {5},
publisher = {Institute of Mathematical Statistics},
pages = {1163 -- 1174},
keywords = {Bayesian inference, order statistics, tail of distribution},
year = {1975},
doi = {10.1214/aos/1176343247},
URL = {https://doi.org/10.1214/aos/1176343247}
}

@book{Bingham1987,
  title = {Regular Variation},
  ISBN = {9780511721434},
  url = {http://doi.org/10.1017/CBO9780511721434},
  DOI = {10.1017/cbo9780511721434},
  publisher = {Cambridge University Press},
  author = {Bingham,  N. H. and Goldie,  C. M. and Teugels,  J. L.},
  year = {1987},
  month = jun 
}

@article{Daouia2010,
  title = {Kernel estimators of extreme level curves},
  volume = {20},
  ISSN = {1863-8260},
  url = {http://doi.org/10.1007/s11749-010-0196-0},
  DOI = {10.1007/s11749-010-0196-0},
  number = {2},
  journal = {TEST},
  publisher = {Springer Science and Business Media LLC},
  author = {Daouia,  Abdelaati and Gardes,  Laurent and Girard,  Stéphane and Lekina,  Alexandre},
  year = {2010},
  month = jun,
  pages = {311–333}
}

@article{Gannoun2002,
  title = {Reference curves based on non‐parametric quantile regression},
  volume = {21},
  ISSN = {1097-0258},
  url = {http://doi.org/10.1002/sim.1226},
  DOI = {10.1002/sim.1226},
  number = {20},
  journal = {Statistics in Medicine},
  publisher = {Wiley},
  author = {Gannoun,  Ali and Girard,  Stéphane and Guinot,  Christiane and Saracco,  Jér\^ome},
  year = {2002},
  month = oct,
  pages = {3119–3135}
}

@article{hamilton1983oil,
  title     = {Oil and the Macroeconomy since World War II},
  author    = {Hamilton, James D.},
  journal   = {Journal of Political Economy},
  volume    = {91},
  number    = {2},
  pages     = {228--248},
  year      = {1983},
  publisher = {The University of Chicago Press},
  doi       = {10.1086/261140},
  url       = {http://doi.org/10.1086/261140}
}

@article{sadorsky1999oil,
  title     = {Oil Price Shocks and Stock Market Activity},
  author    = {Sadorsky, Perry},
  journal   = {Energy Economics},
  volume    = {21},
  number    = {5},
  pages     = {449--469},
  year      = {1999},
  publisher = {Elsevier},
  doi       = {10.1016/S0140-9883(99)00020-1},
  url       = {http://doi.org/10.1016/S0140-9883(99)00020-1}
}

@article{Christie_82,
  title     = {The Stochastic Behavior of Common Stock Variances: Value, Leverage and Interest Rate Effects},
  author    = {Christie, Andrew A.},
  journal   = {Journal of Financial Economics},
  year      = {1982},
  volume    = {10},
  number    = {4},
  pages     = {407--432},
  doi       = {10.1016/0304-405X(82)90018-6},
  url       = {http://doi.org/10.1016/0304-405X(82)90018-6}
}

@article{Rootzen_2009,
  title     = {Weak Convergence of the Tail Empirical Process for Dependent Sequences},
  author    = {Rootzén, Holger},
  journal   = {Stochastic Processes and their Applications},
  volume    = {119},
  number    = {2},
  pages     = {468--490},
  year      = {2009},
  doi       = {10.1016/j.spa.2008.03.003},
  url       = {https://doi.org/10.1016/j.spa.2008.03.003}
}

@article{Hall_1982,
  author    = {Hall, Peter},
  title     = {On Some Simple Estimates of an Exponent of Regular Variation},
  journal   = {Journal of the Royal Statistical Society: Series B (Methodological)},
  volume    = {44},
  number    = {1},
  pages     = {37--42},
  year      = {1982},
  doi       = {10.1111/j.2517-6161.1982.tb01183.x},
  url       = {https://doi.org/10.1111/j.2517-6161.1982.tb01183.x}
}

@article{Davis_Resnick_1984,
author = {Richard Davis and Sidney Resnick},
title = {{Tail Estimates Motivated by Extreme Value Theory}},
volume = {12},
journal = {The Annals of Statistics},
number = {4},
publisher = {Institute of Mathematical Statistics},
pages = {1467 -- 1487},
keywords = {Intermediate order statistics, mean residual life, Pareto distributions, regular variation, Tail estimation},
year = {1984},
doi = {10.1214/aos/1176346804},
URL = {https://doi.org/10.1214/aos/1176346804}
}

@article{Haeusler_Teugels_1985,
author = {E. Haeusler and J. L. Teugels},
title = {{On Asymptotic Normality of Hill's Estimator for the Exponent of Regular Variation}},
volume = {13},
journal = {The Annals of Statistics},
number = {2},
publisher = {Institute of Mathematical Statistics},
pages = {743 -- 756},
keywords = {limit theorems, order statistics, Parameter estimation, regular variation},
year = {1985},
doi = {10.1214/aos/1176349551},
URL = {https://doi.org/10.1214/aos/1176349551}
}

@article{Csorgo_Mason_1985,
  author    = {S{\'a}ndor Cs{\"o}rg{\H{o}} and David M. Mason},
  title     = {Central Limit Theorems for Sums of Extreme Values},
  journal   = {Mathematical Proceedings of the Cambridge Philosophical Society},
  year      = {1985},
  volume    = {98},
  number    = {3},
  pages     = {547--558},
  doi       = {10.1017/S0305004100063751},
  url       = {https://doi.org/10.1017/S0305004100063751}
}

@article{HSING1991117,
title = {Estimating the parameters of rare events},
journal = {Stochastic Processes and their Applications},
volume = {37},
number = {1},
pages = {117-139},
year = {1991},
issn = {0304-4149},
doi = {https://doi.org/10.1016/0304-4149(91)90064-J},
url = {https://doi.org/10.1016/0304-4149(91)90064-J},
author = {Tailen Hsing},
abstract = {In this paper the estimation of certain parameters of the extreme order statistics of stationary observations is considered in a general framework. These parameters are resulted from dependence, and hence their inference is substantially different from similar considerations in the i.i.d. context. Variants of estimators that are functionals of the empirical ‘cluster size’ distribution are proposed, and such properties as consistency and asymptotic normality are studied. Special emphasis is given to estimating the extremal index.}
}

@article{Resnick_Starica_1997,
  author    = {Sidney I. Resnick and C{\u{a}}t{\u{a}}lin St{\u{a}}ric{\u{a}}},
  title     = {Asymptotic Behavior of Hill's Estimator for Autoregressive Data},
  journal   = {Communications in Statistics: Stochastic Models},
  volume    = {13},
  number    = {4},
  pages     = {703--721},
  year      = {1997},
  doi       = {10.1080/15326349708807448},
  url       = {https://doi.org/10.1080/15326349708807448}
}

@article{Daouia_Stupfler_Usseglio-Carleve_2023,
  author    = {Abdelaati Daouia and Gilles Stupfler and Antoine Usseglio-Carlev{\'e}},
  title     = {Inference for Extremal Regression with Dependent Heavy-Tailed Data},
  journal   = {The Annals of Statistics},
  volume    = {51},
  number    = {5},
  pages     = {2040--2066},
  year      = {2023},
  publisher = {Institute of Mathematical Statistics},
  doi       = {10.1214/23-AOS2320},
  url       = {https://doi.org/10.1214/23-AOS2320},
  keywords  = {Conditional expectiles, conditional quantiles, extreme value analysis, heavy tails, inference, mixing, nonparametric regression}
}

@article{Drees_2000,
author = {Holger Drees},
title = {{Weighted approximations of tail processes for $\beta$-mixing random variables}},
volume = {10},
journal = {The Annals of Applied Probability},
number = {4},
publisher = {Institute of Mathematical Statistics},
pages = {1274 -- 1301},
keywords = {$\beta$-mixing, ARCH-process, dependent, extreme value index, Hill estimator, invariance principle, statistical tail functional, stochastic difference equation, tail empirical distribution function, tail empirical quantile function, time series},
year = {2000},
doi = {10.1214/aoap/1019487617},
URL = {https://doi.org/10.1214/aoap/1019487617}
}

@techreport{Rootzen_1995,
  author      = {Rootzén, Holger},
  title       = {The Tail Empirical Process for Stationary Sequences,},
  institution = {Chalmers University},
  year        = {1995}
}

@article{Resnick_Starica_1995,
  author = {Sidney I. Resnick and Cătălin Stărică},
  title = {Consistency of Hill's Estimator for Dependent Data},
  journal = {Journal of Applied Probability},
  volume = {32},
  number = {1},
  doi   = {10.2307/3214926},
  pages = {139--167},
  year = {1995},
  publisher = {Applied Probability Trust},
  ISSN = {0021-9002},
  URL = {https://doi.org/10.2307/3214926},
  urldate = {2025-02-24},
  abstract = {Consider a sequence of possibly dependent random variables having the same marginal distribution F, whose tail 1-F is regularly varying at infinity with an unknown index $- \alpha < 0$ which is to be estimated. For i.i.d. data or for dependent sequences with the same marginal satisfying mixing conditions, it is well known that Hill's estimator is consistent for α-1 and asymptotically normally distributed. The purpose of this paper is to emphasize the central role played by the tail empirical process for the problem of consistency. This approach allows us to easily prove Hill's estimator is consistent for infinite order moving averages of independent random variables. Our method also suffices to prove that, for the case of an AR model, the unknown index can be estimated using the residuals generated by the estimation of the autoregressive parameters.}
}

@article{Wang01092009,
author = {Hansheng Wang and Chih-Ling Tsai},
title = {Tail Index Regression},
journal = {Journal of the American Statistical Association},
volume = {104},
number = {487},
pages = {1233--1240},
year = {2009},
publisher = {ASA Website},
doi = {10.1198/jasa.2009.tm08458},
URL = {https://doi.org/10.1198/jasa.2009.tm08458},
eprint = {https://doi.org/10.1198/jasa.2009.tm08458}
}

@article{Daouia_2013,
author = {Abdelaati Daouia and Laurent Gardes and St{\'e}phane Girard},
title = {{On kernel smoothing for extremal quantile regression}},
volume = {19},
journal = {Bernoulli},
number = {5B},
publisher = {Bernoulli Society for Mathematical Statistics and Probability},
pages = {2557 -- 2589},
keywords = {asymptotic normality, Extreme quantile, extreme-value index, kernel smoothing, regression, von-Mises condition},
year = {2013},
doi = {10.3150/12-BEJ466},
URL = {https://doi.org/10.3150/12-BEJ466}
}

@article{Sheather_Jones,
 ISSN = {00359246},
 URL = {https://doi.org/10.1111/j.2517-6161.1991.tb01857.x},
 author = {S. J. Sheather and M. C. Jones},
 journal = {Journal of the Royal Statistical Society. Series B (Methodological)},
 number = {3},
 pages = {683--690},
 publisher = {[Royal Statistical Society, Oxford University Press]},
 title = {A Reliable Data-Based Bandwidth Selection Method for Kernel Density Estimation},
 urldate = {2025-03-05},
 volume = {53},
 year = {1991}
}

@article{whaley2000investor,
  title={The Investor Fear Gauge},
  author={Whaley, Robert E.},
  journal={The Journal of Portfolio Management},
  volume={26},
  number={3},
  pages={12--17},
  year={2000},
  doi={https://doi.org/10.3905/jpm.2000.319728}
}

@article{chen2018predictive,
  title={The predictive content of the CBOE crude oil volatility index},
  author={Chen, Hao and Liu, Li and Li, Xiang},
  journal={Physica A: Statistical Mechanics and its Applications},
  volume={492},
  pages={837--850},
  year={2018},
  doi={https://doi.org/10.1016/j.physa.2017.11.014}
}

@article{Li_Racine,
 ISSN = {07350015},
 URL = {https://doi.org/10.1198/073500107000000250},
 abstract = {We propose a new nonparametric conditional cumulative distribution function kernel estimator that admits a mix of discrete and categorical data along with an associated nonparametric conditional quantile estimator. Bandwidth selection for kernel quantile regression remains an open topic of research. We employ a conditional probability density function-based bandwidth selector proposed by Hall, Racine, and Li that can automatically remove irrelevant variables and has impressive performance in this setting. We provide theoretical underpinnings including rates of convergence and limiting distributions. Simulations demonstrate that this approach performs quite well relative to its peers; two illustrative examples serve to underscore its value in applied settings.},
 author = {Qi Li and Jeffrey S. Racine},
 journal = {Journal of Business \& Economic Statistics},
 number = {4},
 pages = {423--434},
 publisher = {[American Statistical Association, Taylor \& Francis, Ltd.]},
 title = {Nonparametric Estimation of Conditional CDF and Quantile Functions with Mixed Categorical and Continuous Data},
 urldate = {2025-03-15},
 volume = {26},
 year = {2008},
 doi = {10.1198/073500107000000250}
}

@unpublished{GoegebeurGuillou2024,
  author       = {Goegebeur, Yuri and Guillou, Armelle},
  title        = {Conditional Extreme Quantile Estimation for Time Series},
  note         = {HAL preprint: hal-04673935},
  year         = {2024},
  url          = {https://hal.science/hal-04673935}
}

@book{Mikosch_Wintenberger,
title = "Extreme Value Theory for Time Series: Models with Power-Law Tails",
keywords = "big jump principle, cluster phenomena, Heavy-tail phenomena, Modeling extremal events, time series",
author = "Thomas Mikosch and Olivier Wintenberger",
year = "2024",
doi = "10.1007/978-3-031-59156-3",
language = "English",
series = "Springer Series in Operations Research and Financial Engineering",
publisher = "Springer Nature",
address = "Netherlands",
}

@article{Ibragimov1962,
  author    = {I. A. Ibragimov},
  title     = {Some Limit Theorems for Stationary Processes},
  journal   = {Theory of Probability and Its Applications},
  volume    = {7},
  number    = {4},
  pages     = {349--382},
  year      = {1962},
  doi       = {10.1137/1107036}
}


\newpage
\begin{appendix}
\section{Proofs of main results from Section~\ref{section:main}}\label{appA}
\subsection{Proof of Theorem~\ref{Empirical_tail_dist_consistency_theorem_m_depence} (Tail consistency, \texorpdfstring{$m$-dependence}{m-dependence})}

We begin by proving some preliminary results related to the process $\widetilde{V}_{n}^{x}$.
\begin{lemma} \label{Limit_of_tail_process}
    Assume conditions~\ref{Assu:gamma_lip}-\ref{Assu:L_lip}. Then 
    \begin{align*}
        \E{\widetilde{V}^{x}_{n}(s)}  \to s^{-1/\gamma(x)}g(x), \ s > 0.
    \end{align*}
\end{lemma}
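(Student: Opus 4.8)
The plan is to make the expectation completely explicit and then pass to the limit factor by factor. By stationarity,
\[
\E{\widetilde{V}^{x}_{n}(s)} = \frac{1}{h_n \overline{F}^x(u_n)}\,\E{K\!\left(\frac{x-X_0}{h_n}\right)1_{\{Y_0 > su_n\}}},
\]
and conditioning on $X_0$ (whose density is $g$) and substituting $z = x - h_n w$, so that the support of $K$ confines $w$ to $[-1,1]$, gives
\[
\E{\widetilde{V}^{x}_{n}(s)} = \int_{-1}^{1} K(w)\,\frac{\overline{F}^{\,x - h_n w}(su_n)}{\overline{F}^{x}(u_n)}\,g(x - h_n w)\,\mathrm{d}w .
\]
Since $u_n\to\infty$ and $h_n\log u_n\to0$ force $h_n\to0$, continuity of $g$ gives $g(x-h_nw)\to g(x)$ uniformly in $w\in[-1,1]$, and $x-h_nw\in U$ for $n$ large, so everything below is well defined.

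The heart of the proof is to show $\overline{F}^{\,x - h_n w}(su_n)/\overline{F}^{x}(u_n) \to s^{-\alpha_x}$ uniformly in $w\in[-1,1]$. Writing $\overline{F}^{u}(y)=L^{u}(y)y^{-\alpha_u}$ and inserting the factors $L^{x}(su_n)$ and $u_n^{-\alpha_x}$, I would decompose the ratio as
\[
\frac{L^{\,x-h_n w}(su_n)}{L^{x}(su_n)}\cdot\frac{L^{x}(su_n)}{L^{x}(u_n)}\cdot s^{-\alpha_{x - h_n w}}\cdot u_n^{\,\alpha_x - \alpha_{x - h_n w}}
\]
and treat the four factors in turn. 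The second tends to $1$ by slow variation of $L^{x}$. The third tends to $s^{-\alpha_x}$ uniformly in $w$ because $|\alpha_{x-h_nw}-\alpha_x|\le c_\alpha h_n|w|\to0$ by \ref{Assu:alpha_lip}. For the fourth, $|(\alpha_x-\alpha_{x-h_nw})\log u_n|\le c_\alpha h_n\log u_n\to0$ by \ref{Assu:h_n_log_u_n}, so it tends to $1$ uniformly. The first factor is where \ref{Assu:L_lip} is used: for $n$ large one has $su_n\ge y_0$, hence $|\log L^{\,x-h_nw}(su_n)-\log L^{x}(su_n)|\le c_L h_n|w|\log(su_n)\le c_L h_n(\log u_n+\log s)\to0$ uniformly in $w$, so this factor also tends to $1$.

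Finally, combining the uniform convergence of the ratio to $s^{-\alpha_x}$ (which in particular makes it uniformly bounded in $n$ and $w$) with the uniform convergence $g(x-h_nw)\to g(x)$ and the boundedness of $g$ near $x$, the integrand converges to $s^{-\alpha_x}g(x)K(w)$ uniformly on $[-1,1]$; since $K$ is a density, bounding the integral by the supremum of the integrand difference times $\int K =1$ yields $\E{\widetilde{V}^{x}_{n}(s)}\to s^{-\alpha_x}g(x)$. The only genuinely delicate point is controlling the ratio of slowly varying functions at two different conditioning values simultaneously with the shift of the tail index in the exponent; conditions \ref{Assu:L_lip} and \ref{Assu:h_n_log_u_n} are calibrated exactly so that both the $L$-ratio and the $u_n^{\alpha_x-\alpha_{x-h_nw}}$ term are negligible, and once these are in hand the remainder is routine.
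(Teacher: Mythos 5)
Your proposal is correct and follows essentially the same route as the paper's proof: condition on $X_0$, substitute to put the integral over the kernel's support, and control the slowly varying ratio, the tail-index shift $u_n^{\alpha_x-\alpha_{x-h_nw}}$, and the density shift uniformly in $w$ via conditions~\ref{Assu:L_lip}, \ref{Assu:alpha_lip} with \ref{Assu:h_n_log_u_n}, and smoothness of $g$, respectively. The only cosmetic differences are that the paper factors $\overline{F}^x(su_n)/\overline{F}^x(u_n)\to s^{-\alpha_x}$ out in front and concludes by dominated convergence rather than by uniform convergence on $[-1,1]$.
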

\begin{proof}
    If $g(x) > 0$ it holds that
    \begin{align*}
\E{\widetilde{V}^{x}_{n}(s)} =
         g(x)\frac{\overline{F}^{x}(su_{n}^{x})}
         {\overline{F}^{x}(u_{n}^{x})}
         \int 
          (su_{n}^{x})^{-(1/\gamma(x-uh_{n}) -1/\gamma(x))}
         \frac{L^{x-uh_{n}}(s u_{n}^{x})}
         {L^{x}(su_{n}^{x})} 
        \frac{g(x - h_{n}u)}{g(x)}   K\p{u}
         \mathrm{d}u. 
    \end{align*}
    The expression before the integral converges to the desired limit, and hence it remains to show that the integral converges to $1$. We consider each of the three $n$-dependent factors in the integral
    \begin{align*}
        &(su_{n}^{x})^{-(1/\gamma(x-uh_{n}) -1/\gamma(x))} =
        \exp\left(-(1/\gamma(x-uh_{n}) -1/\gamma(x))\log(su_{n}^{x})\right)\\
        &\leq 
        \exp\left(c_{\gamma}h_n\lvert u\rvert \log(su_{n}^{x})\right) 
        =
         \exp\left(c_{\gamma}\lvert u\rvert h_n\log(u_{n}^{x})\right) 
          \exp\left(c_{\gamma}h_n\lvert u\rvert \log(s)\right) \to 1,
    \end{align*}
    and similarly 
    \begin{align*}
        (su_{n}^{x})^{-(1/\gamma(x-uh_{n}) -1/\gamma(x))} \geq
         \exp\left(-c_{\gamma}\lvert u\rvert h_n\log(u_{n}^{x})\right) 
          \exp\left(-c_{\gamma}h_n\lvert u\rvert \log(s)\right) \to 1,
    \end{align*}
    uniformly in $u \in [-1,1]$, using condition~\ref{Assu:gamma_lip} and $h_n\log(u_{n}^{x}) \to 0$.
    Now, condition~\ref{Assu:L_lip} implies
    \begin{align*}
        \exp\big(-\log(su_{n}^{x}) h_n c_{L}\left\vert u \right\vert\big)\le \frac{L^{x-uh_{n}}(s u_{n}^{x})}
         {L^{x}(su_{n}^{x})} 
         \leq \exp\big(\log(su_{n}^{x}) h_n c_{L}\left\vert u \right\vert\big)
    \end{align*}
    and both sides converge to $1$, uniformly in $u \in [-1,1]$. 
    The density $g$ is assumed twice continuously differentiable which implies Lipschitz continuity, so that 
    \begin{align*}
        1 - \frac{ c_{g}\lvert u \rvert h_n}{g(x)}\le \frac{g(x - h_{n}u)}{g(x)}
        \leq 
        1 + \frac{ c_{g}\lvert u \rvert h_n}{g(x)}
    \end{align*}
      and both sides converge to $1$, uniformly in $u \in [-1,1]$.\\
    By the dominated convergence theorem get $\mathbb{E}[\widetilde{V}^{x}_{n}(s)] \to g(x)s^{-1/\gamma(x)}$. If $g(x) = 0$, similar arguments, using Lipschitz continuity of $g$, yield the result.
\end{proof}

\begin{lemma} \label{Expected_indicator_kernel_lemma}
    Assume conditions~\ref{Assu:gamma_lip}-\ref{Assu:L_lip}. For any $t,q > 0$ it holds that
    \begin{align*}
        \frac{\E{1_{\curly{Y > u_{n}^{x}t}}K^{q}\p{\frac{x-X}{h_n}}}}{h_{n}\overline{F}^{x}(u_{n}^{x})}
        \to t^{-1/\gamma(x)}g(x)\int K^{q}\p{u}\mathrm{d}u.
    \end{align*}
\end{lemma}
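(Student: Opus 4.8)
The plan is to mimic the proof of Lemma~\ref{Limit_of_tail_process}, since this statement is essentially the same computation with an extra power $q$ on the kernel. First I would write out the expectation explicitly by conditioning on $X$ and using \eqref{def:condRV}: with the substitution $X = x - h_n u$,
\begin{align*}
    \E{1_{\curly{Y > u_n t}}K^{q}\p{\tfrac{x-X}{h_n}}}
    = h_n \int \overline{F}^{x - h_n u}(u_n t)\, g(x - h_n u)\, K^{q}(u)\,\mathrm{d}u,
\end{align*}
where the support of $K$ restricts the integral to $u \in [-1,1]$. Dividing by $h_n \overline{F}^{x}(u_n)$ gives
\begin{align*}
    \frac{\E{1_{\curly{Y > u_n t}}K^{q}\p{\tfrac{x-X}{h_n}}}}{h_n \overline{F}^{x}(u_n)}
    = \int \frac{\overline{F}^{x - h_n u}(u_n t)}{\overline{F}^{x}(u_n)}\, g(x - h_n u)\, K^{q}(u)\,\mathrm{d}u.
\end{align*}

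Next I would decompose the ratio inside the integral exactly as in Lemma~\ref{Limit_of_tail_process}, writing
\begin{align*}
    \frac{\overline{F}^{x - h_n u}(u_n t)}{\overline{F}^{x}(u_n)}
    = \frac{\overline{F}^{x}(u_n t)}{\overline{F}^{x}(u_n)}\,
      (u_n t)^{-(\alpha_{x - h_n u} - \alpha_x)}\,
      \frac{L^{x - h_n u}(u_n t)}{L^{x}(u_n t)},
\end{align*}
so that the prefactor $\overline{F}^{x}(u_n t)/\overline{F}^{x}(u_n) \to t^{-\alpha_x}$ by regular variation of $\overline{F}^{x}$, and the remaining three $n$-dependent factors — the power term controlled by condition~\ref{Assu:alpha_lip} together with \ref{Assu:h_n_log_u_n} ($h_n \log u_n \to 0$), the slowly-varying ratio controlled by condition~\ref{Assu:L_lip}, and $g(x - h_n u)/g(x)$ controlled by Lipschitz continuity of $g$ — each converge to $1$ uniformly in $u \in [-1,1]$. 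Then dominated convergence (the integrand is bounded on $[-1,1]$ since $K$ is a bounded density and the factors are uniformly bounded for large $n$) yields
\begin{align*}
    \int \frac{\overline{F}^{x - h_n u}(u_n t)}{\overline{F}^{x}(u_n)}\, g(x - h_n u)\, K^{q}(u)\,\mathrm{d}u
    \to t^{-\alpha_x} g(x) \int K^{q}(u)\,\mathrm{d}u.
\end{align*}

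I expect no genuine obstacle here: the entire argument is a verbatim adaptation of Lemma~\ref{Limit_of_tail_process}, the only change being the replacement of $K(u)$ by $K^q(u)$, which is still bounded and integrable on $[-1,1]$ and plays no role in any of the limiting estimates. The mildly delicate point — as in the earlier lemma — is justifying that the three correction factors converge \emph{uniformly} in $u$, but this is exactly what conditions~\ref{Assu:alpha_lip}, \ref{Assu:h_n_log_u_n} and \ref{Assu:L_lip} are designed to give, via the elementary exponential bounds already displayed in the proof of Lemma~\ref{Limit_of_tail_process}. If one wished to avoid repetition, the cleanest presentation would simply note that replacing $K$ by $K^q$ in the proof of Lemma~\ref{Limit_of_tail_process} changes nothing, and invoke dominated convergence with the dominating function $\|K\|_\infty^{q-1} K \cdot C$ on $[-1,1]$ for a suitable constant $C$.
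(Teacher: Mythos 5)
Your proposal is correct and is exactly what the paper does: its entire proof of this lemma reads ``Calculations similar to those of Lemma~\ref{Limit_of_tail_process} yield the result,'' and your write-up is precisely that adaptation, with the same factorisation of $\overline{F}^{x-h_nu}(u_nt)/\overline{F}^{x}(u_n)$ into the regularly varying prefactor and the three correction terms controlled by conditions~\ref{Assu:alpha_lip}, \ref{Assu:h_n_log_u_n} and \ref{Assu:L_lip}. The observation that replacing $K$ by $K^{q}$ changes nothing in the limiting estimates is exactly the intended argument.
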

\begin{proof}
    Calculations similar to those of Lemma~\ref{Limit_of_tail_process} yield the result.
\end{proof}

\begin{proof}[Proof of Theorem~\ref{Empirical_tail_dist_consistency_theorem_m_depence}]
    By assumption $g_{n}(x) \overset{\P}{\to} g(x)$, and hence, using the relation \newline $\widetilde{T}^{x}_{n}(s) = \widetilde{V}^{x}_{n}(s)/g_{n}(x)$, it remains to show that $\widetilde{V}^{x}_{n}(s)  \overset{\P}{\to} g(x)s^{-1/\gamma(x)}$.\\
    Let $m \in \N$ be given
    and split  $\widetilde{V}^{x}_{n}(\cdot)$ into a main component, which is a sum of independent terms, and a remainder term
    \begin{align*}
        \widetilde{V}^{x}_{n}(s) = 
        \frac{1}{m}\sum_{i = 1}^{m}\widetilde{V}^{x, i}_{n}(s) + R_{n}(s), 
    \end{align*}
    where 
    \begin{align*}
        \widetilde{V}^{x, i}_{n}(s) &= 
        \frac{m}{nh_{n} \overline{F}^{x}(u_{n}^{x})}
        \sum_{j=1}^{\floor{\frac{n}{m}}}K\left(\frac{x-X_{(j-1)m + i}}{h_n}\right)1_{\left\{Y_{(j-1)m + i} > su_{n}^{x}\right\}}, \\
        R_{n}(s) &= 
        \frac{1}{nh_{n} \overline{F}^{x}(u_{n}^{x})}
    \sum_{j=\floor{\frac{n}{m}}m}^{n}K\left(\frac{x-X_j}{h_n}\right)1_{\left\{Y_j > su_{n}^{x}\right\}}.
    \end{align*}
    Stationarity and Lemma~\ref{Expected_indicator_kernel_lemma} yields that
    \begin{align*}
        \E{\widetilde{V}^{x, i}_{n}(s)} = 
        \frac{m}{n}\floor{\frac{n}{m}}
        \frac{\E{1_{\curly{Y > u_{n}^{x}s}}K\p{\frac{x-X}{h_n}}}}{h_{n} \overline{F}^{x}(u_{n}^{x})} \to 
        s^{-1/\gamma(x)}g(x),
    \end{align*}
    while $m$-dependence implies
    \begin{align*}
         &\Var \p{\widetilde{V}^{x, i}_{n}(s)} =
          \frac{\frac{m}{n}\floor{\frac{n}{m}} m}{nh_{n} \overline{F}^{x}(u_{n}^{x})}
          \Bigg[\frac{\E{1_{\curly{Y > u_{n}^{x}s}}K^{2}\p{\frac{x-X}{h_n}}}}{h_{n}\overline{F}^{x}(u_{n}^{x})} -
           \frac{\E{1_{\curly{Y > u_{n}^{x}s}}K\p{\frac{x-X}{h_n}}}^2}{h_{n}\overline{F}^{x}(u_{n}^{x})}\Bigg],
    \end{align*}
    which vanishes by Lemma~\ref{Expected_indicator_kernel_lemma}. Negligibility of the remainder term follows from
    \begin{align*}
    &\E{R_{n}(s)} = \left(n- \left\lfloor\frac{n}{m}\right\rfloor m\right)\frac{1}{n} \frac{\E{K\left(\frac{x-X}{h_n}\right)1_{\left\{Y > su_{n}^{x}\right\}}}}{h_{n} \overline{F}^{x}(u_{n}^{x})} \to 0, 
    \\
    &\E{R_{n}(s)^2} 
    = \frac{1}{\left(nh_{n} \overline{F}^{x}(u_{n}^{x})\right)^2} 
    \biggl( \sum_{j=\left\lfloor\frac{n}{m}\right\rfloor m}^{n} \E{K^2\left(\frac{x-X_j}{h_n}\right)1_{\left\{Y_j > su_{n}^{x}\right\}}} 
    \\
    &\qquad \qquad\quad \ \ + 2\sum_{j=\left\lfloor\frac{n}{m}\right\rfloor m}^{n}\sum_{i=j}^{n} \E{1_{\left\{Y_i > u_{n}^{x}s\right\}}K\left(\frac{x-X_i}{h_n}\right)1_{\left\{Y_j > u_{n}^{x}s\right\}}K\left(\frac{x-X_j}{h_n}\right)} \biggr) 
    \\
    &\leq \frac{\left(n-\left\lfloor\frac{n}{m}\right\rfloor m\right)}{\left(nh_{n} \overline{F}^{x}(u_{n}^{x})\right)^2} \biggl(  \E{K^2\left(\frac{x-X}{h_n}\right)1_{\left\{Y > su_{n}^{x}\right\}}}  
    + 2\left(n-\left\lfloor\frac{n}{m}\right\rfloor m\right) \sqrt{\E{1_{\left\{Y > u_{n}^{x}s\right\}}K^2\left(\frac{x-X}{h_n}\right)}^2} \ \biggr) 
    \\
    &= \frac{\left(n-\left\lfloor\frac{n}{m}\right\rfloor m\right)}{n^2 h_{n} \overline{F}^{x}(u_{n}^{x})} \frac{\E{1_{\left\{Y > u_{n}^{x}t\right\}}K^{2}\left(\frac{x-X}{h_n}\right)}}{h_{n}\overline{F}^{x}(u_{n}^{x})} \left(1 + 2\left(n-\left\lfloor\frac{n}{m}\right\rfloor m\right)\right) \to 0,
\end{align*}
    where we used the Cauchy--Schwarz inequality. Then Dini's theorem extends the convergence uniformly on compacts bounded away from zero. 
    This finishes the proof.
\end{proof}

\subsection{Proof of Theorem~\ref{Empirical_tail_dist_consistency_theorem} (Tail consistency, \texorpdfstring{$\alpha$-mixing}{alpha-mixing})}
\begin{proof}[Proof of Theorem~\ref{Empirical_tail_dist_consistency_theorem}]
By assumption $g_{n}(x) \overset{\P}{\to} g(x)$ and Lemma~\ref{Limit_of_tail_process} yields asymptotic unbiasedness of $\widetilde{V}_{n}^{x}$, and hence consistency of $\widetilde{V}_{n}^{x}$ follows if the variance is negligible as $n\to \infty$. We have
\begin{align*}
    \text{var}(\widetilde{V}_{n}^{x}(s))
    &=
    \frac{1}{(n h_{n} \overline{F}^{x}(u_{n}^{x}))^{2}}
    \Bigg(
    \sum_{j=1}^{n}  \E{K^{2}\left(\frac{x-X_{j}}{h_n}\right)1_{\left\{Y_{j} > su_{n}^{x}\right\}}}  - n^{2}\E{K\left(\frac{x-X_{0}}{h_n}\right)1_{\left\{Y_{0} > su_{n}^{x}\right\}}}^{2}  
    \\ &\qquad\qquad\qquad 
    + 2n  \sum_{j=1}^{n} \left(1-\frac{j}{n}\right) \E{K\left(\frac{x-X_{0}}{h_n}\right)1_{\left\{Y_{0} > su_{n}^{x}\right\}}K\left(\frac{x-X_{j}}{h_n}\right)1_{\left\{Y_{j} > su_{n}^{x}\right\}}}
    \Bigg) \\
    &\leq
    \frac{1}{(n h_{n} \overline{F}^{x}(u_{n}^{x}))^{2}}
    \Bigg(
    \sum_{j=1}^{n}  \E{K^{2}\left(\frac{x-X_{j}}{h_n}\right)1_{\left\{Y_{j} > su_{n}^{x}\right\}}}\\
    &\quad 
    + 2n  \sum_{j=1}^{n} \text{cov}\left(K\left(\frac{x-X_{0}}{h_n}\right)1_{\left\{Y_{0} > su_{n}^{x}\right\}}, K\left(\frac{x-X_{j}}{h_n}\right)1_{\left\{Y_{j} > su_{n}^{x}\right\}} \right)
    \Bigg),
\end{align*}
where the first term vanishes  due to Lemma~\ref{Expected_indicator_kernel_lemma} and condition~\ref{Assu:n_h_F_limit}.
The leading term of the covariance behaves as
\begin{align*}
        &\frac{1}{h_{n} \overline{F}^{x}(u_{n}^{x})}\E{K\left(\frac{x-X_{0}}{h_n}\right)1_{\left\{Y_{0} > su_{n}^{x}\right\}}K\left(\frac{x-X_{j}}{h_n}\right)1_{\left\{Y_{j} > su_{n}^{x}\right\}}} \\
        &= h_{n}
        \int_{[-1,1]^2}  \biggl\{
         \frac{\CE{1_{\left\{Y_0 > su_{n}^{x}\right\}}
       1_{\left\{Y_j > su_{n}^{x}\right\}}}{X_0 = x + w_{1}h_n, X_j = x + w_{2}h_n}}
         {\overline{F}^{x}(u_{n}^{x})} \\
         & \qquad \qquad \qquad \qquad \qquad \qquad 
         \times g(x + w_{1}h_{n},x + w_{2}h_{n})K\left(w_{1}\right)K\left(w_{2}\right) \biggr\}
         \mathrm{d}(w_1, w_2) = \mathcal{O}(h_n),
\end{align*}
where the integral is bounded,  for $n$ large enough, due to condition~\ref{Assu:Expected_cross_indicator_limit} and smoothness of $g_{X_0, X_j}$. Now, Billingsley's inequality with the $\alpha$-mixing assumption yields that
\begin{align*}
    &\left|\text{cov}\left(K\left(\frac{x-X_{0}}{h_n}\right)1_{\left\{Y_{0} > su_{n}^{x}\right\}}, K\left(\frac{x-X_{j}}{h_n}\right)1_{\left\{Y_{j} > su_{n}^{x}\right\}} \right)\right| \\
    & \leq 4\alpha(j) 
    \norm{K\left(\frac{x-X_{0}}{h_n}\right)1_{\left\{Y_{0} > su_{n}^{x}\right\}}}_{\infty} 
    \norm{ K\left(\frac{x-X_{j}}{h_n}\right)1_{\left\{Y_{j} > su_{n}^{x}\right\}}}_{\infty} 
    =
    \mathcal{O}(\alpha(j)),
\end{align*}
using that the kernel is bounded.
Let $d_{n} = \lceil h_{n}^{-c_{x}} \rceil$ for some $c_{x} > 0$. Combing the above considerations yields that
\begin{align*}
    &\frac{2n}{(n h_{n} \overline{F}^{x}(u_{n}^{x}))^{2}}
    \left|\sum_{j=1}^{n} \text{cov}\left(K\left(\frac{x-X_{0}}{h_n}\right)1_{\left\{Y_{0} > su_{n}^{x}\right\}}, K\left(\frac{x-X_{j}}{h_n}\right)1_{\left\{Y_{j} > su_{n}^{x}\right\}} \right)\right| \\
    & \leq
    \frac{\text{cst}}{n (h_{n} \overline{F}^{x}(u_{n}^{x}))^{2}}
    \left(
    \sum_{j=1}^{d_n -1} h_{n}^{2} \overline{F}^{x}(u_{n}^{x}) + \sum_{j = d_n}^{\infty} j^{-\eta}
    \right) \leq
    \frac{\text{cst}}{n (h_{n} \overline{F}^{x}(u_{n}^{x}))^{2}}
    \left(
    d_n h_{n}^{2} \overline{F}^{x}(u_{n}^{x}) + d_{n}^{-\eta +1}
    \right)\\
    &=
    \text{cst}\left(\frac{d_{n}}{n\overline{F}^{x}(u_{n}^{x})} + \frac{d_{n}^{-\eta + 1}}{n (h_{n} \overline{F}^{x}(u_{n}^{x}))^{2}} \right)\sim
    \text{cst}\left(\frac{1}{n h_{n}^{c_{x}} \overline{F}^{x}(u_{n}^{x})} + \frac{h_{n}^{c_{x}(\eta - 1) -2}}{n (\overline{F}^{x}(u_{n}^{x}))^{2}} \right) \to 0.
\end{align*}
Once again, Dini's theorem extends the convergence uniformly on compacts bounded away from zero.
\end{proof}

\subsection{Proof of Theorem~\ref{Hill_Consistency_thm} (Hill consistency)}

\begin{proof}[Proof of Proposition~\ref{Empirical_quantile_consistency_prop}]
    Let $\varepsilon > 0$ be given and assume that $q_{n,k_{n}}^{x}/u_{n}^{x} > 1+\varepsilon$ for all $n \in \N$. Since $\overline{F}^{x}_{n}$ is decreasing we have 
    \begin{align*}
            \widetilde{T}^{x}_{n}\p{1+\varepsilon} = 
             \frac{ \overline{F}^{x}_{n}(\p{1+\varepsilon}u_{n}^{x})}
    { \overline{F}^{x}(u_{n}^{x})} \geq 
    \frac{ \overline{F}^{x}_{n}(q_{n,k_{n}}^{x})}
    { \overline{F}^{x}(u_{n}^{x})} 
    =
    \frac{ \overline{F}^{x}_{n}( F^{\leftarrow, x}_{n}\p{1-k_{n}/n})}
    { \overline{F}^{x}( F^{\leftarrow, x}\p{1-k_{n}/n})} 
    =
    1.
    \end{align*}
    In particular it holds that
    \begin{align*}
        \P\p{\frac{q_{n,k_{n}}^{x}}{u_{n}^{x}} > 1 + \varepsilon} 
        \leq
        \P\p{\widetilde{T}^{x}_{n}\p{1+\varepsilon}> 1}. 
    \end{align*}
    Since~\eqref{Empirical_tail_dist_consistency_equation} holds, then $\widetilde{T}^{x}_{n}\p{1+\varepsilon} \overset{\P}{\to} \p{1+\varepsilon}^{-1/\gamma(x)}$ and thus
    \begin{align*}
        \limsup_{n\to \infty} \P\p{\frac{q_{n,k_{n}}^{x}}{u_{n}^{x}} > 1 + \varepsilon} 
        \leq
        \lim_{n\to \infty}  \P\p{\widetilde{T}^{x}_{n}\p{1+\varepsilon}> 1} = 0.
    \end{align*}
    Using similar arguments we obtain that $\limsup_{n\to \infty} \P({q_{n,k_{n}}^{x}}/{u_{n}^{x}} < 1 - \varepsilon) = 0$.
\end{proof}
\begin{proof}[Proof of Theorem~\ref{Hill_Consistency_thm}]
    We consider the following decomposition
    \begin{align*}
     \widehat{\gamma}_{n}\p{x}
        =
        \int_{q_{n,k_{n}}^{x}/u_{n}^{x}}^{1}
        \frac{\widetilde{T}_{n}^{x}\p{s}}{s}\mathrm{d}s
        +
        \int_{1}^{\infty}
        \frac{\widetilde{T}_{n}^{x}\p{s}}{s}\mathrm{d}s.
    \end{align*}
    Proposition~\ref{Empirical_quantile_consistency_prop} yields that $q_{n,k_{n}}^{x}/u_{n}^{x} \overset{\P}{\to} 1$ and thus the first integral is $o_{\P}\p{1}$.\\
     The convergence of equation~\eqref{Empirical_tail_dist_consistency_equation} is uniform on compacts of $[1,\infty)$ and hence for all $t>1$ we have that
    \begin{align*}  
        \int_{1}^{t}
        \frac{\widetilde{T}_{n}^{x}\p{s}}{s}\mathrm{d}s \overset{\P}{\to}
        \int_{1}^{t}
        \frac{T^{x}\p{s}}{s}\mathrm{d}s, \ \  \text{as} \ n \to \infty.
    \end{align*}
    Since $\lim_{t \to \infty} \int_{1}^{t}
        (T^{x}(s)/s)\mathrm{d}s = \gamma (x)$, consistency of $\widehat{\gamma}_{n}$ follows if for every $\eta > 0$
    \begin{align*}
        \lim_{t \to \infty}\limsup_{n\to \infty}\P\p{  \int_{t}^{\infty}
        \frac{\widetilde{T}_{n}^{x}\p{s}}{s}\mathrm{d}s > \eta} = 0,
    \end{align*}
    Now, since
        \begin{align*}
            \lim_{t \to \infty} \int_{t}^{\infty}
        \frac{\widetilde{T}_{n}^{x}\p{s}}{s}\mathrm{d}s  
        =
         \frac{g(x)}{g_{n}(x)}\lim_{t \to \infty} \int_{t}^{\infty}
        \frac{\widetilde{V}_{n}^{x}\p{s}}{g(x)s}\mathrm{d}s,
        \end{align*}
        and $g(x)/g_{n}(x) \overset{\P}{\to} 1$, it suffices to show that for every $\eta >0$,
        \begin{align*}
        \lim_{t \to \infty}\limsup_{n\to \infty}\P\p{  \int_{t}^{\infty}
        \frac{\widetilde{V}_{n}^{x}\p{s}}{g(x)s}\mathrm{d}s > \eta} = 0.
    \end{align*}
    Fix $\eta > 0$. We get
     \begin{align*}
          &\P\p{  \int_{t}^{\infty}
        \frac{\widetilde{V}_{n}^{x}\p{s}}{g(x)s}\mathrm{d}s > \eta}
        \leq
        \frac{1}{\eta}
        \int_{t}^{\infty}
        \frac{\E{\widetilde{V}_{n}^{x}\p{s}}}{g(x)s}\mathrm{d}s \\
         &=
        \frac{1}{\eta}
        \int_{t}^{\infty}
        \frac{\overline{F}^{x}(su_{n}^{x})}
         {s\overline{F}^{x}(u_{n}^{x})}
         \int 
          (su_{n}^{x})^{-(1/\gamma(x-uh_{n}) -1/\gamma(x))}
         \frac{L^{x-uh_{n}}(s u_{n}^{x})}
         {L^{x}(su_{n}^{x})} 
        \frac{g(x - h_{n}u)}{g(x)}   K\p{u}
         \mathrm{d}u \mathrm{d}s
         \\
         &\leq 
        \frac{1}{\eta}
        \int_{t}^{\infty}
        \frac{\overline{F}^{x}(su_{n}^{x})}
         {s\overline{F}^{x}(u_{n}^{x})}
             \sup_{u \in [-1,1]}\left( 
          (su_{n}^{x})^{-(1/\gamma(x-uh_{n}) -1/\gamma(x))}
         \frac{L^{x-uh_{n}}(s u_{n}^{x})}
         {L^{x}(su_{n}^{x})} 
        \frac{g(x - h_{n}u)}{g(x)} \right)   \mathrm{d}s
        \\
         &\leq 
        \frac{1}{\eta}
        \int_{t}^{\infty}
        \frac{\overline{F}^{x}(su_{n}^{x})}
         {s\overline{F}^{x}(u_{n}^{x})}
             \sup_{u \in [-1,1]}\left(
        \frac{g(x - h_{n}u)}{g(x)} \right) 
         s^{ \left(c_{\gamma}+ c_{L}\right) h_n }
           (u_{n}^{x})^{\left(c_{\gamma}+ c_{L}\right) h_n }
        \mathrm{d}s
        \\
         &\leq 
        \frac{ \text{cst}}{\eta}
        \sup_{u \in [-1,1]}\left(
        \frac{g(x - h_{n}u)}{g(x)} \right)  
        (u_{n}^{x})^{\left(c_{\gamma}+ c_{L}\right) h_n }
        \int_{t}^{\infty}
         s^{-1/\gamma(x)+\varepsilon -1 + \left(c_{\gamma}+ c_{L}\right) h_n }
        \mathrm{d}s,
         \end{align*}
         for $\varepsilon \in (0, 1/\gamma(x))$, using Potter's bound. Let $\varepsilon' \in (0, 1/\gamma(x) - \varepsilon)$ be given. It is possible to find $n'$ such that $\left(c_{\gamma}+ c_{L}\right) h_n \leq \varepsilon'$ for all $n\geq n'$. Hence for $n\geq n'$ we have that the integrand $s \mapsto s^{-1/\gamma(x)+\varepsilon -1 + \left(c_{\gamma}+ c_{L}\right) h_n }$ is integrable and consequently
         \begin{align*}
            &\limsup_{n\to \infty}\P\p{  \int_{t}^{\infty}
            \frac{\widetilde{V}_{n}^{x}\p{s}}{g(x)s}\mathrm{d}s > \eta}
            \\&\leq
            \limsup_{n\to \infty}
            \left\{
            \frac{ \text{cst}}{\eta}
        \sup_{u \in [-1,1]}\left(
        \frac{g(x - h_{n}u)}{g(x)} \right)  
        (u_{n}^{x})^{\left(c_{\gamma}+ c_{L}\right) h_n }
        \int_{t}^{\infty}
         s^{-1/\gamma(x)+\varepsilon -1 + \left(c_{\gamma}+ c_{L}\right) h_n }\mathrm{d}s\right\} \\
         &=
         \mathcal{O}\p{t^{-1/\gamma(x) + \varepsilon + \varepsilon'}}, 
         \end{align*}
         which vanishes as $t \to \infty$.
\end{proof}

\subsection{Proof of Theorem~\ref{Thm:Functional_CLT_Tail_empirical_process_Random_level} (Functional CLT)}
Given the original sample $\curly{\p{X_1, Y_1},\ldots,\p{X_n, Y_n}}$, we consider a pseudo-sample 
\begin{align*}
    \curly{\p{X^{\dag}_1, Y^{\dag}_1},\ldots,\p{X^{\dag}_n, Y^{\dag}_n}},
\end{align*}
where the blocks $$\curly{\p{X^{\dag}_{(i-1)r_{n} +1}, Y^{\dag}_{(i-1)r_{n} +1}},\ldots,\p{X^{\dag}_{ir_{n}}, Y^{\dag}_{ir_{n}}}}, \ i = 1, \ldots , m_n, $$ are mutually independent and each have the same distribution as the first original sample block $\{(X_1, Y_1),\ldots,(X_{r_n}, Y_{r_n})\}$.\\
Define for every $s > 0$
\begin{align*}
    &\widetilde{V}^{\dag,x}_{n}(s) = 
    \frac{1}{nh_{n} \overline{F}^{x}(u_{n}^{x})}
    \sum_{i=1}^{m_n}
    \sum_{j=1}^{r_n}K\left(\frac{x-X^{\dag}_{(i-1)r_{n} +j}}{h_n}\right)1_{\left\{Y^{\dag}_{(i-1)r_{n} +j} > su_{n}^{x}\right\}}, \\
      & \widetilde{\mathbb{V}}^{\dag,x}_{n}(s) =  \sqrt{k_{n}h_{n}}\curly{\widetilde{V}^{\dag,x}_{n}(s) - \E{\widetilde{V}^{\dag,x}_{n}(s) }}.
\end{align*}
We establish weak convergence for $\widetilde{\mathbb{V}}^{\dag,x}_{n}$ and relate it to the original $\widetilde{\mathbb{V}}^{x}_{n}$ and in turn $\widetilde{\mathbb{T}}^{x}_{n}$.\\
The first step is to establish finite dimensional convergence for $\widetilde{\mathbb{V}}^{\dag,x}_{n}$.
\begin{lemma}\label{Lemma_covariance_of_V}
    Assume conditions~\ref{Assu:gamma_lip}-\ref{Assu:n_h_F_limit}, and~\ref{rate_condition}-\ref{anti_clustering_condition}. Then for all $s,t > 0$
    \begin{align}\label{Covariance_of_V}
        \lim_{n \to \infty} \Cov{\p{\widetilde{\mathbb{V}}^{\dag,x}_{n}(s), \widetilde{\mathbb{V}}^{\dag,x}_{n}(t)}} = \p{s \vee t}^{-1/\gamma(x)}g(x)\int K^{2}(u)\mathrm{d}u. 
    \end{align}
\end{lemma}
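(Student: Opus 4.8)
The plan is to compute the limiting covariance directly from the block structure, exploiting the mutual independence of the pseudo-blocks together with stationarity within each block. First I would write
\[
\Cov\p{\widetilde{\mathbb{V}}^{\dag,x}_{n}(s), \widetilde{\mathbb{V}}^{\dag,x}_{n}(t)}
= \frac{k_n h_n}{\p{nh_n\overline{F}^x(u_n)}^2}
\sum_{i=1}^{m_n}
\Cov\p{\sum_{j=1}^{r_n} Z_j^x(s), \sum_{j=1}^{r_n}Z_j^x(t)},
\]
where $Z_j^x(s) = K\p{\frac{x-X_j}{h_n}}1_{\{Y_j>su_n\}}$, using that cross-block covariances vanish by independence and that all $m_n$ blocks are identically distributed. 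Since $k_n = n\overline{F}^x(u_n)$ and $m_n r_n \sim n$, the prefactor times $m_n$ simplifies to roughly $\p{h_n\overline{F}^x(u_n)}^{-1}$ up to constants converging to $1$, so the problem reduces to evaluating $\p{h_n\overline{F}^x(u_n)}^{-1}\Cov\p{\sum_{j=1}^{r_n}Z_j^x(s),\sum_{j=1}^{r_n}Z_j^x(t)}$.

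Next I would expand that within-block covariance into a diagonal contribution and off-diagonal contributions. The diagonal term is $r_n\,\E{Z_0^x(s)Z_0^x(t)} - r_n\,\E{Z_0^x(s)}\E{Z_0^x(t)}$; the product $Z_0^x(s)Z_0^x(t) = K^2\p{\frac{x-X_0}{h_n}}1_{\{Y_0>(s\vee t)u_n\}}$, so Lemma~\ref{Expected_indicator_kernel_lemma} gives
\[
\frac{r_n\,\E{Z_0^x(s)Z_0^x(t)}}{h_n\overline{F}^x(u_n)} \to r_n\cdot(s\vee t)^{-\alpha_x}g(x)\int K^2(u)\,\dd u,
\]
wait — that is not right, since $r_n\to\infty$. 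The correct normalization is that each block contributes a single diagonal term per index, and summing over $i$ already accounts for the $m_n$ factor, so in fact the relevant quantity is $\p{h_n\overline{F}^x(u_n)}^{-1}$ times the \emph{per-observation} covariance summed over the block; after multiplying by $m_n$ and using $m_n r_n\sim n$, the diagonal piece yields exactly $(s\vee t)^{-\alpha_x}g(x)\int K^2(u)\,\dd u$ via Lemma~\ref{Expected_indicator_kernel_lemma}, while $r_n\,\E{Z_0^x(s)}\E{Z_0^x(t)} = \mathcal{O}\p{r_n h_n^2\overline{F}^x(u_n)^2}$, which after the normalization is $\mathcal{O}\p{r_n h_n\overline{F}^x(u_n)}\to 0$ by condition~\ref{rate_condition}.

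For the off-diagonal contributions I would bound
\[
\frac{1}{h_n\overline{F}^x(u_n)}\sum_{1\le j<j'\le r_n}\abs{\Cov\p{Z_j^x(s),Z_{j'}^x(t)}}
\le \frac{2}{h_n\overline{F}^x(u_n)}\sum_{j=1}^{r_n}\p{r_n-j}\abs{\Cov\p{Z_0^x(s),Z_j^x(t)}},
\]
and split the sum at a fixed $m$: for $j<m$ one uses the crude bound $\abs{\Cov\p{Z_0^x(s),Z_j^x(t)}}\le \E{K\p{\tfrac{x-X_0}{h_n}}K\p{\tfrac{x-X_j}{h_n}}1_{\{Y_0>su_n\}}1_{\{Y_j>tu_n\}}} + \E{Z_0^x(s)}\E{Z_j^x(t)}$, the first of which is $\mathcal{O}(h_n^2\overline{F}^x(u_n))$ by the bounded joint density $g_{X_0,X_j}$ and condition~\ref{Assu:Expected_cross_indicator_limit}, so after normalization each such term is $\mathcal{O}(h_n)\to 0$; for $m\le j\le r_n$ one invokes precisely the anti-clustering condition~\ref{anti_clustering_condition}, which states that $\limsup_n\sum_{j=m}^{r_n}\p{h_n\overline{F}^x(u_n)}^{-1}\E{K\p{\tfrac{x-X_0}{h_n}}1_{\{Y_0>su_n\}}K\p{\tfrac{x-X_j}{h_n}}1_{\{Y_j>tu_n\}}}\to 0$ as $m\to\infty$. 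Here one should carefully note that the $(r_n-j)/\text{(something)}$ weighting is at most $1$ after dividing by the appropriate factor coming from the $m_n$-sum, so condition~\ref{anti_clustering_condition} applies verbatim. The main obstacle I anticipate is exactly this bookkeeping: keeping track of how the factor $m_n$ from summing over blocks combines with the $r_n-j$ weights so that the off-diagonal sum is controlled by condition~\ref{anti_clustering_condition} in its stated form, and verifying that the $j<m$ ``boundary'' terms are genuinely negligible uniformly — the covariance-versus-second-moment distinction matters there, since only the product $\E{Z_0}\E{Z_j}$ part needs the $\mathcal{O}(h_n^2\overline{F}^x(u_n)^2)$ bound while the joint-indicator part needs condition~\ref{Assu:Expected_cross_indicator_limit}. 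Letting first $n\to\infty$ and then $m\to\infty$ then gives the claimed limit.
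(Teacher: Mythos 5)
Your proposal follows essentially the same route as the paper's proof: block independence plus stationarity reduce the problem to a single normalized within-block covariance, the diagonal term gives the limit via Lemma~\ref{Expected_indicator_kernel_lemma}, the product of means is $\mathcal{O}(r_nh_n\overline{F}^x(u_n))$ and vanishes by condition~\ref{rate_condition}, and the off-diagonal sum is split at a fixed $m$ with the near terms controlled by the bounded joint density together with condition~\ref{Assu:Expected_cross_indicator_limit} and the far terms by the anti-clustering condition~\ref{anti_clustering_condition}. The only differences are cosmetic (you bound off-diagonal covariances term by term while the paper keeps raw cross-moments and subtracts a single $r_n\E{\cdot}\E{\cdot}$ term), and your mid-stream normalization confusion is correctly self-resolved, so the argument is sound.
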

\begin{proof}
Independence between the blocks of the pseudo-sample and stationarity yield
    \begin{align*}
        &\Cov{\p{\widetilde{\mathbb{V}}^{\dag,x}_{n}(s), \widetilde{\mathbb{V}}^{\dag,x}_{n}(t)}} 
          = 
        \frac{m_{n}r_{n}}{nh_{n}\overline{F}^{x}(u_{n}^{x})}
        \biggl[  \E{ 1_{\curly{Y_{0} > \p{s \vee t}u_{n}^{x}}} K^{2}\left(\frac{x-X_{0}}{h_n}\right)}
        \\
        & \quad \quad - r_{n} \E{K\left(\frac{x-X_0}{h_n}\right)1_{\curly{Y_0 > su_{n}^{x}}}}
        \E{K\left(\frac{x-X_0}{h_n}\right)1_{\curly{Y_0 > tu_{n}^{x}}}} \\
        & \quad \quad + \sum_{j =1}^{r_n}\p{1-\frac{j}{r_n}}
        \biggl\{\E{K\left(\frac{x-X_0}{h_n}\right)1_{\left\{Y_0 > su_{n}^{x}\right\}}
       K\left(\frac{x-X_j}{h_n}\right)1_{\left\{Y_j > tu_{n}^{x}\right\}}} \\
       & \qquad \qquad \qquad \qquad \qquad +
       \E{K\left(\frac{x-X_0}{h_n}\right)1_{\left\{Y_0 > tu_{n}^{x}\right\}}
       K\left(\frac{x-X_j}{h_n}\right)1_{\left\{Y_j > su_{n}^{x}\right\}}}
       \biggr\}
        \biggr].
    \end{align*}
    Since $m_n = \floor{n/r_n}$ we have that $\lim_{n \to \infty} m_{n}r_{n}/n = 1$. The limit of the first term follows from Lemma~\ref{Expected_indicator_kernel_lemma}
    \begin{align*}
        \lim_{n \to \infty} 
        \frac{m_{n}r_{n}}{n}
        \frac{\E{ 1_{\curly{Y_{0} > \p{s \vee t}u_{n}^{x}}} K^{2}\left(\frac{x-X_{0}}{h_n}\right)}}{h_{n}\overline{F}^{x}(u_{n}^{x})} = 
        \p{s \vee t}^{-1/\gamma(x)}g(x)\int K^{2}\p{u}\mathrm{d}u,
    \end{align*}
    which is the asymptotic covariance. It remains to show that the second and third terms disappear asymptotically.
    The second term vanishes by the assumption $ \lim_{n \to \infty} {r_{n}h_{n}\overline{F}^{x}(u_{n}^{x})} = 0$.
    Since $m_{n}r_{n}/n = \mathcal{O}(1)$, the leading term in the third term is 
     asymptotically proportional to
     \begin{align*}
         \frac{1}{h_n\overline{F}^{x}(u_{n}^{x})} \sum_{j=1}^{r_n}
         \E{K\left(\frac{x-X_0}{h_n}\right)1_{\left\{Y_0 > tu_{n}^{x}\right\}}
       K\left(\frac{x-X_j}{h_n}\right)1_{\left\{Y_j > su_{n}^{x}\right\}}}.
     \end{align*}
To examine the limit, fix $m \in \N$, and consider
\begin{align*}
         &\limsup_{n \to \infty}
         \frac{1}{h_n \overline{F}^{x}(u_{n}^{x})} 
         \sum_{j=1}^{m-1}
         \E{K\left(\frac{x-X_0}{h_n}\right)1_{\left\{Y_0 > tu_{n}^{x}\right\}}
        K\left(\frac{x-X_j}{h_n}\right)1_{\left\{Y_j > su_{n}^{x}\right\}}} \\
        &= \limsup_{n \to \infty} 
         \sum_{j=1}^{m-1} h_{n}
        \int_{[-1,1]^2}  \biggl\{
         \frac{\CE{1_{\left\{Y_0 > tu_{n}^{x}\right\}}
       1_{\left\{Y_j > su_{n}^{x}\right\}}}{X_0 = x + w_{1}h_n, X_j = x + w_{2}h_n}}
         {\overline{F}^{x}(u_{n}^{x})} \\
         & \qquad \qquad \qquad \qquad \qquad \qquad 
         \times g(x + w_{1}h_{n},x + w_{2}h_{n})K\left(w_{1}\right)K\left(w_{2}\right) \biggr\}
         \mathrm{d}(w_1, w_2)=0.
     \end{align*}
     Fix $\varepsilon > 0$. By condition~\ref{anti_clustering_condition}, it is possible to pick $m_0$ large enough so that for all $m \geq m_0$
     \begin{align*}
        \limsup_{n \to \infty}
         \frac{1}{h_{n}\overline{F}^{x}(u_{n}^{x})} 
         \sum_{j=m}^{r_n}
         \E{K\left(\frac{x-X_0}{h_n}\right)1_{\left\{Y_0 > tu_{n}^{x}\right\}}
        K\left(\frac{x-X_j}{h_n}\right)1_{\left\{Y_j > su_{n}^{x}\right\}}}
        \leq \varepsilon.
     \end{align*}
     This finishes the proof.
\end{proof}

\begin{theorem} \label{Fidis_theorem}
    Assume conditions~\ref{Assu:gamma_lip}-\ref{Assu:n_h_F_limit},~\ref{rate_condition}-\ref{anti_clustering_condition}. Then the finite dimensional distributions of $ \widetilde{\mathbb{V}}^{\dag,x}_{n}$ converge to those of a Gaussian process $ \mathbb{V}^{x}$ with covariance function defined in~\eqref{Covariance_of_V}.
\end{theorem}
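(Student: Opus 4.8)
The plan is to reduce the statement to a univariate central limit theorem via the Cramér--Wold device and then to apply the Lindeberg--Feller theorem for triangular arrays whose rows consist of independent summands. Fix $L\in\N$, levels $s_{1},\dots,s_{L}>0$, coefficients $a_{1},\dots,a_{L}\in\R$, and put $\bar a=\sum_{\ell}\lvert a_{\ell}\rvert$ and $s_{\ast}=\min_{\ell}s_{\ell}$. Since $n h_{n}\overline{F}^{x}(u_{n})=k_{n}h_{n}$, the linear combination can be written as a sum over the independent pseudo-blocks,
\[
    \sum_{\ell=1}^{L}a_{\ell}\widetilde{\mathbb{V}}^{\dag,x}_{n}(s_{\ell})=\sum_{i=1}^{m_{n}}W_{n,i},\qquad
    W_{n,i}=\frac{1}{\sqrt{k_{n}h_{n}}}\curly{\zeta_{n,i}-\E{\zeta_{n,i}}},
\]
with $\zeta_{n,i}=\sum_{j=1}^{r_{n}}\sum_{\ell=1}^{L}a_{\ell}K\p{\tfrac{x-X^{\dag}_{(i-1)r_{n}+j}}{h_{n}}}1_{\{Y^{\dag}_{(i-1)r_{n}+j}>s_{\ell}u_{n}\}}$. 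By construction of the pseudo-sample, for each fixed $n$ the variables $W_{n,1},\dots,W_{n,m_{n}}$ are i.i.d.\ and centred and have finite variance (each is bounded), so it remains to verify the two hypotheses of Lindeberg--Feller and to identify the limit, which will be $\mathrm{N}\p{0,\sigma^{2}}$ with $\sigma^{2}=g(x)\int K^{2}(u)\,\dd u\,\sum_{\ell,\ell'}a_{\ell}a_{\ell'}(s_{\ell}\vee s_{\ell'})^{-\alpha_{x}}$; Cramér--Wold then yields convergence of the finite dimensional distributions to those of a centred Gaussian process $\mathbb{V}^{x}$ with covariance \eqref{Covariance_of_V}.

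The variance condition is immediate from what is already available: by bilinearity, $m_{n}\Var(W_{n,1})=\sum_{\ell,\ell'}a_{\ell}a_{\ell'}\Cov\p{\widetilde{\mathbb{V}}^{\dag,x}_{n}(s_{\ell}),\widetilde{\mathbb{V}}^{\dag,x}_{n}(s_{\ell'})}$, which converges to $\sigma^{2}$ by Lemma~\ref{Lemma_covariance_of_V}. The substantive step is the Lindeberg condition $m_{n}\E{W_{n,1}^{2}1_{\{\lvert W_{n,1}\rvert>\varepsilon\}}}\to0$ for every $\varepsilon>0$. Here I would exploit that exceedances above $s_{\ast}u_{n}$ are rare. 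Set $\widetilde B_{n,1}=\sum_{j=1}^{r_{n}}K\p{\tfrac{x-X^{\dag}_{j}}{h_{n}}}1_{\{Y^{\dag}_{j}>s_{\ast}u_{n}\}}\ge0$, so that $\lvert\zeta_{n,1}\rvert\le\bar a\,\widetilde B_{n,1}$, each summand of $\widetilde B_{n,1}$ is at most $\norm{K}_{\infty}$, and $\E{\widetilde B_{n,1}}=O\p{r_{n}h_{n}\overline{F}^{x}(u_{n})}=o(1)$ by \ref{rate_condition}. Because $\sqrt{k_{n}h_{n}}\to\infty$ by \ref{Assu:n_h_F_limit}, on $\{\lvert W_{n,1}\rvert>\varepsilon\}$ one has $\widetilde B_{n,1}>\varepsilon\sqrt{k_{n}h_{n}}/(2\bar a)$ for all large $n$, which in particular exceeds $2\norm{K}_{\infty}$; since the sum of the squared summands of $\widetilde B_{n,1}$ is at most $\norm{K}_{\infty}\widetilde B_{n,1}$, it follows that on this event $\widetilde B_{n,1}^{2}\le 2\sum_{j\neq j'}K\p{\tfrac{x-X^{\dag}_{j}}{h_{n}}}1_{\{Y^{\dag}_{j}>s_{\ast}u_{n}\}}K\p{\tfrac{x-X^{\dag}_{j'}}{h_{n}}}1_{\{Y^{\dag}_{j'}>s_{\ast}u_{n}\}}$. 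Taking expectations, combining with the elementary bound $W_{n,1}^{2}\le 2\bar a^{2}\widetilde B_{n,1}^{2}/(k_{n}h_{n})+o\p{1/(k_{n}h_{n})}$ (the deterministic remainder contributing negligibly after multiplication by $m_{n}\P(\lvert W_{n,1}\rvert>\varepsilon)$), and using stationarity of the original sequence within a block together with $m_{n}r_{n}/n\to1$, one is left with bounding, up to a constant, $\tfrac{1}{h_{n}\overline{F}^{x}(u_{n})}\sum_{l=1}^{r_{n}}\E{K\p{\tfrac{x-X_{0}}{h_{n}}}1_{\{Y_{0}>s_{\ast}u_{n}\}}K\p{\tfrac{x-X_{l}}{h_{n}}}1_{\{Y_{l}>s_{\ast}u_{n}\}}}$; splitting this lag sum at a fixed $m$, the first $m-1$ lags contribute $O\p{mh_{n}}\to0$ (each pairwise expectation being $O\p{h_{n}\overline{F}^{x}(u_{n})}$, exactly as in the proof of Lemma~\ref{Lemma_covariance_of_V}), while the tail of the sum is killed by the anti-clustering condition~\ref{anti_clustering_condition} with $s=t=s_{\ast}$ upon letting $m\to\infty$.

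The hard part is precisely this Lindeberg verification: the naive bound $\widetilde B_{n,1}\le\norm{K}_{\infty}r_{n}$ would only suffice under the extraneous relation $r_{n}^{2}=o(k_{n}h_{n})$, which the hypotheses do not grant; instead one must notice that on the truncating event a block must contain at least two simultaneous exceedances, whose probability is exactly the quantity controlled by the anti-clustering condition~\ref{anti_clustering_condition}, and then absorb the finitely many small lags using the conditional regular variation \eqref{def:condRV} and smoothness of $g$. Once the Lindeberg condition is in place, the Lindeberg--Feller theorem yields $\sum_{i=1}^{m_{n}}W_{n,i}\overset{d}{\to}\mathrm{N}\p{0,\sigma^{2}}$ for every $a\in\R^{L}$, and the Cramér--Wold device completes the proof that the finite dimensional distributions of $\widetilde{\mathbb{V}}^{\dag,x}_{n}$ converge to those of the centred Gaussian process $\mathbb{V}^{x}$ with covariance \eqref{Covariance_of_V}.
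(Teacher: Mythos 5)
Your proposal is correct and follows the same overall strategy as the paper: reduction to the i.i.d.\ pseudo-blocks, Lindeberg--Feller for the row sums, the covariance from Lemma~\ref{Lemma_covariance_of_V}, and the anti-clustering condition~\ref{anti_clustering_condition} to kill the Lindeberg term. Where you genuinely differ is in the execution of the Lindeberg step and in the multivariate bookkeeping. The paper verifies negligibility at a single level $s$ (leaving the Cram\'er--Wold reduction implicit), bounds the truncation indicator by $\lvert \overline{S}_{n,1}^{x}(s)\rvert/(\varepsilon\sqrt{k_nh_n})$, and then treats the diagonal and off-diagonal parts of the expanded square separately --- the diagonal being $o(r_nh_n\overline{F}^x(u_n))$ via condition~\ref{rate_condition} and Lemma~\ref{Expected_indicator_kernel_lemma}, the off-diagonal via anti-clustering. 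You instead carry out Cram\'er--Wold explicitly, dominate the linear combination by the exceedance count at the minimal level $s_\ast$, and observe that on the truncation event the block sum exceeds $2\norm{K}_\infty$, so the diagonal part of the square is absorbed into the off-diagonal part; this collapses the whole verification onto the single anti-clustering sum and avoids a separate treatment of the diagonal. Both routes are valid; yours is arguably cleaner and makes the finite-dimensional claim airtight rather than implicit. One small slip: in your parenthetical the per-lag expectation $\E{K(\frac{x-X_0}{h_n})1_{\{Y_0>s_\ast u_n\}}K(\frac{x-X_l}{h_n})1_{\{Y_l>s_\ast u_n\}}}$ is $\mathcal{O}(h_n^2\overline{F}^x(u_n))$ (two kernel integrations), not $\mathcal{O}(h_n\overline{F}^x(u_n))$; your stated conclusion $\mathcal{O}(mh_n)$ for the first $m-1$ lags after normalising by $h_n\overline{F}^x(u_n)$ is the correct one and is exactly what the computation in Lemma~\ref{Lemma_covariance_of_V} delivers.
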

\begin{proof}
    For $i = 1,\ldots, m_{n}$, let 
    \begin{align*}
        S_{n,i}^{x}(s) = \sum_{j=1}^{r_n}K\left(\frac{x-X_{(i-1)r_{n} +j}}{h_n}\right)1_{\curly{Y_{(i-1)r_{n} +j} > su_{n}^{x}}}, 
        \quad 
        \overline{S}_{n,i}^{x}(s) = S_{n,i}^{x}(s) - \E{S_{n,1}^{x}(s)},
        \\
        S_{n,i}^{\dag, x}(s) = \sum_{j=1}^{r_n}K\left(\frac{x-X^{\dag}_{(i-1)r_{n} +j}}{h_n}\right)1_{\curly{Y^{\dag}_{(i-1)r_{n} +j} > su_{n}^{x}}}, 
        \quad 
        \overline{S}_{n,i}^{\dag,x}(s) = S_{n,i}^{\dag,x}(s) - \E{S_{n,1}^{x}(s)},
    \end{align*}
    for $s>0$. Then $\widetilde{\mathbb{V}}^{\dag,x}_{n}$ can be written as 
    \begin{align*}
        \widetilde{\mathbb{V}}^{\dag,x}_{n}(s) =  
        \frac{1}{\sqrt{k_{n}h_{n}}}
        \sum_{i=1}^{m_n}
        \overline{S}_{n,i}^{\dag,x}(s),
    \end{align*}
    where the summands are i.i.d. In the calculations below we show that the conditions of the Lindeberg--Lévy CLT are fulfilled. By Lemma~\ref{Lemma_covariance_of_V} 
    it only remains to show the negligibility condition. Since $\overline{S}_{n,1}^{x}(s)$ and $\overline{S}_{n,1}^{\dag, x}(s)$ have the same distribution, we may establish it for the former.
    Note that  $m_n/(nh_{n}\overline{F}^{x}(u_{n}^{x})) = \mathcal{O}((r_{n}h_{n}\overline{F}^{x}(u_{n}^{x}))^{-1})$. Thus the negligibility condition thus follows if the expectation below is $o(r_{n}h_{n}\overline{F}^{x}(u_{n}^{x}))$:
    \begin{align*}
        &\E{\p{\overline{S}_{n,1}^{x}(s)}^{2}
        1_{\curly{\lvert \overline{S}_{n,1}^{x}(s) \rvert > \varepsilon \sqrt{k_{n}h_{n}}}}}=
        \\
        &
        \E{
        \left(
        \sum_{j=1}^{r_n}K\left(\frac{x-X_{j}}{h_n}\right)1_{\curly{Y_{j} > su_{n}^{x}}}
        \right)^{2}
        1_{\curly{\lvert \overline{S}_{n,1}^{x}(s) \rvert > \varepsilon \sqrt{k_{n}h_{n}}}}} 
        \\
        & \quad + 
        \p{r_{n}\E{K\left(\frac{x-X_{0}}{h_n}\right)1_{\curly{Y_{0} > su_{n}^{x}}}}}^{2}
        \P{\p{\left\lvert \overline{S}_{n,1}^{x}(s) \right\rvert > \varepsilon \sqrt{k_{n}h_{n}}}}
        \\
        & \quad - 2r_{n}\E{K\left(\frac{x-X_{0}}{h_n}\right)1_{\curly{Y_{0} > su_{n}^{x}}}}
        \sum_{j=1}^{r_n}\E{K\left(\frac{x-X_{j}}{h_n}\right)1_{\curly{Y_{j} > su_{n}^{x}}}
        1_{\curly{\lvert \overline{S}_{n,1}^{x}(s) \rvert > \varepsilon \sqrt{k_{n}h_{n}}}}}.
    \end{align*}
    Consider the second term. Lemma~\ref{Expected_indicator_kernel_lemma} yields that it is bounded by
    \begin{align*}
          r_{n}^{2} \ \E{K\left(\frac{x-X_{0}}{h_n}\right)1_{\curly{Y_{0} > su_{n}^{x}}}}^{2} = 
        r_{n}^{2} \mathcal{O}\p{\p{h_{n}\overline{F}^{x}(u_{n}^{x})}^{2}}
        = 
        o\p{r_{n}h_{n}\overline{F}^{x}(u_{n}^{x})}.
    \end{align*}
    The third term follows similarly. 
    We turn the first term and decompose it according to an expansion of the square, yielding diagonal terms $I_1$ and off-diagonal terms $2I_2$. 
    We get
    \begin{align*}
        I_1 &\leq  \frac{1}{\varepsilon \sqrt{k_{n}h_{n}}}
        \sum_{j=1}^{r_n} \E{K\left(\frac{x-X_{j}}{h_n}\right)^{2}1_{\curly{Y_{j} > su_{n}^{x}}} \left\lvert \overline{S}_{n,1}^{x}(s) \right\rvert }
        \\
        & \leq 
        \frac{1}{\varepsilon \sqrt{k_{n}h_{n}}}
        \sum_{j=1}^{r_n} \E{K\left(\frac{x-X_{j}}{h_n}\right)^{2}1_{\curly{Y_{j} > su_{n}^{x}}} \p{S_{n,1}^{x}(s) + \E{S_{n,1}^{x}(s)}} }
    \end{align*}
    The latter of the two terms is of order
    \begin{align*}        &\frac{\mathcal{O}\p{r_{n}h_{n}\overline{F}^{x}(u_{n}^{x})}}{\varepsilon \sqrt{k_{n}h_{n}}}
        \sum_{j=1}^{r_n} \E{K\left(\frac{x-X_{j}}{h_n}\right)^{2}1_{\curly{Y_{j} > su_{n}^{x}}}} = \frac{\mathcal{O}\p{\p{r_{n}h_{n}\overline{F}^{x}(u_{n}^{x})}^{2}}}{\varepsilon \sqrt{k_{n}h_{n}}} = o\p{r_{n}h_{n}\overline{F}^{x}(u_{n}^{x})},
    \end{align*}
    using condition~\ref{rate_condition} and Lemma~\ref{Expected_indicator_kernel_lemma}.
    The former of the two terms can be rewritten as
    \begin{align*}
        &\frac{ r_{n}}{\varepsilon \sqrt{k_{n}h_{n}}}
        \Biggl(\E{K\left(\frac{x-X_{0}}{h_n}\right)^{3} 1_{\curly{Y_{0} > su_{n}^{x}}}}\\
        &\qquad \qquad + \sum_{j=1}^{r_n}\p{1-\frac{j}{r_n}} \biggr\{
        \E{K\left(\frac{x-X_{j}}{h_n}\right)^{2} K\left(\frac{x-X_{0}}{h_n}\right)
        1_{\curly{Y_{j} > su_{n}^{x}}}  1_{\curly{Y_{0} > su_{n}^{x}}}} \\
        & \qquad \qquad   +
        \E{K\left(\frac{x-X_{0}}{h_n}\right)^{2} K\left(\frac{x-X_{j}}{h_n}\right)
        1_{\curly{Y_{0} > su_{n}^{x}}}  1_{\curly{Y_{j} > su_{n}^{x}}}}
        \biggr\}
        \Biggr),
    \end{align*}
    where the first term is order $o(r_{n}h_{n}\overline{F}^{x}(u_{n}^{x}))$
    as above using condition~\ref{rate_condition} and Lemma~\ref{Expected_indicator_kernel_lemma}. The leading term in the sum is dealt with 
    as in the proof of Lemma~\ref{Lemma_covariance_of_V}, from which we infer that the expression is $o(r_{n}h_{n}\overline{F}^{x}(u_{n}^{x}))$ and consequently $I_1 =  o(r_{n}h_{n}\overline{F}^{x}(u_{n}^{x}))$.
    
    Consider $I_2$. Given $\delta > 0$, condition~\ref{anti_clustering_condition} yields that we can choose a positive integer $m$ such that for sufficiently large $n$ it holds that
    \begin{align*}
         \sum_{j=m}^{r_n}
          \frac{\E{K\left(\frac{x-X_{j}}{h_n}\right) K\left(\frac{x-X_{0}}{h_n}\right)
        1_{\curly{Y_{j} > su_{n}^{x}}}  1_{\curly{Y_{0} > su_{n}^{x}}}}}{h_{n}\overline{F}^{x}(u_{n}^{x})}
        \leq \delta.
    \end{align*}
    Split the inner sum of $I_2$ according to the chosen $m$.
     
    This yields two terms. By stationarity the first term can be rewritten as 
    \begin{align*}
         &\sum_{j=1}^{m}\p{m-j}
        \E{K\left(\frac{x-X_{0}}{h_n}\right)K\left(\frac{x-X_{j}}{h_n}\right)1_{\curly{Y_{0} > su_{n}^{x}}}1_{\curly{Y_{j} > su_{n}^{x}}}
        1_{\curly{\lvert \overline{S}_{n,1}^{x}(s) \rvert > \varepsilon \sqrt{k_{n}h_{n}}}}} \\
        &\leq
        m\sum_{j=1}^{m}
        \E{K\left(\frac{x-X_{0}}{h_n}\right)K\left(\frac{x-X_{j}}{h_n}\right)1_{\curly{Y_{0} > su_{n}^{x}}}1_{\curly{Y_{j} > su_{n}^{x}}}} 
        \\ 
        &\leq 
        \frac{m}{r_n} r_{n}h_{n}\overline{F}^{x}(u_{n}^{x})
        \sum_{j=1}^{r_n}
        \frac{\E{K\left(\frac{x-X_{0}}{h_n}\right)K\left(\frac{x-X_{j}}{h_n}\right)1_{\curly{Y_{0} > su_{n}^{x}}}1_{\curly{Y_{j} > su_{n}^{x}}}}}{h_{n}\overline{F}^{x}(u_{n}^{x})} \\
        &  = \frac{\mathcal{O}\p{{r_{n}h_{n}\overline{F}^{x}(u_{n}^{x})}}}{r_n} = 
        o\p{r_{n}h_{n}\overline{F}^{x}(u_{n}^{x})}.
    \end{align*}
    The second term of $I_2$ satisfies
    \begin{align*}
        &\sum_{i=1}^{r_n} \sum_{j=i + m}^{r_n} \E{K\left(\frac{x-X_{i}}{h_n}\right)K\left(\frac{x-X_{j}}{h_n}\right)1_{\curly{Y_{i} > su_{n}^{x}}}1_{\curly{Y_{j} > su_{n}^{x}}}
        1_{\curly{\lvert \overline{S}_{n,1}^{x}(s) \rvert > \varepsilon \sqrt{k_{n}h_{n}}}}} 
         \leq \delta r_{n}h_{n}\overline{F}^{x}(u_{n}^{x}).
    \end{align*}
    Gathering the above yields that
    \begin{align*}
        \limsup_{n \to \infty} \frac{I_2}{r_{n}h_{n}\overline{F}^{x}(u_{n}^{x})} \leq \delta,
    \end{align*}
    and since $\delta$ can be made arbitrarily small we conclude that $I_2 = o(r_{n}h_{n}\overline{F}^{x}(u_{n}^{x}))$ which concludes the proof.
\end{proof}
Now we show that terms constructed from the incomplete block $\{(X_{m_{n}r_{n} + 1}, Y_{m_{n}r_{n} + 1}),\ldots,\p{X_{n}, Y_{n}}\}$ are asymptotically negligible.
\begin{lemma} \label{Incomplete_block_lemma}
    Assume conditions~\ref{Assu:gamma_lip}-\ref{Assu:n_h_F_limit} and~\ref{rate_condition}. Then
    \begin{align*}
         \sup_{s \in [a,b]}
         \left\lvert
         \sum_{j=m_{n}r_{n} + 1}^{n}
         \frac{K\left(\frac{x-X_{j}}{h_n}\right)1_{\curly{Y_{j} > su_{n}^{x}}}
         - \E{K\left(\frac{x-X_{j}}{h_n}\right)1_{\curly{Y_{j} > su_{n}^{x}}}}}{\sqrt{nh_{n} \overline{F}^{x}(u_{n}^{x})}}
         \right\rvert
         \overset{\P}{\to} 0.
    \end{align*}
\end{lemma}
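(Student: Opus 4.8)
\emph{Sketch.} The plan rests on two elementary observations about the incomplete block $\curly{\p{X_{m_nr_n+1},Y_{m_nr_n+1}},\dots,\p{X_n,Y_n}}$: it has at most $r_n-1$ terms (since $0\le n-m_nr_n<r_n$), and each summand $\xi_j(s):=K\p{\frac{x-X_j}{h_n}}1_{\curly{Y_j>su_n}}$ is non-negative with $\E{\xi_j(s)}=\mathcal{O}\p{h_n\overline{F}^x(u_n)}$ by Lemma~\ref{Expected_indicator_kernel_lemma}, uniformly in $s$ on $[a,b]$. Hence the whole partial sum has mean of order $r_n h_n\overline{F}^x(u_n)$, and with $W_n:=\sqrt{n h_n\overline{F}^x(u_n)}$ one has $r_n h_n\overline{F}^x(u_n)/W_n=\sqrt{\p{r_n h_n\overline{F}^x(u_n)}\cdot\p{r_n/n}}\to0$, using condition~\ref{rate_condition} for the first factor and $r_n/n\to0$ (as $\curly{r_n}$ is an intermediate sequence) for the second. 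So after normalisation the partial sum is negligible \emph{in mean}, and what remains is to (i) make the bound uniform over $s\in[a,b]$ and (ii) pass from the mean bound to convergence in probability.

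For (i) I would use monotonicity in the threshold. Writing $D_n(s):=\sum_{j=m_nr_n+1}^n\p{\xi_j(s)-\E{\xi_j(s)}}$, both $s\mapsto\sum_j\xi_j(s)$ and $s\mapsto\sum_j\E{\xi_j(s)}=(n-m_nr_n)\E{\xi_0(s)}$ are non-negative and non-increasing, since $1_{\curly{Y_j>su_n}}$ is; therefore $\sup_{s\in[a,b]}\lvert D_n(s)\rvert\le\sum_{j=m_nr_n+1}^n\xi_j(a)+r_n\,\E{\xi_0(a)}$. The centring piece satisfies $r_n\E{\xi_0(a)}/W_n\to0$ by the bound above. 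For the random piece, $W_n^{-1}\sum_{j=m_nr_n+1}^n\xi_j(a)$ is non-negative with mean $\mathcal{O}\p{r_n h_n\overline{F}^x(u_n)/W_n}=o(1)$, so it tends to $0$ in probability by Markov's inequality, which settles (ii). Combining the two estimates gives $\sup_{s\in[a,b]}\lvert D_n(s)\rvert/W_n\overset{\P}{\to}0$, which is the assertion (read in probability, the left-hand side being random).

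The main — in fact, only — point requiring care is the uniform control over $s$: a direct second-moment approach would bound the variance of the partial sum crudely by $\p{r_n\lVert\xi_0\rVert_{L^2}}^2=\mathcal{O}\p{r_n^2 h_n\overline{F}^x(u_n)}$, which after division by $W_n^2$ yields only $\mathcal{O}(r_n^2/n)$, not necessarily $o(1)$, while a chaining or grid argument over $[a,b]$ would be clumsy since any effective grid must depend on $n$. The monotonicity reduction sidesteps both difficulties. Note that, in contrast with the finite-dimensional convergence in Theorem~\ref{Fidis_theorem}, no covariance or anti-clustering estimate is invoked here, precisely because for this short block the first-moment bound already dominates the normalisation under condition~\ref{rate_condition}.
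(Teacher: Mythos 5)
Your proof is correct, and the reduction is the same as the paper's: bound the supremum over $s\in[a,b]$ by monotonicity in the threshold, yielding the non-negative sum at $s=a$ plus the deterministic centring term of order $r_n\,\E{\xi_0(a)}$, and kill the latter using $\E{\xi_0(a)}=\mathcal{O}\p{h_n\overline{F}^x(u_n)}$ together with condition~\ref{rate_condition} and $r_n/n\to0$. Where you diverge is the treatment of the random sum at $s=a$: the paper bounds its \emph{variance} by that of a full block and invokes Lemma~\ref{Lemma_covariance_of_V} (hence implicitly the covariance/anti-clustering estimates, and a variance-monotonicity step for a sub-block of a block of non-negative correlated terms), whereas you observe that the sum is non-negative with mean $\mathcal{O}\p{r_nh_n\overline{F}^x(u_n)}=o\p{\sqrt{nh_n\overline{F}^x(u_n)}}$ and conclude by Markov's inequality. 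Your first-moment route is more elementary, uses only the hypotheses actually listed in the lemma (conditions~\ref{Assu:alpha_lip}--\ref{Assu:n_h_F_limit} and \ref{rate_condition}, without condition~\ref{anti_clustering_condition}), and avoids any covariance computation; the paper's second-moment route gives the (here unneeded) stronger $L^2$-negligibility. Both establish the conclusion in the sense of convergence in probability, which is how the displayed $\limsup$ must be read.
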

\begin{proof}
    Since $s \mapsto K\left((x-X_{j})/h_{n}\right)1_{\curly{Y_{j} > su_{n}^{x}}}$ is monotonically decreasing, and $(n - m_{n}r_{n}) \leq r_{n}$, i.e. the last incomplete block consists of fewer terms than a complete block, we get 
    \begin{align*}
        &\sup_{s \in [a,b]}
         \left\lvert
         \sum_{j=m_{n}r_{n} + 1}^{n}
         \frac{K\left(\frac{x-X_{j}}{h_n}\right)1_{\curly{Y_{j} > su_{n}^{x}}}
         - \E{K\left(\frac{x-X_{j}}{h_n}\right)1_{\curly{Y_{j} > su_{n}^{x}}}}}{\sqrt{nh_{n} \overline{F}^{x}(u_{n}^{x})}}
         \right\rvert
         \\ 
         & \leq
         \frac{1}{\sqrt{nh_{n} \overline{F}^{x}(u_{n}^{x})}}
         \Bigg\{
         \sum_{j=m_{n}r_{n} + 1}^{n}
          K\left(\frac{x-X_{j}}{h_n}\right)1_{\curly{Y_{j} > au_{n}^{x}}}   +
         r_{n} \E{K\left(\frac{x-X_{j}}{h_n}\right)1_{\curly{Y_{j} > au_{n}^{x}}}} \Bigg\}.
   \end{align*}
    The sum has variance
    \begin{align*}
         &\frac{1}{nh_{n} \overline{F}^{x}(u_{n}^{x})}
         \Var\p{\sum_{j= 1}^{(n-m_{n}r_{n})}
          K\left(\frac{x-X_{j}}{h_n}\right)1_{\curly{Y_{j} > au_{n}^{x}}} } 
          \\
         &\leq
         \frac{1}{nh_{n} \overline{F}^{x}(u_{n}^{x})}
         \Var\p{\sum_{j= 1}^{r_{n}}
          K\left(\frac{x-X_{j}}{h_n}\right)1_{\curly{Y_{j} > au_{n}^{x}}}} 
          = \frac{1}{m_n}  \Var\p{\widetilde{\mathbb{V}}^{\dag,x}_{n}(a)} \to 0,
    \end{align*}   
    where we used that the joint process $\left\{\left(X_{j}, Y_{j}\right)\right\}$ is stationary, Lemma~\ref{Expected_indicator_kernel_lemma} and  
    Lemma~\ref{Lemma_covariance_of_V} . Further,
    by condition~\ref{rate_condition},
    \begin{align*}
        &\frac{r_{n}}{\sqrt{nh_{n} \overline{F}^{x}(u_{n}^{x})}} \E{K\left(\frac{x-X_{j}}{h_n}\right)1_{\curly{Y_{j} > au_{n}^{x}}}}  =
        \frac{r_{n}}{\sqrt{nh_{n} \overline{F}^{x}(u_{n}^{x})}} \mathcal{O}\p{h_{n} \overline{F}^{x}(u_{n}^{x})}=
        \frac{\mathcal{O}\p{r_{n} h_{n} \overline{F}^{x}(u_{n}^{x})}}{\sqrt{nh_{n} \overline{F}^{x}(u_{n}^{x})}}
        \to 0.
    \end{align*}
    These two limits now immediately imply the desired result.
\end{proof}

\begin{lemma}
      Assume conditions~\ref{Assu:gamma_lip}-\ref{Assu:n_h_F_limit} and~\ref{rate_condition}-\ref{beta_mixing_condition}. Then the finite dimensional limit distributions of $ \widetilde{\mathbb{V}}^{\dag,x}_{n}$ and $ \widetilde{\mathbb{V}}^{x}_{n}$ coincide.
\end{lemma}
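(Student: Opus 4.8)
The plan is to run the standard large-block/small-block coupling argument for $\beta$-mixing sequences. By the Cram\'er--Wold device it suffices to fix finitely many levels $s_1,\dots,s_d>0$ and reals $a_1,\dots,a_d$ and show that $\sum_k a_k\widetilde{\mathbb{V}}^{x}_{n}(s_k)$ and $\sum_k a_k\widetilde{\mathbb{V}}^{\dag,x}_{n}(s_k)$ have the same limiting distribution; since both are centred linear functionals of block sums, I will work directly with the block decompositions already set up in the excerpt. Using the intermediate sequence $\{\ell_n\}$ from condition~\ref{beta_mixing_condition}, split each superblock of length $r_n$ into a \emph{big block} consisting of its first $r_n-\ell_n$ indices and a \emph{small block} consisting of its last $\ell_n$ indices. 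Writing $S_{n,i}^{x}(s)=S_{n,i}^{x,\mathrm{big}}(s)+S_{n,i}^{x,\mathrm{small}}(s)$ (and likewise for the pseudo-sample), the centering $\E{S_{n,1}^{x}(s)}$ splits accordingly, so that both $\widetilde{\mathbb{V}}^{x}_{n}$ and $\widetilde{\mathbb{V}}^{\dag,x}_{n}$ decompose into a big-block part, a small-block part, and the incomplete-block remainder already handled in Lemma~\ref{Incomplete_block_lemma}.

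Next I would dispose of the small-block parts. For either sample the centred small-block sum, divided by $\sqrt{nh_n\overline{F}^{x}(u_n)}$, has a second moment that I bound by a diagonal contribution plus within-block cross terms: the diagonal contributes $O\!\big(\tfrac{m_n\ell_n h_n\overline{F}^{x}(u_n)}{nh_n\overline{F}^{x}(u_n)}\big)=O(\ell_n/r_n)\to0$ by Lemma~\ref{Expected_indicator_kernel_lemma}, and the cross terms are controlled by the anti-clustering condition~\ref{anti_clustering_condition} exactly as in the proof of Lemma~\ref{Lemma_covariance_of_V}, while the cross-covariances between \emph{different} small blocks are handled by the $\alpha$-mixing decay and summability (as in the proof of Theorem~\ref{Empirical_tail_dist_consistency_theorem}); for the pseudo-sample the between-block terms simply vanish by independence. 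Chebyshev then gives that both small-block parts are $o_{\P}(1)$, so it remains to compare only the big-block parts, which I denote $W_n$ (original) and $W_n^{\dag}$ (pseudo).

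Then I would invoke the coupling lemma for $\beta$-mixing sequences recalled in the appendix: because consecutive big blocks are separated by a gap of exactly $\ell_n$ time indices, one can construct on an enlarged probability space independent random vectors, each distributed as the corresponding original big block, that coincide with the original big blocks on an event of probability at least $1-(m_n-1)\beta_{\ell_n}\ge 1-\tfrac{n}{r_n}\beta_{\ell_n}$, which tends to $1$ by condition~\ref{beta_mixing_condition}. Hence $W_n$ equals, with probability tending to one, a sum of independent blocks having the same joint law as the big-block part $W_n^{\dag}$ of the pseudo-sample. Combining this with the small-block negligibility from the previous step, $\sum_k a_k\widetilde{\mathbb{V}}^{x}_{n}(s_k)$ and $\sum_k a_k\widetilde{\mathbb{V}}^{\dag,x}_{n}(s_k)$ differ by an $o_{\P}(1)$ term plus a quantity that is zero with probability tending to one, and therefore have identical limit laws; by Cram\'er--Wold the finite-dimensional limits of $\widetilde{\mathbb{V}}^{x}_{n}$ and $\widetilde{\mathbb{V}}^{\dag,x}_{n}$ coincide.

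I expect the main obstacle to be the bookkeeping in the small-block negligibility step: one must uniformly control, over the finitely many levels $s_k$, both the within-small-block cross terms (via condition~\ref{anti_clustering_condition}, taking the $\limsup$ over $n$ before letting the truncation index grow) and the between-small-block covariances, and also check that the corresponding centering differences are negligible --- the rate condition~\ref{rate_condition} is what makes the expectation-type terms of smaller order, exactly as in Lemmas~\ref{Incomplete_block_lemma} and \ref{Lemma_covariance_of_V}. The coupling step itself is comparatively routine once the $\ell_n$-gap structure is in place, so the work concentrates in verifying that the discarded blocks contribute nothing in the limit.
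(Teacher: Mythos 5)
Your overall strategy is the right one and is essentially the paper's: the paper simply invokes the prepackaged Lemma~\ref{coinciding_fidis_lemma} from the appendix, taking both $h_{n,r_n}$ and $f_n$ equal to the normalised block sum, so that the two hypotheses of that lemma --- $m_n\beta_{\ell_n}\to0$ and $m_n\E{f_n^2\p{\p{X_1,Y_1},\ldots,\p{X_{\ell_n},Y_{\ell_n}}}}\to0$ --- are exactly your coupling step and your small-block second-moment bound; the paper verifies the latter via Lemma~\ref{Lemma_covariance_of_V} through the bound $m_n\E{\p{\cdots}^2}\le\p{\ell_n/r_n}\Var\p{\widetilde{\mathbb{V}}^{\dag,x}_{n}(s)}\to0$, together with Lemma~\ref{Incomplete_block_lemma} for the incomplete block. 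Your proposal unrolls that lemma by hand (big blocks of length $r_n-\ell_n$, small blocks of length $\ell_n$, Berbee-type coupling of the big blocks across the $\ell_n$-gaps), which is a legitimate and equivalent route.

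One step, however, is not justified under the stated hypotheses. For the original (dependent) sample you dispose of the cross-covariances between \emph{distinct} small blocks ``by the $\alpha$-mixing decay and summability as in the proof of Theorem~\ref{Empirical_tail_dist_consistency_theorem}''. That proof uses the polynomial rate $\alpha(j)=\mathcal{O}(j^{-\eta})$, $\eta>2$, together with the speed relation \eqref{Assu:h_F_speed_relation}; neither is among conditions~\ref{Assu:alpha_lip}--\ref{Assu:n_h_F_limit} and \ref{rate_condition}--\ref{beta_mixing_condition}, which only give $\beta$-mixing with $\frac{n}{r_n}\beta_{\ell_n}\to0$ and no rate on $\alpha(j)$. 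The repair stays inside your own framework: distinct small blocks are separated by gaps of length $r_n-\ell_n\ge\ell_n$ for $n$ large, so a second application of the coupling (Lemma~\ref{blocking_lemma}) replaces them by mutually independent copies at total-variation cost at most $m_n\beta_{r_n-\ell_n}\le m_n\beta_{\ell_n}\to0$, after which the between-block covariances vanish by independence and only the single-block variance, which you have already shown to be $\mathcal{O}(\ell_n/r_n)$ per block, remains. This is precisely why the paper's Lemma~\ref{coinciding_fidis_lemma} only ever asks for the second moment of one small block. With that substitution your argument is complete.
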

\begin{proof}
    By Lemma~\ref{Incomplete_block_lemma} the incomplete block is asymptotically negligible. Thus we apply Lemma~\ref{coinciding_fidis_lemma} with $h_{n,r_{n}}\p{\p{X_1, Y_1},\ldots,\p{X_{r_{n}}, Y_{r_{n}}}}$ and $f_{n}\p{\p{X_1, Y_1},\ldots,\p{X_{r_{n}}, Y_{r_{n}}}}$ both equal to
    \begin{align*}
        \sum_{j=1}^{r_n}
        \frac{K\left(\frac{x-X_{j}}{h_n}\right)1_{\left\{Y_{j} > su_{n}^{x}\right\}}}{\sqrt{nh_{n} \overline{F}^{x}(u_{n}^{x})}}.   
    \end{align*} 
    The result follows if $\lim_{n \to \infty} m_{n}\beta_{\ell_{n}} = 0$ and $\lim_{n \to \infty} m_{n} \E{f_{n}^{2}\p{\p{X_1, Y_1},\ldots,\p{X_{\ell_{n}}, Y_{\ell_{n}}}}} = 0.$
    The former condition holds by  condition~\ref{beta_mixing_condition},
    while the latter condition follows from Lemma~\ref{Lemma_covariance_of_V} since 
    \begin{align*}
       m_{n} \E{\p{ 
        \sum_{j=1}^{\ell_n}
        \frac{K\left(\frac{x-X_{j}}{h_n}\right)1_{\left\{Y_{j} > su_{n}^{x}\right\}}}
        {\sqrt{nh_{n} \overline{F}^{x}(u_{n}^{x})}}
        }^{2}} 
        \leq
        \frac{l_{n}}{r_{n}} \Var\p{\widetilde{\mathbb{V}}^{\dag,x}_{n}(s)} 
        \to 0. 
    \end{align*}
\end{proof}
This concludes the finite-dimensional convergence and we now proceed to proving a functional central limit theorem.

Define the modulus of continuity over an arbitrary compact interval $C$ as
\begin{align*}
    w(f, \delta, C) = 
    \sup_{\substack{s,t \in C \\ \lvert t-s \rvert < \delta}}
    \left\lvert f(t)-f(s) \right\rvert,
\end{align*}
for $f: C \to \R$.
\begin{theorem} \label{theorem_fct_clt_V}
     Assume conditions~\ref{Assu:gamma_lip}-\ref{Assu:n_h_F_limit},~\ref{rate_condition}-\ref{beta_mixing_condition}. Then $\widetilde{\mathbb{V}}^{x}_{n}$ converges weakly to the centred Gaussian process ${\mathbb{V}}^{x}$ in $\mathbb{D}((0,\infty))$ endowed with the $J_1$-topology, and the process $\mathbb{V}^{x}$ admits an almost surely continuous version.
\end{theorem}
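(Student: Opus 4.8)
The finite-dimensional convergence $\widetilde{\mathbb{V}}^{x}_{n}\to\mathbb{V}^{x}$ is already in hand from the preceding lemmas, so the plan has two remaining components: identify the limit process explicitly (which yields the continuity assertion and clarifies the topology) and prove asymptotic tightness. For the first, observe that the covariance $\Gamma(s,t)=(s\vee t)^{-\alpha_{x}}g(x)\int K^{2}(u)\,\mathrm{d}u$ is exactly that of $s\mapsto\sqrt{g(x)\int K^{2}(u)\,\mathrm{d}u}\,W(s^{-\alpha_{x}})$ for a standard Brownian motion $W$ on $[0,\infty)$: for $s\le t$ one has $s^{-\alpha_{x}}\ge t^{-\alpha_{x}}$, hence $\Cov\bigl(W(s^{-\alpha_{x}}),W(t^{-\alpha_{x}})\bigr)=t^{-\alpha_{x}}=(s\vee t)^{-\alpha_{x}}$. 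Since $s\mapsto s^{-\alpha_{x}}$ is continuous on $(0,\infty)$ and $W$ has continuous paths, $\mathbb{V}^{x}$ admits an almost surely continuous version; this settles the last sentence of the theorem and, in particular, means that $J_{1}$-convergence to $\mathbb{V}^{x}$ is the same as uniform convergence on each compact $[a,b]\subset(0,\infty)$.

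It therefore suffices to prove, for every such $[a,b]$, that $\{\widetilde{\mathbb{V}}^{x}_{n}|_{[a,b]}\}$ is tight in $(\mathbb{D}([a,b]),J_{1})$; combined with the finite-dimensional convergence this gives $\widetilde{\mathbb{V}}^{x}_{n}\Rightarrow\mathbb{V}^{x}$ on $[a,b]$, and since convergence in $\mathbb{D}((0,\infty))$ is convergence of all such restrictions, the theorem follows by letting $[a,b]\uparrow(0,\infty)$. Two structural facts simplify the tightness step. First, the jumps of $\sqrt{k_{n}h_{n}}\,\widetilde V^{x}_{n}$ have size at most $\|K\|_{\infty}/\sqrt{k_{n}h_{n}}$, which tends to $0$ because $k_{n}h_{n}=nh_{n}\overline F^{x}(u_{n})\to\infty$ by condition~\ref{Assu:n_h_F_limit}; hence the maximal jump of $\widetilde{\mathbb{V}}^{x}_{n}$ is asymptotically negligible, and $J_{1}$-tightness reduces to asymptotic equicontinuity in the uniform sense. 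Second, $s\mapsto\widetilde V^{x}_{n}(s)$ and $s\mapsto\E{\widetilde V^{x}_{n}(s)}$ are both non-increasing, so the oscillation of $\widetilde{\mathbb{V}}^{x}_{n}$ over a short subinterval is controlled — up to the vanishing jump size — by its values at the endpoints. It is thus enough to produce a quantitative increment bound and then refine a finite grid.

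For the increment control, work first with the independent-block pseudo-process $\widetilde{\mathbb{V}}^{\dag,x}_{n}$. Repeating the computation of Lemma~\ref{Lemma_covariance_of_V} with $1_{\{su_{n}<Y_{j}\le tu_{n}\}}$ in place of $1_{\{Y_{j}>su_{n}\}}$ gives, for $a\le s<t\le b$,
\[
\Var\bigl(\widetilde{\mathbb{V}}^{\dag,x}_{n}(s)-\widetilde{\mathbb{V}}^{\dag,x}_{n}(t)\bigr)\longrightarrow\bigl(s^{-\alpha_{x}}-t^{-\alpha_{x}}\bigr)g(x)\!\int K^{2}(u)\,\mathrm{d}u\le C\,|s-t|,
\]
where the off-diagonal within-block contribution is annihilated by the anti-clustering condition~\ref{anti_clustering_condition} together with condition~\ref{rate_condition}, and the final bound holds because $s\mapsto s^{-\alpha_{x}}$ is Lipschitz on $[a,b]$. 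Since $\widetilde{\mathbb{V}}^{\dag,x}_{n}$ is a centred normalised sum of $m_{n}$ i.i.d.\ block functionals indexed by the manageable (VC-type) class $\{1_{\{\cdot>su_{n}\}}:s\in[a,b]\}$, this estimate feeds into a maximal/chaining inequality which, combined with the monotonicity and vanishing-jump remarks above, yields the asymptotic equicontinuity of $\widetilde{\mathbb{V}}^{\dag,x}_{n}$. Finally, the $\beta$-mixing coupling already used to identify the finite-dimensional laws of $\widetilde{\mathbb{V}}^{\dag,x}_{n}$ and $\widetilde{\mathbb{V}}^{x}_{n}$ (Lemma~\ref{coinciding_fidis_lemma}, which only uses $m_{n}\beta_{\ell_{n}}\to0$ from condition~\ref{beta_mixing_condition}) couples the two processes on $\mathbb{D}([a,b])$ up to an error tending to zero, the incomplete last block being negligible by Lemma~\ref{Incomplete_block_lemma}; this transfers tightness to $\widetilde{\mathbb{V}}^{x}_{n}$ and completes the proof.

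The main obstacle is precisely this tightness/equicontinuity step: a plain second-moment increment bound of order $|s-t|$ is by itself too weak for the Skorokhod space, so one must genuinely exploit the extra structure — the monotonicity of $\widetilde V^{x}_{n}$ to turn grid-wise control into a modulus-of-continuity estimate, and the vanishing maximal jump to reduce $J_{1}$ to uniform equicontinuity — rather than invoke a bare Kolmogorov-type criterion. The anti-clustering condition~\ref{anti_clustering_condition} is what prevents the within-block temporal dependence from inflating the variance of small increments, and is therefore the essential hypothesis behind this argument.
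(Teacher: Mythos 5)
Your outline follows essentially the same route as the paper: finite-dimensional convergence from the preceding lemmas, reduction to asymptotic equicontinuity on compacts $[a,b]\subset(0,\infty)$, an increment-variance bound for the independent-block pseudo-process $\widetilde{\mathbb{V}}^{\dag,x}_{n}$ driven by the anti-clustering condition~\ref{anti_clustering_condition} and condition~\ref{rate_condition}, a chaining argument over the linearly ordered (hence VC-subgraph) class of indicator functionals, negligibility of the incomplete block, and a $\beta$-mixing transfer back to $\widetilde{\mathbb{V}}^{x}_{n}$. Your identification of $\mathbb{V}^{x}$ as a time-changed Brownian motion is a clean alternative way to get the continuous version (the paper obtains continuity directly from the equicontinuity theorem).

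One step needs repair. You claim that Lemma~\ref{coinciding_fidis_lemma} ``couples the two processes on $\mathbb{D}([a,b])$ up to an error tending to zero.'' It does not: that lemma only transfers weak convergence of the real-valued sums (finite-dimensional laws) and uses the gap $\ell_n$ \emph{inside} each block; it gives no control of suprema over $s\in[a,b]$. To transfer tightness you must split $\widetilde{\mathbb{V}}^{x}_{n}$ into the sums over odd-indexed and even-indexed blocks; within each of these sub-sums consecutive blocks are separated by a full block length $r_n$, so Lemma~\ref{blocking_lemma} bounds the total-variation distance between the law of the whole function-valued block collection and its independent counterpart by $m_n\beta_{r_n}$, which tends to zero since $m_n\beta_{\ell_n}\to0$ and $\ell_n/r_n\to0$. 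Only this total-variation coupling lets you carry the modulus-of-continuity bound (a supremum functional of the blocks) from the pseudo-sample to the original sample; with that substitution your argument matches the paper's proof. I would also note that ``this estimate feeds into a maximal/chaining inequality'' is doing real work: the paper makes it precise via Theorem~\ref{bracketing_clt} with the random metric $\mathrm{d}_n$, the envelope bound from Lemma~\ref{Lemma_covariance_of_V}, and Lemmas~\ref{uniform_entropy_lemma} and \ref{linearly_ordered_class_lemma}, and the Lindeberg-type negligibility condition there is exactly the computation carried out in Theorem~\ref{Fidis_theorem}.
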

\begin{proof}
    Theorem~\ref{Fidis_theorem} yields that the finite dimensional distributions of $\widetilde{\mathbb{V}}^{x}_{n}$ converges to that of the proposed Gaussian process. It thus remains to prove asymptotic equicontinuity.
    According to Theorem~\ref{equicontinuity_theorem} the statement above follows if $\widetilde{\mathbb{V}}^{x}_{n}$ is asymptotically equicontinuous w.r.t. the uniform norm on each interval $[a,b]$ with $0 < a < b$, i.e if for all $\varepsilon > 0$
    \begin{align*}
        \lim_{\delta \to 0}
        \limsup_{n \to \infty}
        \P\p{w\p{\widetilde{\mathbb{V}}^{x}_{n}, \delta, [a,b]} > \varepsilon} = 0.
    \end{align*}
    To show this limit, we relate the asymptotic equicontinuity of $\widetilde{\mathbb{V}}^{x}_{n}$ to the pseudo-sample version $\widetilde{\mathbb{V}}^{\dag, x}_{n}$ as follows. Define the processes 
    \begin{align*}
        &\widetilde{\mathbb{V}}^{x}_{n,1} =
        \sum_{\substack{1 \leq i \leq m_{n} \\ i \ \text{odd}}}
        \frac{ \overline{S}_{n,i}^{x}}{\sqrt{nh_{n} \overline{F}^{x}(u_{n}^{x})}}, 
        \quad \ \
        \widetilde{\mathbb{V}}^{x}_{n,2} =
        \sum_{\substack{1 \leq i \leq m_{n} \\ i \ \text{even}}}
        \frac{ \overline{S}_{n,i}^{x}}{\sqrt{nh_{n} \overline{F}^{x}(u_{n}^{x})}} 
        \\
        &\widetilde{\mathbb{V}}^{\dag, x}_{n,1} =
        \sum_{\substack{1 \leq i \leq m_{n} \\ i \ \text{odd}}}
        \frac{ \overline{S}_{n,i}^{\dag,x}}{\sqrt{nh_{n} \overline{F}^{x}(u_{n}^{x})}},
        \quad
        \widetilde{\mathbb{V}}^{\dag, x}_{n,2} =
        \sum_{\substack{1 \leq i \leq m_{n} \\ i \ \text{even}}}
        \frac{ \overline{S}_{n,i}^{\dag,x}}{\sqrt{nh_{n} \overline{F}^{x}(u_{n}^{x})}}
    \end{align*}
    These processes are sums of blocks separated by a distance of the block length $r_{n}$ and thus Lemma~\ref{blocking_lemma} yields that 
    \begin{align*}
        \mathrm{d}_{\mathrm{TV}}
        \p{\mathcal{L}\p{\widetilde{\mathbb{V}}^{\dag, x}_{n,i}},
           \mathcal{L}\p{\widetilde{\mathbb{V}}^{x}_{n,i}}}
        \leq m_{n}\beta_{r_{n}}, \ \ i=1,2,
    \end{align*}
    where $ \mathrm{d}_{\mathrm{TV}}$ denotes the total variation distance.
    The processes defined above allows for the following decompositions
    \begin{align*}
        &\widetilde{\mathbb{V}}^{x}_{n} = 
        \widetilde{\mathbb{V}}^{x}_{n,1} +
        \widetilde{\mathbb{V}}^{x}_{n,2} +
        \widetilde{\mathbb{V}}^{x, \text{incomplete}}_{n}, \quad\widetilde{\mathbb{V}}^{\dag, x}_{n} = 
        \widetilde{\mathbb{V}}^{\dag, x}_{n,1} +
        \widetilde{\mathbb{V}}^{\dag, x}_{n,2} ,
    \end{align*}
    where \begin{align*}
        \widetilde{\mathbb{V}}^{x, \text{incomplete}}_{n}(s) := \sum_{j=m_{n}r_{n} + 1}^{n}
         \frac{K\left(\frac{x-X_{j}}{h_n}\right)1_{\curly{Y_{j} > su_{n}^{x}}}
         - \E{K\left(\frac{x-X_{j}}{h_n}\right)1_{\curly{Y_{j} > su_{n}^{x}}}}}{\sqrt{nh_{n} \overline{F}^{x}(u_{n}^{x})}}.
    \end{align*}
    Let $\varepsilon > 0$ be given and notice that $w\p{f + g, \delta, [a,b]} \leq w\p{f, \delta, [a,b]} + w\p{ g, \delta, [a,b]}$ for any $f,g : [a,b] \to \R$. The total variation distance yields that
    \begin{align*}
        &\P\p{w\p{\widetilde{\mathbb{V}}^{x}_{n}, \delta, [a,b]} > \varepsilon} 
        \\
        &\leq 
        \P\p{w\p{\widetilde{\mathbb{V}}^{x}_{n,1}, \delta, [a,b]} > \frac{\varepsilon}{3}}  
        + \P\p{w\p{\widetilde{\mathbb{V}}^{x}_{n,2}, \delta, [a,b]} > \frac{\varepsilon}{3}}  
        + \P\p{w\p{\widetilde{\mathbb{V}}^{x, \text{incomplete}}_{n}, \delta, [a,b]} > \frac{\varepsilon}{3}} 
        \\
        & \leq 
        \P\p{w\p{\widetilde{\mathbb{V}}^{\dag, x}_{n,1}, \delta, [a,b]} > \frac{\varepsilon}{3}}  
        + \P\p{w\p{\widetilde{\mathbb{V}}^{\dag, x}_{n,2}, \delta, [a,b]} > \frac{\varepsilon}{3}}  
         + \P\p{w\p{\widetilde{\mathbb{V}}^{x, \text{incomplete}}_{n}, \delta, [a,b]} > \frac{\varepsilon}{3}}\\
         &\quad + 2m_{n}\beta_{r_n}.
    \end{align*}
     Now, Lemma~\ref{Incomplete_block_lemma} implies asymptotic equicontinuity of $\widetilde{\mathbb{V}}^{x, \text{incomplete}}_{n}$ w.r.t. the uniform norm on compact positive intervals and hence 
     \begin{align*}
        \lim_{\delta \to 0}
        \limsup_{n \to \infty}
        \P\p{w\p{\widetilde{\mathbb{V}}^{x, \text{incomplete}}_{n}, \delta, [a,b]} > \frac{\varepsilon}{3}} = 0.
    \end{align*}
    Condition~\ref{beta_mixing_condition} yields
    $m_{n}\beta_{l_{n}} \to 0$ and $l_{n}/r_{n} \to 0$ which implies $m_{n}\beta_{r_n} \to 0$. Thus asymptotic equicontinuity of $\widetilde{\mathbb{V}}^{x}_{n}$ follows if we show asymptotic equicontinuity of $\widetilde{\mathbb{V}}^{\dag, x}_{n,1}$ and $\widetilde{\mathbb{V}}^{\dag, x}_{n,1}$ or equivalently $\widetilde{\mathbb{V}}^{\dag, x}_{n}$ since these processes are sums of the same i.i.d. summands.
    
    The summands of $\widetilde{\mathbb{V}}^{\dag, x}_{n}$ are i.i.d processes indexed on $[a,b]$ with càdlàg sample paths, the latter implying separability of each summand. Consequently we can apply Theorem~\ref{bracketing_clt} to show asymptotic equicontinuity. The Lindeberg condition was already shown in Theorem~\ref{Fidis_theorem}.
    With $v_{n} = nh_{n} \overline{F}^{x}(u_{n}^{x})$ define the random metric $\mathrm{d}_{n}$ on $[a,b]$ by
    \begin{align*}
        \mathrm{d}_{n}^{2}(s,t) =
        \frac{1}{v_n}\sum_{i=1}^{m_n} 
        \curly{\overline{S}_{n, i}^{\dag,x}(s) - \overline{S}_{n, i}^{\dag,x}(t)}^{2}.
    \end{align*}
    Monotonicity of $s \mapsto \overline{S}_{n, i}^{\dag,x}(s)$ yields
    \begin{align*}
        &\E{\mathrm{d}_{n}^{2}(s,t)}
        =
        \frac{m_n}{ nh_{n} \overline{F}^{x}(u_{n}^{x})}
        \E{\curly{\overline{S}_{n, i}^{\dag,x}(s) - \overline{S}_{n, i}^{\dag,x}(t)}^{2}} \\
        &\leq 
        \frac{m_n}{ nh_{n} \overline{F}^{x}(u_{n}^{x})} \p{
        \E{\overline{S}_{n, i}^{\dag,x}(s)^{2}} - \E{\overline{S}_{n, i}^{\dag,x}(t)^{2}}} 
        \to 
        \p{s^{-1/\gamma(x)} -  t^{-1/\gamma(x)}}g(x)\int K^{2}\p{u}\mathrm{d}u,
    \end{align*}
    where Lemma~\ref{Lemma_covariance_of_V} was invoked. Since the limit function is continuous, Dini's Theorem implies that the convergence holds uniformly on compact intervals bounded away from zero. Thus for every sequence $\curly{\delta_{n}}$ decreasing to zero it holds that
    \begin{align*}
        &\lim_{n \to \infty} 
        \sup_{\substack{s,t \in [a,b] \\ \lvert s-t \rvert < \delta_{n}}}
        \E{\mathrm{d}_{n}^{2}(s,t)} \leq
        \lim_{n \to \infty} 
        \sup_{\substack{s,t \in [a,b] \\ \lvert s-t \rvert < \delta_{n}}}
        \curly{\frac{m_n}{ nh_{n} \overline{F}^{x}(u_{n}^{x})} \p{
        \E{\overline{S}_{n, i}^{\dag,x}(s)^{2}} - \E{\overline{S}_{n, i}^{\dag,x}(t)^{2}}} }
        = 0.
    \end{align*}
    In order to prove the random entropy integral condition, define the map $\mathcal{E}_{s}$ on $\ell_{0}(\R)$ by
    \begin{align*}
        \mathcal{E}_{s}\p{\boldsymbol{y}, \boldsymbol{x}} 
        =
        \sum_{j \in \Z}1_{\curly{y_{j}>s}}K(x_j), \quad \p{\boldsymbol{y}, \boldsymbol{x}} = \p{\p{y_j,x_j}, j \in \Z},
    \end{align*}
    and let $\mathcal{G}$ be the class of functions on $\ell_{0}(\R)$ defined by $\mathcal{G}= \curly{ \mathcal{E}_{s}, a \leq s \leq b}$. In addition define a sequence of processes $\curly{\Z_n}$ indexed by $\mathcal{G}$ by
    \begin{align*}
        \Z_n\p{\mathcal{E}_{s}}
        &= v_{n}^{-1}
        \sum_{i=1}^{m_n} \mathcal{E}_{s}
        \p{(u_{n}^{x})^{-1}\boldsymbol{Y}^{\dag}_{(i-1)r_{n} +1, ir_n}, h_{n}^{-1}\p{\boldsymbol{x} -\boldsymbol{X}^{\dag}_{(i-1)r_{n} +1, ir_n}}}
        =
         \frac{ 1}{nh_{n} \overline{F}^{x}(u_{n}^{x})} 
         \sum_{i=1}^{m_n}
         S_{n,i}^{\dag,x}(s),
    \end{align*}
    using the notation $\boldsymbol{X}^{\dag}_{(i-1)r_{n} +1, ir_n} = (X^{\dag}_{(i-1)r_{n} +1},\ldots X^{\dag}_{ir_n})$ for all $i=1,\ldots,m_n$, and similarly for $\boldsymbol{Y}^{\dag}$.\\
    If we define a random metric $\widetilde{d}_n$ on $\mathcal{G}$ by
    \begin{align*}
        \widetilde{d}_{n}^{2}\p{\mathcal{E}_{s}, \mathcal{E}_{t}}
        &=
        v_{n}^{-1}
        \sum_{i=1}^{m_n} \Biggl\{ \mathcal{E}_{s}
        \p{(u_{n}^{x})^{-1}\boldsymbol{Y}^{\dag}_{(i-1)r_{n} +1, ir_n}, h_{n}^{-1}\p{\boldsymbol{x} -\boldsymbol{X}^{\dag}_{(i-1)r_{n} +1, ir_n}}} \\
        & \qquad \qquad \quad - \mathcal{E}_{t}
        \p{(u_{n}^{x})^{-1}\boldsymbol{Y}^{\dag}_{(i-1)r_{n} +1, ir_n}, h_{n}^{-1}\p{\boldsymbol{x} -\boldsymbol{X}^{\dag}_{(i-1)r_{n} +1, ir_n}}}
        \Biggr\}^{2},
    \end{align*}
    then $ \widetilde{d}_{n}\p{\mathcal{E}_{s}, \mathcal{E}_{t}} = \mathrm{d}_{n}(s,t)$, and thus the random entropy condition can be proved for the class $\mathcal{G}$ with respect to the random metric $\widetilde{d}_n$. This reformulation allows us to invoke Lemma~\ref{uniform_entropy_lemma}. Consider the envelope
    \begin{align*}
        \boldsymbol{G}\p{\boldsymbol{y}, \boldsymbol{x}}
        = \sup_{s \in [a,b]}
        \left\lvert  \mathcal{E}_{s}\p{\boldsymbol{y}, \boldsymbol{x}}  \right\rvert 
        =
        \sup_{s \in [a,b]}
        \left\lvert   \sum_{j \in \Z}1_{\curly{y_{j}>s}}K(x_j)
        \right\rvert 
        =
        \sum_{j \in \Z}1_{\curly{y_{j}>a}}K(x_j).
    \end{align*}
    Lemma~\ref{Covariance_of_V} implies that 
    \begin{align*}
        &\limsup_{n \to \infty} \frac{m_n}{nh_{n} \overline{F}^{x}(u_{n}^{x})}
        \E{\boldsymbol{G}^{2} \p{(u_{n}^{x})^{-1}\boldsymbol{Y}^{\dag}_{1, r_n}, h_{n}^{-1}\p{\boldsymbol{x} -\boldsymbol{X}^{\dag}_{1, r_n}}}} 
        \\
        &=
        \limsup_{n \to \infty} \frac{m_n}{nh_{n} \overline{F}^{x}(u_{n}^{x})}
         \E{\p{\sum_{j=1}^{r_n}
        K\left(\frac{x-X_{j}}{h_n}\right)1_{\left\{Y_{j} > au_{n}^{x}\right\}}}^{2}}
         \\
        &=
        \limsup_{n \to \infty} \Var\p{\widetilde{\mathbb{V}}^{\dag, x}_{n}(a)}
        = a^{-1/\gamma(x)}g(x) \int K^{2}(u)\mathrm{d}u < \infty.
    \end{align*}
    We note that $\mathcal{G}$ is linearly ordered. This indeed holds due to monotonicity:
    \begin{align*}
        s < t \iff \mathcal{E}_{s}\p{\boldsymbol{y}, \boldsymbol{x}} \geq
        \mathcal{E}_{t}\p{\boldsymbol{y}, \boldsymbol{x}}.
    \end{align*}
    Consequently, Lemma~\ref{linearly_ordered_class_lemma} yields that the uniform entropy condition in Lemma~\ref{uniform_entropy_lemma} is fulfilled, and hence the random entropy condition in Theorem~\ref{bracketing_clt} holds, and thus the asymptotic equicontinuity of Theorem~\ref{equicontinuity_theorem} holds. This concludes the proof.
\end{proof}

\begin{proof}[Proof of Proposition~\ref{Functional_CLT_Tail_empirical_process}]
Consider the following decomposition
\begin{align*}
    \widetilde{\mathbb{T}}^{x}_{n}(s) 
    &= \sqrt{k_{n}h_{n}}
    \curly{g(x)- g_{n}(x)}
    \frac{\widetilde{T}^{x}_{n}(s)}{g(x)} + 
    \frac{1}{g(x)}\widetilde{\mathbb{V}}^{x}_{n}(s)+ 
    \sqrt{k_{n}h_{n}}\frac{1}{g(x)}
    \curly{ \E{\widetilde{V}^{x}_{n}(s)} - g(x)s^{-1/\gamma(x)}}.
\end{align*}
    The absolute value of the last term vanishes, since it is bounded by $\sqrt{k_{n}h_{n}}B_{n}^{x}(s_{0}) \to 0$,
    so Lemma~\ref{lemma:g_n_emp_process_negligibility_alpha_mixing} and Theorem~\ref{theorem_fct_clt_V} imply that
    \begin{align*}
    \widetilde{\mathbb{T}}^{x}_{n}(s)
    = \sqrt{k_{n}h_{n}}
    \curly{g(x)- g_{n}(x)}
    \frac{\widetilde{T}^{x}_{n}(s)}{g(x)} + 
    \frac{1}{g(x)}\widetilde{\mathbb{V}}^{x}_{n}(s) 
     + 
    o(1)
     \overset{w}{\Longrightarrow}\frac{1}{g(x)}\mathbb{V}^{x}(s). 
\end{align*}
\end{proof}
Recall the definitions $u_{n}^{x} = F^{\leftarrow, x}\p{1-k_{n}/n}$ and $q_{n,k_{n}}^{x} = F^{\leftarrow, x}_{n}\p{1-k_{n}/n}$. Then $n\overline{F}^{x}(u_{n}^{x}) = k_{n}$. Define the conditional tail empirical quantile function $\widetilde{Q}_{n}^{x}$ on $\p{0, k_{n}/n}$ by 
\begin{align*}
    \widetilde{Q}_{n}^{x}(t) = 
    \widetilde{T}_{n}^{\leftarrow,x}\p{\frac{\floor{k_{n}t}}{k_{n}}} =
    \frac{q_{\floor{k_{n}t},n}}{u_{n}^{x}},
\end{align*}
and let $Q^{x}(t) = t^{-\gamma(x)}, t > 0$.
\begin{theorem} \label{th_quantile_normality}
    Assume~\eqref{def:condRV}, conditions~\ref{Assu:gamma_lip}-\ref{Assu:g_n_consistent} and~\ref{rate_condition}-\ref{B_n_assumption}.
    Then
    \begin{align*}
        \sqrt{h_{n}k_{n}}\curly{\widetilde{T}_{n}^{x} - T^{x},   \widetilde{Q}_{n}^{x} - Q^{x}} 
        \overset{w}{\Longrightarrow} \p{\mathbb{T}^{x}, -\p{Q^{x}}'\cdot \mathbb{T}^{x}\circ Q^{x}},
    \end{align*}
    in $\mathbb{D}\p{[s_0, \infty)} \times \mathbb{D}((0,s_{0}^{-\gamma(x)}])$ endowed with the product $J_{1}$ topology.\\ Consequently
    \begin{align*}
         \sqrt{h_{n}k_{n}}\curly{\frac{q_{n,k_{n}}^{x}}{u_{n}^{x}} - 1} 
         \overset{d}{\to}
         \gamma(x)\mathbb{T}^{x}(1), 
    \end{align*}
    jointly with the convergence of $\sqrt{h_{n}k_{n}}\curly{\widetilde{T}_{n}^{x} - T^{x}}$ to $\mathbb{T}^{x}$.
\end{theorem}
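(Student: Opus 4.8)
The plan is to deduce both displays from the functional central limit theorem for the tail empirical process $\widetilde{\mathbb{T}}^{x}_{n}$ that is already available under the assumptions of the theorem, namely Proposition~\ref{Functional_CLT_Tail_empirical_process} (equivalently Theorem~\ref{theorem_fct_clt_V} together with the negligibility of the bias term and of $g_{n}(x)-g(x)$), by means of the functional delta method for the generalised-inverse map. I would first record the two inputs that drive the argument: (i) $\widetilde{\mathbb{T}}^{x}_{n}\overset{w}{\Longrightarrow}\mathbb{T}^{x}$ in $\mathbb{D}([s_0,\infty))$ endowed with the $J_1$-topology, the limit having an almost surely continuous version; and (ii) $\widetilde{Q}^{x}_{n}\overset{\P}{\to}Q^{x}$ locally uniformly on $(0,s_{0}^{-1/\alpha_x}]$, which follows from the uniform consistency of $\widetilde{T}^{x}_{n}$ (Theorem~\ref{Empirical_tail_dist_consistency_theorem}) combined with the switching argument used in the proof of Proposition~\ref{Empirical_quantile_consistency_prop}, applied pointwise and then upgraded to local uniformity by monotonicity and continuity of $Q^{x}$.

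Next, I would fix a compact interval $[a,b]$ bounded away from $0$ inside the quantile domain and work there; the $J_1$-statement on the half-open interval then follows since it only asks for convergence on each such $[a,b]$. On $[a,b]$ the target is the Bahadur-type linearisation
\begin{align*}
    \sqrt{k_{n}h_{n}}\curly{\widetilde{Q}^{x}_{n}(t)-Q^{x}(t)}
    = -\p{Q^{x}}'(t)\,\widetilde{\mathbb{T}}^{x}_{n}\p{Q^{x}(t)} + R_{n}(t),
    \qquad \sup_{t\in[a,b]}\lvert R_{n}(t)\rvert\overset{\P}{\to}0.
\end{align*}
Its ingredients are the switching relation $\widetilde{Q}^{x}_{n}(t)\le s\iff \widetilde{T}^{x}_{n}(s)\le\floor{k_{n}t}/k_{n}$; the discretisation bound $\lvert\floor{k_{n}t}/k_{n}-t\rvert\le 1/k_{n}$, which is negligible after multiplication by $\sqrt{k_{n}h_{n}}$ because $\sqrt{k_{n}h_{n}}/k_{n}=\sqrt{h_{n}/k_{n}}\to0$; a first-order Taylor expansion of $T^{x}$ at $Q^{x}(t)$, using that $T^{x}$ is $C^{1}$ and strictly decreasing with $\p{Q^{x}}'(t)=1/(T^{x})'\p{Q^{x}(t)}$; and, to replace $\widetilde{\mathbb{T}}^{x}_{n}$ evaluated at the random argument $\widetilde{Q}^{x}_{n}(t)$ by $\widetilde{\mathbb{T}}^{x}_{n}\p{Q^{x}(t)}$, the uniform consistency from (ii) together with the asymptotic equicontinuity of $\widetilde{\mathbb{T}}^{x}_{n}$ proved inside Theorem~\ref{theorem_fct_clt_V}. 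Equivalently, one may package all of this as Hadamard differentiability of $z\mapsto z^{\leftarrow}$ tangentially to $C[a,b]$ and quote the functional delta method, in the spirit of Vervaat's lemma.

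Since the leading term of the linearisation is a fixed $J_1$-continuous functional of $\widetilde{\mathbb{T}}^{x}_{n}$ alone, namely composition with the continuous strictly monotone time change $Q^{x}$ followed by multiplication by the continuous function $-\p{Q^{x}}'$, the continuous mapping theorem upgrades (i) to
\begin{align*}
    \p{\widetilde{\mathbb{T}}^{x}_{n},\ \sqrt{k_{n}h_{n}}\curly{\widetilde{Q}^{x}_{n}-Q^{x}}}
    \overset{w}{\Longrightarrow}
    \p{\mathbb{T}^{x},\ -\p{Q^{x}}'\cdot\mathbb{T}^{x}\circ Q^{x}}
\end{align*}
in the product $J_1$-topology, which is the first display. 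For the consequence I would evaluate the second coordinate at $t=1$, where $\widetilde{Q}^{x}_{n}(1)=q_{n}/u_{n}$ and $Q^{x}(1)=1$: since the limit process has an almost surely continuous version, the evaluation-at-$1$ map, applied jointly with the identity on the first coordinate, is continuous on a set of full limit measure, so the continuous mapping theorem gives $\sqrt{k_{n}h_{n}}\curly{q_{n}/u_{n}-1}\overset{d}{\to}-\p{Q^{x}}'(1)\,\mathbb{T}^{x}(1)=\frac{1}{\alpha_x}\mathbb{T}^{x}(1)$, jointly with $\widetilde{\mathbb{T}}^{x}_{n}\overset{w}{\Longrightarrow}\mathbb{T}^{x}$; the Gaussian law with variance $\frac{\gamma(x)^{2}}{g(x)}\int K^{2}(u)\mathrm{d}u$ then follows from $\Gamma(1,1)=\int K^{2}(u)\mathrm{d}u/g(x)$ and $\gamma(x)=1/\alpha_x$, which is precisely Proposition~\ref{prop:q_n_normality}.

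I expect the \textbf{main obstacle} to be the uniform control of the remainder $R_{n}$, that is, showing $\sup_{t\in[a,b]}\lvert\widetilde{\mathbb{T}}^{x}_{n}(\widetilde{Q}^{x}_{n}(t))-\widetilde{\mathbb{T}}^{x}_{n}(Q^{x}(t))\rvert\overset{\P}{\to}0$; this is exactly where the uniform consistency of $\widetilde{Q}^{x}_{n}$ must be married to the asymptotic equicontinuity modulus of $\widetilde{\mathbb{T}}^{x}_{n}$, whereas the discretisation and Taylor remainders are routine order-of-magnitude estimates. A secondary point requiring care is the behaviour of $\widetilde{Q}^{x}_{n}(t)$ near the right endpoint of the quantile domain, where it can dip slightly below $s_0$ and out of the region where $\widetilde{\mathbb{T}}^{x}_{n}$ is controlled; this is handled by exhausting the half-open interval with compacts strictly inside it, which is all the $J_1$-topology requires.
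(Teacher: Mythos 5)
Your argument is correct and takes essentially the same route as the paper: the paper's proof is a one-line application of Vervaat's lemma (Theorem~\ref{Vervaat} in the appendix) to the FCLT of Proposition~\ref{Functional_CLT_Tail_empirical_process}, using exactly the facts you use — $T^{x}$ is continuous and strictly decreasing with negative derivative, $\widetilde{T}^{x}_{n}$ has monotone paths, and the limit has a continuous version — so your switching/Bahadur linearisation is just an unpacking of that lemma's proof rather than a different method. The only point worth flagging is that you do not need to invoke Theorem~\ref{Empirical_tail_dist_consistency_theorem} (which would require the extra speed condition on $h_n$ and $\overline{F}^x(u_n)$ not assumed here) to get consistency of $\widetilde{Q}^{x}_{n}$; uniform consistency of $\widetilde{T}^{x}_{n}$, and hence of its inverse, already follows from the weak convergence of $\widetilde{\mathbb{T}}^{x}_{n}$ itself.
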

\begin{proof}
    Note that $[s_0, \infty) \ni s \mapsto T^{x}(s)$ is surjective on $(0,s_{0}^{-\gamma(x)}]$, continuous and decreasing with negative derivative. As the processes $\widetilde{T}_{n}^{x}$ have non-increasing sample paths for each $n \geq 1$, Vervaat's Lemma~\ref{Vervaat} yields that 
    \begin{align*}
        \sqrt{h_{n}k_{n}}\curly{\widetilde{T}_{n}^{x} - T^{x},  \widetilde{T}_{n}^{\leftarrow,x}- T^{\leftarrow, x}} 
        \overset{w}{\Longrightarrow} 
        \p{\mathbb{T}^{x}, -\p{T^{\leftarrow, x}}'\cdot \mathbb{T}^{x}\circ T^{\leftarrow, x}},
    \end{align*}
    in $\mathbb{D}\p{[s_0, \infty)} \times \mathbb{D}((0,s_{0}^{-\gamma(x)}])$ endowed with the product $J_{1}$ topology. This concludes the proof.
\end{proof}
\begin{proof}[Proof of Proposition~\ref{prop:q_{n,k_{n}}^{x}_normality}]
    The limit follows directly from Theorem~\ref{th_quantile_normality}. The asymptotic variance follows as
    \begin{align*}
        \text{var}\left(\gamma(x)\mathbb{T}^{x}(1)\right)
        =
        \gamma(x)^{2} \text{cov}(\mathbb{T}^{x}(1),\mathbb{T}^{x}(1)) 
        =
        \frac{\gamma(x)^{2}}{g(x)}\int K^{2}(u) \dd u.
    \end{align*}
\end{proof}
\begin{proof}[Proof of Theorem~\ref{Thm:Functional_CLT_Tail_empirical_process_Random_level}]
    Decompose $ \widehat{\mathbb{T}}^{x}_{n}$ into the following components
    \begin{align*}
         \widehat{\mathbb{T}}^{x}_{n} =  \widetilde{\mathbb{T}}^{x}_{n}\p{s\frac{q_{n,k_{n}}^{x}}{u_{n}^{x}}} + 
         \sqrt{k_{n}h_{n}}\curly{T^{x}\p{s\frac{q_{n,k_{n}}^{x}}{u_{n}^{x}}} - T^{x}(s)}.
    \end{align*}
    The continuous mapping theorem and Theorem~\ref{th_quantile_normality} yields 
     that the first term above converges to $\mathbb{T}^{x}$.  As for the second term, Theorem~\ref{th_quantile_normality} and the delta-method yields that
     \begin{align*}
         \sqrt{k_{n}h_{n}}\curly{T^{x}\p{s\frac{q_{n,k_{n}}^{x}}{u_{n}^{x}}} - T^{x}(s)} &= 
         s^{-{1/\gamma(x)}}\sqrt{k_{n}h_{n}}\curly{T^{x}\p{\frac{q_{n,k_{n}}^{x}}{u_{n}^{x}}} - T^{x}(1)}
         \\
         &\overset{d}{\to}
          s^{-{1/\gamma(x)}} \p{T^{x}}'(1) \cdot \gamma(x)\mathbb{T}^{x}(1) = -T^{x}(s)\mathbb{T}^{x}(1), 
     \end{align*}
     jointly with $\widetilde{\mathbb{T}}^{x}_{n}\p{sq_{n,k_{n}}^{x}/u_{n}^{x}}$. The argument can be transferred unchanged to pathwise weak convergence.
\end{proof}

\subsection{Proof of Theorem~\ref{Hill_normality_thm} (Hill CLT)}

\begin{proof}[Proof of Theorem~\ref{Hill_normality_thm}]
    We consider the decomposition
    \begin{align*}
        \sqrt{k_{n}h_{n}}\p{\widehat{\gamma}_{n}(x) - \gamma(x)}
        &= \sqrt{k_{n}h_{n}}\int_{q_{n,k_{n}}^{x}/u_{n}^{x}}^{1}
        \frac{\widetilde{T}_{n}^{x}\p{s}}{s}\mathrm{d}s
        +
        \int_{1}^{\infty}
        \frac{\widetilde{\mathbb{T}}_{n}^{x}\p{s}}{s}\mathrm{d}s. 
    \end{align*}
    We examine the first term.
    Since Theorem~\ref{th_quantile_normality} yields $q_{n,k_{n}}^{x}/u_{n}^{x} \overset{\P}{\to} 1$, we may assume without loss of generality that for given $\varepsilon \in (0,1)$, there exists $N \in \N$ such that for $n\geq N$ it holds that $q_{n,k_{n}}^{x}/u_{n}^{x} \in (1-\varepsilon, 1+ \varepsilon)$. \\
    Thus for  $n\geq N$ we have that
    \begin{align*}
        \sqrt{k_{n}h_{n}} \left\vert\int_{q_{n,k_{n}}^{x}/u_{n}^{x}}^{1}
        \frac{\widetilde{T}_{n}^{x}\p{s} -T^{x}\p{s}}{s}\mathrm{d}s \right\vert
        \leq 
        \frac{\sqrt{k_{n}h_{n}}}{1-\varepsilon}
        \left\vert\frac{q_{n,k_{n}}^{x}}{u_{n}^{x}} -1 \right\vert
        \sup_{t \in (1-\varepsilon, 1+ \varepsilon)}
        \left\vert \widetilde{T}_{n}^{x}\p{t} -T^{x}\p{t} \right\vert.
    \end{align*}
    Moreover, Theorem~\ref{th_quantile_normality} and the continuous mapping theorem yield that
    \begin{align*}
        \sqrt{k_{n}h_{n}}
        \left\vert\frac{q_{n,k_{n}}^{x}}{u_{n}^{x}} -1 \right\vert
        \overset{d}{\to}
        \gamma(x) 
        \left\vert \mathbb{T}^{x}(1)\right\vert,
    \end{align*}
    while Theorem~\ref{Empirical_tail_dist_consistency_theorem_m_depence} and Dini's theorem yield  $  \sup_{t \in (1-\varepsilon, 1+ \varepsilon)}
        \left\vert \widetilde{T}_{n}^{x}\p{t} -T^{x}\p{t} \right\vert \overset{\P}{\to} 0$ and thus
    \begin{align*}
        \sqrt{k_{n}h_{n}}\int_{q_{n,k_{n}}^{x}/u_{n}^{x}}^{1}
        \frac{\widetilde{T}_{n}^{x}\p{s}}{s}\mathrm{d}s 
        =
        \sqrt{k_{n}h_{n}}\int_{q_{n,k_{n}}^{x}/u_{n}^{x}}^{1}
        \frac{T^{x}\p{s}}{s}\mathrm{d}s + o_{\P}(1).
    \end{align*}
    Applying the delta-method with $x \mapsto \int_{x}^{1}  \frac{T^{x}\p{s}}{s}\mathrm{d}s$, on the sequence $(q_{n,k_{n}}^{x}/u_{n}^{x})$ yields that
    \begin{align*}
        \sqrt{k_{n}h_{n}}\int_{q_{n,k_{n}}^{x}/u_{n}^{x}}^{1}
        \frac{T^{x}\p{s}}{s}\mathrm{d}s 
        \overset{d}{\to}
        -\gamma(x) \mathbb{T}^{x}(1).
    \end{align*}
    This provides the asymptotics of the first term. We proceed to the second term
    \begin{align*}
        \int_{1}^{\infty}
        \frac{\widetilde{\mathbb{T}}_{n}^{x}\p{s}}{s}\mathrm{d}s  
        &= 
        \sqrt{k_{n}h_{n}}
         \frac{1}{g(x)}
        \curly{g(x)- g_{n}(x)}
         \int_{1}^{\infty}
        \frac{\widetilde{T}^{x}_{n}(s)}{s}\mathrm{d}s  + \frac{1}{g(x)}\int_{1}^{\infty}
        \frac{\widetilde{\mathbb{V}}_{n}^{x}\p{s}}{s}\mathrm{d}s \\
        & \qquad
        +
        \sqrt{k_{n}h_{n}}\frac{1}{g(x)}\int_{1}^{\infty}
        \frac{\E{\widetilde{V}^{x}_{n}(s)} - g(x)s^{-1/\gamma(x)}}{s}\mathrm{d}s.
    \end{align*}
    The last term is negligible due to condition~\ref{integral_B_n_assumption}. Theorem~\ref{th_quantile_normality} yields that equation~\eqref{Empirical_tail_dist_consistency_equation} holds, and consequently $\int_{1}^{\infty}
        \widetilde{T}^{x}_{n}(s)/s \ \mathrm{d}s \overset{\P}{\to} \gamma(x)$ as in the proof of Theorem~\ref{Hill_Consistency_thm}. Combined with condition~\ref{Assu:gn_emp_proc_negl} we have that the first term also vanishes.
    Finally we consider $g(x)^{-1}\int_{1}^{\infty} \widetilde{\mathbb{V}}_{n}^{x}\p{s}/s \ \mathrm{d}s$. 
    It holds that
    \begin{align*}
        &\int_{1}^{A}
        \frac{\widetilde{\mathbb{V}}_{n}^{x}\p{s}}{s}\mathrm{d}s
        \underset{n \to \infty}{\overset{d}{\to}} 
        \int_{1}^{A}
        \frac{\widetilde{\mathbb{V}}^{x}\p{s}}{s}\mathrm{d}s,
        \qquad
        \int_{A}^{\infty}
        \frac{\widetilde{\mathbb{V}}^{x}\p{s}}{s}\mathrm{d}s
         \underset{A \to \infty}{\overset{d}{\to}}  0.
    \end{align*}
   Introduce
    \begin{align*}
        \mathbb{J}_{n,A} = 
        \int_{A}^{\infty}
        \frac{\widetilde{\mathbb{V}}_{n}^{x}\p{s}}{s}\mathrm{d}s 
        , \quad 
        \mathbb{J}^{\dag}_{n,A} = 
        \int_{A}^{\infty}
        \frac{\widetilde{\mathbb{V}}_{n}^{\dag, x}\p{s}}{s}\mathrm{d}s. 
    \end{align*}
    Normality is established if we can show that, for every $\varepsilon > 0$,
    \begin{align*}
        \lim_{A \to \infty}\limsup_{n\to \infty}
        \P\p{\left\vert  \mathbb{J}_{n,A}\right\vert > \varepsilon} = 0
    \end{align*}
    By repeating the arguments in the proof of Theorem~\ref{theorem_fct_clt_V}, it holds that
    \begin{align*}
        \P\p{\left\vert  \mathbb{J}_{n,A}\right\vert > \varepsilon} 
        \leq \P\p{\left\vert  \mathbb{J}^{\dag}_{n,A}\right\vert > \varepsilon} 
        +
        m_{n}\beta_{r_{n}},
    \end{align*}
    and thus it suffices to show that
     \begin{align*}
        \lim_{A \to \infty}\limsup_{n\to \infty}
        \P\p{\left\vert  \mathbb{J}^{\dag}_{n,A}\right\vert > \varepsilon} = 0.
    \end{align*}
    As $ \mathbb{J}^{\dag}_{n,A}$ is centred, we show negligibility of the variance.
    By independence of blocks we have that
    \begin{align*}
        &\text{var}\p{ \mathbb{J}^{\dag}_{n,A}}
        = 
       \frac{m_n}{nh_{n}\overline{F}^{x}(u_{n}^{x})}
         \text{var}\p{ \sum_{j=1}^{r_n}K\left(\frac{x-X_{j}}{h_n}\right) 
         \log_{+}\left(\frac{Y_{j}}{u_{n}^{x}A} \right)} \\
         &\leq
         \frac{2m_{n}r_{n}}{nh_{n}\overline{F}^{x}(u_{n}^{x})}
         \sum_{j=0}^{r_n}
         \E{K\left(\frac{x-X_{0}}{h_n}\right) K\left(\frac{x-X_{j}}{h_n}\right) 
         \log_{+}\left(\frac{Y_{0}}{u_{n}^{x}A} \right)
         \log_{+}\left(\frac{Y_{j}}{u_{n}^{x}A} \right)}
    \end{align*}
    Now, for fixed $m \in \N$, stationarity yields
    \begin{align*}
        &
         \frac{m_{n}r_{n}}{nh_{n}\overline{F}^{x}(u_{n}^{x})}
        \sum_{j = 0}^{m-1} \E{K\left(\frac{x-X_{0}}{h_n}\right) K\left(\frac{x-X_{j}}{h_n}\right) 
         \log_{+}\left(\frac{Y_{0}}{u_{n}^{x}A} \right)
         \log_{+}\left(\frac{Y_{j}}{u_{n}^{x}A} \right)} 
         \\
         & \leq 
         \frac{m_{n}r_{n}}{nh_{n}\overline{F}^{x}(u_{n}^{x})}
         m
         \E{K^{2}\left(\frac{x-X_{0}}{h_n}\right) \log_{+}\left(\frac{Y_{0}}{u_{n}^{x}A} \right)^{2}}.
    \end{align*}
    Then, recalling $m_{n}r_{n} = \mathcal{O}(n)$, and that the kernel $K$ has support $[-1,1]$, we have that $1_{\{(z-x)/h_{n} \leq 1\}} =  1_{\{z \leq x + h_{n}\}}$ and hence we have the further bound
    \begin{align*}
         &\frac{m}{h_{n}\overline{F}^{x}(u_{n}^{x})}\int  
         K^{2}\left(\frac{x-z}{h_{n}}\right) 
         \E{\log_{+}\left(\frac{Y_{0}}{u_{n}^{x}A} \right)^{2} \Big\vert X_0 = z} g(z) \dd z \\
         & \leq 
         m
         \sup_{u \in [-h_{n} + x, x + h_{n}]}\left(\frac{1}{\overline{F}^{u}(u_{n}^{x})}\E{\log_{+}\left(\frac{Y_{0}}{u_{n}^{x}A} \right)^{2} \Big\vert X_0 = u}\right) \times \frac{1}{h_n} \int  
         K^{2}\left(\frac{x-z}{h_{n}}\right) g(z) \frac{\overline{F}^{z}(u_{n}^{x})}{\overline{F}^{x}(u_{n}^{x})} \dd z,
    \end{align*}
    where it holds that
    \begin{align*}
        &\frac{1}{h_n} \int  
         K^{2}\left(\frac{x-z}{h_{n}}\right) g(z) \frac{\overline{F}^{z}(u_{n}^{x})}{\overline{F}^{x}(u_{n}^{x})} \dd z
         \leq 
          \frac{1}{h_n} \int  
         K^{2}\left(\frac{x-z}{h_{n}}\right) g(z) (u_{n}^{x})^{-(c_{\gamma} + c_{L})\lvert x - z \rvert} \dd z \\
         & \leq
         (u_{n}^{x})^{(c_{\gamma} + c_{L})h_{n}}
         \int  
         K^{2}\left(w\right) g(x - wh_{n}) \dd w = \mathcal{O}(1),
    \end{align*}
    and 
    \begin{align*}
        &\lim_{n \to \infty} \sup_{u \in [-h_{n} + x, x + h_{n}]}\left(\frac{1}{\overline{F}^{u}(u_{n}^{x})}\E{\log_{+}\left(\frac{Y_{0}}{u_{n}^{x}A} \right)^{2} \Big\vert X_0 = u}\right)
        \\
        &\leq
        \lim_{n \to \infty}\left(\frac{1}{\overline{F}^{x}(u_{n}^{x})}\E{\log_{+}\left(\frac{Y_{0}}{u_{n}^{x}A} \right)^{2} \Big\vert X_0 = x}\right) \\
         & \quad +
        \lim_{n \to \infty} \sup_{u \in [-h_{n} + x, x + h_{n}]}
        \Bigg\vert
        \frac{1}{\overline{F}^{u}(u_{n}^{x})}\E{\log_{+}\left(\frac{Y_{0}}{u_{n}^{x}A} \right)^{2} \Big\vert X_0 = u}  -
        \frac{1}{\overline{F}^{x}(u_{n}^{x})}\E{\log_{+}\left(\frac{Y_{0}}{u_{n}^{x}A} \right)^{2} \Big\vert X_0 = x}
        \Bigg\vert
        \\
        &=
        \int_{A}^{\infty} \log^{2}(t) 1/\gamma(x)t^{-1/\gamma(x)-1} \dd t,
     \end{align*}
     where we have used Potter's bound to invoke the dominated convergence theorem for the first limit, and the term in the difference can be bounded using Potter's bound and applying twice the Lipschitz conditions on $L^x$ and $1/\gamma(x)$ as was done in Theorem~\ref{Hill_Consistency_thm}.
    The remaining upper sum is is asymptotically negligible by condition~\ref{log_anti_clustering_condition}, i.e. 
     \begin{align*}
       \lim_{m\to \infty}
       \limsup_{n\to \infty} \sum_{j=m}^{r_n} 
         \frac{\E{K\left(\frac{x-X_{0}}{h_n}\right) K\left(\frac{x-X_{j}}{h_n}\right) 
         \log_{+}\left(\frac{Y_{0}}{u_{n}^{x}A} \right)
         \log_{+}\left(\frac{Y_{j}}{u_{n}^{x}A} \right)}}{h_{n}\overline{F}^{x}(u_{n}^{x})}
           = 0.
    \end{align*}
    Thus, taking limit as $A\to\infty$ establishes the required asymptotic normality.
    By construction, $\mathbb{T}^{x}(\cdot) = \mathbb{V}^{x}(\cdot)/g(x)$, and thus we have the representation
    \begin{align*}
         \sqrt{k_{n}h_{n}}\p{\widehat{\gamma}_{n}\p{x} - \gamma(x)}
         \overset{d}{\to} \frac{1}{g(x)}\int_{1}^{\infty}
        \frac{\mathbb{V}^{x}\p{s}}{s}\mathrm{d}s -\frac{\gamma(x)}{g(x)} \mathbb{V}^{x}(1).
    \end{align*}
    It remains to establish the variance of the limit distribution. We provide the straightforward calculations.
    Change of variable yields that the limit can be rewritten as
    \begin{align*}
        \gamma(x) \int_{0}^{1}
        \frac{\mathbb{T}^{x}\p{t^{-\gamma(x)}} -t\mathbb{T}^{x}\p{1}}{t}\mathrm{d}t.
    \end{align*}
    Let $\mathbb{W}^{x}\p{t} = \mathbb{T}^{x}\p{t^{-\gamma(x)}}$ and note that $\mathbb{W}^{x}$ has the following covariance structure
    \begin{align*}
         \text{cov}\left(\mathbb{W}^{x}\p{s}, \mathbb{W}^{x}\p{t}\right) 
        &= \p{s \wedge t} \frac{1}{g(x)} \int K^{2}(u) \dd u =: c^{*}(s,t).
    \end{align*}
    The limit process is a linear functional of centred Gaussian processes, and hence centred itself. The variance of the limit variable is thus given by
    \begin{align*}
        &\gamma(x)^{2} \E{\int_{0}^{1}\frac{\mathbb{T}^{x}\p{s^{-\gamma(x)}} -s\mathbb{T}^{x}\p{1}}{s}\mathrm{d}s 
        \int_{0}^{1}\frac{\mathbb{T}^{x}\p{t^{-\gamma(x)}} -t\mathbb{T}^{x}\p{1}}{t}\mathrm{d}t}\\
        & =\gamma(x)^{2} \int_{0}^{1}\int_{0}^{1}\frac{c^{*}(s,t) -sc^{*}(t,1) -tc^{*}(s,1) + stc^{*}(1,1)}{st}\mathrm{d}s\mathrm{d}t
        \\
        & =\gamma(x)^{2} \left\{c^{*}(1,1) + \int_{0}^{1}\int_{0}^{1}\frac{c^{*}(s,t)}{st}\mathrm{d}s\mathrm{d}t 
        -2\int_{0}^{1}\frac{c^{*}(s,1)}{s}\mathrm{d}s\right\}. 
    \end{align*}
    We have that $c^{*}(s,t) = c^{*}(t,s) = tc^{*}(s/t,1)$, so that the latter two terms cancel 
    and we get variance of the limiting distribution given by 
    \begin{align*}
        \gamma(x)^{2}c^{*}(1,1) = \gamma(x)^{2}\frac{1}{g(x)} \int K^{2}(u) \dd u.
    \end{align*}
\end{proof}

\section{Proofs of results from Section~\ref{section:verification}} 
\begin{proof}[Proof of Theorem~\ref{thm:sequence_cons_power}]
Due to the fact that $\delta \in ((\eta + 1)^{-1}, 1)$, we can let 
    \begin{align*}
        e \in \left(0,\frac{\delta}{1-\delta}\wedge \frac{\delta(\eta + 1) -1}{2(1-\delta)}\right),
    \end{align*}
    since the interval is non-empty.\\
    Notice that by regular variation it holds that 
    \begin{align*}
         \log(u_{n}^{x})
         =\gamma(x)\log\left(\ell^{x}(u_{n}^{x})\right)
         - 
         \gamma(x)\log\left(k_{n}/n\right). 
    \end{align*}
    Consequently, for any strictly positive $e$ it holds that
    \begin{align*}
        h_{n}\log(u_{n}^{x})
        &=
        h_{n}\left(\gamma(x)\log\left(\ell^{x}(u_{n}^{x})\right)
         - 
         \gamma(x)\log\left(k_{n}/n\right)\right) \\
         &=
        (\overline{F}^{x}(u_{n}^{x}))^{e}\left(\gamma(x)\log\left(\ell^{x}(u_{n}^{x})\right)
         - 
         \gamma(x)\log\left(n^{-(1-\delta)}\right)\right) \\
         &=
         \gamma(x) (u_{n}^{x})^{-e/\gamma(x)} \ell^{x}(u_{n}^{x})^{e}\log\left(\ell^{x}(u_{n}^{x})\right)
         +
         \mathcal{O}\left(n^{-(1-\delta)e} \log(n)\right) \\
         &\sim
         \gamma(x) (u_{n}^{x})^{-e/\gamma(x)} \ell^{x}(u_{n}^{x})^{e}\log\left(\ell^{x}(u_{n}^{x})\right) \\
         &\leq
         \gamma(x) (u_{n}^{x})^{-e/\gamma(x)} \ell^{x}(u_{n}^{x})^{1+e}
         =
         \gamma(x) \left((u_{n}^{x})^{-(e/1+e)/\gamma(x)} \ell^{x}(u_{n}^{x})\right)^{1+e}
         \to 0.
    \end{align*}
    The upper bound on $e$ ensures
    \begin{align*}
        &n h_{n} \overline{F}^{x}(u_{n}^{x}) 
        \asymp n \left(\overline{F}^{x}(u_{n}^{x})\right)^{1+e}
        \asymp n\left(\frac{k_{n}}{n}\right)^{1+e}= n^{-(1-\delta)(1+e) + 1}
        = n^{-(1-\delta)e +\delta}\to \infty.
    \end{align*}
    As for the third condition we have
    \begin{align*}
        \frac{\overline{F}^{x}(u_{n}^{x}) + h_{n}^{c_{x}\eta -2}}{n h_{n}^{c_{x}} (\overline{F}^{x}(u_{n}^{x}))^{2}}
        =
        \frac{1}{n h_{n}^{c_{x}} \overline{F}^{x}(u_{n}^{x})} + \frac{h_{n}^{c_{x}(\eta - 1) -2}}{n (\overline{F}^{x}(u_{n}^{x}))^{2}}.
    \end{align*}
    Let $c_{x}$ be a positive constant such that
    \begin{align*}
        c_{x} \in \left(\frac{2e(1-\delta) +1 -2\delta}{e(1-\delta)(\eta-1)} \vee 0, \frac{\delta}{e(1-\delta)}\right),
    \end{align*}
    and note that the restrictions on $\delta$ and $e$ ensures that this interval is non-empty.
  The upper bound on $c_{x}$ ensures that the first of the above two terms vanishes asymptotically since
    \begin{align*}
         \left( n h_{n}^{c_{x}} \overline{F}^{x}(u_{n}^{x})\right)^{-1}
         &\asymp
          \left( n (\overline{F}^{x}(u_{n}^{x}))^{c_{x}e + 1}\right)^{-1}=
          \left( n \left(\frac{k_n}{n}\right)^{c_{x}e + 1}\right)^{-1} 
          \asymp
          \left( n^{1- (1-\delta)(c_{x}e + 1)}\right)^{-1} \to 0,
    \end{align*}
    while the lower bound ensures
    \begin{align*}
        \frac{h_{n}^{c_{x}(\eta - 1) -2}}{n (\overline{F}^{x}(u_{n}^{x}))^{2}}    
        &=
        \frac{1}{nh_{n}\overline{F}^{x}(u_{n}^{x})}\frac{h_{n}^{c_{x}(\eta- 1) -1}}{\overline{F}^{x}(u_{n}^{x})} \\
        &\asymp
        n^{(1-\delta)e -\delta}
        \overline{F}^{x}(u_{n}^{x})^{e(c_{x}(\eta-1)-1)-1} 
        \asymp 
        n^{(1-\delta)e -\delta- (1-\delta)(e(c_{x}(\eta-1)-1)-1)}\to 0
        ,
    \end{align*}
    as $n\to \infty$.
\end{proof}
\begin{proof}[Proof of Theorem~\ref{thm:sequence_cons_m_dependence}]
    Note that $n h_{n} \overline{F}^{x}(u_{n}^{x}) 
        = k_{n}h_{n} 
        \asymp \log(n)^{2-e}\to \infty$.
Now, the relation
    \begin{align*}
        \log\left(n/k_{n}\right)
        =
        -\log\left( \overline{F}^{x}(u_{n}^{x})\right)
        =
        (1/\gamma(x))\log(u_{n}^{x}) - \log(\ell^{x}(u_{n}^{x})),
    \end{align*}
    implies that 
    \begin{align*}
        \frac{ \gamma(x)\log(n/k_{n})}{\log(u_{n}^{x})}
        =
        1- \frac{\gamma(x)\log(\ell^{x}(u_{n}^{x}))}{\log(u_{n}^{x})} \to 1,
    \end{align*}
    using~\citet{Bingham1987} Proposition 1.3.6 (i). Hence $ \log\left(n/k_n\right) \sim (1/\gamma(x))\log(u_{n}^{x}) $ and consequently
    \begin{align*}
       h_{n}\log(u_{n}^{x})
       \sim
        \gamma(x)h_{n}\log(n/k_{n})
        \leq
        \gamma(x)h_{n}\log(n)
        \asymp
        \gamma(x)\log(n)^{1-e} \to 0. 
    \end{align*}
\end{proof}
\begin{proof}[Proof of Lemma~\ref{lemma_second_order_conditions}]
    By definition of $u_{n}^{x}$ and properties of generalised inverses, it holds that $n\overline{F}^{x}(u_{n}^{x}) = k_{n}$. The second order assumption of equation~\eqref{cond_second_order} implies that $u_{n}^{x} = \mathcal{O}((n/k_{n})^{\gamma(x)})$.\\
    Then recall that 
    \begin{align*}
         B_{n}^{x}(s_{0}) = \sup_{t \geq s_{0}}\left\vert   \E{\widetilde{V}^{x}_{n}(t)}  - t^{-1/\gamma(x)}g(x)  \right\vert.
    \end{align*}
    The above difference can be split as follows
    \begin{align*}
        &\E{\widetilde{V}^{x}_{n}(t)}  - t^{-1/\gamma(x)}g(x)
        =
        \int  K\p{u}
        \left( \frac{\overline{F}^{x-uh_{n}}(t u_{n}^{x})}
         {\overline{F}^{x}(u_{n}^{x})}g(x - h_{n}u)  
         -
          t^{-1/\gamma(x)}g(x)\right)
         \mathrm{d}u \\
         &=
         \int  K\p{u} t^{-1/\gamma(x)}
        \left(g(x - h_{n}u)  
         -
         g(x)\right)
         \mathrm{d}u   +
         \int  K\p{u}
        \left( \frac{\overline{F}^{x}(t u_{n}^{x})}
         {\overline{F}^{x}(u_{n}^{x})}
         -
          t^{-1/\gamma(x)}\right)g(x - h_{n}u)  
         \mathrm{d}u \\
         & \quad +
         \int  K\p{u}
        \left( \frac{\overline{F}^{x-uh_{n}}(t u_{n}^{x})}
         {\overline{F}^{x}(u_{n}^{x})}
         -
         \frac{\overline{F}^{x}(t u_{n}^{x})}
         {\overline{F}^{x}(u_{n}^{x})} \right)
         g(x - h_{n}u)  
         \mathrm{d}u.
    \end{align*}
    The above expression is analysed term by term. The density $g$ is Lipschitz by assumption and hence the first of the above three terms is of order
    \begin{align*}
        \int  K\p{u} t^{-1/\gamma(x)}
        \left(g(x - h_{n}u)  
         -
         g(x)\right)
         \mathrm{d}u
         \leq
         t^{-1/\gamma(x)} h_{n} c_{g}  \int  K\p{u} \left\vert u \right\vert  \mathrm{d}u = \mathcal{O}( t^{-1/\gamma(x)} h_{n}).
    \end{align*}
    When examining the third term, we rely on calculations from the proof of Lemma~\ref{Limit_of_tail_process} and utilize the inequality
    $\left\vert e^{z} -1 \right\vert \leq \left\vert z \right\vert + (1+ e^{\left\vert z \right\vert }) z^{2}/2$. This yields
    \begin{align*}
        &\left( \frac{\overline{F}^{x-uh_{n}}(t u_{n}^{x})}
         {\overline{F}^{x}(u_{n}^{x})}
         -
         \frac{\overline{F}^{x}(t u_{n}^{x})}
         {\overline{F}^{x}(u_{n}^{x})} \right)
         \leq
         \frac{\overline{F}^{x}(t u_{n}^{x})}
         {\overline{F}^{x}(u_{n}^{x})}
        \left\vert \frac{\overline{F}^{x-uh_{n}}(t u_{n}^{x})}
         {\overline{F}^{x}(tu_{n}^{x})}
         -
         1 \right\vert
         \\
         &\leq 
        \frac{\overline{F}^{x}(t u_{n}^{x})}
         {\overline{F}^{x}(u_{n}^{x})}
         \left(
         \left\vert \log\left(  \frac{\overline{F}^{x-uh_{n}}(t u_{n}^{x})}
         {\overline{F}^{x}(tu_{n}^{x})} \right) \right\vert
         +
         \frac{1}{2} \log\left(  \frac{\overline{F}^{x-uh_{n}}(t u_{n}^{x})}
         {\overline{F}^{x}(tu_{n}^{x})} \right)^{2}\left(1 + \frac{\overline{F}^{x-uh_{n}}(t u_{n}^{x})}
         {\overline{F}^{x}(tu_{n}^{x})}\right)
         \right)
         \\
         & \leq \text{cst} \
         \frac{\overline{F}^{x}(t u_{n}^{x})}
         {\overline{F}^{x}(u_{n}^{x})} \log(t u_{n}^{x}) \left\vert u \right\vert h_{n}
         \left( 1 + \frac{1}{2} \log(t u_{n}^{x}) \left\vert u \right\vert h_{n}\left( 1 + (t u_{n}^{x})^{(c_{\gamma} + c_{L})\left\vert u \right\vert h_{n}}\right)
         \right)
         \\
         & \leq \text{cst} \
         t^{-1/\gamma(x) + \varepsilon}
         \log(t u_{n}^{x}) \left\vert u \right\vert h_{n}
         \left( 1 + \frac{1}{2} \log(t u_{n}^{x}) \left\vert u \right\vert h_{n}\left( 1 + (t u_{n}^{x})^{(c_{\gamma} + c_{L})\left\vert u \right\vert h_{n}}\right)
         \right),
    \end{align*}
    for $\varepsilon \in (0, 1/\gamma(x))$. The first term is the leading term and of order 
    \begin{align*}
        \mathcal{O}\left(t^{-1/\gamma(x) + \varepsilon}
         \log(t u_{n}^{x})h_{n}\right)
         =
         \mathcal{O}\left(t^{-1/\gamma(x) + \varepsilon}
         \log\left(t \frac{n}{k_{n}}\right)h_{n}\right).
    \end{align*}
         To determine the order at which the second and final term converge to zero the second order condition is needed. Indeed
     \begin{align*}
          &\left \vert \frac{\overline{F}^{x}(t u_{n}^{x})}
         {\overline{F}^{x}(u_{n}^{x})}
         -
          t^{-1/\gamma(x)}\right \vert=
          \left \vert  \frac{n \overline{F}^{x}(t u_{n}^{x})}
         {k_n}
         -
          t^{-1/\gamma(x)}\right \vert\\
          & \leq \frac{n}{k_n} \left( \left \vert \overline{F}^{x}(t u_{n}^{x})
         -
         c_{x}(u_{n}^{x}t)^{-1/\gamma(x)}
          \right \vert
          + t^{-1/\gamma(x)}
          \left \vert 
         c_{x}(u_{n}^{x})^{-1/\gamma(x)}
         -
         \overline{F}^{x}(u_{n}^{x})
          \right \vert
          \right)
          \\
          & \leq 
          \text{cst}
          \frac{n}{(u_{n}^{x})^{\zeta_{x}}k_n}\left(t^{-\zeta_{x}} + t^{-1/\gamma(x)}\right)
          \leq 
          \text{cst}
          \frac{n}{(u_{n}^{x})^{\zeta_{x}}k_n}t^{-1/\gamma(x)}
          =
          \mathcal{O}\left( 
         k_{n}^{\zeta_{x}\gamma(x) - 1}n^{1-\zeta_{x}\gamma(x)}
         t^{-1/\gamma(x)}
          \right).
     \end{align*}
     The above three O-terms are uniformly bounded in $t$ for $t\geq s_{0} \in (0,1)$. Thus, we get
     \begin{align*}
         \sqrt{k_{n}h_{n}}B_{n}^{x}(s_{0})&\leq
          \mathcal{O}( \sqrt{k_{n}h_{n}} h_{n})
          +
          \mathcal{O}\left( \sqrt{k_{n}h_{n}}
         \log\left(n/k_{n}\right)h_{n}\right)
          +
           \mathcal{O}\left( 
            \sqrt{k_{n}h_{n}}
         k_{n}^{\zeta_{x}\gamma(x) - 1}n^{1-\zeta_{x}\gamma(x)}
          \right) \\
          &= 
          \mathcal{O}\left( \sqrt{k_{n}h_{n}^{3}}
         \log\left(n/k_{n}\right)\right)
          +
           \mathcal{O}\left( 
            \sqrt{h_{n}}
         k_{n}^{\zeta_{x}\gamma(x) - \frac{1}{2}}n^{1-\zeta_{x}\gamma(x)}
          \right) \to 0.
     \end{align*}
     This concludes condition~\ref{B_n_assumption}. As for condition~\ref{integral_B_n_assumption} we have
    \begin{align*}
        &\int_{1}^{\infty} \frac{ \E{\widetilde{V}^{x}_{n}(t)}  - t^{-1/\gamma(x)}g(x)}{t} \mathrm{d}t \\
        &\leq 
        \mathcal{O}(h_n)\int_{1}^{\infty} t^{-1/\gamma(x)-1} \mathrm{d}t 
        +
        \mathcal{O}\left( 
         k_{n}^{\zeta_{x}\gamma(x) - 1}n^{1-\zeta_{x}\gamma(x)}
          \right)
        \int_{1}^{\infty} t^{-1/\gamma(x)-1} \mathrm{d}t \\
        & \quad +
        \mathcal{O}(h_n)\int_{1}^{\infty} 
        t^{-1/\gamma(x) + \varepsilon -1 }
         \log(t u_{n}^{x}) 
         \left( 1 + \frac{1}{2} \log(t u_{n}^{x}) h_{n}\left( 1 + (t u_{n}^{x})^{(c_{\gamma} + c_{L})\left\vert u \right\vert h_{n}}\right)
         \right)
        \mathrm{d}t,
    \end{align*}
    for $\varepsilon \in (0, 1/\gamma(x))$.
    The first two integrals are obviously finite, and thus we examine the last integral, which itself consists of three sub-integrals. The first of these subintegrals is finite, and of order $\mathcal{O}(h_n\log(u_{n}^{x}))$, likewise for the second which is of order $\mathcal{O}(h_{n}^{2}\log(u_{n}^{x})^{2})$. Finally, let $\varepsilon' \in (0, 1/\gamma(x) - \varepsilon)$ be given. It is possible to find $n'$ such that $\left(c_{\gamma}+ c_{L}\right) h_n \leq \varepsilon'$ for all $n\geq n'$. For such $n$'s the third integral is also finite and of order $\mathcal{O}(h_n\log(u_{n}^{x}))$, since $(u_{n}^{x})^{h_{n}} = \mathcal{O}(1)$. \\
     Consequently
      \begin{align*}
         \sqrt{k_{n}h_{n}}\int_{1}^{\infty} \frac{ \E{\widetilde{V}^{x}_{n}(t)}  - t^{-1/\gamma(x)}g(x)}{t} \mathrm{d}t \leq
          \mathcal{O}\left( \sqrt{k_{n}h_{n}^{3}}
         \log\left(n/k_{n}\right)\right)
          +
           \mathcal{O}\left( 
            \sqrt{h_{n}}
         k_{n}^{\zeta_{x}\gamma(x) - \frac{1}{2}}n^{1-\zeta_{x}\gamma(x)}
          \right),
     \end{align*}
     which vanishes as $n \to \infty$.
\end{proof}
\begin{proof}[Proof of Lemma~\ref{lemma_log_second_order_conditions}]
    As above, it holds that $\overline{F}^{x}(t u_{n}^{x}) /\overline{F}^{x}(u_{n}^{x}) = \overline{F}^{x}(tu_{n}^{x})n/k_{n}$. Consequently
    \begin{align*}
         &\left \vert \frac{\overline{F}^{x}(t u_{n}^{x})}
         {\overline{F}^{x}(u_{n}^{x})}
         -
          t^{-1/\gamma(x)}\right \vert \\
          &= \frac{n}{k_{n}} 
          \left \vert 
          \overline{F}^{x}(t u_{n}^{x}) 
          -c_{x}\log(u_{n}^{x}t)(u_{n}^{x}t)^{-1/\gamma(x)} 
          +c_{x}\log(u_{n}^{x}t)(u_{n}^{x}t)^{-1/\gamma(x)} 
          - \overline{F}^{x}(u_{n}^{x})t^{-1/\gamma(x)}
          \right \vert \\
          & \leq
          \frac{n}{k_{n}} \Big(
          \left \vert 
          \overline{F}^{x}(t u_{n}^{x}) 
          -c_{x}\log(u_{n}^{x}t)(u_{n}^{x}t)^{-1/\gamma(x)} \right \vert
          \\
          & \qquad 
          +
         t^{-1/\gamma(x)}\left \vert 
          c_{x}\log(u_{n}^{x})(u_{n}^{x})^{-1/\gamma(x)} 
          - \overline{F}^{x}(u_{n}^{x})
          \right \vert 
          +
          c_{x}\log(t)(u_{n}^{x}t)^{-1/\gamma(x)} \Big) \\
          & \leq
          \text{cst}
          \frac{n}{k_{n}} \left(
          (tu_{n}^{x})^{-\zeta_{x}}
           + 
         t^{-1/\gamma(x)}(u_{n}^{x})^{-\zeta_{x}}
          +
          c_{x}\log(t)(u_{n}^{x}t)^{-1/\gamma(x)} \right) \\
          &\leq
          \text{cst} 
          \frac{n}{k_{n}}
          t^{-1/\gamma(x) } (u_{n}^{x})^{-1/\gamma(x) }\left(
         1+ \log(t)\right).
    \end{align*}
    Equation~\eqref{cond_second_order_log} yields that
    \begin{align*}
        (u_{n}^{x})^{-1/\gamma(x)}\log(u_{n}^{x})
        = \frac{k_n}{n} + \mathcal{O}\left((u_{n}^{x})^{-\zeta_{x}}\right),
    \end{align*}
    and consequently
    \begin{align*}
         &\sqrt{k_{n}h_{n}}\left \vert \frac{\overline{F}^{x}(t u_{n}^{x})}
         {\overline{F}^{x}(u_{n}^{x})}
         -
          t^{-1/\gamma(x)}\right \vert
          \leq
          \text{cst} \
           t^{-1/\gamma(x) } \left(
         1+ \log(t)\right)\sqrt{k_{n}h_{n}}
           \frac{n}{k_{n}}
          \frac{(u_{n}^{x})^{-1/\gamma(x) } \log(u_{n}^{x})}{\log(u_{n}^{x})}
          \\
          &=
          \text{cst} \
           t^{-1/\gamma(x) } \left(
         1+ \log(t)\right)\sqrt{k_{n}h_{n}}
           \frac{n}{k_{n}}
          \frac{\frac{k_n}{n} + \mathcal{O}\left((u_{n}^{x})^{-\zeta_{x}}\right)}{\log(u_{n}^{x})}
          \\
          &=
          \text{cst} \
           t^{-1/\gamma(x) } \left(
         1+ \log(t)\right)
         \left(\frac{\sqrt{k_{n}h_{n}}}{\log(u_{n}^{x})} + \frac{\mathcal{O}\left(n\sqrt{h_{n}}(u_{n}^{x})^{-\zeta_{x}}\right)}{\log(u_{n}^{x})\sqrt{k_{n}}} \right).
    \end{align*}
    The mapping $t \mapsto t^{-1/\gamma(x) - 1}\left(1+ \log(t)\right) $ is integrable on $(1,\infty)$ and hence it remains to examine the deterministic sequence
    \begin{align*}
        \left(\frac{\sqrt{k_{n}h_{n}}}{\log(u_{n}^{x})} + \frac{\mathcal{O}\left(n\sqrt{h_{n}}(u_{n}^{x})^{-\zeta_{x}}\right)}{\log(u_{n}^{x})\sqrt{k_{n}}} \right)
        =
        \frac{\sqrt{k_{n}h_{n}}}{\log(u_{n}^{x})} 
        \left( 1 + \frac{n}{k_{n} (u_{n}^{x})^{\zeta_{x}} } \right).
    \end{align*}
    Notice first that, since $\zeta_{x}\gamma(x) > 1$,
    \begin{align*}
        \frac{n}{k_{n} (u_{n}^{x})^{\zeta_{x}}} 
        =
        \frac{1}{ \overline{F}^{x}(u_{n}^{x}) (u_{n}^{x})^{\zeta_{x}}} \to 0,
    \end{align*}
    As argued in the proof of Theorem~\ref{thm:sequence_cons_m_dependence} we have $ \log\left(n/k_n\right) \sim (1/\gamma(x))\log(u_{n}^{x})$ and thus
    \begin{align*}
        \frac{\sqrt{k_{n}h_{n}}}{\log(u_{n}^{x})} 
        \left( 1 + \frac{n}{k_{n} (u_{n}^{x})^{\zeta_{x}} } \right)
        \sim
        \frac{\sqrt{k_{n}h_{n}}}{\gamma(x)\log(n/k_{n})} 
        \left( 1 + \frac{n}{k_{n} (u_{n}^{x})^{\zeta_{x}} } \right) \to 0.
    \end{align*}
    To verify that condition~\ref{B_n_assumption} holds, we can repeat some of the arguments from Lemma~\ref{lemma_second_order_conditions}:
    \begin{align*}
         \sqrt{k_{n}h_{n}}B_{n}^{x}(s_{0}) &\leq
          \mathcal{O}( \sqrt{k_{n}h_{n}} h_{n})
          +
          \mathcal{O}\left( \sqrt{k_{n}h_{n}}
         \log\left(u_{n}^{x}\right)h_{n}\right)
          +
           \mathcal{O}\left( \frac{\sqrt{k_{n}h_{n}}}
       {\log\left(n/k_{n}\right)}\right)  \\
       &=
        \mathcal{O}\left( \sqrt{k_{n}h_{n}^{3}}
         \log\left(n/k_{n}\right)\right)
          +
          \mathcal{O}\left( \frac{\sqrt{k_{n}h_{n}}}
       {\log\left(n/k_{n}\right)}\right) \to 0.
    \end{align*}
In a similar way, arguments akin to those of Lemma~\ref{lemma_second_order_conditions}  also yield that condition~\ref{integral_B_n_assumption} holds.
\end{proof}

\begin{proof}[Proof of Lemma~\ref{lemma::anti_clustering_sequence}]
    Consider the expression of condition~\ref{anti_clustering_condition}:
    \begin{align*}
       &\sum_{j=m}^{r_n}\frac{\E{K\left(\frac{x-X_0}{h_n}\right)1_{\left\{Y_0 > su_{n}^{x}\right\}}
       K\left(\frac{x-X_j}{h_n}\right)1_{\left\{Y_j > tu_{n}^{x}\right\}}}}
       {h_{n} \overline{F}^{x}(u_{n}^{x})} \\
       & \quad=
       \sum_{j=m}^{r_n}\frac{\text{cov}\left(K\left(\frac{x-X_0}{h_n}\right)1_{\left\{Y_0 > su_{n}^{x}\right\}}
       K\left(\frac{x-X_j}{h_n}\right)1_{\left\{Y_j > tu_{n}^{x}\right\}}\right)}
       {h_{n} \overline{F}^{x}(u_{n}^{x})} \\
       & \qquad +
       (r_n-m)h_{n} \overline{F}^{x}(u_{n}^{x})\frac{\E{K\left(\frac{x-X_0}{h_n}\right)1_{\left\{Y_0 > su_{n}^{x}\right\}}}}
       {h_{n} \overline{F}^{x}(u_{n}^{x})} 
       \frac{\E{K\left(\frac{x-X_0}{h_n}\right)1_{\left\{Y_0 > tu_{n}^{x}\right\}}}}
       {h_{n} \overline{F}^{x}(u_{n}^{x})}, 
    \end{align*}
    where the latter term is negligible due to condition~\ref{rate_condition}.
    Calculations similar to those of the proof of Theorem~\ref{Empirical_tail_dist_consistency_theorem} yield
    \begin{align*}
        &\sum_{j=m}^{r_n}\frac{\text{cov}\left(K\left(\frac{x-X_0}{h_n}\right)1_{\left\{Y_0 > su_{n}^{x}\right\}}
       K\left(\frac{x-X_j}{h_n}\right)1_{\left\{Y_j > tu_{n}^{x}\right\}}\right)}
       {h_{n} \overline{F}^{x}(u_{n}^{x})}
       \leq \frac{\text{cst}}{h_{n} \overline{F}^{x}(u_{n}^{x})} \left(h_{n}^{2-c_{x}} \overline{F}^{x}(u_{n}^{x}) + h_{n}^{c_{x}(\eta-1)} \right) \\
       &\quad = \text{cst} (h_{n}^{1-c_{x}} + h_{n}^{c_{x}(\eta-1) -1}\overline{F}^{x}(u_{n}^{x})^{-1}) \to 0,
    \end{align*}
    as $n \to \infty$, for some constant $c_{x} > 0$, by assumption. This yields condition~\ref{anti_clustering_condition}.  \\
    We proceed to establish the logarithmic anti-clustering condition~\ref{log_anti_clustering_condition}, focusing, as above, on the covariance rather than the cross-term.
    Let $q>2$ and $p>1$ such that $2/q = 1- 1/p$. Rio's covariance inequality and calculations similar to those in the proof of Theorem~\ref{Hill_normality_thm} yield
    \begin{align*}
        &\left\vert \text{cov}\left( K\left(\frac{x-X_{0}}{h_n}\right)
         \log_{+}\left(\frac{Y_{0}}{u_{n}^{x}A} \right),
         K\left(\frac{x-X_{j}}{h_n}\right) 
         \log_{+}\left(\frac{Y_{j}}{u_{n}^{x}A} \right) \right) \right\vert \\
         &\leq 
         2p (2\alpha(j))^{1/p}\E{ K^{q}\left(\frac{x-X_{0}}{h_n}\right)
         \log_{+}^{q}\left(\frac{Y_{0}}{u_{n}^{x}A} \right)}^{2/q}
         =
          \mathcal{O}(\alpha(j)^{1/p} h_{n}^{2/q} \overline{F}^{x}(u_{n}^{x})^{2/q}).
    \end{align*}
  In addition it holds that
  \begin{align*}
        &\E{K\left(\frac{x-X_{0}}{h_n}\right) K\left(\frac{x-X_{j}}{h_n}\right) 
         \log_{+}\left(\frac{Y_{0}}{u_{n}^{x}A} \right)
         \log_{+}\left(\frac{Y_{j}}{u_{n}^{x}A} \right)} \\
         &= 
          h_{n}^{2}\overline{F}^{x}(u_{n}^{x})
        \int_{[-1,1]^2}  \biggl\{
         \frac{\CE{ \log_{+}\left(\frac{Y_{0}}{u_{n}^{x}A} \right)
         \log_{+}\left(\frac{Y_{j}}{u_{n}^{x}A} \right)}{X_{0} = x + h_{n}w_{1}, X_{j} = x + h_{n}w_{2}}}
         {\overline{F}^{x}(u_{n}^{x})} \\
         & \qquad \qquad \qquad \qquad \qquad \qquad 
         \times g(x + w_{1}h_{n},x + w_{2}h_{n})K\left(w_{1}\right)K\left(w_{2}\right) \biggr\}
         \mathrm{d}(w_1, w_2) = \mathcal{O}\left( h_{n}^{2}\overline{F}^{x}(u_{n}^{x})\right),
  \end{align*}
  due to~\eqref{eq::Expected_cross_log_limit}. The above two inequalities give 
  \begin{align*}
      &\frac{1}{h_{n} \overline{F}^{x}(u_{n}^{x})}
    \left|\sum_{j=m}^{r_n} \text{cov}\left( K\left(\frac{x-X_{0}}{h_n}\right)
         \log_{+}\left(\frac{Y_{0}}{u_{n}^{x}A} \right),
         K\left(\frac{x-X_{j}}{h_n}\right) 
         \log_{+}\left(\frac{Y_{j}}{u_{n}^{x}A} \right) \right) \right| \\
    & \leq
   \frac{\text{cst}}{h_{n} \overline{F}^{x}(u_{n}^{x})}
    \left(
    \sum_{j=1}^{d_n -1} h_{n}^{2} \overline{F}^{x}(u_{n}^{x}) + \sum_{j = d_n}^{\infty} h_{n}^{2/q} \overline{F}^{x}(u_{n}^{x})^{2/q} j^{-\eta/p}
    \right) \\
    &\leq
   \text{cst}
    \left(
    d_n h_{n} + d_{n}^{-\eta/p +1} h_{n}^{2/q-1} \overline{F}^{x}(u_{n}^{x})^{2/q-1}
    \right) \sim
    \text{cst}\left(
         h_n^{1-c'_x} + h_{n}^{-1/p + c'_{x}(\eta/p -1)}\overline{F}^{x}(u_{n}^{x})^{-1/p} \right) \to 0,
  \end{align*}
  where as above, we have  $d_{n} = \lceil h_{n}^{-c'_{x}} \rceil$ for some $c'_{x} > 0$.
\end{proof}

\begin{proof}[Proof of Theorem~\ref{thm:existence_of_sequences}]
    Theorem~\ref{thm:sequence_cons_power} ensures that the first two limit conditions for consistency are fulfilled for $k_{n} \asymp n^{\delta}$ for $\delta \in (0,3\nu/(2(1+\nu)))$ and $h_{n} \asymp \overline{F}^{x}(u_{n}^{x})^{e}$ for $e \in (\delta/(3(1-\delta)), \delta/(1-\delta))$. 
    Let $r_{n} \asymp n^{a}, \ a = 1- 2\delta/3$. Then $r_{n}/n \to 0$ and
    \begin{align*}
        r_{n}h_{n}\overline{F}^{x}(u_{n}^{x})
        \asymp 
        n^{a} \left(\frac{k_{n}}{n}\right)^{1+e} 
        \asymp
        n^{a -(1-\delta)(1+e)} \to 0,
    \end{align*}
    since $(1-\delta)(1+e) > 1- 2\delta/3$.
    Let $\ell_{n} \asymp n^{\psi}, \ \psi \in (0,a)$. Then $1/\ell_{n} \to 0$, $\ell_{n}/r_{n} \to 0$ and
    \begin{align*}
        \frac{n}{r_n}\beta_{\ell_n}
        \leq \text{cst} \
        n^{1-a-\nu\psi} \to 0,
    \end{align*}
    since upper bound on $\delta$ ensures that for every $\nu > 0$ there exists a small $\varepsilon > 0$ such that for $\psi = a-\varepsilon$, it holds that $1-a-\nu\psi < 0$.
    This concludes the second limit condition. Lemma~\ref{lemma:g_n_emp_process_negligibility_alpha_mixing} yields that the third condition holds if $\lim_{n \to \infty} \sqrt{\overline{F}^{x}(u_{n}^{x})nh_{n}}h_{n}^{2} = 0$. This is indeed true since
    \begin{align*}
        \sqrt{\overline{F}^{x}(u_{n}^{x})nh_{n}}h_{n}^{2} 
        \asymp
        \sqrt{n\left(\frac{k_n}{n}\right)^{1+5e}}
      \asymp
      \sqrt{n^{1-(1+5e)(1-\delta)}},
    \end{align*}
    which vanishes asymptotically for $\delta/(5(1-\delta)) < \delta/(3(1-\delta)) < e$. According to Lemma~\ref{lemma_second_order_conditions}  it suffices to check the limits of equation~\eqref{eq:second_order_derived_sequences} in order ensure the last two of the above five conditions. The lower bound on $e$ ensures
    \begin{align*}
         \sqrt{k_{n}h_{n}^{3}}
         \log\left(n/k_{n}\right)
         \asymp
         \sqrt{n^{\delta - 3e(1-\delta)}}\log(n^{1-\delta}) \to 0.
    \end{align*}
    Further restricting, let
    \begin{align*}
        \delta \in \left(0, \ \frac{3}{2}\frac{\nu}{1+\nu} \wedge \frac{2(\zeta_{x} -1/\gamma(x))}{2\zeta_{x} - 1/\gamma(x)}\right),
    \end{align*}
    where the latter fraction of the upper bound of $\delta$ ensures that $e$ is in a non-empty interval, since $\zeta_{x}\gamma(x) > 1$, by assumption. It then holds that
    \begin{align*}
        \sqrt{h_{n}}
         k_{n}^{\zeta_{x}\gamma(x) - \frac{1}{2}}n^{1-\zeta_{x}\gamma(x)}
         \asymp
         \sqrt{h_{n}}
         n^{\delta(\zeta_{x}\gamma(x) - \frac{1}{2}) + 1-\zeta_{x}\gamma(x)}
         \to 0,
    \end{align*}
    which concludes the last two of the above five conditions.
    
    According to Lemma~\ref{lemma::anti_clustering_sequence}, the anti-clustering conditions follow if equation~\eqref{eq:sequence_anti_clustering_restrains} holds. The first limit expression of equation~\eqref{eq:sequence_anti_clustering_restrains} is given by
    \begin{align*}
        h_{n}^{1-c_{x}} + h_{n}^{c_{x}(\eta-1) -1}\overline{F}^{x}(u_{n}^{x})^{-1}
        \asymp
        \overline{F}^{x}(u_{n}^{x})^{e(1-c_{x})} + \overline{F}^{x}(u_{n}^{x})^{ec_{x}(\eta-1) + (e+1)}.
    \end{align*}
    Since $\eta > 2$, there exists $c_{x} \in (0,1)$, such that the exponents above are positive, and consequently that the expression converges to zero as $n\to \infty$.
    For the second limit expression of equation~\eqref{eq:sequence_anti_clustering_restrains}, note that
    \begin{align*}
         h_n^{1-c'_x} + h_{n}^{-1/p + c'_{x}(\eta/p -1)}\overline{F}^{x}(u_{n}^{x})^{-1/p}     
         \asymp
         \overline{F}^{x}(u_{n}^{x})^{e(1-c'_{x})} + \overline{F}^{x}(u_{n}^{x})^{ec'_{x}(\eta/p-1) + (e+1)/p}.
    \end{align*}
     For $c'_{x} \in (0,1)$ and $p \in (1, (1+e)/(ec'_{x})+ \eta) $ the exponents above are positive, and consequently the expression converges to zero as $n\to \infty$.
\end{proof}
\begin{proof}[Proof of Theorem~\ref{thm:existence_of_sequences_log}]
    Let  $h_{n}\asymp \log(n)^{-e} $ for $e \in (4/3, 2)$ and let $k_{n} \asymp \log(n)^{2} $. Then, according to Theorem~\ref{thm:sequence_cons_m_dependence}, the first two conditions for consistency is satisfied. The proof is similar to that of Theorem~\ref{thm:existence_of_sequences}, thus we only show that the statements hold, where adjustment is needed.\\
    It holds that $k_{n}/n \asymp \log(n)^{2}/n \to  0$ and hence  $\lim_{n\to \infty}u_{n}^{x} = \infty$. 
    Secondly
    \begin{align*}
        n h_{n} \overline{F}^{x}(u_{n}^{x}) 
        = k_{n}h_{n} 
       \asymp\log(n)^{2-e}\to \infty.
    \end{align*}
    As argued in the proof of Theorem~\ref{thm:sequence_cons_m_dependence}, we have $ \log\left(n/k_n\right) \sim (1/\gamma(x))\log(u_{n}^{x})$ and thus
    \begin{align*}
        h_{n}\log(u_{n}^{x}) 
        \sim
        \gamma(x)h_{n}\log(n/k_{n})
        \leq
        \text{cst}\log(n)^{1-e} \to 0.
    \end{align*}
    This concludes the three consistency conditions. Let $r_{n} \asymp n^{a}, \ a \in (0,1)$. Then $r_{n}/n \to 0$ and
    \begin{align*}
        r_{n}h_{n}\overline{F}^{x}(u_{n}^{x})
       \asymp
        n^{a-1} \log(n)^{2-e}
        \to 0.
    \end{align*}
    Let $\ell_{n} \asymp n^{\psi}, \ \psi \in (0,a)$. Then $1/\ell_{n} \to 0$, $\ell_{n}/r_{n} \to 0$ and
    \begin{align*}
        \frac{n}{r_n}\beta_{\ell_n}
        \leq \text{cst} \
        n^{1-a-\nu\psi} \to 0,
    \end{align*}
     since for every $\nu > 0$ there exists $a \in (0,1)$ and $\psi \in (0,a)$ such that $1-a-\nu\psi < 0$.
    This concludes the second normality limit condition above. Note also that
    \begin{align*}
        \sqrt{\overline{F}^{x}(u_{n}^{x})nh_{n}}h_{n}^{2}
        \asymp
        \sqrt{k_{n}h_{n}}h_{n}^{2}
        \asymp
        \log(n)^{1-5e/2}  \to 0,
    \end{align*}
    which, according to Lemma~\ref{lemma:g_n_emp_process_negligibility_alpha_mixing}, implies the third condition. The last two conditions for asymptotic normality are fulfilled by Lemma~\ref{lemma_log_second_order_conditions}, since 
    \begin{align*}
      \frac{\sqrt{k_{n}h_{n}}}
       {\log\left(n/k_{n}\right)}
       \sim
      \frac{\sqrt{k_{n}h_{n}}}
       {\log\left(n\right)}
       \asymp
       \frac{\sqrt{\log\left(n\right)^{2-e}}}
       {\log\left(n\right)} 
       = \log\left(n\right)^{-e/2} \to 0,
    \end{align*}
    and 
    \begin{align*}
        &\sqrt{k_{n}h_{n}}h_{n}\log(u_{n}^{x})
        \sim
        \gamma(x)\sqrt{k_{n}h_{n}}h_{n}\log(n/k_{n})\leq \text{cst} \sqrt{k_{n}h_{n}}h_{n}\log(n)
        \asymp
        \log\left(n\right)^{2-3e/2} \to 0.
    \end{align*}
    which concludes the last two of the above five normality conditions.
\end{proof}

\section{Anti-clustering conditions for simulations} \label{app_AC}
When checking that the time series in the simulation study satisfy the required conditions, Lemma~\ref{lemma::anti_clustering_sequence} can used to verify anti-clustering conditions~\ref{anti_clustering_condition} and~\ref{log_anti_clustering_condition}. In this appendix we prove that equation~\eqref{eq::Expected_cross_log_limit} holds for the proposed conditional Fréchet and Pareto models, and also provide a result that can be used to check the anti-clustering conditions for the proposed conditional max-stable process.

\subsection{Conditional Fréchet and Pareto}
Consider covariate process $\{X_j\}$, belonging to some stationary (possibly long-memory) time series model and $\{U_j\}$ stationary (possibly long-memory) with marginals in $[0,1]$. The constructions
\begin{align} 
    Y_{j} &= (-\log(U_j))^{-\gamma(X_j)}, \quad j \in \Z, \label{eq:cond_frechet} \\
    Y_{j} &= U_j^{-\gamma(X_j)}, \quad j \in \Z \label{eq:cond_pareto},
\end{align}
gives rise to a time series models with marginal $\text{Fréchet}(1/\gamma(X_j), 1, 0)$ and $\text{Pareto}(1, 1/\gamma(X_j))$ distributions respectively for every $j$. 

\begin{lemma}\label{Lemma:cond_par_fre_AC_requirements}
    Assume conditions~\ref{Assu:gamma_lip}-\ref{Assu:n_h_F_limit}, with either~\eqref{eq:cond_frechet} or~\eqref{eq:cond_pareto}. Then condition~\ref{Assu:Expected_cross_indicator_limit} and~\eqref{eq::Expected_cross_log_limit} holds.
\end{lemma}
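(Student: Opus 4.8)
The plan is to use that in both constructions $Y_j$ is a deterministic function of the pair $(U_j,X_j)$ with $\{U_j\}$ independent of $\{X_j\}$, so that $Y_0$ is conditionally independent of $X_j$ given $X_0$, and symmetrically. Writing $x_0=x+h_nw_1$ and $x_j=x+h_nw_2$, the elementary bound $1_A1_B\le 1_A\wedge 1_B$ gives
\[
\CE{1_{\{Y_0>su_n\}}1_{\{Y_j>tu_n\}}}{X_0=x_0,\;X_j=x_j}\;\le\;\overline{F}^{x_0}(su_n)\wedge\overline{F}^{x_j}(tu_n),
\]
uniformly in $j$. The first step is then to show $\overline{F}^{x_0}(su_n)/\overline{F}^{x}(u_n)\to s^{-\alpha_x}$ uniformly in $w_1\in[-1,1]$ (and likewise with $t,w_2$). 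This is exactly the estimate already carried out inside the proof of Lemma~\ref{Limit_of_tail_process}: regular variation of $\overline{F}^{x}$ handles $\overline{F}^{x}(su_n)/\overline{F}^{x}(u_n)\to s^{-\alpha_x}$, while the Lipschitz conditions~\ref{Assu:alpha_lip}, \ref{Assu:L_lip} together with \ref{Assu:h_n_log_u_n} control $\overline{F}^{x_0}(su_n)/\overline{F}^{x}(su_n)\to 1$ uniformly; in the Fréchet case one also uses $\overline{F}^{u}(y)=1-e^{-y^{-\alpha_u}}\sim y^{-\alpha_u}$ as $y\to\infty$. Hence the ratio in condition~\ref{Assu:Expected_cross_indicator_limit} is dominated, uniformly in $w_1,w_2$ and in $j$, by a sequence converging to the $j$-free constant $s^{-\alpha_x}\wedge t^{-\alpha_x}=(s\vee t)^{-\alpha_x}$, which already supplies the uniform boundedness needed for the downstream application in Lemma~\ref{lemma::anti_clustering_sequence}.

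To identify the limiting constant in the form demanded by condition~\ref{Assu:Expected_cross_indicator_limit}, I would next invoke the specific structure of the $\{U_j\}$ used in the simulation study, where $\{U_j\}$ is the probability-integral transform of a stationary Gaussian process (an AR$(1)$ or an ARFIMA with Gaussian innovations); thus $(U_0,U_j)$ has a Gaussian copula with correlation in $(-1,1)$ for every $j\ge 1$, hence is asymptotically independent in both tails. Translating the events, $\{Y_0>su_n\}$ equals $\{U_0<v_n^{(0)}\}$ in the Pareto case and $\{U_0>1-v_n^{(0)}\}$ in the Fréchet case, with $v_n^{(0)}\asymp\overline{F}^{x}(u_n)$ uniformly in $w_1$ (and analogously $v_n^{(j)}$ for $Y_j$). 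Dominating these $w$-dependent levels by a single $w$-free multiple of $\overline{F}^{x}(u_n)$ and using asymptotic independence yields $o(\overline{F}^{x}(u_n))$ uniformly, so condition~\ref{Assu:Expected_cross_indicator_limit} holds with $M^x(s,t)=0$.

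For \eqref{eq::Expected_cross_log_limit} the argument is parallel. Setting $W_0:=-\log U_0$ in the Fréchet case and $W_0:=U_0$ in the Pareto case, one checks $\log_{+}(Y_0/(u_nA))=\gamma(x_0)\,\log\!\big(w_n^{(0,A)}/W_0\big)\,1_{\{W_0<w_n^{(0,A)}\}}$ with $w_n^{(0,A)}=(u_nA)^{-\alpha_{x_0}}$, and similarly for $Y_j$. Since $\E{\log^2(w/W)\,1_{\{W<w\}}}\sim 2w$ as $w\downarrow 0$ when $W$ is uniform or $\mathrm{Exp}(1)$ (the constant $2$ being $\int_0^1\log^2(1/r)\,\mathrm{d}r$), the Cauchy--Schwarz inequality bounds the conditional expectation in \eqref{eq::Expected_cross_log_limit} by $2\gamma(x_0)\gamma(x_j)\sqrt{w_n^{(0,A)}w_n^{(j,A)}}\,(1+o(1))$; dividing by $\overline{F}^{x}(u_n)$ and reusing the uniform level estimates of the first step produces a bound uniform in $w_1,w_2,j$ converging to $2\gamma(x)^2A^{-\alpha_x}$, and the tail-independence step again sharpens the limit to $\widetilde M^x=0$.

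The main obstacle is securing all of these bounds \emph{uniformly} in $w_1,w_2\in[-1,1]$: this is where the Potter-type inequalities and the Lipschitz hypotheses on $\alpha_\cdot$ and $L^\cdot$ come in, precisely as in Lemma~\ref{Limit_of_tail_process}, and where one must verify that the levels $v_n^{(0)},v_n^{(j)}$ (resp.\ $w_n^{(0,A)},w_n^{(j,A)}$) stay comparable to $\overline{F}^{x}(u_n)$ with constants not depending on $w_1,w_2$. The only genuinely new computation is the Cauchy--Schwarz reduction in the logarithmic case; the Gaussian-copula tail-independence estimate is standard.
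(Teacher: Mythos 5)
Your core argument is essentially the paper's: reduce the cross moment to marginal conditional tails by an elementary inequality, then control $\overline{F}^{x+h_nw}(su_n)/\overline{F}^{x}(u_n)$ uniformly in $w\in[-1,1]$ via Potter's bound and the Lipschitz conditions~\ref{Assu:alpha_lip} and \ref{Assu:L_lip}, exactly as in Lemma~\ref{Limit_of_tail_process}. The paper uses Cauchy--Schwarz for both the indicator and the logarithmic cross terms (together with the fact that, since $Y_i$ is a deterministic function of $(U_i,X_i)$ with $\{U_j\}$ independent of $\{X_j\}$, conditioning on $(X_0,X_j)$ collapses to conditioning on the relevant single coordinate), and stops at a uniform $\mathcal{O}(1)$ bound for \ref{Assu:Expected_cross_indicator_limit} and at the limit $\int_A^\infty \log^2(t)\,\alpha_x t^{-\alpha_x-1}\,\mathrm{d}t$ for the bound in \eqref{eq::Expected_cross_log_limit}; your $1_A1_B\le 1_A\wedge 1_B$ bound and your explicit computation $2\gamma(x)^2A^{-\alpha_x}$ are equivalent. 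Where you genuinely diverge is in the second step, where you try to identify the limits $M^x(s,t)$ and $\widetilde M^x$ as zero via Gaussian-copula tail independence of $(U_0,U_j)$. Be aware that this imports hypotheses not present in the lemma: the constructions \eqref{eq:cond_frechet}--\eqref{eq:cond_pareto} only assume $\{U_j\}$ stationary with $[0,1]$ marginals, so there is no Gaussian copula to appeal to in general (and for a copula with correlation approaching $1$ as $j$ varies the uniformity in $j$ of the $o(\overline F^x(u_n))$ claim would also need care). So either restrict that step explicitly to the AR/ARFIMA-driven simulation models, or drop it; the uniform upper bound you establish in the first step is all that the downstream Lemma~\ref{lemma::anti_clustering_sequence} actually uses, and is what the paper itself proves.
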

\begin{proof}
    Let $s,t > 0$ be given. In both equation~\eqref{eq:cond_frechet} and~\eqref{eq:cond_pareto}, the Cauchy-Schwarz inequality yields that
    \begin{align*}
     &\CE{1_{\left\{Y_0 > su_{n}^{x}\right\}}
      1_{\left\{Y_j > tu_{n}^{x}\right\}}}{X_{0} = x + h_{n}w_{1}, X_{j} = x + h_{n}w_{2}}/\overline{F}^{x}(u_{n}^{x}) \\
      \leq 
       &\frac{\CE{1_{\left\{Y_0 > su_{n}^{x}\right\}}
      }{X_{0} = x + h_{n}w_{1}, X_{j} = x + h_{n}w_{2}}^{1/2}
      \CE{1_{\left\{Y_j > tu_{n}^{x}\right\}}}{X_{0} = x + h_{n}w_{1}, X_{j} = x + h_{n}w_{2}}^{1/2}
      }{\overline{F}^{x}(u_{n}^{x})} \\
      &=
      \left(\frac{\CE{1_{\left\{Y_0 > su_{n}^{x}\right\}}
      }{X_{0} = x + h_{n}w_{1}}
      }{\overline{F}^{x}(u_{n}^{x})}\right)^{1/2}
      \left(\frac{\CE{1_{\left\{Y_j > tu_{n}^{x}\right\}}}{ X_{j} = x + h_{n}w_{2}}
      }{\overline{F}^{x}(u_{n}^{x})}\right)^{1/2}
      = \mathcal{O}(1),
    \end{align*}
    uniformly in $w_1, w_2 \in [-1,1]$, where we have used Potter's bound and the Lipschitz conditions on $L^{x}$ and $1/\gamma(x)$ as was done in Lemma~\ref{Limit_of_tail_process}. This shows condition~\ref{Assu:Expected_cross_indicator_limit}. \\
        Showing equation~\eqref{eq::Expected_cross_log_limit} is done analogously 
    \begin{align*}
          &\frac{\CE{ \log_{+}\left(\frac{Y_{0}}{u_{n}^{x}A} \right)
         \log_{+}\left(\frac{Y_{j}}{u_{n}^{x}A} \right)}{X_{0} = x + h_{n}w_{1}, X_{j} = x + h_{n}w_{2}}}{\overline{F}^{x}(u_{n}^{x})} \\
         &\leq
         \left(\frac{\CE{ \log_{+}^{2}\left(\frac{Y_{0}}{u_{n}^{x}A} \right)
         }{X_{0} = x + h_{n}w_{1}}}
         {\overline{F}^{x}(u_{n}^{x})}\right)^{1/2}
         \left(\frac{\CE{ \log_{+}^{2}\left(\frac{Y_{0}}{u_{n}^{x}A} \right)
         }{X_{0} = x + h_{n}w_{2}}}
         {\overline{F}^{x}(u_{n}^{x})}\right)^{1/2}\\
         & \quad \to \int_{A}^{\infty} \log^{2}(t) (1/\gamma(x))t^{-1/\gamma(x)-1} \dd t,
    \end{align*}
    uniformly in $w_1, w_2 \in [-1,1]$ as shown in the proof of Theorem~\ref{Hill_normality_thm}. We conclude that also equation~\eqref{eq::Expected_cross_log_limit} holds.
\end{proof}
\subsection{Conditional Max-Stable process}
The construction of equation~\eqref{eq:MS_cond_construction}, can be summarised in the following diagram, when the centred Gaussian processes are started at zero and $(P^{X_i} \independent P^{X_j}) \vert (X_i,X_j)$.
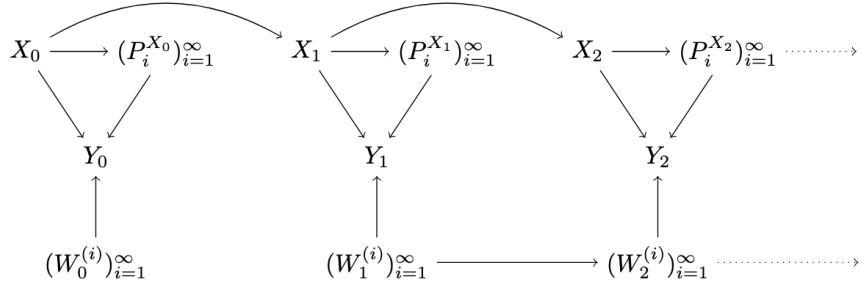
\begin{figure}[hbt!]
\centering
\begin{tikzpicture}[scale=1, every node/.style={scale=1}]

    \node (X0) at (0, 3) {$X_0$};
    \node (PX0) at (2, 3) {$(P_{i}^{X_0})_{i=1}^{\infty}$};
    \node (X1) at (4, 3) {$X_1$};
    \node (PX1) at (6, 3) {$(P_i^{X_1} )_{i=1}^{\infty}$};
    \node (X2) at (8, 3) {$X_2$};
    \node (PX2) at (10, 3) {$(P_i^{X_2} )_{i=1}^{\infty}$};

    \node (Y0) at (1, 1.5) {$Y_0$};
    \node (Y1) at (5, 1.5) {$Y_1$};
    \node (Y2) at (9, 1.5) {$Y_2$};

    \node (W0) at (1, 0) {$(W_0^{(i)})_{i=1}^{\infty}$};
    \node (W1) at (5, 0) {$(W_1^{(i)})_{i=1}^{\infty}$};
    \node (W2) at (9, 0) {$(W_2^{(i)})_{i=1}^{\infty}$};

    \draw[->] (X0) -- (PX0);
    \draw[->] (X1) -- (PX1);
    \draw[->] (X2) -- (PX2);
    
    \draw[->] (X0) -- (Y0);
    \draw[->] (PX0) -- (Y0);
    \draw[->] (X1) -- (Y1);
    \draw[->] (PX1) -- (Y1);
    \draw[->] (X2) -- (Y2);
    \draw[->] (PX2) -- (Y2);

    \draw[->] (W0) -- (Y0);
    \draw[->] (W1) -- (Y1);
    \draw[->] (W2) -- (Y2);

    \draw[->] (W1) -- (W2);

    \draw[->, bend left=30] (X0) to (X1);
    \draw[->, bend left=30] (X1) to (X2);

    \draw[->, dotted] (PX2.east) -- ++(1,0);
    \draw[->, dotted] (W2.east) -- ++(2,0);

\end{tikzpicture}
\caption{
Diagram of the conditional Max-stable (CSGMS) construction.
Since the Gaussian processes $\textbf{W}^{(i)}, i \geq 1$ are all started in zero, the distribution of the processes at time $1$, namely $(W_{1}^{(i)})_{i=1}^{\infty}$, doesn't depend on the starting values $(W_{0}^{(i)})_{i=1}^{\infty}$ as they are deterministic. Hence no arrow between $(W_{0}^{(i)})_{i=1}^{\infty}$ and $(W_{1}^{(i)})_{i=1}^{\infty}$.}
\label{fig:MS_diagram}
\end{figure}
Since $(W_{0}^{(i)})_{i=1}^{\infty}$ is an infinite dimensional vector of zeroes, it holds that $( Y_{0} \independent Y_{j}) \vert (X_0, X_j)$ for every $j$ and $Y_j \vert X_j, X_i \sim Y_j \vert X_j$ for $ i\neq j$. For time series in our setup, satisfying these distributional properties, it turns out that showing the anti-clustering conditions can be done without the use of Lemma~\ref{lemma::anti_clustering_sequence}. This is content of the Lemma below,
\begin{lemma}\label{lemma:cond_ind_impl_anti_clustering}
    Assume~\eqref{def:condRV}, conditions~\ref{Assu:gamma_lip} -~\ref{Assu:n_h_F_limit} and~\ref{rate_condition}. If $( Y_{0} \independent Y_{j}) \vert (X_0, X_j)$ for every $j$, and $Y_j \vert X_j, X_i \sim Y_j \vert X_j$ for $ i\neq j$, then conditions~\ref{anti_clustering_condition} and~\ref{log_anti_clustering_condition} hold.
\end{lemma}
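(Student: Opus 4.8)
The plan is to exploit the conditional independence $(Y_0 \independent Y_j)\mid(X_0,X_j)$ together with the property that the law of $Y_j\mid X_j,X_i$ coincides with that of $Y_j\mid X_j$ in order to \emph{factorize} the cross-moments appearing in conditions~\ref{anti_clustering_condition} and \ref{log_anti_clustering_condition}. Indeed, conditioning on $(X_0,X_j)$ and invoking these two facts,
\begin{align*}
    \E{K\p{\tfrac{x-X_0}{h_n}}1_{\curly{Y_0>su_n}}K\p{\tfrac{x-X_j}{h_n}}1_{\curly{Y_j>tu_n}}}
    = \E{K\p{\tfrac{x-X_0}{h_n}}K\p{\tfrac{x-X_j}{h_n}}\,\overline{F}^{X_0}(su_n)\,\overline{F}^{X_j}(tu_n)},
\end{align*}
and analogously for the logarithmic version, with $1_{\curly{Y_0>su_n}}$ and $1_{\curly{Y_j>tu_n}}$ replaced respectively by $\CE{\log_{+}(Y_0/(u_nA))}{X_0}$ and $\CE{\log_{+}(Y_j/(u_nA))}{X_j}$.

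First I would write the right-hand side as an integral against the joint density $g_{X_0,X_j}$, which is bounded uniformly in $j$ by the standing assumptions, and perform the change of variables $z_i = x-h_nw_i$ exactly as in the proof of Lemma~\ref{Limit_of_tail_process}. This produces a prefactor $h_n^2$ and an integral over $[-1,1]^2$ of $K(w_1)K(w_2)\,g_{X_0,X_j}(x-h_nw_1,x-h_nw_2)\,\overline{F}^{x-h_nw_1}(su_n)\,\overline{F}^{x-h_nw_2}(tu_n)$. Potter's bound together with the Lipschitz conditions~\ref{Assu:alpha_lip} and \ref{Assu:L_lip}, applied exactly as in Lemma~\ref{Limit_of_tail_process}, shows that each of $\overline{F}^{x-h_nw_1}(su_n)/\overline{F}^{x}(u_n)$ and $\overline{F}^{x-h_nw_2}(tu_n)/\overline{F}^{x}(u_n)$ is bounded uniformly in $w_1,w_2\in[-1,1]$ and in $n$. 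Hence each summand obeys
\begin{align*}
    \frac{\E{K\p{\tfrac{x-X_0}{h_n}}1_{\curly{Y_0>su_n}}K\p{\tfrac{x-X_j}{h_n}}1_{\curly{Y_j>tu_n}}}}{h_n\overline{F}^{x}(u_n)} = \mathcal{O}\p{h_n\overline{F}^{x}(u_n)},
\end{align*}
with a constant independent of $j$ (the uniformity in $j$ being inherited from the uniform bound on $g_{X_i,X_j}$ and the $j$-free constants in conditions~\ref{Assu:alpha_lip}, \ref{Assu:L_lip}). Summing over $j=m,\dots,r_n$ therefore gives $\mathcal{O}(r_nh_n\overline{F}^{x}(u_n))$, which tends to $0$ by the rate condition~\ref{rate_condition}; since this bound is independent of $m$, taking first $\limsup_{n\to\infty}$ and then $m\to\infty$ yields condition~\ref{anti_clustering_condition}.

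The logarithmic anti-clustering condition~\ref{log_anti_clustering_condition} follows by the identical scheme. After the same factorization I would use that, for $z$ in a shrinking neighbourhood of $x$,
\begin{align*}
    \CE{\log_{+}\p{\tfrac{Y_0}{u_nA}}}{X_0=z} = \int_0^\infty \overline{F}^{z}(u_nAe^v)\,\dd v = \mathcal{O}\p{\overline{F}^{x}(u_n)}
\end{align*}
uniformly in such $z$, which is precisely the Potter-bound estimate already established in the proof of Theorem~\ref{Hill_normality_thm} (note it requires $h_n\log u_n\to0$, i.e.\ condition~\ref{Assu:h_n_log_u_n}, so that $u_n^{c_\alpha h_n}\to1$ and the $v$-integral stays finite). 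The rest of the argument is verbatim and again produces $\mathcal{O}(r_nh_n\overline{F}^{x}(u_n))\to 0$. I do not anticipate a genuine obstacle here: the conditional independence trivializes the cross-terms, so the only point demanding care is the \emph{uniformity} in $w_1,w_2$ and, above all, in the lag $j$ of the Potter/Lipschitz estimates, which is exactly what makes the summation of the $\mathcal{O}(r_n)$ error terms legitimate.
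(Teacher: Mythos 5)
Your proposal is correct and follows essentially the same route as the paper's proof: factorize the cross-expectation via the conditional independence assumptions, bound each summand by $\mathcal{O}(h_n\overline{F}^{x}(u_n))$ using Potter's bound and the Lipschitz conditions as in Lemma~\ref{Limit_of_tail_process} (and the logarithmic analogue from the proof of Theorem~\ref{Hill_normality_thm}), then conclude via condition~\ref{rate_condition}. No gaps.
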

\begin{proof}
    Consider the numerator in condition~\ref{anti_clustering_condition}
    \begin{align*}
       &\E{K\left(\frac{x-X_0}{h_n}\right)1_{\left\{Y_0 > su_{n}^{x}\right\}}
       K\left(\frac{x-X_j}{h_n}\right)1_{\left\{Y_j > tu_{n}^{x}\right\}}} \\
       &=
       \E{K\left(\frac{x-X_0}{h_n}\right)
       K\left(\frac{x-X_j}{h_n}\right)
       \CE{1_{\left\{Y_0 > su_{n}^{x}\right\}}}{X_{0},X_{j}}
       \CE{1_{\left\{Y_j > tu_{n}^{x}\right\}}}{X_{0},X_{j}}}\\
       &=
       \E{K\left(\frac{x-X_0}{h_n}\right)
       K\left(\frac{x-X_j}{h_n}\right)
       \CE{1_{\left\{Y_0 > su_{n}^{x}\right\}}}{X_{0}}
       \CE{1_{\left\{Y_j > tu_{n}^{x}\right\}}}{X_{j}}} \\
       &= 
        (h_{n}\overline{F}^{x}(u_{n}^{x}))^{2}
        \int_{[-1,1]^2}  \biggl\{
         \frac{\CE{1_{\left\{Y_0 > su_{n}^{x}\right\}}}{X_{0} = x + w_{1}h_n}}
         {\overline{F}^{x}(u_{n}^{x})}
         \frac{ \CE{1_{\left\{Y_0 > tu_{n}^{x}\right\}}}{X_{0} = x + w_{2}h_n}}
         {\overline{F}^{x}(u_{n}^{x})}\\
         & \qquad \qquad \qquad \qquad \qquad \qquad \qquad
         \times g(x + w_{1}h_{n},x + w_{2}h_{n})K\left(w_{1}\right)K\left(w_{2}\right) \biggr\}
         \mathrm{d}(w_1, w_2)\\
         &= \mathcal{O}((h_{n}\overline{F}^{x}(u_{n}^{x}))^{2}),
    \end{align*}
    where we have used Potter's bound and the Lipschitz conditions on $L^{x}$ and $1/\gamma(x)$ as was done in Lemma~\ref{Limit_of_tail_process}. Consequently 
    \begin{align*}
       &\lim_{m\to \infty}
       \limsup_{n\to \infty} \sum_{j=m}^{r_n} 
       \frac{\E{K\left(\frac{x-X_0}{h_n}\right)1_{\left\{Y_0 > su_{n}^{x}\right\}}
       K\left(\frac{x-X_j}{h_n}\right)1_{\left\{Y_j > tu_{n}^{x}\right\}}}}
       {h_{n} \overline{F}^{x}(u_{n}^{x})} \\
       & \quad \leq 
       \limsup_{n\to \infty} \sum_{j=1}^{r_n} 
       \frac{\E{K\left(\frac{x-X_0}{h_n}\right)1_{\left\{Y_0 > su_{n}^{x}\right\}}
       K\left(\frac{x-X_j}{h_n}\right)1_{\left\{Y_j > tu_{n}^{x}\right\}}}}
       {h_{n} \overline{F}^{x}(u_{n}^{x})}  \leq
       \limsup_{n\to \infty} \mathcal{O}\left(r_{n} h_{n} \overline{F}^{x}(u_{n}^{x})\right)  = 0,
    \end{align*}
    which concludes condition~\ref{anti_clustering_condition}. Showing condition~\ref{log_anti_clustering_condition} is done analogously, using that
     \begin{align*}
        &\E{K\left(\frac{x-X_{0}}{h_n}\right) K\left(\frac{x-X_{j}}{h_n}\right) 
         \log_{+}\left(\frac{Y_{0}}{u_{n}^{x}A} \right)
         \log_{+}\left(\frac{Y_{j}}{u_{n}^{x}A} \right)} \\
         &= 
          (h_{n}\overline{F}^{x}(u_{n}^{x}))^{2}
        \int_{[-1,1]^2}  \biggl\{
         \frac{\CE{ \log_{+}\left(\frac{Y_{0}}{u_{n}^{x}A} \right)
         }{X_{0} = x + h_{n}w_{1}}}
         {\overline{F}^{x}(u_{n}^{x})}
         \frac{\CE{ \log_{+}\left(\frac{Y_{0}}{u_{n}^{x}A} \right)
         }{X_{0} = x + h_{n}w_{2}}}
         {\overline{F}^{x}(u_{n}^{x})}\\
         & \qquad \qquad \qquad \qquad \qquad \qquad 
         \times g(x + w_{1}h_{n},x + w_{2}h_{n})K\left(w_{1}\right)K\left(w_{2}\right) \biggr\}
         \mathrm{d}(w_1, w_2) \\
         &= \mathcal{O}( (h_{n}\overline{F}^{x}(u_{n}^{x}))^{2}),
  \end{align*}
  using calculations similar to those in the proof of Theorem~\ref{Hill_normality_thm}.
\end{proof}

\section{Kernel density estimation for mixing sequences}
The main task of this section is to prove consistency of the Parzen--Rosenblatt estimator for a stationary time series, under suitable assumptions involving either some form of anti-clustering or mixing condition. We also analyse the speed of convergence.

We begin with an auxiliary result which is used in all subsequent proofs.
\begin{lemma} \label{Expected_kernel_lemma}
    Consider a stationary time series $\{(X_{j}), j \in  \mathbb{Z} \}$ where each $X_j$ takes values in $[-1,1]$ and admits twice continuously differentiable density $g$. Let the kernel $K$ be a bounded, symmetric and defined on $[-1,1]$,
    and let the bandwidth sequence $\curly{h_{n}}$ satisfy
    \begin{align*}
        \lim_{n\to \infty} h_{n}  = 0.
    \end{align*}
    For any $x \in \R$, it holds that
    \begin{align*}
        \frac{\E{K^{q}\p{\frac{x-X_0}{h_n}}}}{h_{n}}
        \to g(x)\int K^{q}\p{u}\mathrm{d}u.
    \end{align*}
\end{lemma}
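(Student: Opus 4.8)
The plan is to compute the expectation directly via a change of variables and then apply dominated convergence, using only the stated regularity of $g$ and the kernel $K$. Concretely, writing out the expectation against the density $g$ of $X_0$ gives
\begin{align*}
\frac{\E{K^{q}\p{\frac{x-X_0}{h_n}}}}{h_{n}}
= \frac{1}{h_n}\int_{\R} K^{q}\p{\frac{x-z}{h_n}} g(z)\,\dd z
= \int_{\R} K^{q}(u)\, g(x - u h_n)\,\dd u,
\end{align*}
after the substitution $u = (x-z)/h_n$, $\dd z = h_n\,\dd u$. Note that this is a single-variable, marginal computation: it does not use stationarity or any dependence structure at all, only that $X_0$ has density $g$. (The hypotheses on the time series are there so the lemma fits the ambient setup, but they play no role here.)

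The second step is to pass to the limit inside the integral. Since $K$ is bounded and supported on $[-1,1]$, the integrand is supported on $u\in[-1,1]$ and dominated by $\|K\|_\infty^{q}\sup_{|v-x|\le 1}g(v) \cdot \mathbf{1}_{[-1,1]}(u)$, which is integrable; here I use that $g$, being continuous (indeed twice continuously differentiable), is bounded on the compact neighbourhood $[x-1,x+1]$. As $n\to\infty$ we have $h_n\to 0$, so $g(x-uh_n)\to g(x)$ for each fixed $u$ by continuity of $g$ at $x$. The dominated convergence theorem then yields
\begin{align*}
\int_{\R} K^{q}(u)\, g(x - u h_n)\,\dd u \;\longrightarrow\; g(x)\int_{\R} K^{q}(u)\,\dd u,
\end{align*}
which is exactly the claimed limit.

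There is essentially no hard part here — this is the classical Parzen--Rosenblatt bias calculation and the only things used are a change of variables, boundedness of $K$ on its compact support, and continuity of $g$. If one wants a quantitative rate (the ``speed of convergence'' alluded to in the surrounding text), the natural refinement is to Taylor-expand $g(x-uh_n) = g(x) - uh_n g'(x) + \tfrac{1}{2}u^2 h_n^2 g''(\xi)$; the $O(h_n)$ term vanishes when $K^q$ integrates an odd power against the symmetric kernel (e.g.\ $q=1$), leaving an $O(h_n^2)$ bias, while for general $q$ one gets $O(h_n)$. The only mild point to be careful about is ensuring $n$ is large enough that $h_n\le 1$, so that the neighbourhood $[x-uh_n, \text{-range}]$ stays inside a fixed compact set on which $g$ and its derivatives are bounded; since $h_n\to 0$ this holds eventually, and the finitely many small-$n$ terms do not affect the limit.
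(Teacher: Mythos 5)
Your proof is correct and follows essentially the same route as the paper: the substitution $u=(x-z)/h_n$ reducing the expression to $\int K^q(u)g(x-uh_n)\,\dd u$ is the whole content of the argument. The only cosmetic difference is that you finish by dominated convergence (needing only continuity of $g$), whereas the paper Taylor-expands $g$ to second order — which, as you note yourself, is just the quantitative refinement of the same computation.
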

\begin{proof}
    The left hand side of the limit can be rewritten as follows
    \begin{align*}
          \frac{\E{K^{q}\p{\frac{x-X_0}{h_n}}}}{h_{n}} =
         \frac{1}{h_{n}}\int  
         g(z)K^{q}\p{\frac{x-z}{h_n}}
         \mathrm{d}z = 
         \int  
         K^{q}\p{u}g(x + h_{n}u)
         \mathrm{d}u. 
    \end{align*}
    The Taylor expansion $g(x + h_{n}u) = g(x) + h_{n}ug^{(1)}(x) +h_{n}^{2}u^{2}g^{(2)}(x)/2 + \mathcal{O}\p{\lvert h_{n}u \rvert^{3}}$ yields
    \begin{align*}
          \frac{\E{K^{q}\p{\frac{x-X_0}{h_n}}}}{h_{n}} = 
          &g(x)\int K^{q}\p{u}\mathrm{d}u + 
          h_{n}g^{(1)}(x)\int uK^{q}\p{u}\mathrm{d}u 
          \\
          & +
          \frac{h_{n}^{2}g^{(2)}(x)}{2}\int  u^{2}K(u)\mathrm{d}u + 
          o\p{h_{n}^{2}} \to g(x)\int  K^{q}\p{u}\mathrm{d}u,
    \end{align*}
    as $n \to \infty$.
\end{proof}
The following Lemma provides consistency when assuming anti-clustering.
\begin{lemma} \label{lemma:g_n_consistent_anti_clust}
    Assume that $h_{n} \to 0$, $nh_{n} \to  \infty$, that the time series $\{(X_{j}), j \in  \mathbb{Z} \}$ is stationary with 
    \begin{align*}
         \lim_{m\to \infty}
       \limsup_{n\to \infty} \frac{1}{n h_{n}} \sum_{j=m}^{n} 
       \frac{\E{K\left(\frac{x-X_0}{h_n}\right)
       K\left(\frac{x-X_j}{h_n}\right)}}{h_{n}} = 0,
    \end{align*}
    where each $X_j$ admits twice continuously differentiable density $g$, and the joint density $g_{X_i, X_j}$ of $(X_i, X_j)$ exists, and is bounded for all $i,j \in \Z$.\\
    Then for any $x \in \R$, it holds that $g_{n}(x) \overset{\P}{\to} g(x)$.
\end{lemma}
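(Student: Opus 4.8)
The plan is to show that the Parzen--Rosenblatt estimator $g_n(x) = \frac{1}{nh_n}\sum_{j=1}^n K\left(\frac{x-X_j}{h_n}\right)$ converges in probability to $g(x)$ by establishing asymptotic unbiasedness and negligibility of the variance. First I would verify unbiasedness: by stationarity and Lemma~\ref{Expected_kernel_lemma} with $q=1$, $\E{g_n(x)} = \frac{1}{h_n}\E{K\left(\frac{x-X_0}{h_n}\right)} \to g(x)$. This reduces the problem to showing $\Var(g_n(x)) \to 0$, after which Chebyshev's inequality finishes the argument.

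For the variance, I would write
\begin{align*}
    \Var(g_n(x)) = \frac{1}{(nh_n)^2}\left( n\Var\left(K\left(\tfrac{x-X_0}{h_n}\right)\right) + 2\sum_{j=1}^{n-1}(n-j)\Cov\left(K\left(\tfrac{x-X_0}{h_n}\right), K\left(\tfrac{x-X_j}{h_n}\right)\right)\right).
\end{align*}
The diagonal term is controlled by Lemma~\ref{Expected_kernel_lemma} with $q=2$: it is $\mathcal{O}\left(\frac{1}{nh_n}\right) \to 0$ using $nh_n \to \infty$. For the covariance sum, I would bound $|\Cov(\cdots)| \le \E{K\left(\frac{x-X_0}{h_n}\right)K\left(\frac{x-X_j}{h_n}\right)}$ up to a term of order $\left(\frac{\E{K(\cdot)}}{}\right)^2 = \mathcal{O}(h_n^2)$, whose contribution to $\frac{2}{(nh_n)^2}\sum_{j}(n-j)(\cdot)$ is $\mathcal{O}(h_n^2) \to 0$. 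The remaining piece is $\frac{2}{nh_n}\sum_{j=1}^{n-1}\frac{1}{h_n}\E{K\left(\frac{x-X_0}{h_n}\right)K\left(\frac{x-X_j}{h_n}\right)}$, which is exactly the quantity appearing in the anti-clustering hypothesis. I would split this sum at a fixed index $m$: for $j < m$, boundedness of the joint density $g_{X_0,X_j}$ together with the support of $K$ gives $\frac{1}{h_n}\E{K\left(\frac{x-X_0}{h_n}\right)K\left(\frac{x-X_j}{h_n}\right)} = \mathcal{O}(h_n)$ (after a change of variables the integral is over a region of area $\mathcal{O}(h_n^2)$ with a bounded integrand, divided by $h_n$), so the first $m$ terms contribute $\mathcal{O}\left(\frac{m h_n}{n \cdot 1}\right)$, actually $\frac{2}{nh_n}\cdot m \cdot \mathcal{O}(h_n) = \mathcal{O}(m/n) \to 0$ for fixed $m$; the tail $j \ge m$ is handled directly by the anti-clustering assumption, which says $\limsup_n \frac{1}{nh_n}\sum_{j=m}^n \frac{\E{K(\cdot)K(\cdot)}}{h_n}$ can be made arbitrarily small by choosing $m$ large.

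The main obstacle, such as it is, is bookkeeping the split correctly: one must let $n \to \infty$ first with $m$ fixed (killing the head), then let $m \to \infty$ (killing the tail via the hypothesis), so the argument is a standard $\limsup_n \le \inf_m$ interchange rather than a single limit. I do not anticipate genuine difficulty here since the anti-clustering condition is tailored precisely to this bound; the only mild care needed is confirming that the short-range terms $j<m$ are $\mathcal{O}(h_n)$ uniformly in $j$, which follows from the uniform boundedness of $g_{X_i,X_j}$ assumed in the statement. Once $\Var(g_n(x)) \to 0$ is established, $g_n(x) \overset{\P}{\to} g(x)$ follows immediately.
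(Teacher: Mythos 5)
Your proposal follows essentially the same route as the paper's proof: negligible bias via the Taylor expansion underlying Lemma~\ref{Expected_kernel_lemma}, a variance decomposition whose diagonal part is $\mathcal{O}\left(1/(nh_n)\right)$, and a head/tail split of the cross terms at a fixed $m$, with the head controlled by the bounded joint density (each term $\mathcal{O}(h_n^2)$, hence a total contribution $\mathcal{O}(m/n)$) and the tail by the anti-clustering hypothesis, taking $n\to\infty$ before $m\to\infty$. One small correction: the product-of-means term contributes $\frac{2}{(nh_n)^2}\sum_{j}(n-j)\E{K\left(\frac{x-X_0}{h_n}\right)}^2=\mathcal{O}(1)$, not $\mathcal{O}(h_n^2)$, so rather than claiming it vanishes you should use that it enters the covariance with a minus sign and simply drop it when upper-bounding the variance, i.e.\ bound $\Cov \leq \E{K\left(\frac{x-X_0}{h_n}\right)K\left(\frac{x-X_j}{h_n}\right)}$.
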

\begin{proof}
    Fix $x \in \R$. The bias is negligible since
    \begin{align*}
        &\E{g_{n}(x)}-g(x) =
        \int \frac{1}{h_{n}} K\left(\frac{x-y}{h_{n}}\right)\{g(y)-g(x)\} \mathrm{d} y\\
        &=\int K(u)\{g'(x)(-h_{n}u)+\frac{1}{2} g''(x)(h_{n}u)^2+o(h_{n}^2)\} \mathrm{d} u=\frac{h_{n}^2}{2}g''(x)\int u^2K(u)\mathrm{d} u+o(h_{n}^2) \to 0,
\end{align*}
as $n\to \infty$. We consider the variance
\begin{align*}
    &\text{var}\left( g_{n}(x) \right) 
    =
    \frac{1}{(nh_{n})^{2}}\text{var}\left( \sum_{i=1}^{n} K\left(\frac{x-X_{j}}{h_n}\right) \right) \\
    &= \frac{1}{(nh_{n})^{2}}\Bigg(
      n\E{K^{2}\left(\frac{x-X_{0}}{h_{n}}\right)}  - 
     n\E{K\left(\frac{x-X_{0}}{h_{n}}\right)}^{2}  + 
      2n\sum_{i=1}^{n}\left(1 - \frac{i}{n} \right) \text{cov}\left(K\left(\frac{x-X_{0}}{h_{n}}\right),K\left(\frac{x-X_{i}}{h_{n}}\right)\right)\Bigg) \\
      & \leq 
      \frac{1}{nh_{n}^{2}} \text{var}\left(K\left(\frac{x-X_{0}}{h_{n}}\right)\right)
      +
     \frac{2}{nh_{n}^{2}}      
    \sum_{i=1}^{n}\left\vert\text{cov}\left(K\left(\frac{x-X_{0}}{h_{n}}\right),K\left(\frac{x-X_{i}}{h_{n}}\right)\right)\right\vert,
\end{align*}
where we used stationarity.
The first term is negligible by Lemma~\ref{Expected_kernel_lemma}, since $nh_n \to \infty$. \\
Since the joint density $g_{X_{0},X_{i}}$ is bounded for all $i \in \N$ it holds that
\begin{align*}
    &\E{K\left(\frac{x-X_{0}}{h_{n}}\right)K\left(\frac{x-X_{i}}{h_{n}}\right)} \\
    &=
    h_{n}^{2}\int\int K\left(u\right)K\left(y\right)g_{X_{0},X_{i}}(x - uh_{n},x - yh_{n}) \mathrm{d}u\mathrm{d}y \leq
    h_{n}^{2} \sup_{x_{1},x_{2} \in \R}(g_{X_{0},X_{i}}(x_1,x_2))
    =
    \mathcal{O}\left(h_{n}^{2}\right).
\end{align*}
Similarly $\E{K^{2}((x-X_{0})/h_{n})} = \mathcal{O}(h_{n}^{2})$, and consequently, for fixed $m \in \N$ we have
    \begin{align*}
     &\limsup_{n \to \infty} \frac{2}{nh_{n}^{2}}           
    \sum_{i=1}^{m}\left\vert \text{cov}\left(K\left(\frac{x-X_{0}}{h_{n}}\right),K\left(\frac{x-X_{i}}{h_{n}}\right)\right) \right\vert
    \leq
    \text{cst} \limsup_{n \to \infty} \frac{m}{n} = 0,
    \end{align*}
    and, by assumption, for given $\varepsilon > 0$, it is possible to pick $m_0$ so large that for all $m \geq m_0$ it holds that
    \begin{align*}
     &\limsup_{n \to \infty}\frac{2}{nh_{n}^{2}}            
    \sum_{i=m}^{n}\text{cov}\left(K\left(\frac{x-X_{0}}{h_{n}}\right),K\left(\frac{x-X_{i}}{h_{n}}\right)\right) \leq \varepsilon,
    \end{align*}
    where $\varepsilon$ can be made arbitrarily small. This implies that $\text{var}\left( g_{n}(x) \right) \to 0$, and hence $g_{n}(x) \overset{\P}{\to} g(x)$.
\end{proof}
The assumptions of the following Lemma implies those of Lemma~\ref{lemma:g_n_consistent_anti_clust}, and is thus a special case thereof.
\begin{lemma} \label{lemma:g_n_consistent_alpha_mixing}
    Assume that $h_{n} \to 0$, $nh_{n} \to  \infty$, that the time series $\{(X_{j}), j \in  \mathbb{Z} \}$ is stationary, $\alpha$-mixing with $\alpha$-mixing coefficient bounded by $\mathcal{O}(j^{-\eta}), \eta > 2$ for every $j \in \Z$, and each $X_j$ admits twice continuously differentiable density $g$, and the joint density $g_{X_i, X_j}$ of $(X_i, X_j)$ exists, and is bounded for all $i,j \in \Z$. Then for any $x \in \R$, it holds that $g_{n}(x) \overset{\P}{\to} g(x)$.
\end{lemma}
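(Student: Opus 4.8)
The plan is to establish $L^2$ convergence, $\E{(g_n(x)-g(x))^2}\to 0$, which entails $g_n(x)\overset{\P}{\to}g(x)$. Writing $g_n(x)=(nh_n)^{-1}\sum_{j=1}^n K_j$ with $K_j:=K((x-X_j)/h_n)$, I would split the mean squared error into squared bias and variance. The bias is handled exactly as in the proof of Lemma~\ref{lemma:g_n_consistent_anti_clust}: a second-order Taylor expansion of $g$ at $x$ together with the symmetry of the kernel gives $\E{g_n(x)}-g(x)=\tfrac12 h_n^2 g''(x)\int u^2K(u)\,\mathrm{d}u+o(h_n^2)\to 0$, using $g\in C^2$ and $h_n\to 0$.

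The core is to show $\Var(g_n(x))\to 0$. By stationarity and the standard bound on the variance of a sum (again as in the proof of Lemma~\ref{lemma:g_n_consistent_anti_clust}),
\[
\Var(g_n(x))\le \frac{1}{nh_n^2}\Var(K_0)+\frac{2}{nh_n^2}\sum_{j=1}^{n}\left|\Cov(K_0,K_j)\right|.
\]
The first summand is $O((nh_n)^{-1})\to 0$ by Lemma~\ref{Expected_kernel_lemma} (applied with $q=1,2$) and $nh_n\to\infty$. For the covariance sum I would borrow the truncation device from the proof of Theorem~\ref{Empirical_tail_dist_consistency_theorem}: fix $d_n=\lceil h_n^{-c}\rceil$ for a constant $c$ chosen below, and split the sum at $d_n$. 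For near lags $1\le j<d_n$, bound $|\Cov(K_0,K_j)|\le \E{K_0K_j}+\E{K_0}\E{K_j}=O(h_n^2)$, using the (uniform) boundedness of the bivariate densities $g_{X_i,X_j}$ together with Lemma~\ref{Expected_kernel_lemma}; this part contributes $O(d_nh_n^2/(nh_n^2))=O((nh_n^c)^{-1})$. For far lags $d_n\le j\le n$, since $K$ is bounded, Billingsley's covariance inequality and the $\alpha$-mixing hypothesis give $|\Cov(K_0,K_j)|\le 4\|K\|_\infty^2\alpha(j)=O(j^{-\eta})$, so this part contributes $O(d_n^{1-\eta}/(nh_n^2))=O(h_n^{c(\eta-1)-2}/n)$.

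It remains to choose $c$. Any $c\in[1/(\eta-1),1)$ works, and this interval is non-empty precisely because $\eta>2$: then $c<1$ forces $h_n^c\ge h_n$ for large $n$, so the near-lag term is $O((nh_n)^{-1})\to 0$, and $c(\eta-1)\ge 1$ forces $h_n^{c(\eta-1)-2}\le h_n^{-1}$, so the far-lag term is likewise $O((nh_n)^{-1})\to 0$. Hence $\Var(g_n(x))\to 0$, which proves the claim; equivalently, one has verified the anti-clustering-type control of the covariances underlying Lemma~\ref{lemma:g_n_consistent_anti_clust}, which is why the present statement is a special case of it. The only real obstacle is this joint calibration of $d_n$ — the near-lag estimate needs $c<1$ while the far-lag estimate needs $c(\eta-1)\ge 1$, and both hold simultaneously exactly under the standing hypothesis $\eta>2$, the same tension already present in the $\alpha$-mixing consistency result of Theorem~\ref{Empirical_tail_dist_consistency_theorem}; a secondary point to treat with care is that the near-lag bound uses boundedness of $g_{X_i,X_j}$ uniformly in $i,j$ (for $j$ up to $d_n\to\infty$), which is part of the standing assumptions.
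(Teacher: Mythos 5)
Your proposal is correct and follows essentially the same route as the paper: bias via the second-order Taylor expansion, variance split at a truncation lag $d_n=\lceil h_n^{-c}\rceil$ with the near lags controlled by the bounded joint densities and the far lags by Billingsley's inequality. The paper simply fixes the specific exponent $c=2/\eta$, which lies in your admissible interval $[1/(\eta-1),1)$, so the two arguments coincide up to this choice.
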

\begin{proof}
    The proof is similar to that of Lemma~\ref{lemma:g_n_consistent_anti_clust}, different only in showing that the sum of covariances disappears as $n\to \infty$. \\
    Let $\left\{d_{n}, n \in \N\right\}$ be an integer sequence given by $d_{n} = \lceil h_{n}^{-2/\eta} \rceil$. Using Billingsley's inequality with the $\alpha$-mixing assumption yields
\begin{align*}
    & \frac{2}{nh_{n}^{2}}     
    \sum_{i=1}^{n}\text{cov}\left(K\left(\frac{x-X_{0}}{h_{n}}\right),K\left(\frac{x-X_{i}}{h_{n}}\right)\right)
    \leq
    \text{cst} \frac{1}{nh_{n}^{2}} 
    \left(\sum_{i=1}^{d_{n}-1} h_{n}^{2}
    +
    \sum_{i=d_{n}}^{\infty}
    \alpha(i)
    \right)\\
    & \leq
    \text{cst}\left(\frac{d_{n}}{n}
    +
    \frac{d_{n}^{-\eta +1}}{nh_{n}^{2}}  
    \right) 
    \sim
    \text{cst} \left(nh_{n}^{2/\eta}\right)^{-1} \to 0,
    \end{align*}
    since $2/\eta \leq 1$.
\end{proof}
\begin{lemma} \label{lemma:g_n_consistent_m_dependent}
    Assume that $h_{n} \to 0$, $nh_{n} \to  \infty$, that the time series $\{(X_{j}), j \in  \mathbb{Z} \}$ is stationary, $m$-dependent and each $X_j$ admits twice continuously differentiable density $g$.
    Then for any $x \in \R$, it holds that $g_{n}(x) \overset{\P}{\to} g(x)$.
\end{lemma}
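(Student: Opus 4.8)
The plan is to run the standard bias–variance decomposition, exactly as in the proofs of Lemmas~\ref{lemma:g_n_consistent_anti_clust} and \ref{lemma:g_n_consistent_alpha_mixing}; the role of $m$-dependence is that it makes the covariance part essentially free, since every covariance at a lag larger than $m$ vanishes identically, so no mixing-rate estimates or auxiliary blocking sequence $\curly{d_n}$ are needed. First I would dispatch the bias: writing $\E{g_n(x)} = \int K(u)\,g(x+h_n u)\,\dd u$ and Taylor-expanding $g$ about $x$ (legitimate since $g$ is twice continuously differentiable), one gets $\E{g_n(x)} - g(x) = \tfrac12 h_n^2 g''(x)\int u^2 K(u)\,\dd u + o(h_n^2) \to 0$, using only $h_n \to 0$. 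This is verbatim the computation already carried out in the proof of Lemma~\ref{lemma:g_n_consistent_anti_clust}.

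For the variance, stationarity gives
\begin{align*}
    \text{var}\p{g_n(x)}
    = \frac{1}{nh_n^2}\,\text{var}\p{K\p{\tfrac{x-X_0}{h_n}}}
      + \frac{2}{nh_n^2}\sum_{i=1}^{n}\p{1-\tfrac{i}{n}}\text{cov}\p{K\p{\tfrac{x-X_0}{h_n}},\,K\p{\tfrac{x-X_i}{h_n}}}.
\end{align*}
Lemma~\ref{Expected_kernel_lemma} yields $\E{K^q\p{(x-X_0)/h_n}} = \mathcal{O}(h_n)$ for $q=1,2$, so $\text{var}\p{K\p{(x-X_0)/h_n}} = \mathcal{O}(h_n)$ and the first term is $\mathcal{O}\p{(nh_n)^{-1}}\to 0$. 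In the covariance sum, $m$-dependence makes every summand with $i>m$ identically zero, while for $1\le i\le m$ the Cauchy–Schwarz inequality together with stationarity gives $\bigl|\text{cov}\p{K\p{(x-X_0)/h_n},K\p{(x-X_i)/h_n}}\bigr| \le \E{K^2\p{(x-X_0)/h_n}} + \E{K\p{(x-X_0)/h_n}}^2 = \mathcal{O}(h_n)$ (here I avoid any use of a joint density). Hence the covariance sum contributes $\mathcal{O}\p{(1+m)(nh_n)^{-1}}$, which vanishes since $m$ is fixed and $nh_n\to\infty$. Combining with the bias bound, $\E{\p{g_n(x)-g(x)}^2}\to 0$, and Chebyshev's inequality delivers $g_n(x)\overset{\P}{\to}g(x)$.

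I do not anticipate any real obstacle: the $m$-dependence hypothesis is only used to truncate the covariance sum at lag $m$, and the Cauchy–Schwarz estimate controls the remaining $m$ covariances without invoking any further regularity. In fact one could instead simply observe that $m$-dependence trivially implies the anti-clustering hypothesis of Lemma~\ref{lemma:g_n_consistent_anti_clust} (the $\limsup_{m}\limsup_{n}$ there being a limit of eventually-zero sums) and invoke that lemma directly, but writing out the short self-contained argument above is cleaner and keeps the proof free of the joint-density assumption.
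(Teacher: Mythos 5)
Your proof is correct, but it takes a different route from the paper's. The paper exploits $m$-dependence by splitting $g_n(x)$ into $m$ interlaced subsums $g_n^{(i)}(x)$ over the index classes $\{(j-1)m+i\}$, so that each subsum consists of i.i.d.\ terms whose variance can be computed directly from Lemma~\ref{Expected_kernel_lemma}, and then separately shows that the leftover incomplete block $R_n$ is negligible in $L^2$ via Cauchy--Schwarz. You instead keep the full sum, write out $\Var(g_n(x))$ via stationarity, observe that $m$-dependence annihilates every covariance at lag exceeding $m$, and bound the remaining $m$ covariances by $\E{K^2((x-X_0)/h_n)} + \E{K((x-X_0)/h_n)}^2 = \mathcal{O}(h_n)$, giving a total contribution $\mathcal{O}(m/(nh_n)) \to 0$. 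Your version is shorter and avoids the remainder-block bookkeeping entirely; like the paper's, it does not require the joint density of $(X_0,X_i)$, which the parallel Lemmas~\ref{lemma:g_n_consistent_anti_clust} and \ref{lemma:g_n_consistent_alpha_mixing} do assume, and you are right to note that the shortcut of invoking Lemma~\ref{lemma:g_n_consistent_anti_clust} directly (the anti-clustering sum being eventually zero) would smuggle in that extra joint-density hypothesis, so the self-contained argument is the better choice. One cosmetic point: your covariance sum should run to $n-1$ rather than $n$, though the $i=n$ term carries weight zero so nothing is affected.
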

\begin{proof}
    By the same argument from Lemma~\ref{lemma:g_n_consistent_anti_clust}, the bias is asymptotically negligible.
    Let $m \in \N$ be given. By $m$-dependence we split  $g_{n}(x)$ into a main component, which is a sum of independent terms, and a negligible remainder term as follows
    \begin{align*}
         g_{n}(x) = 
        \frac{1}{m}\sum_{i = 1}^{m} g_{n}^{(i)}(x) + R_{n}(x), 
    \end{align*}
    where 
    \begin{align*}
        g_{n}^{(i)}(x) = 
        \frac{m}{nh_{n}}
        \sum_{j=1}^{\floor{\frac{n}{m}}}K\left(\frac{x-X_{(j-1)m + i}}{h_n}\right), \quad
        R_{n}(s) = 
        \frac{1}{n h_{n} }
    \sum_{j=\floor{\frac{n}{m}}m}^{n}K\left(\frac{x-X_j}{h_n}\right).
    \end{align*}
    Stationarity and $m$-dependence implies that each summand in $g_{n}^{(i)}(x)$ are i.i.d. which allow for a direct variance calculation
    \begin{align*}
         \Var \p{ \frac{1}{m}\sum_{i = 1}^{m} g_{n}^{(i)}(x)} &= 
         \frac{1}{m}\Var \p{g_{n}^{(i)}(x)}
         \frac{m}{\p{nh_{n}}^2}\floor{\frac{n}{m}}
         \Var\p{K\p{\frac{x-X_0}{h_n}}} \\
         &=
          \frac{\frac{m}{n}\floor{\frac{n}{m}}}{nh_{n}}
          \p{\frac{\E{K^{2}\p{\frac{x-X_0}{h_n}}}}{h_{n}} -
           \frac{\E{K\p{\frac{x-X_0}{h_n}}}}{h_{n}}}
           \to 0,
    \end{align*}
    using Lemma~\ref{Expected_kernel_lemma}. \\
    Turning our attention to the remainder term, notice that
    \begin{align*}
    \E{R_{n}(x)^2} &= \frac{1}{n^{2}h_{n}^2}
    \E{\left( \sum_{j=\left\lfloor\frac{n}{m}\right\rfloor m}^{n}K\left(\frac{x-X_j}{h_n}\right)\right)^2} 
    \\
    &= \frac{1}{n^{2}h_{n}^2} 
     \biggl\{ \sum_{j=\left\lfloor\frac{n}{m}\right\rfloor m}^{n} \E{K^2\left(\frac{x-X_j}{h_n}\right)} 
      + 2\sum_{j=\left\lfloor\frac{n}{m}\right\rfloor m}^{n}
    \sum_{i=j}^{n} 
    \E{K\left(\frac{x-X_i}{h_n}\right)K\left(\frac{x-X_j}{h_n}\right)} \biggr\} 
    \\
    &\leq \frac{\left(n-\left\lfloor\frac{n}{m}\right\rfloor m\right)}
    {n^{2}h_{n}^2}
    \curly{  \E{K^2\left(\frac{x-X_0}{h_n}\right)}  + 2\left(n-\left\lfloor\frac{n}{m}\right\rfloor m\right) \sqrt{\E{K^2\left(\frac{x-X_0}{h_n}\right)}^2} } \\
    &= \frac{\left(n-\left\lfloor\frac{n}{m}\right\rfloor m\right)}{n^2 h_{n}} \frac{\E{K^{2}\left(\frac{x-X_0}{h_n}\right)}}{h_{n}} \left(1 + 2\left(n-\left\lfloor\frac{n}{m}\right\rfloor m\right)\right) \to 0,
\end{align*}
    where we used the Cauchy-Schwarz inequality. The above calculations imply that the remainder is negligible which finishes the proof.
\end{proof}
\begin{lemma}\label{lemma:g_n_emp_process_negligibility_anti_cl}
    Assume that that $h_{n} \to 0$, $u_{n}^{x} \to \infty$, $nh_{n} \to  \infty$, that the time series $\{(X_{j}), j \in  \mathbb{Z} \}$ is stationary, each $X_j$ admits twice continuously differentiable density $g$,  
    \begin{align}\label{Condition:sqrt_n_h3_F}
        \sqrt{k_{n}h_{n}}h_{n}^{2} \to 0,
    \end{align}
    and
    \begin{align*}
         \lim_{m\to \infty}
       \limsup_{n\to \infty} \overline{F}^{x}(u_{n}^{x}) \sum_{j=m}^{n} 
       \frac{\E{K\left(\frac{x-X_0}{h_n}\right)
       K\left(\frac{x-X_j}{h_n}\right)}}{h_{n}} = 0.
    \end{align*}
   Then
    \begin{align*}
        \sqrt{k_{n}h_{n}}\curly{g(x)- g_{n}(x)} \overset{\P }{\to} 0.
    \end{align*}
\end{lemma}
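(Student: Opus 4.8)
The plan is to split the quantity into a deterministic bias term and a centred stochastic term. Writing $K_j = K\p{\frac{x-X_j}{h_n}}$, so that $g_n(x) = (nh_n)^{-1}\sum_{j=1}^{n}K_j$, I would decompose
\begin{align*}
    \sqrt{k_n h_n}\curly{g(x) - g_n(x)}
    = \sqrt{k_n h_n}\curly{g(x) - \E{g_n(x)}}
    + \sqrt{k_n h_n}\curly{\E{g_n(x)} - g_n(x)}.
\end{align*}
For the bias term I would use a second-order Taylor expansion of the twice continuously differentiable $g$ around $x$, exactly as in the proof of Lemma~\ref{lemma:g_n_consistent_anti_clust}: since $K$ is symmetric, $\E{g_n(x)} - g(x) = \frac{h_n^2}{2}g''(x)\int u^{2}K(u)\dd u + o(h_n^2) = \mathcal{O}(h_n^2)$, so that $\sqrt{k_n h_n}\curly{g(x) - \E{g_n(x)}} = \mathcal{O}\p{\sqrt{k_n h_n}\,h_n^2} \to 0$ by \eqref{Condition:sqrt_n_h3_F}.

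For the centred term, which has mean zero, it suffices by Chebyshev's inequality to show $k_n h_n\Var(g_n(x)) \to 0$. Using stationarity and recalling $n\,\overline{F}^x(u_n) = k_n$, one has
\begin{align*}
    k_n h_n\Var(g_n(x))
    = \frac{k_n}{nh_n}\Var(K_0)
    + \frac{2k_n}{nh_n}\sum_{l=1}^{n-1}\p{1 - \frac{l}{n}}\Cov(K_0, K_l).
\end{align*}
The diagonal contribution is $\mathcal{O}(k_n/n)$ because $\Var(K_0) \le \E{K_0^2} = \mathcal{O}(h_n)$ by Lemma~\ref{Expected_kernel_lemma} and $k_n \le n$, hence negligible. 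For the off-diagonal part I would bound $\left|\sum_{l=1}^{n-1}(1 - l/n)\Cov(K_0, K_l)\right| \le \sum_{l=1}^{n-1}\left|\Cov(K_0, K_l)\right|$ and split the lag sum at a fixed integer $m$. The near lags contribute at most $\frac{2k_n}{nh_n}\sum_{l=1}^{m-1}\left|\Cov(K_0, K_l)\right| \le \frac{4(m-1)k_n}{nh_n}\E{K_0^2} = \mathcal{O}(mk_n/n)$, which tends to $0$ as $n\to\infty$ for each fixed $m$ since $k_n/n\to 0$. For the far lags $l\ge m$, bounding the covariance (up to the product-of-means correction $\E{K_0}^2 = \mathcal{O}(h_n^2)$, handled as in the bias step) by $\E{K_0 K_l}$, the contribution is dominated by $2\,\overline{F}^x(u_n)\sum_{l=m}^{n}\frac{\E{K_0 K_l}}{h_n}$; taking $\limsup_{n\to\infty}$ and then $m\to\infty$ this vanishes by the anti-clustering hypothesis of the lemma. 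Combining the three pieces yields $k_n h_n\Var(g_n(x))\to 0$, which finishes the argument.

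I expect the delicate step to be the control of the lagged covariances $\Cov(K_0, K_l)$ at large lags. Since no mixing is assumed here, the only available handle is the anti-clustering hypothesis, and one must be careful that the centring corrections $\E{K_0}\E{K_l}$ — each of order $h_n^2$ and summing a priori to something comparable to $nh_n^2$ — do not overwhelm the estimate; this is precisely why the hypothesis carries the factor $\overline{F}^x(u_n)$ and why the intermediate-sequence property $k_n/n\to 0$ enters. The remaining ingredients (the Taylor bias expansion, the stationarity variance decomposition, and Lemma~\ref{Expected_kernel_lemma}) are routine.
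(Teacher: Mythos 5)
Your proof follows the same route as the paper's: the bias/variance decomposition, the second-order Taylor expansion of $g$ giving a bias of order $\sqrt{k_nh_n}\,h_n^2$, and the split of the lagged covariances at a fixed $m$, with the near lags controlled by a crude moment bound and the far lags by the anti-clustering hypothesis. One step needs tightening, and it is exactly the one you flag as delicate: once you pass to $\sum_{l}\lvert \Cov(K_0,K_l)\rvert$, the product-of-means corrections cannot be "handled as in the bias step" --- one has $\frac{2k_n}{nh_n}\sum_{l\ge m}\E{K_0}\E{K_l}=\mathcal{O}\p{\tfrac{k_n}{nh_n}\cdot n\cdot h_n^2}=\mathcal{O}(k_nh_n)$, which diverges (indeed $k_nh_n\to\infty$ is the effective sample size). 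The correct resolution, which you also state in passing, is not to take absolute values on the far lags at all: since the variance is nonnegative, an upper bound suffices, and $\Cov(K_0,K_l)\le\E{K_0K_l}$ because the centring correction enters with a negative sign and can simply be discarded; the surviving sum is then precisely the quantity controlled by the anti-clustering hypothesis. This is what the paper's proof does implicitly, so with that one sentence repaired your argument coincides with the paper's.
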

\begin{proof}
Consider the following decomposition
    \begin{align*}
         &\sqrt{k_{n}h_{n}}\curly{g_{n}(x)- g(x)} =
          \sqrt{k_{n}h_{n}}\curly{g_{n}(x)- \E{g_{n}(x)}} +  \sqrt{k_{n}h_{n}}\curly{\E{g_{n}(x)}- g(x)}=: A_{n} + B_{n}.
    \end{align*}
    In Lemma~\ref{lemma:g_n_consistent_anti_clust}, we showed that the bias is of order $h_{n}^{2}$ and hence $B_{n}$ vanishes asymptotically as
\begin{align*}
    B_{n} = \sqrt{k_{n}h_{n}}\curly{\E{g_{n}(x)}- g(x)}
    =
    \mathcal{O}\left(\sqrt{k_{n}h_{n}}h_{n}^{2}\right) \to 0,
\end{align*}
by assumption. The term $A_{n}$ has mean zero, and hence it is asymptotically negligible if its variance goes to zero. 
     For fixed $m \in \N$ we have
    \begin{align*}
     &\limsup_{n \to \infty} \frac{2\overline{F}^{x}(u_{n}^{x})}{h_{n}}      
    \sum_{i=1}^{m}\text{cov}\left(K\left(\frac{x-X_{0}}{h_{n}}\right),K\left(\frac{x-X_{i}}{h_{n}}\right)\right) 
    \leq
    \text{cst} \limsup_{n \to \infty} m \overline{F}^{x}(u_{n}^{x})  h_{n} = 0,
    \end{align*}
    and for given $\varepsilon > 0$, it is possible to pick $m_0$ so large that for all $m \geq m_0$ it holds that
    \begin{align*}
     &\limsup_{n \to \infty} \frac{2\overline{F}^{x}(u_{n}^{x})}{h_{n}}      
    \sum_{i=m}^{n}\text{cov}\left(K\left(\frac{x-X_{0}}{h_{n}}\right),K\left(\frac{x-X_{i}}{h_{n}}\right)\right) \leq \varepsilon,
    \end{align*}
    where $\varepsilon$ can be made arbitrarily small. 
\end{proof}
\begin{lemma} \label{lemma:g_n_emp_process_negligibility_alpha_mixing}
    Assume that~\eqref{Condition:sqrt_n_h3_F} holds, that $h_{n} \to 0$, $u_{n}^{x} \to \infty$, $nh_{n} \to  \infty$, that the time series $\{(X_{j}), j \in  \mathbb{Z} \}$ is stationary, $\alpha$-mixing with $\alpha$-mixing coefficient bounded by $\mathcal{O}(j^{-\eta}), \eta > 2$ for every $j \in \Z$, and each $X_j$ admits twice continuously differentiable density $g$.\\
    Then
    \begin{align*}
        \sqrt{k_{n}h_{n}}\curly{g(x)- g_{n}(x)} \overset{\P }{\to} 0.
    \end{align*}
\end{lemma}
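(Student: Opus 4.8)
The plan is to follow the proof of Lemma~\ref{lemma:g_n_emp_process_negligibility_anti_cl} in its broad outline, but to control the covariance sum arising in the variance by means of the $\alpha$-mixing coefficients, exactly as the proof of Lemma~\ref{lemma:g_n_consistent_alpha_mixing} does relative to that of Lemma~\ref{lemma:g_n_consistent_anti_clust}. First I would split $\sqrt{k_{n}h_{n}}\curly{g_{n}(x)-g(x)} = A_{n}+B_{n}$, with $A_{n}=\sqrt{k_{n}h_{n}}\curly{g_{n}(x)-\E{g_{n}(x)}}$ and $B_{n}=\sqrt{k_{n}h_{n}}\curly{\E{g_{n}(x)}-g(x)}$. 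By the Taylor expansion of $g$ carried out in Lemma~\ref{lemma:g_n_consistent_anti_clust}, the bias is of order $h_{n}^{2}$, so $B_{n}=\mathcal{O}(\sqrt{k_{n}h_{n}}\,h_{n}^{2})\to 0$ by \eqref{Condition:sqrt_n_h3_F}. Since $A_{n}$ is centred, it remains to show $\Var(A_{n})=k_{n}h_{n}\Var(g_{n}(x))\to 0$.

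For the variance I would use stationarity to bound $\Var\p{\sum_{i=1}^{n}K\p{\tfrac{x-X_{i}}{h_{n}}}}\le n\Var\p{K\p{\tfrac{x-X_{0}}{h_{n}}}}+2n\sum_{d=1}^{n-1}\left\lvert\Cov\p{K\p{\tfrac{x-X_{0}}{h_{n}}},K\p{\tfrac{x-X_{d}}{h_{n}}}}\right\rvert$, whence $\Var(A_{n})\le\tfrac{k_{n}}{nh_{n}}\Var\p{K\p{\tfrac{x-X_{0}}{h_{n}}}}+\tfrac{2k_{n}}{nh_{n}}\sum_{d=1}^{n-1}c_{d,n}$, writing $c_{d,n}$ for the $d$-th covariance. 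The first term tends to zero by Lemma~\ref{Expected_kernel_lemma} (which gives $\E{K^{2}\p{\tfrac{x-X_{0}}{h_{n}}}}/h_{n}\to g(x)\int K^{2}$) together with $k_{n}/n=\overline{F}^{x}(u_{n})\to 0$, valid since $u_{n}\to\infty$. For the covariance sum I would combine two estimates on $c_{d,n}$: a change of variables and the assumed boundedness of the bivariate densities $g_{X_{0},X_{d}}$ yield $c_{d,n}=\mathcal{O}(h_{n}^{2})$ uniformly in $d$, while Billingsley's inequality and $\alpha(d)=\mathcal{O}(d^{-\eta})$ yield $c_{d,n}\le 4\alpha(d)\norm{K}_{\infty}^{2}=\mathcal{O}(d^{-\eta})$. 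Truncating at $d_{n}=\lceil h_{n}^{-c}\rceil$ for a fixed $c\in\p{\tfrac{1}{\eta-1},1}$ — a nonempty interval precisely because $\eta>2$ — gives $\sum_{d=1}^{n-1}c_{d,n}\le\mathcal{O}(d_{n}h_{n}^{2})+\sum_{d\ge d_{n}}\mathcal{O}(d^{-\eta})=\mathcal{O}(h_{n}^{2-c})+\mathcal{O}(h_{n}^{c(\eta-1)})$, so $\tfrac{2k_{n}}{nh_{n}}\sum_{d}c_{d,n}=\overline{F}^{x}(u_{n})\p{\mathcal{O}(h_{n}^{1-c})+\mathcal{O}(h_{n}^{c(\eta-1)-1})}\to 0$, since $\overline{F}^{x}(u_{n})$ is bounded and both exponents are strictly positive. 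Hence $\Var(A_{n})\to 0$ and the conclusion follows.

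The argument has no genuinely difficult step; the only point requiring care is the calibration of the truncation exponent $c$, for which $\eta>2$ is exactly what is needed, and where the extra factor $\overline{F}^{x}(u_{n})$ (absent in the consistency version) only helps. I would argue directly here rather than try to deduce the statement from Lemma~\ref{lemma:g_n_emp_process_negligibility_anti_cl}, because it is the covariance $\Cov\p{K\p{\tfrac{x-X_{0}}{h_{n}}},K\p{\tfrac{x-X_{d}}{h_{n}}}}$, and not the cross-moment $\E{K\p{\tfrac{x-X_{0}}{h_{n}}}K\p{\tfrac{x-X_{d}}{h_{n}}}}$, that enters the variance once the mean-square terms are combined, and it is the former that the mixing coefficients make summable.
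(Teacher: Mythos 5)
Your proposal is correct and follows essentially the same route as the paper: the same $A_n+B_n$ decomposition with the bias handled by \eqref{Condition:sqrt_n_h3_F}, and the covariance sum controlled by combining the $\mathcal{O}(h_n^2)$ bound from the bounded joint density with Billingsley's inequality and a truncation at $d_n=\lceil h_n^{-c}\rceil$ (the paper takes the balanced choice $c=2/\eta$, which lies in your admissible interval). No gaps.
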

\begin{proof}
The proof is similar to that of Lemma~\ref{lemma:g_n_emp_process_negligibility_anti_cl}, different only in showing that the sum of covariances disappears as $n\to \infty$.
    By similar calculations as in the proof of Lemma~\ref{lemma:g_n_consistent_anti_clust} it holds that
\begin{align*}
    &\text{var}\left(A_{n}\right)
      \leq
      \frac{\overline{F}^{x}(u_{n}^{x})}{h_{n}}
      \text{var}\left(K\left(\frac{x-X_{0}}{h_{n}}\right)\right) +
    \frac{2\overline{F}^{x}(u_{n}^{x})}{h_{n}}      
    \sum_{i=1}^{n} \left\vert \text{cov}\left(K\left(\frac{x-X_{0}}{h_{n}}\right),K\left(\frac{x-X_{i}}{h_{n}}\right)\right)\right\vert.
\end{align*}
The first term has leading term of order $\overline{F}^{x}(u_{n}^{x})$, which vanishes asymptotically. For the second term, we again consider the integer sequence $\left\{d_{n}, n \in \N\right\}$ given by $d_{n} = \lceil h_{n}^{-2/\eta} \rceil$. Using Davydov's and Billingsley's inequality with the $\alpha$-mixing assumption yields
\begin{align*}
    &\frac{2\overline{F}^{x}(u_{n}^{x})}{h_{n}}      
    \sum_{i=1}^{n}\left\vert\text{cov}\left(K\left(\frac{x-X_{0}}{h_{n}}\right),K\left(\frac{x-X_{i}}{h_{n}}\right)\right)\right\vert \leq
    \text{cst}\frac{\overline{F}^{x}(u_{n}^{x})}{h_{n}} 
    \left(\sum_{i=1}^{d_{n}-1} h_{n}^{2}
    +
    \sum_{i=d_{n}}^{\infty}
    \alpha(i)
    \right)\\
    & \leq
    \text{cst}\left(\overline{F}^{x}(u_{n}^{x})h_{n}d_{n}
    +
    \frac{\overline{F}^{x}(u_{n}^{x})}{h_{n}} d_{n}^{-\eta +1}
    \right) 
    \sim
    \text{cst}\overline{F}^{x}(u_{n}^{x})h_{n}^{1-\frac{2}{\eta}} \to 0.
    \end{align*}
This yields that $\text{var}\left(A_{n}\right) \to 0$, and hence by Chebyshev's inequality we conclude the desired result.
\end{proof}

\section{Background on fundamental limit theorems}
\subsection*{Mixing}
We introduce $\alpha$- and $\beta$-mixing using~\citet{Rosenblatt1956ACL,Volkonskii1959} as primary references (see also~\citet{Bradley_05,Kulik2020}) and state useful results on $\beta$-mixing sequences.

Let $\p{\Omega, \mathcal{F}, \P}$ be a probability space and let $\mathcal{A}$ and $\mathcal{B}$ be sub-sigma-algebras of $\mathcal{F}$.
\begin{definition}
    Define 
    \begin{align*}
        &\alpha \p{\mathcal{A}, \mathcal{B}} = 
        \sup_{A \in \mathcal{A}, B \in \mathcal{B}}\left\vert \P\p{A \cap B} - \P\p{A}\P\p{B} \right\vert, \\
        &\beta \p{\mathcal{A}, \mathcal{B}} = 
        \sup \frac{1}{2} \sum_{i=1}^{I}\sum_{j=1}^{J} \left\vert \P\p{A_{i} \cap B_{j}} - \P\p{A_{i}}\P\p{B_{j}} \right\vert,
    \end{align*}
    where the latter supremum is over all pairs of finite partitions $\p{A_1, A_2, \ldots, A_I}$ and $\p{B_1, B_2, \ldots, B_J}$ of $\Omega$ such that $A_i \in \mathcal{A}$ for each $i$ and $B_j \in \mathcal{B}$ for each $j$.
\end{definition}
The functions $\alpha$ and $ \beta$ are measures of dependence.

\begin{definition}
    Let $\curly{X_j, j \in \Z}$ be a stationary sequence and let $\mathcal{F}_{a}^{b}$ be the $\sigma$-field generated by $\p{X_a, X_{a+1}, \ldots, X_b}, -\infty \leq a < b \leq \infty$. We introduce the sequence of $\alpha$-mixing coefficients and $\beta$-mixing coefficients respectively as 
    \begin{align*}
        \alpha(n) = \alpha \p{\mathcal{F}_{-\infty}^{0}, \mathcal{F}_{n}^{\infty}},\quad\beta(n) = \beta \p{\mathcal{F}_{-\infty}^{0}, \mathcal{F}_{n}^{\infty}}.
    \end{align*}
    The sequence $\curly{X_j, j \in \Z}$ is said to be 
    \begin{enumerate}
        \item $\alpha$-mixing (strongly mixing) if $\lim_{n \to \infty}\alpha(n) = 0$;
        \item $\beta$-mixing (absolutely regular) if $\lim_{n \to \infty}\beta(n) = 0$.
    \end{enumerate}    
\end{definition}
It holds that $2\alpha \p{\mathcal{A}, \mathcal{B}} \leq \beta \p{\mathcal{A}, \mathcal{B}}$ and thus $\beta$-mixing implies $\alpha$-mixing. Absolute regularity allows for comparison between the laws of a times series and another sequence based on independent blocks. The following Lemma validates the blocking method.
\begin{lemma}\label{blocking_lemma}
    Let $l \leq r$ and $m$ be integers. Let $\p{\Omega, \mathcal{F}, \P}$ be a probability space on which the following sequences are defined, (i) a stationary $\R^{d}$-valued sequence $\{X_j, j \in \Z\}$ with $\beta$-mixing coefficients $\beta_{j}$, (ii) a sequence $\{X_j, j \in \Z\}$ such that the vectors  $(X^{\dag}_{(i-1)r +1},\ldots,X^{\dag}_{ir}),  i \in \Z$ are mutually independent and each have the same distribution as $(X_{1},\ldots,X_{r})$, (iii) a sequence $\{\xi_j, j \in \Z\}$ of i.i.d. random elements taking value in a measurable set $(E, \mathcal{E})$, independent of the previous sequences.
    Let $Q_m$ and $Q_{m}^{\dag}$ be the respective laws of $\{(\xi_{i}, X_{(i-1)r +1},\ldots, X_{ir-l}), 1 \leq i \leq m\}$ and $\{(\xi_{i}, X^{\dag}_{(i-1)r +1},\ldots, X^{\dag}_{ir-l}), 1 \leq i \leq m\}$. Then 
    \begin{align*}
        \mathrm{d}_{TV}\p{Q_m, Q_{m}^{\dag}} \leq m\beta_{l},
    \end{align*}
    where $ \mathrm{d}_{TV}(\cdot, \cdot)$ denotes the total variation distance between measures.
\end{lemma}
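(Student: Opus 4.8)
The plan is to first discard the auxiliary i.i.d.\ sequence $\curly{\xi_j}$, which is independent of everything else and therefore contributes nothing to the total variation distance, and then to compare the law of the genuine $\curly{X_j}$-blocks with that of the independent-block sequence by a telescoping argument whose increments are each controlled by the single $\beta$-mixing coefficient $\beta_l$.

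Concretely, I would set $B_i = \p{X_{(i-1)r +1},\ldots, X_{ir-l}}$ and $B_i^{\dag} = \p{X^{\dag}_{(i-1)r +1},\ldots, X^{\dag}_{ir-l}}$ for $1\le i\le m$. Reordering the coordinates of the relevant $m$-tuples is a measurable bijection, hence preserves total variation, so $Q_m$ and $Q_m^{\dag}$ may be identified with the laws of $\p{\p{\xi_1,\ldots,\xi_m},\p{B_1,\ldots,B_m}}$ and $\p{\p{\xi_1,\ldots,\xi_m},\p{B_1^{\dag},\ldots,B_m^{\dag}}}$ respectively. Independence of $\curly{\xi_j}$ from both block sequences makes these laws factorise as $\mathcal{L}\p{\xi_1,\ldots,\xi_m}\otimes\mathcal{L}\p{B_1,\ldots,B_m}$ and $\mathcal{L}\p{\xi_1,\ldots,\xi_m}\otimes\mathcal{L}\p{B_1^{\dag},\ldots,B_m^{\dag}}$ with identical first factor; since $\mathrm{d}_{TV}\p{\mu\otimes\rho,\nu\otimes\rho}=\mathrm{d}_{TV}\p{\mu,\nu}$, this reduces the claim to bounding $\mathrm{d}_{TV}\p{\mathcal{L}\p{B_1,\ldots,B_m},\mathcal{L}\p{B_1^{\dag},\ldots,B_m^{\dag}}}$. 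By construction $B_1^{\dag},\ldots,B_m^{\dag}$ are independent, and by stationarity each has the common law $P:=\mathcal{L}\p{B_1}$, so the target becomes $\mathrm{d}_{TV}\p{\mathcal{L}\p{B_1,\ldots,B_m},P^{\otimes m}}$.

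For this I would telescope through the hybrid measures $R_k = \mathcal{L}\p{B_1,\ldots,B_k}\otimes P^{\otimes (m-k)}$, $0\le k\le m$, with $R_0=P^{\otimes m}$ and $R_m=\mathcal{L}\p{B_1,\ldots,B_m}$, so that $\mathrm{d}_{TV}\p{R_m,R_0}\le\sum_{k=1}^{m}\mathrm{d}_{TV}\p{R_k,R_{k-1}}$ by the triangle inequality. Because $R_k$ and $R_{k-1}$ agree on the last $m-k$ coordinates (a product of copies of $P$, independent of the rest under both measures), the $k$-th increment equals $\mathrm{d}_{TV}\p{\mathcal{L}\p{B_1,\ldots,B_k},\mathcal{L}\p{B_1,\ldots,B_{k-1}}\otimes P}$, i.e.\ the total variation distance between the joint law of $\p{B_1,\ldots,B_{k-1}}$ and $B_k$ and the product of its marginals; it vanishes for $k=1$. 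For $k\ge 2$, $\p{B_1,\ldots,B_{k-1}}$ is $\mathcal{F}_1^{(k-1)r-l}$-measurable and $B_k$ is $\mathcal{F}_{(k-1)r+1}^{kr-l}$-measurable, and these index ranges are separated by the $l$ time indices $(k-1)r-l+1,\ldots,(k-1)r$; hence, by stationarity (and monotonicity of the mixing coefficients), the $\beta$-coefficient between the two $\sigma$-algebras is at most $\beta_l$. Invoking the standard fact that the total variation distance between a bivariate law and the product of its marginals equals the associated $\beta$-coefficient — immediate from the definition of $\beta$ via finite partitions — gives $\mathrm{d}_{TV}\p{R_k,R_{k-1}}\le\beta_l$, and summing over $k=2,\ldots,m$ yields $\mathrm{d}_{TV}\p{Q_m,Q_m^{\dag}}=\mathrm{d}_{TV}\p{R_m,R_0}\le (m-1)\beta_l\le m\beta_l$.

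The argument is mostly bookkeeping; the only points that deserve care are the telescoping through the $R_k$, verifying that consecutive blocks are genuinely separated by $l$ time indices (so that precisely $\beta_l$, not some neighbouring coefficient, bounds each increment), and the elementary-but-essential identification of $\mathrm{d}_{TV}$ between a joint law and the product of its marginals with the corresponding $\beta$-coefficient. There is no real analytic obstacle here: this lemma is the measure-theoretic backbone that licenses replacing the dependent sample by the independent-block pseudo-sample in the preceding proofs.
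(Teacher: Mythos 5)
Your proof is correct. Note that the paper itself gives no proof of this lemma: it is stated as background in the appendix and quoted from the standard references (\citet{Kulik2020}, Volkonskii--Rozanov/Eberlein-type blocking lemmas), so there is nothing to compare against line by line. Your argument --- discarding the independent $\xi$-factor via $\mathrm{d}_{TV}(\mu\otimes\rho,\nu\otimes\rho)=\mathrm{d}_{TV}(\mu,\nu)$, telescoping through the hybrid laws $R_k$, and identifying each increment with the $\beta$-coefficient between $\sigma(B_1,\dots,B_{k-1})$ and $\sigma(B_k)$, which the gap of $l$ indices (together with monotonicity of $n\mapsto\beta_n$) bounds by $\beta_l$ --- is exactly the classical proof, and it correctly yields the slightly sharper bound $(m-1)\beta_l\le m\beta_l$.
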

For $n,k \geq 1$, let $h_{n,k}:\p{\R^{d}}^{k} \rightarrow \R$ be measurable function and define
\begin{align*}
    &\mathbb{H}_n = \sum_{i=1}^{m_n} h_{n,r_{n}}\p{X_{(i-1)r_{n} +1},\ldots,X_{ir_{n}}}, \quad \mathbb{H}^{\dag}_n = \sum_{i=1}^{m_n} h_{n,r_{n}}\p{X^{\dag}_{(i-1)r_{n} +1},\ldots,X^{\dag}_{ir_{n}}},\\
    &\mathbb{H}_{n}^{0} = \mathbb{H}_n - \E{\mathbb{H}_n}, \quad 
     \mathbb{H}_{n}^{\dag, 0} = \mathbb{H}^{\dag}_n - \E{\mathbb{H}^{\dag}_n}.
\end{align*}
\begin{lemma} \label{coinciding_fidis_lemma}
    Let $\curly{X_j, j \in \Z}$ be a stationary sequence in $\R^{d}$ with $\beta$-mixing coefficients $\beta_{j}$. Let $\ell_{n}, m_n, r_n$ be non-decreasing sequences of integers such that $\ell_n/r_n \to 0$. Assume that for each $n$ there exists a measurable function $f_n$ such that
    \begin{align*}
        \left\vert
        h_{n,r_{n}}\p{X_{1},\ldots,X_{r_n}} -
        h_{n,r_{n}-\ell_{n}}\p{X_{1},\ldots,X_{r_n-\ell_{n}}}
        \right\vert
        \leq f_{n}\p{X_{r_n-\ell_{n} + 1},\ldots,X_{r_n}},
    \end{align*}
    where $
        \lim_{n \to \infty}m_{n}\E{f_{n}^{2}\p{X_{1},\ldots,X_{\ell_n}}} = 0.$\\
    If $\lim_{n \to \infty}m_{n}\beta_{l_{n}} = 0,$
    and $\mathbb{H}_{n}^{\dag, 0}$ converges weakly, then $\mathbb{H}_{n}^{0}$ converges weakly to the same distribution.
\end{lemma}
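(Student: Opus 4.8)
The plan is to run the classical big-block--small-block reduction and use Lemma~\ref{blocking_lemma} to pass from the dependent sequence to the array of independent blocks, where the relevant summands become i.i.d.\ and every estimate is elementary. Split each length-$r_n$ block into a \emph{core} of length $r_n-\ell_n$ and a trailing \emph{buffer} of length $\ell_n$, and introduce
\begin{align*}
\widetilde{\mathbb{H}}_n &= \sum_{i=1}^{m_n}h_{n,r_n-\ell_n}\p{X_{(i-1)r_n+1},\dots,X_{ir_n-\ell_n}}, \\
\widetilde{\mathbb{H}}^{\dag}_n &= \sum_{i=1}^{m_n}h_{n,r_n-\ell_n}\p{X^{\dag}_{(i-1)r_n+1},\dots,X^{\dag}_{ir_n-\ell_n}},
\end{align*}
with centred versions $\widetilde{\mathbb{H}}_n^{0},\widetilde{\mathbb{H}}_n^{\dag,0}$. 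Since each $\dag$-block has the law of $\p{X_1,\dots,X_{r_n}}$, the centring constants of $\mathbb{H}_n$ and $\mathbb{H}_n^{\dag}$ agree, and similarly for $\widetilde{\mathbb{H}}_n$ and $\widetilde{\mathbb{H}}_n^{\dag}$, so one moves freely between the two arrays at the level of centred quantities. Write $G$ for the given weak limit of $\mathbb{H}_n^{\dag,0}$.

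First I would dispose of the buffer contribution in the independent array. Here $\mathbb{H}_n^{\dag,0}-\widetilde{\mathbb{H}}_n^{\dag,0}$ is the centred sum of the i.i.d.\ terms $D_{n,i}^{\dag}:=h_{n,r_n}(\text{block }i)-h_{n,r_n-\ell_n}(\text{core }i)$, which satisfy $|D_{n,i}^{\dag}|\le f_n(\text{buffer }i)$ by hypothesis; hence its variance equals $m_n\Var(D_{n,1}^{\dag})\le m_n\,\E{f_n^2\p{X_1,\dots,X_{\ell_n}}}\to0$, and Chebyshev's inequality gives $\mathbb{H}_n^{\dag,0}-\widetilde{\mathbb{H}}_n^{\dag,0}\overset{\P}{\to}0$, so that $\widetilde{\mathbb{H}}_n^{\dag,0}\overset{d}{\to}G$. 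Next, the cores are consecutive windows of length $r_n-\ell_n$ separated by gaps of length $\ell_n$, so Lemma~\ref{blocking_lemma}, applied with block length $r_n$ and trim $\ell_n$, yields $\mathrm{d}_{TV}\p{\mathcal{L}(\widetilde{\mathbb{H}}_n^{0}),\mathcal{L}(\widetilde{\mathbb{H}}_n^{\dag,0})}\le m_n\beta_{\ell_n}\to0$; since total-variation convergence transfers weak limits, $\widetilde{\mathbb{H}}_n^{0}\overset{d}{\to}G$.

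The decisive step is then to show that the buffer contribution in the \emph{dependent} array is negligible, i.e.\ $\mathbb{H}_n^{0}-\widetilde{\mathbb{H}}_n^{0}=\sum_{i=1}^{m_n}\p{D_{n,i}-\E{D_{n,i}}}\overset{\P}{\to}0$, where now the $D_{n,i}$ come from $\{X_j\}$ and are dependent, so the clean variance bound of the previous paragraph is no longer available. I would resolve this by invoking Lemma~\ref{blocking_lemma} a second time, now applied to the buffers $\p{X_{ir_n-\ell_n+1},\dots,X_{ir_n}}$, $1\le i\le m_n$: these are pairwise separated by cores of length $r_n-\ell_n$, which exceeds $\ell_n$ for all large $n$ since $\ell_n/r_n\to0$, so $\beta$ being non-increasing the joint law of the buffers lies within total variation $m_n\beta_{r_n-\ell_n}\le m_n\beta_{\ell_n}\to0$ of the corresponding i.i.d.\ array. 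In the applications $h_{n,r_n}-h_{n,r_n-\ell_n}$ is the partial sum over the buffer indices, so $D_{n,i}$ is itself a function of buffer $i$, and the transfer is immediate: the statistic $\sum_{i}\p{D_{n,i}-\E{D_{n,i}}}=\mathbb{H}_n^{0}-\widetilde{\mathbb{H}}_n^{0}$ has, up to the vanishing total-variation error, the law of $\mathbb{H}_n^{\dag,0}-\widetilde{\mathbb{H}}_n^{\dag,0}$ (the centring constants being equal), already shown to tend to $0$ in probability. In the general dominated case one writes $D_{n,i}=\CE{D_{n,i}}{\text{buffer }i}+R_{n,i}$, treats the first part exactly as above, and controls $\sum_i R_{n,i}$ by a variance estimate that combines $m_n\E{f_n^2}\to0$ with the separation of distinct blocks. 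Finally, Slutsky's theorem together with $\widetilde{\mathbb{H}}_n^{0}\overset{d}{\to}G$ gives $\mathbb{H}_n^{0}\overset{d}{\to}G$, as required.

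I expect the only genuine obstacle to be this last step; everything else is the textbook reduction, driven entirely by the two hypotheses $m_n\E{f_n^2(X_1,\dots,X_{\ell_n})}\to0$ and $m_n\beta_{\ell_n}\to0$, with $\ell_n/r_n\to0$ guaranteeing that the windows handed to Lemma~\ref{blocking_lemma} are genuinely separated by at least $\ell_n$. The subtle bookkeeping is to keep the three centring constants aligned throughout and to verify that the second invocation of the blocking lemma really collapses the dependent buffer sum onto the i.i.d.\ one rather than producing new cross terms.
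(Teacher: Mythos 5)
The paper does not actually prove this lemma --- it is quoted in the background appendix as a known result (essentially the independent-blocks comparison lemma of \citet{Kulik2020}) --- so your attempt must be judged on its own. Your architecture is the standard one and two of the three steps are sound: the i.i.d.\ variance computation showing $\mathbb{H}_n^{\dag,0}-\widetilde{\mathbb{H}}_n^{\dag,0}\overset{\P}{\to}0$, and the total-variation transfer $\mathrm{d}_{TV}\bigl(\mathcal{L}(\widetilde{\mathbb{H}}_n^{0}),\mathcal{L}(\widetilde{\mathbb{H}}_n^{\dag,0})\bigr)\le m_n\beta_{\ell_n}$ via Lemma~\ref{blocking_lemma} (with the correct observation that the centring constants coincide), are both correct.

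The gap is exactly where you suspected it, in showing $\mathbb{H}_n^{0}-\widetilde{\mathbb{H}}_n^{0}=\sum_i(D_{n,i}-\E{D_{n,i}})\overset{\P}{\to}0$. Your main device --- applying the blocking lemma to the buffers --- presupposes that $D_{n,i}$ is a function of buffer $i$ alone. That holds for the tail empirical process in this paper, but the lemma only assumes the domination $|D_{n,i}|\le f_n(\text{buffer}_i)$, and a domination cannot be transferred through a total-variation bound on the buffers: the best it yields is $|\sum_i(D_{n,i}-\E{D_{n,i}})|\le\sum_i F_{n,i}+m_n\E{F_{n,1}}$, whose i.i.d.\ version concentrates around $2m_n\E{F_{n,1}}$, and $m_n\E{F_{n,1}}\le\sqrt{m_n}\,\sqrt{m_n\E{F_{n,1}^2}}$ need not vanish. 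Your fallback via $D_{n,i}=\CE{D_{n,i}}{\text{buffer}_i}+R_{n,i}$ handles the conditional-expectation part, but the residual sum $\sum_iR_{n,i}$ cannot be controlled by the variance estimate you gesture at: Cauchy--Schwarz on the $\sim m_n^2$ cross-covariances gives only $m_n^2\E{f_n^2}=m_n\cdot o(1)$, and a mixing-based covariance inequality for the well-separated pairs is unavailable under a bare second-moment assumption (Davydov-type bounds require $L^{2+\delta}$). The standard repair keeps the actual $D_{n,i}$ as functions of the \emph{full} blocks and splits the sum into odd- and even-indexed families: same-parity blocks are separated by $r_n\ge\ell_n$ eventually, so Lemma~\ref{blocking_lemma} puts each same-parity centred sum within total variation $\lceil m_n/2\rceil\beta_{r_n}\le m_n\beta_{\ell_n}\to0$ of its i.i.d.\ version, whose variance is at most $m_n\E{f_n^2(X_1,\dots,X_{\ell_n})}\to0$; since convergence in probability to the constant $0$ is a property of the marginal laws, it survives the vanishing total-variation perturbation, and summing the two parities finishes the step. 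With that replacement your proof closes; as written, the decisive step is not established in the stated generality.
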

\subsection*{Central Limit Theorems}
In this section we present existing results on regarding Central Limit Theorems using~\citet{BillingsleyPatrick2009Copm} and~\citet{vanderVaart2023} as primary reference, see also~\citet{Kulik2020}. 
\\
For $a < b$, define by $\mathbb{D}\left[a,b\right]$ the space of cadlag functions indexed on the compact interval $\left[a,b\right]$. We are interested in establishing weak convergence in $\mathbb{D}\left[a,b\right]$ endowed with the $J_1$-topology (also called the Skorohod topology)
\begin{definition}
    For $x$ and $y$ in $\mathbb{D}$, define
    \begin{align*}
        d_{J_{1}}(x,y) = \inf_{\lambda \in \Lambda}\curly{\norm{\lambda - I} \vee \norm{x-y(\lambda)}}, 
    \end{align*}
    where $\Lambda$ is the set of strictly increasing and continuous mappings defined on the index set of $\mathbb{D}$ onto it self and $I$ is the identity map on the index set. Then $d_{J_{1}}$ is a metric on $\mathbb{D}$ and we denote by $J_1$ the topology metrizable by this metric.
\end{definition}

\begin{theorem}[Asymptotic equicontinuity with uniform modulus of continuity]\label{equicontinuity_theorem}
Assume that a sequence of $\mathbb{D}[a,b]$-valued stochastic processes $(\mathbb{X}_n)$ has converging finite-dimensional distributions. If moreover, for any $\varepsilon>0$
\begin{align}\label{equicontinuity_condition}
\lim_{\delta\to0} \limsup_{n\to\infty}\P \Big(\sup_{\substack{a\le s\le t \le b \\ |s-t|\le \delta}}|\mathbb{X}_n(t)-\mathbb{X}_n(s)|>\varepsilon\Big)=0,
\end{align}
then there exists a stochastic process $\mathbb{X}$, with $\P (\mathbb{X}\in \mathbb{C}[a,b]) = 1$, such that $\mathbb{X}_n$ converges weakly to $\mathbb{X}$
in $\mathbb{D}[a,b]$ endowed with the $J_1$ topology.
\end{theorem}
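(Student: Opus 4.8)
The plan is to deduce the statement from the standard Skorokhod-space machinery (as in \citet{BillingsleyPatrick2009Copm} and \citet{vanderVaart2023}): first establish tightness of $(\mathbb{X}_n)$ in $\mathbb{D}[a,b]$ under the $J_1$-topology, then invoke Prokhorov's theorem to obtain relative compactness, use the assumed convergence of the finite-dimensional distributions to identify a unique subsequential limit $\mathbb{X}$, and finally exploit the fact that \eqref{equicontinuity_condition} controls a \emph{uniform} (not merely Skorokhod) modulus of continuity to show that $\mathbb{X}$ has no jumps, i.e. $\P(\mathbb{X}\in\mathbb{C}[a,b])=1$.

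For tightness I would use Billingsley's criterion for $\mathbb{D}[a,b]$, which asks for (i) $\lim_{M\to\infty}\limsup_n \P(\sup_{t\in[a,b]}|\mathbb{X}_n(t)|>M)=0$ and (ii) for every $\varepsilon>0$, $\lim_{\delta\to0}\limsup_n \P(w'(\mathbb{X}_n,\delta)\ge\varepsilon)=0$, where $w'$ denotes the Skorokhod modulus. Since $w'(f,\delta)\le w(f,\delta):=\sup_{|s-t|\le\delta,\ s,t\in[a,b]}|f(t)-f(s)|$, condition (ii) is immediate from \eqref{equicontinuity_condition}. For (i), given $\eta>0$ one picks via \eqref{equicontinuity_condition} a fixed $\delta>0$ with $\limsup_n\P(w(\mathbb{X}_n,\delta)>1)<\eta$, covers $[a,b]$ by $K=\lceil(b-a)/\delta\rceil$ consecutive subintervals of length $\le\delta$ starting at $a$, and telescopes to get $\sup_{t}|\mathbb{X}_n(t)|\le|\mathbb{X}_n(a)|+(K+1)\,w(\mathbb{X}_n,\delta)$; tightness of $\{\mathbb{X}_n(a)\}$ (it is a convergent sequence of one-dimensional marginals) then yields (i). Hence $(\mathbb{X}_n)$ is tight, and by Prokhorov it is relatively compact in $(\mathbb{D}[a,b],J_1)$.

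Next, for any subsequential limit $\mathbb{X}$, along a subsequence $(n_k)$ say, I would prove that its paths are a.s.\ continuous. Let $J(f)=\sup_{a<t\le b}|f(t)-f(t-)|$ be the maximal-jump functional. On one hand, $J$ is lower semicontinuous for $J_1$: a jump of $f$ of size $c$ at $t_0$ is reproduced, up to a vanishing error, by $f_m$ near the preimage of $t_0$ under the time changes realising $f_m\to f$, so $\liminf_m J(f_m)\ge J(f)$. On the other hand, $J(g)\le w(g,\delta)$ for every $g\in\mathbb{D}[a,b]$ and every $\delta>0$, since $|g(t)-g(t-)|=\lim_{s\uparrow t}|g(t)-g(s)|\le w(g,\delta)$. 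Combining these with the Portmanteau theorem, for every $\varepsilon>0$ and every $\delta>0$ one gets $\P(J(\mathbb{X})>\varepsilon)\le\liminf_k\P(J(\mathbb{X}_{n_k})>\varepsilon)\le\limsup_k\P(w(\mathbb{X}_{n_k},\delta)\ge\varepsilon)$; letting $\delta\to0$ and using \eqref{equicontinuity_condition} forces $\P(J(\mathbb{X})>\varepsilon)=0$, hence $\P(\mathbb{X}\in\mathbb{C}[a,b])=1$.

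Finally, since $\mathbb{X}$ is a.s.\ continuous, every coordinate projection $\pi_t$, $t\in[a,b]$, is $\mathbb{X}$-a.s.\ continuous, so $\mathbb{X}_{n_k}\overset{w}{\Longrightarrow}\mathbb{X}$ implies that the finite-dimensional distributions of $\mathbb{X}$ agree with the assumed limits; as these do not depend on the subsequence and the law of a continuous-path process on $[a,b]$ is determined by its finite-dimensional distributions, every subsequential limit has the same law, and therefore $\mathbb{X}_n\overset{w}{\Longrightarrow}\mathbb{X}$ along the whole sequence, with $\P(\mathbb{X}\in\mathbb{C}[a,b])=1$. I expect the only genuinely delicate step to be the continuity of the limit: the amplitude bound in tightness part (i) is routine once one remembers to feed in the marginal at $a$, and (ii) is automatic, but the reduction ``$\mathbb{X}$ càdlàg $\Rightarrow$ $\mathbb{X}$ has no jumps'' really needs the lower semicontinuity of $J$ together with the fact that \eqref{equicontinuity_condition} bounds a uniform modulus --- this is precisely the feature that upgrades the conclusion from $\mathbb{D}[a,b]$ to $\mathbb{C}[a,b]$.
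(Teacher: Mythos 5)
The paper does not prove this statement: it appears in the appendix ``Background on fundamental limit theorems'' as a standard result quoted from \citet{BillingsleyPatrick2009Copm} and \citet{vanderVaart2023}, so there is no in-paper argument to compare against. Your proof is the textbook argument for exactly this theorem and it is correct: tightness via the Skorokhod modulus plus compact containment, Prokhorov, continuity of subsequential limits via the maximal-jump functional, and identification through the finite-dimensional distributions (which do determine the law on $\mathbb{D}[a,b]$, since the Borel $\sigma$-field of the $J_1$ topology is generated by the coordinate projections). The only quantitative slip is the claim $w'(f,\delta)\le w(f,\delta)$: since the partitions in the definition of the Skorokhod modulus have mesh strictly greater than $\delta$, two points in the same cell may be up to $2\delta$ apart, and the correct comparison is $w'(f,\delta)\le w(f,2\delta)$ (a linear function on $[0,1]$ with $\delta=0.4$ already violates your version). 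This is immaterial here because \eqref{equicontinuity_condition} controls $w(\cdot,\delta')$ for every small $\delta'$, so Billingsley's tightness criterion (ii) still follows; the rest of the argument, including the lower semicontinuity of the jump functional and the bound $J(g)\le w(g,\delta)$ that together force $\P(\mathbb{X}\in\mathbb{C}[a,b])=1$, is sound.
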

As the condition of equation~\eqref{equicontinuity_condition} is difficult to show directly, one often invokes the covering Central Limit Theorem when dealing with arrays of stochastic processes. In the theorem we below we present an adaption to the covering CLT in the case of separable processes.

\begin{definition}[Separability]
    Let $\p{\mathcal{G}, \rho}$ be a semi-metric space. A process $\curly{\mathbb{X}(f), f \in \mathcal{G}}$ is said to be separable if there exists a countable sub-class $\mathcal{G}_{0} \subset \mathcal{G}$ an $\Omega_{0} \subseteq \Omega$ with $\P\p{\Omega_{0}}=1$ such the for all $\omega \in \Omega_{0}, f \in \mathcal{G}$ and $\varepsilon > 0$,
    \begin{align*}
        \mathbb{X}\p{f,w} \in \overline{\curly{ \mathbb{X}\p{g,w}: g \in \mathcal{G}_{0} \cap B\p{f,\varepsilon}}}, 
    \end{align*}
    where $B\p{f,\varepsilon}$ is the open ball around $f$ with radius $\varepsilon$ wrt $\rho$.
\end{definition}

\begin{definition}[Covering number]
   The covering number $N\p{\mathcal{G},  \mathrm{d}, \varepsilon}$ is the minimal number of balls $\left\{g: d(g,f) < \varepsilon \right\}$ of radius $\varepsilon$ needed to cover the set $\mathcal{G}$. The centres of the balls need not belong to $\mathcal{G}$, but the should have finite norms.
\end{definition}

\begin{lemma}\label{separable_measurable}
Assume $\mathbb{X}$ is a separable stochastic process indexed by the set $\mathcal{F}$. Then $||\mathbb{X}||_\mathcal{F}=\sup_{f\in\mathcal{F}}\{\mathbb{X}(f)\}$ is measurable.
\end{lemma}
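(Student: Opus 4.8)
The plan is to reduce the supremum over the uncountable index set $\mathcal{F}$ to a supremum over the countable separating class guaranteed by separability. Let $\mathcal{F}_0\subseteq\mathcal{F}$ be countable and let $\Omega_0\in\mathcal{F}$ with $\P(\Omega_0)=1$ be as in the definition of separability, so that for every $\omega\in\Omega_0$, every $f\in\mathcal{F}$ and every $\varepsilon>0$ one has $\mathbb{X}(f,\omega)\in\overline{\{\mathbb{X}(g,\omega):g\in\mathcal{F}_0\cap B(f,\varepsilon)\}}$. The first step is to show that, for each $\omega\in\Omega_0$,
\[
  \sup_{f\in\mathcal{F}}\mathbb{X}(f,\omega)=\sup_{g\in\mathcal{F}_0}\mathbb{X}(g,\omega).
\]
The inequality ``$\ge$'' is immediate from $\mathcal{F}_0\subseteq\mathcal{F}$. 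For ``$\le$'', fix $f\in\mathcal{F}$; for each $k\in\N$ the closure property lets us pick $g_k\in\mathcal{F}_0\cap B(f,1/k)$ with $\mathbb{X}(g_k,\omega)\ge\mathbb{X}(f,\omega)-1/k$, whence $\sup_{g\in\mathcal{F}_0}\mathbb{X}(g,\omega)\ge\mathbb{X}(f,\omega)-1/k$ for all $k$, and letting $k\to\infty$ and then taking the supremum over $f\in\mathcal{F}$ gives the claim.

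The second step is the measurability conclusion. Each $\mathbb{X}(g)$ for $g\in\mathcal{F}_0$ is, by hypothesis, a real-valued random variable, and $\mathcal{F}_0$ is countable, so $\omega\mapsto\sup_{g\in\mathcal{F}_0}\mathbb{X}(g,\omega)$ is measurable as a countable supremum of measurable functions (with values in $[-\infty,\infty]$). By the first step this function coincides with $\|\mathbb{X}\|_{\mathcal{F}}$ on $\Omega_0$, and $\P(\Omega_0)=1$; hence $\|\mathbb{X}\|_{\mathcal{F}}$ agrees with a measurable function outside a $\P$-null set, so it is measurable with respect to the $\P$-completion of $\mathcal{F}$ (and measurable outright under the customary completeness assumption on the underlying probability space).

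The argument is elementary and I do not anticipate a genuine obstacle; the only points that require a modicum of care are the passage from the closure statement in the definition of separability to the existence of near-maximizers within $\mathcal{F}_0$ (handled via the $1/k$-sequence above), and the caveat that measurability of $\|\mathbb{X}\|_{\mathcal{F}}$ is to be read up to completion, since the exceptional set $\Omega_0^{c}$ is controlled only through its being $\P$-null rather than measurably describable in terms of $\mathbb{X}$.
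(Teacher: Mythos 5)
Your argument is correct and is the standard one: separability reduces the supremum to the countable class $\mathcal{F}_0$ on the full-measure set $\Omega_0$, a countable supremum of random variables is measurable, and the exceptional set forces the (correctly flagged) caveat that measurability holds with respect to the completion. The paper states this lemma without proof in its background appendix, so there is no competing argument to compare against; your proof fills the gap exactly as the cited references (e.g.\ van der Vaart) would.
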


Let $\mathbb{Z}_{n}$ be the centred empirical process indexed by a class of functions $\mathcal{G}$ defined by 
\begin{align*}
    \mathbb{Z}_{n}\p{f} = \sum_{i=1}^{m_{n}} \curly{Z_{n,i}\p{f} - \E{Z_{n,i}\p{f}}}, \ f \in \mathcal{G},
\end{align*}
where $m_n \to \infty$ as $n \to \infty$ and $\curly{Z_{n,i}, n \in \N}, i = 1,\ldots, m_{n}$ are i.i.d. separable stochastic processes. We define the random semi-metric $\mathrm{d}_{n}$ on $\mathcal{G}$ by 
\begin{align*}
    \mathrm{d}_{n}^{2}\p{f,g} = 
    \sum_{i=1}^{m_{n}} \curly{Z_{n,i}\p{f} - Z_{n,i}\p{g}}^{2}, \ f,g \in \mathcal{G}. 
\end{align*}

\begin{theorem}\label{bracketing_clt}
        Consider the sequence of processes  
\begin{align*}
    \mathbb{Z}_{n}\p{f} = \sum_{i=1}^{m_{n}} \curly{Z_{n,i}\p{f} - \E{Z_{n,i}\p{f}}}, 
\end{align*}
where for each $n\in \N, \curly{Z_{n,i}, i = 1,\ldots, m_{n}}, $ are i.i.d. separable stochastic processes and assume that the space $\p{\mathcal{G},\rho}$ is totally bounded. Suppose that
\begin{enumerate}
    \item For all $\eta > 0$,
    \begin{align*}
        \lim_{n \to \infty}m_{n}\E{\norm{Z_{n,1}}_{\mathcal{G}}^{2} \mathrm{1}\curly{\norm{Z_{n,1}}_{\mathcal{G}}^{2} > \eta}} = 0.
    \end{align*}
    \item For every sequence $\curly{\delta_{n}}$ decreasing to zero,
    \begin{align*}
        \lim_{n\to \infty} 
        \sup_{\substack{f,g \in \mathcal{G}\\ \rho\p{f,g} \leq \delta_{n}}}
        \E{ \mathrm{d}_{n}^{2}\p{f,g}} = 0.
    \end{align*}
    \item There exists a measurable majorant $N^{\ast}\p{\mathcal{G},  \mathrm{d}_{n}, \varepsilon}$ of the covering number $N\p{\mathcal{G},  \mathrm{d}_{n}, \varepsilon}$ such that for every sequence $\curly{\delta_{n}}$ which decreases to zero, 
    \begin{align}\label{entropy_condition}
        \int_{[0,\delta_{n}]}\sqrt{\log\left(N^{\ast}\p{\mathcal{G},  \mathrm{d}_{n}, \varepsilon}\right)}\mathrm{d}\varepsilon \overset{\P}{\to} 0. 
    \end{align}
\end{enumerate}
Then $\curly{Z_{n}, n \in \N}$ is asymptotically $\rho$-equicontinuous.
\end{theorem}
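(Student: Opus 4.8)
The plan is to prove this Pollard-type functional central limit theorem for triangular arrays of row-wise i.i.d.\ separable processes by the classical route of \emph{truncation}, \emph{symmetrization}, and \emph{chaining}. Write $\|\,\cdot\,\|_{\mathcal G}$ for the supremum over $\mathcal G$, which is a genuine random variable by Lemma~\ref{separable_measurable}; the object is to show that for every $\varepsilon>0$,
\[
\lim_{\delta\to0}\ \limsup_{n\to\infty}\ \P\Big(\sup_{\rho(f,g)\le\delta}\big|\mathbb Z_{n}(f)-\mathbb Z_{n}(g)\big|>\varepsilon\Big)=0 .
\]
First I would truncate: fix $M>0$, set $Z_{n,i}^{M}=Z_{n,i}\,\mathbf 1\{\|Z_{n,i}\|_{\mathcal G}\le M\}$, and split $\mathbb Z_{n}$ into the centred sums $\mathbb Z_{n}^{M}$ and $\mathbb Z_{n}-\mathbb Z_{n}^{M}$ built from $Z_{n,i}^{M}$ and from $Z_{n,i}-Z_{n,i}^{M}$ respectively. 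The expected supremum norm of the remainder is at most $2m_{n}\,\E{\|Z_{n,1}\|_{\mathcal G}\,\mathbf 1\{\|Z_{n,1}\|_{\mathcal G}>M\}}\le\tfrac{2}{M}\,m_{n}\,\E{\|Z_{n,1}\|_{\mathcal G}^{2}\,\mathbf 1\{\|Z_{n,1}\|_{\mathcal G}^{2}>M^{2}\}}$, which tends to $0$ by condition~(1) for \emph{each fixed} $M$; hence the remainder is uniformly $o_{\P}(1)$ and it suffices to treat $\mathbb Z_{n}^{M}$ for a fixed $M$. Since the random semimetric of the truncated array is pointwise dominated by $\mathrm d_{n}$, conditions~(2) and (3) are inherited.

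Next I would symmetrize. With $(\xi_{i})$ i.i.d.\ Rademacher variables independent of the data, the symmetrization inequality reduces the expected modulus of $\mathbb Z_{n}^{M}$ to that of the Rademacher-averaged increments $(f,g)\mapsto\sum_{i}\xi_{i}\big(Z_{n,i}^{M}(f)-Z_{n,i}^{M}(g)\big)$, up to a multiplicative constant. Conditionally on the data this is a sub-Gaussian process with respect to the random semimetric $\mathrm d_{n}^{M}\le\mathrm d_{n}$, by Hoeffding's lemma. Using total boundedness of $(\mathcal G,\rho)$ to introduce a finite $\delta$-net $f_{1},\dots,f_{N_{\delta}}$, projecting each increment onto the nearest net points, and applying Dudley's entropy bound, one obtains, conditionally on the data,
\[
\mathbb E_{\xi}\Big[\sup_{\rho(f,g)\le\delta}\Big|\textstyle\sum_{i}\xi_{i}\big(Z_{n,i}^{M}(f)-Z_{n,i}^{M}(g)\big)\Big|\Big]\ \le\ K\!\int_{0}^{\Theta_{n}(\delta)}\!\!\sqrt{\log N^{*}(\mathcal G,\mathrm d_{n},\varepsilon)}\ \dd\varepsilon\ +\ C\sqrt{\log N_{\delta}}\ \max_{\rho(f_{j},f_{k})\le 3\delta}\mathrm d_{n}(f_{j},f_{k}),
\]
where $\Theta_{n}(\delta)=\sup_{\rho(f,g)\le\delta}\mathrm d_{n}(f,g)$ and $K,C$ are universal constants.

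It remains to drive the right-hand side to $0$, in the order $n\to\infty$ then $\delta\to0$, and then to apply Markov's inequality. The coarse term involves only the $O(N_{\delta}^{2})$ pairs of net points, so its square has expectation $\lesssim\log N_{\delta}\cdot N_{\delta}^{2}\sup_{\rho(f_{j},f_{k})\le 3\delta}\E{\mathrm d_{n}(f_{j},f_{k})^{2}}$, which vanishes as $n\to\infty$ by condition~(2), and, with the net adapted to the scale, as $\delta\to0$. For the entropy integral, combining conditions~(2) and (3) along the chaining shows that $\Theta_{n}(\delta)$ lies below a suitable deterministic sequence $\delta_{n}^{*}\downarrow0$ with probability tending to one, up to an $o_{\P}(1)$ correction, after which $\int_{0}^{\delta_{n}^{*}}\sqrt{\log N^{*}(\mathcal G,\mathrm d_{n},\varepsilon)}\,\dd\varepsilon\overset{\P}{\to}0$ by condition~(3). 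Putting the pieces together with the truncation step yields asymptotic $\rho$-equicontinuity, which is exactly the hypothesis required to complement a Theorem~\ref{equicontinuity_theorem}-type argument.

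\emph{The main obstacle} is precisely this last step: reconciling the \emph{random} $\mathrm d_{n}$-modulus $\Theta_{n}(\delta)$ and the random entropy integral produced by the conditional Dudley bound with hypotheses that are stated via suprema of \emph{expectations} of $\mathrm d_{n}^{2}$ (condition~(2)) and via in-probability negligibility of the entropy integral only up to a \emph{deterministic} cut-off (condition~(3)). The clean way through is the two-scale decomposition sketched above — a coarse scale over a finite $\rho$-net controlled directly by condition~(2) in mean square, and a fine scale controlled by condition~(3) — taking care that the truncation level $M$ and the symmetrization constants stay inert, which is legitimate because the truncation error already vanishes for each fixed $M$.
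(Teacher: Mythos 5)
The paper does not actually prove this statement: it appears in the background appendix as a cited result (it is Theorem~2.11.1 of van der Vaart and Wellner, restated as Theorem~C.4.5 in Kulik and Soulier), so your sketch has to stand on its own. Its architecture --- truncation via condition~(1), symmetrization, conditional sub-Gaussianity of the Rademacher process with respect to $\mathrm{d}_n$, and Dudley chaining against the random entropy --- is indeed the standard route, and the truncation and symmetrization steps are fine.

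There is, however, a genuine gap at exactly the point you flag as the main obstacle. Both the upper limit of the conditional entropy integral and the coarse chaining term are governed by the random modulus $\Theta_n(\delta)=\sup_{\rho(f,g)\le\delta}\mathrm{d}_n(f,g)$, and you assert that conditions~(2) and~(3) ``show that $\Theta_n(\delta)$ lies below a suitable deterministic sequence with probability tending to one.'' Condition~(2) controls only $\sup_{\rho(f,g)\le\delta_n}\E{\mathrm{d}_n^{2}(f,g)}$ --- a supremum of expectations --- which bounds neither the expectation nor the in-probability size of the supremum of $\mathrm{d}_n^{2}$ over the uncountable collection of $\rho$-close pairs. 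Your fallback for the coarse scale, bounding the squared maximum over the $O(N_\delta^{2})$ net pairs by $\log N_\delta\cdot N_\delta^{2}\cdot\sup\E{\mathrm{d}_n^{2}}$, does not close either: for fixed $\delta$ and $n\to\infty$ condition~(2) gives no decay (it only applies along sequences $\delta_n\downarrow0$), and as $\delta\to0$ the factor $N_\delta^{2}$ diverges with no quantitative rate on $\sup\E{\mathrm{d}_n^{2}}$ to compensate. The standard proof resolves this with an additional, non-trivial step that your sketch omits: one shows $\Theta_n(\delta_n)\overset{\P}{\to}0$ by writing $\mathrm{d}_n^{2}(f,g)$ as its expectation (controlled by condition~(2)) plus a centred empirical process indexed by the squared increments, symmetrizing that process, and invoking the contraction principle (legitimate after truncation at level $M$) to bound it by the symmetrized modulus of the original process; this yields a self-improving inequality which, combined with the entropy condition~(3), forces $\Theta_n(\delta_n)\to0$ in probability. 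Without this bootstrap, or an equivalent device, the chaining bound cannot be closed, so the proposal is incomplete at its decisive step.
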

Consider a sequence of stochastic processes
\begin{align*}
    Z_{n,i}\p{f} = \frac{1}{\sqrt{v_n}}f\p{Y_{n,i}}, \ \ i = 1,\ldots, m_n,
\end{align*}
where $Y_{n,i}$ are i.i.d. random elements in a measurable space $E$ and $v_n$ is non-decreasing tending to infinity. For a class of real-valued functions $\mathcal{G}$ defined on $E$, define the envelope function $\boldsymbol{\mathrm{G}}:E\rightarrow \R$ by
\begin{align*}
    \boldsymbol{\mathrm{G}}(y) = \sup_{f \in \mathcal{G}}\left\vert f(y) \right\vert.
\end{align*}
Denote by $\mathcal{P}$ the set of finite discrete probability measures on $E$. For $P \in \mathcal{P}$ define
    $L^{2}\p{P} = \{f:E \rightarrow \R, P(f^{2})<\infty\}.$
For each $f,g \in L^{2}\p{P}$ write
\begin{align*}
    \norm{f}_{L^{2}\p{P}} = \sqrt{P\p{f^{2}}}, \quad \mathrm{d}_{L^{2}\p{P}}(f,g) = \sqrt{P\p{\curly{f-g}^{2}}}.
\end{align*}

The entropy condition of equation~\eqref{entropy_condition} in Theorem~\ref{bracketing_clt} can be replaced with a uniform metric entropy condition with respect to the deterministic semi-metric $\mathrm{d}_{L^{2}\p{P}}$.
\begin{lemma}\label{uniform_entropy_lemma}
    Assume that 
    \begin{align}
        \limsup_{n \to \infty}\frac{m_n\E{\boldsymbol{\mathrm{G}}^{2}(Y_{n,1})}}{v_n} < \infty,
    \end{align}
    and \begin{align}\label{uniform_entropy_condition}
        \lim_{\delta \to 0}
        \int_{[0,\delta]}\sup_{Q \in \mathcal{P}}\sqrt{\log\left(N\p{\mathcal{G},  \mathrm{d}_{L^{2}\p{P}},  \norm{\boldsymbol{\mathrm{G}}}_{L^{2}\p{Q}}\varepsilon}\right)}\mathrm{d}\varepsilon = 0. 
    \end{align}
    \hspace{4pt}
    Then equation~\eqref{entropy_condition} holds.
\end{lemma}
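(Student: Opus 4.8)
The plan is to identify the random semi-metric $\mathrm{d}_{n}$ with a reweighted empirical $L^{2}$-distance, bound its covering numbers by the deterministic uniform metric entropy of \eqref{uniform_entropy_condition}, and then absorb a random dilation into a change of variables controlled by the envelope moment assumption.

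Concretely, I would first introduce the random empirical probability measure $Q_{n}=m_{n}^{-1}\sum_{i=1}^{m_{n}}\delta_{Y_{n,i}}$, which belongs to $\mathcal{P}$. Since $Z_{n,i}(f)=v_{n}^{-1/2}f(Y_{n,i})$, we have $\mathrm{d}_{n}^{2}(f,g)=v_{n}^{-1}\sum_{i=1}^{m_{n}}\{f(Y_{n,i})-g(Y_{n,i})\}^{2}=(m_{n}/v_{n})\,\mathrm{d}_{L^{2}(Q_{n})}^{2}(f,g)$, hence $\mathrm{d}_{n}=\sqrt{m_{n}/v_{n}}\,\mathrm{d}_{L^{2}(Q_{n})}$ and $N(\mathcal{G},\mathrm{d}_{n},\varepsilon)=N(\mathcal{G},\mathrm{d}_{L^{2}(Q_{n})},\varepsilon\sqrt{v_{n}/m_{n}})$. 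Writing $S_{n}:=\big(v_{n}^{-1}\sum_{i=1}^{m_{n}}\boldsymbol{\mathrm{G}}^{2}(Y_{n,i})\big)^{1/2}=\sqrt{m_{n}/v_{n}}\,\norm{\boldsymbol{\mathrm{G}}}_{L^{2}(Q_{n})}$, on $\{S_{n}>0\}$ one has $\varepsilon\sqrt{v_{n}/m_{n}}=(\varepsilon/S_{n})\norm{\boldsymbol{\mathrm{G}}}_{L^{2}(Q_{n})}$, and since $Q_{n}\in\mathcal{P}$,
\begin{align*}
  N(\mathcal{G},\mathrm{d}_{n},\varepsilon)\ \le\ \sup_{Q\in\mathcal{P}}N\!\big(\mathcal{G},\mathrm{d}_{L^{2}(Q)},(\varepsilon/S_{n})\norm{\boldsymbol{\mathrm{G}}}_{L^{2}(Q)}\big)\ =:\ \widetilde{N}(\varepsilon/S_{n}).
\end{align*}
The map $\widetilde{N}:(0,\infty)\to[1,\infty]$ is non-increasing, hence Borel, so $N^{\ast}(\mathcal{G},\mathrm{d}_{n},\varepsilon):=\widetilde{N}(\varepsilon/S_{n})$, set equal to $1$ on the event $\{S_{n}=0\}$ (on which $\mathrm{d}_{n}\equiv 0$ and the entropy integral vanishes), is a measurable majorant of $N(\mathcal{G},\mathrm{d}_{n},\varepsilon)$. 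Moreover $|f|,|g|\le\boldsymbol{\mathrm{G}}$ gives $\mathrm{d}_{L^{2}(Q)}(f,g)\le\norm{f}_{L^{2}(Q)}+\norm{g}_{L^{2}(Q)}\le 2\norm{\boldsymbol{\mathrm{G}}}_{L^{2}(Q)}$, so $\widetilde{N}(u)=1$ for $u\ge 2$; combined with monotonicity of $\widetilde{N}$ and \eqref{uniform_entropy_condition}, the function $J(\delta):=\int_{0}^{\delta}\sqrt{\log\widetilde{N}(u)}\,\mathrm{d}u$ is real-valued and non-decreasing on $[0,\infty)$, bounded by $J_{\infty}:=J(2)<\infty$, and satisfies $J(\delta)\to 0$ as $\delta\downarrow 0$ — the last being precisely what \eqref{uniform_entropy_condition} asserts.

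Given a sequence $\{\delta_{n}\}$ decreasing to $0$, the substitution $u=\varepsilon/S_{n}$ gives, on $\{S_{n}>0\}$,
\begin{align*}
  \int_{[0,\delta_{n}]}\sqrt{\log N^{\ast}(\mathcal{G},\mathrm{d}_{n},\varepsilon)}\,\mathrm{d}\varepsilon
  \ =\ \int_{0}^{\delta_{n}}\sqrt{\log\widetilde{N}(\varepsilon/S_{n})}\,\mathrm{d}\varepsilon
  \ =\ S_{n}\,J(\delta_{n}/S_{n}),
\end{align*}
so it remains to prove $S_{n}\,J(\delta_{n}/S_{n})\overset{\P}{\to}0$. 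The envelope assumption gives $\E{S_{n}^{2}}=(m_{n}/v_{n})\E{\boldsymbol{\mathrm{G}}^{2}(Y_{n,1})}=\mathcal{O}(1)$, so $S_{n}=\mathcal{O}_{\P}(1)$ by Markov's inequality. The conclusion then follows from an elementary deterministic fact: if $J$ is non-decreasing and bounded by $J_{\infty}$ with $J(0{+})=0$, and $\delta_{n}\downarrow 0$, $S_{n}=\mathcal{O}_{\P}(1)$, then $S_{n}J(\delta_{n}/S_{n})\overset{\P}{\to}0$. Indeed, fix $\varepsilon,\eta>0$, choose $M\ge\varepsilon/(2J_{\infty})$ with $\sup_{n}\P(S_{n}>M)<\eta$, set $a=\varepsilon/(2J_{\infty})$, pick $\delta^{\ast}$ with $J(\delta^{\ast})\le\varepsilon/(2M)$, and take $n$ large enough that $\delta_{n}\le a\delta^{\ast}$: then on $\{S_{n}\le a\}$ one has $S_{n}J(\delta_{n}/S_{n})\le aJ_{\infty}=\varepsilon/2$; on $\{a<S_{n}\le M\}$ one has $\delta_{n}/S_{n}<\delta^{\ast}$, hence $S_{n}J(\delta_{n}/S_{n})\le MJ(\delta^{\ast})\le\varepsilon/2$; and the residual event $\{S_{n}>M\}$ has probability $<\eta$.

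The step I expect to be the genuine obstacle is this last one: the bound $S_{n}J(\delta_{n}/S_{n})$ is not manifestly negligible on the event that $S_{n}$ is atypically small, since then $\delta_{n}/S_{n}$ need not tend to $0$; the resolution hinges on the boundedness of $J$ — which encodes the integrability carried by \eqref{uniform_entropy_condition} near $0$ together with the envelope domination $|f|\le\boldsymbol{\mathrm{G}}$ away from $0$ — used in tandem with the tightness of $(S_{n})$. The remaining ingredients, namely recognising $\mathrm{d}_{n}$ as a scaled empirical $L^{2}$-metric, passing to the uniform entropy via $Q_{n}\in\mathcal{P}$, and verifying measurability of $N^{\ast}$, are routine once this scheme is in place.
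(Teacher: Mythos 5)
Your argument is correct. Note that the paper states Lemma~\ref{uniform_entropy_lemma} as background material and gives no proof of it (it is the standard reduction of the random entropy condition to the uniform entropy condition, as in van der Vaart--Wellner or Kulik--Soulier), so there is nothing to compare against; your proof is exactly that standard argument, and the two points you flag as the real content --- the identification $\mathrm{d}_{n}=\sqrt{m_{n}/v_{n}}\,\mathrm{d}_{L^{2}(Q_{n})}$ with $Q_{n}\in\mathcal{P}$, and the handling of the event where $S_{n}$ is small via boundedness of $J$ together with tightness of $(S_{n})$ --- are indeed the essential steps, and you execute both correctly.
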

\begin{lemma}\label{linearly_ordered_class_lemma}
    If $\mathcal{G}$ is a VC-subgraph class with measurable envelope function $\boldsymbol{\mathrm{G}}$, then~\eqref{uniform_entropy_condition} holds. In particular,~\eqref{uniform_entropy_condition} holds for linearly ordered classes.
\end{lemma}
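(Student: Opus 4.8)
The plan is to reduce condition~\eqref{uniform_entropy_condition} to the classical polynomial bound on the uniform covering numbers of a VC-subgraph class, and from there to the integrability of $\varepsilon\mapsto\sqrt{\log(1/\varepsilon)}$ at the origin. The statement is essentially a repackaging of standard empirical-process theory into the precise form in which the entropy condition is invoked in Lemma~\ref{uniform_entropy_lemma}, so the work is bookkeeping rather than new estimates.

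First I would recall the Vapnik--Chervonenkis uniform-entropy bound (see \citet{vanderVaart2023}; see also \citet{Kulik2020}): if $\mathcal{G}$ is VC-subgraph with measurable envelope $\boldsymbol{\mathrm{G}}$, then there are finite constants $C$ and $W$, depending only on the VC index of the class of subgraphs, such that for every finitely supported probability measure $Q$ with $\norm{\boldsymbol{\mathrm{G}}}_{L^{2}\p{Q}}>0$ and every $\varepsilon\in(0,1)$,
\[
N\p{\mathcal{G},\mathrm{d}_{L^{2}\p{Q}},\varepsilon\norm{\boldsymbol{\mathrm{G}}}_{L^{2}\p{Q}}}\le C\,\varepsilon^{-W},
\]
while the covering number is $1$ when $\norm{\boldsymbol{\mathrm{G}}}_{L^{2}\p{Q}}=0$. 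Taking the supremum over $Q\in\mathcal{P}$, then a logarithm and a square root, the integrand in~\eqref{uniform_entropy_condition} is dominated, uniformly in $Q$, by $\varepsilon\mapsto\sqrt{\log C+W\log(1/\varepsilon)}$, which is integrable on $(0,1)$. Consequently $\delta\mapsto\int_{[0,\delta]}\sup_{Q\in\mathcal{P}}\sqrt{\log N\p{\mathcal{G},\mathrm{d}_{L^{2}\p{Q}},\varepsilon\norm{\boldsymbol{\mathrm{G}}}_{L^{2}\p{Q}}}}\,\mathrm{d}\varepsilon$ is finite, nonnegative, nondecreasing in $\delta$, and tends to $0$ as $\delta\downarrow0$ by monotone convergence; this is precisely~\eqref{uniform_entropy_condition}.

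For the ``in particular'' clause it remains to check that a class $\mathcal{G}$ that is linearly ordered by the pointwise order is VC-subgraph of index at most $2$. If $f\le g$ pointwise then the subgraph of $f$ is contained in that of $g$, so linear ordering of $\mathcal{G}$ forces the subgraphs to form a chain under inclusion; a chain cannot shatter any two-point set $\{p_{1},p_{2}\}$, since that would require members $A,B$ with $p_{1}\in A\setminus B$ and $p_{2}\in B\setminus A$, contradicting $A\subseteq B$ or $B\subseteq A$. Hence the first part applies, and in particular it covers the class $\curly{\mathcal{E}_{s}:a\le s\le b}$ used in the proof of Theorem~\ref{theorem_fct_clt_V}, which is linearly ordered because $K\ge0$. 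I expect the main obstacle to be merely stating the VC uniform-entropy bound in the exact shape needed, and keeping the random metric $\mathrm{d}_{n}$ (handled in Lemma~\ref{uniform_entropy_lemma}) separate from the deterministic $\mathrm{d}_{L^{2}\p{Q}}$ of~\eqref{uniform_entropy_condition}; once that is settled the argument is routine, and for the nonnegative ordered class $\curly{\mathcal{E}_{s}}$ one could alternatively build an explicit net of cardinality $O(\varepsilon^{-2})$ from the monotonicity of $s\mapsto Q(\mathcal{E}_{s}^{2})$ and bypass VC theory entirely.
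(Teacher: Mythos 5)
Your proof is correct and is exactly the standard argument this lemma rests on: the paper states it without proof in its background appendix (citing \citet{vanderVaart2023} and \citet{Kulik2020}), and the intended justification is precisely the polynomial VC uniform covering-number bound combined with the integrability of $\sqrt{\log(1/\varepsilon)}$ near zero, plus the observation that a pointwise linearly ordered class has nested subgraphs and hence cannot shatter a two-point set. Your handling of the degenerate case $\norm{\boldsymbol{\mathrm{G}}}_{L^{2}(Q)}=0$ and your separation of the deterministic metric $\mathrm{d}_{L^{2}(Q)}$ from the random metric $\mathrm{d}_{n}$ are both as they should be.
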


\begin{theorem}[Vervaat's Lemma]\label{Vervaat}
    Consider non-empty intervals $I, J$ and let $\curly{\mathbb{X}, \mathbb{X}_{n}, n \geq 1}$ be $\mathbb{D}\p{I}$-valued stochastic processes and assume that $\mathbb{X}$ has almost surely continuous sample paths. Let $h:I \rightarrow J$ be a surjective, continuous and increasing function with positive derivative $h'$. If there exists a deterministic sequence $c_n \to \infty$ such that $c_{n}\p{\mathbb{X}_{n} - h} \overset{d}{\to} \mathbb{X}$ in $\mathbb{D}\p{I}$ endowed with the $J_1$-topology and $\mathbb{X}_{n}, n \geq 1$ have non-decreasing sample paths, then 
    \begin{align*}
        c_{n}\p{\mathbb{X}_{n} - h, \mathbb{X}^{\leftarrow}_{n} - h^{\leftarrow} } 
        \overset{d}{\to} \p{\mathbb{X}, -\p{h^{\leftarrow}}' \times \mathbb{X}\circ h^{\leftarrow}},
    \end{align*}
    in $\mathbb{D}\p{I} \times \mathbb{D}\p{J}$ endowed with the product $J_1$ topology.
\end{theorem}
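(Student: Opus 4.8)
\textbf{Proof proposal for Theorem~\ref{Vervaat} (Vervaat's Lemma).}
The plan is to deduce the joint distributional limit from a deterministic, pathwise statement about generalised inverses of monotone functions, via a Skorokhod coupling. Since $\mathbb{D}\p{I}$ equipped with the $J_1$-metric is a separable metric space and the limit $\mathbb{X}$ has almost surely continuous sample paths, Skorokhod's representation theorem furnishes versions $\widetilde{\mathbb{X}}_n, \widetilde{\mathbb{X}}$ on a common probability space with $c_n\p{\widetilde{\mathbb{X}}_n - h} \to \widetilde{\mathbb{X}}$ almost surely in $J_1$; as the limit is continuous, this convergence is in fact locally uniform on $I$. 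It therefore suffices to prove the deterministic claim: if $f_n \in \mathbb{D}\p{I}$ are non-decreasing, $c_n \to \infty$, $x$ is continuous on $I$, and $c_n\p{f_n - h} \to x$ locally uniformly, then $c_n\p{f_n^\leftarrow - h^\leftarrow} \to -\p{h^\leftarrow}' \cdot \p{x \circ h^\leftarrow}$ locally uniformly on $J$, hence in $J_1$. Applying this pathwise to $\widetilde{\mathbb{X}}_n$ yields almost sure joint convergence of $c_n\p{\widetilde{\mathbb{X}}_n - h,\ \widetilde{\mathbb{X}}_n^\leftarrow - h^\leftarrow}$ to $\p{\widetilde{\mathbb{X}},\ -\p{h^\leftarrow}' \cdot \widetilde{\mathbb{X}} \circ h^\leftarrow}$ in the product space; since $f\mapsto f^\leftarrow$ is measurable, this transfers back to joint weak convergence of the original pair.

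For the deterministic claim, fix a compact $K \subset J$ and set $a_t = h^\leftarrow(t)$, so that $h(a_t) = t$ and, since $h$ is a continuous increasing bijection with positive derivative, $h^\leftarrow$ is differentiable with $\p{h^\leftarrow}'(t) = 1/h'(a_t)$. Right-continuity and monotonicity of $f_n$ give the \emph{switching relation} $f_n^\leftarrow(t) \le s \iff f_n(s) \ge t$, whence $f_n^\leftarrow(t) = a_t + c_n^{-1}\inf\{\, v : f_n(a_t + v/c_n) \ge t \,\}$. Writing $f_n(a_t + v/c_n) = h(a_t + v/c_n) + c_n^{-1}\p{x(a_t) + o(1)}$ and expanding $h(a_t + v/c_n) = t + h'(a_t)\,v/c_n + o(c_n^{-1})$ shows that, for $v$ ranging over a fixed bounded set, $f_n(a_t + v/c_n) \ge t$ holds precisely for $v \ge -x(a_t)/h'(a_t) + o(1)$; one checks the $o(1)$ is uniform in $t \in K$ using uniform continuity of $x$ and of $h'$ on the relevant compact subset of $I$, uniform continuity of $h^\leftarrow$ on $K$, and the locally uniform control of $f_n - h - x/c_n$. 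Consequently $c_n\p{f_n^\leftarrow(t) - h^\leftarrow(t)} \to -x(a_t)/h'(a_t) = -\p{h^\leftarrow}'(t)\, x\p{h^\leftarrow(t)}$ uniformly on $K$. Because the limit is continuous, uniform-on-compacts convergence upgrades to $J_1$ convergence, and combining it with the convergence $c_n\p{f_n - h} \to x$ gives convergence in the product $J_1$ topology.

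The main obstacle is the uniformity in this inversion step: turning the pointwise Taylor expansion of $h$ and the pointwise continuity of $x$ and of the remainder into estimates uniform in $t$ over compacts. This is precisely where the regularity of $h$ (a genuine inverse with positive, continuous derivative, so that $h^\leftarrow$ is $C^1$ and the candidate limit is a continuous element of $\mathbb{D}\p{J}$) enters. A secondary but essential point is the interchange between $J_1$ and locally uniform convergence, valid only because every limit in sight is continuous: this is what both licenses the Skorokhod-coupling reduction at the start and justifies upgrading the conclusion to the $J_1$ topology at the end.
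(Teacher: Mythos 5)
The paper does not prove this statement: Vervaat's Lemma is listed in the background appendix as a known result taken from the literature (the section cites Billingsley, van der Vaart, and Kulik--Soulier), so there is no in-paper argument to compare against. Your proof is the standard one — Skorokhod coupling to reduce to a deterministic inversion lemma, then the switching relation $f_n^{\leftarrow}(t)\le s \iff f_n(s)\ge t$ together with a first-order expansion of $h$ around $h^{\leftarrow}(t)$ — and it is correct as written. Two minor points worth making explicit: your uniformity argument uses continuity of $h'$ on compacts, which is slightly stronger than the literal hypothesis ``positive derivative'' but is the intended reading (and holds in every application in the paper, where $h(s)=s^{-\alpha_x}$); and when you localize the infimum over $v$ to a bounded set, it is the monotonicity of $f_n$ that excludes the infimum being attained at arbitrarily negative $v$, so that assumption should be invoked there rather than only in the switching relation.
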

A simple application of the Continuous Mapping Theorem yields that Vervaat's Lemma holds in the case where $h$ is decreasing and $\mathbb{X}_{n}, n \geq 1$ have non-increasing sample paths.

\end{appendix}
\end{document}